\theoremstyle{plain}
\newtheorem{theorem}{Theorem}[section]
\newtheorem{proposition}[theorem]{Proposition}
\newtheorem{corollary}[theorem]{Corollary}
\newtheorem{lemma}[theorem]{Lemma}
\theoremstyle{definition}
\newtheorem{definition}[theorem]{Definition}
\newtheorem{remark}[theorem]{Remark}
\newcommand{\lra}{\longrightarrow}
\newcommand{\ra}{\rightarrow}
\newcommand{\Spec}{\mbox{Spec}}
\newcommand{\Spf}{\mbox{Spf}}
\newcommand{\Spm}{\mbox{Spm}}
\newcommand{\Iw}{\mbox{Iw}}
\newcommand{\GL}{{\rm \mathbf{GL}}}
\newcommand{\SL}{{\rm {\mathbf{SL}}}}
\newcommand{\Z}{{\mathbb Z}}
\newcommand{\Q}{{\mathbb Q}}
\newcommand{\C}{{\mathbb C}}
\newcommand{\F}{{\mathbb F}}
\newcommand{\N}{{\mathbb N}}
\newcommand{\WW}{{\mathbb W}}
\newcommand{\qbar}{{\bar\Q}}
\newcommand{\uS}{{\underline{S}}}
\newcommand{\Kbar}{{\overline{K}}}
\newcommand{\Rbar}{{\overline{R}}}
\newcommand{\Nbar}{{\overline{N}}}
\newcommand{\hR}{\widehat{{\overline{R}}}}
\newcommand{\uX}{{\underline{X}}}
\newcommand{\ucX}{{\underline{\cX}}}
\newcommand{\uY}{{\underline{Y}}}
\newcommand{\ucY}{{\underline{\cY}}}
\newcommand{\uZ}{{\underline{Z}}}
\newcommand{\ucZ}{{\underline{\cZ}}}
\newcommand{\ucU}{{\underline{\cU}}}
\newcommand{\uW}{{\underline{W}}}
\newcommand{\ucV}{{\underline{\cV}}}
\newcommand{\cM}{{\cal M}}
\newcommand{\cH}{{\cal H}}
\newcommand{\cC}{{\cal C}}
\newcommand{\cU}{{\cal U}}
\newcommand{\cT}{{\cal T}}
\newcommand{\cF}{{\cal F}}
\newcommand{\cG}{{\cal G}}
\newcommand{\cE}{{\cal E}}
\newcommand{\cV}{{\cal V}}
\newcommand{\cA}{{\cal A}}
\newcommand{\cD}{{\cal D}}
\newcommand{\cO}{{\cal O}}
\newcommand{\hO}{\widehat{\cO}}
\newcommand{\Sh}{{\mbox{Sh}}}
\newcommand{\cW}{{\cal W}}
\newcommand{\cZ}{{\cal Z}}
\newcommand{\fX}{{\mathfrak{X}}}
\newcommand{\fZ}{{\mathfrak{Z}}}
\newcommand{\Symm}{{\rm Sym}}
\newcommand{\rH}{{\rm H}}
\newcommand{\ho}{\hat{\otimes}}
\newcommand{\cY}{{\cal Y}}
\newcommand{\cX}{{\cal X}}
\begin{document}

\title{Overconvergent Eichler-Shimura isomorphisms}
\author{Fabrizio Andreatta \\ Adrian Iovita\\ Glenn Stevens}
\maketitle

\tableofcontents \pagebreak
\section{Introduction}
\label{sec:intro}

\bigskip
\noindent
We start by fixing a prime integer $p>2$, a complete discrete valuation field $K$ of characteristic $0$, ring of integers $\cO_K$ and residue field $\F$, a perfect field of characteristic $p$.

\bigskip
\noindent
Let us first recall the classical Eichler-Shimura isomorphism. We fix $N\ge 3$ an integer not divisible by $p$ and
let $\Gamma:=\Gamma_1(N)\cap \Gamma_0(p)\subset \SL_2(\Z)$. Let us denote by
$X:=X(N,p)$ the modular curve over $\Spec(\Z[1/(Np)])$ which classifies generalized elliptic curves with $\Gamma$-level
structure, $E\rightarrow X$ the universal semi-abelian scheme and $\omega:=\omega_{E/X}=e^\ast(\Omega^1_{E/X})$
the invertible sheaf on $X$ of invariant $1$-differentials, where
$e:X\rightarrow E$ denotes the zero-section. With these notations we have

\begin{theorem} [see \cite{deligne}] For every $k\in \Z$, $k\ge 0$ we have a natural isomorphism compatible with the
action of Hecke operators
$$
\rH^1\bigl(\Gamma, V_{k,\C}\bigr) \cong \rH^0\bigl(X_{\C}, \omega^{k+2}\bigr)\oplus \overline{\rH^0\bigl(X_{\C}, \omega^k\otimes\Omega^1_{X/\C}\bigr)},
$$
where $V_{k,\C}$ is the natural $\Gamma$-representation $V_{k,\C}:={\Symm}^k(\C^2)$ and the overline on the second
term on the right means "complex conjugation".
\end{theorem}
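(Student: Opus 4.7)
The plan is to pass from group cohomology to the de Rham cohomology of a vector bundle with integrable connection on the modular curve, and then split the result via the Hodge filtration. Since $N\ge 3$, the group $\Gamma$ is torsion-free and acts freely on the upper half-plane $\mathfrak{H}$, so writing $Y\subset X$ for the open modular curve (the complement of the cusps), the quotient $Y_\C(\C)=\Gamma\backslash\mathfrak{H}$ is a $K(\Gamma,1)$. Letting $\V_{k,\C}$ denote the local system on $Y_\C(\C)$ attached to the $\Gamma$-representation $V_{k,\C}$, one obtains
$$
\rH^1\bigl(\Gamma, V_{k,\C}\bigr)\ \cong\ \rH^1\bigl(Y_\C(\C), \V_{k,\C}\bigr).
$$
I would identify $\V_{k,\C}$ with $\Symm^k(R^1\pi_\ast\C)$, where $\pi\colon E\to Y$ is the universal elliptic curve, realize it as the sheaf of horizontal sections of the flat vector bundle $\cV_k:=\Symm^k \cH^1_{dR}(E/Y)$ endowed with its Gauss--Manin connection $\nabla$, and use Deligne's canonical extension of $(\cV_k,\nabla)$ across the cusps to rewrite
$$
\rH^1\bigl(Y_\C(\C), \V_{k,\C}\bigr)\ \cong\ \bH^1\Bigl(X_\C,\ \cV_k\ \stackrel{\nabla}{\lra}\ \cV_k\otimes\Omega^1_{X_\C}(\log\,\mathrm{cusps})\Bigr).
$$

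The Hodge filtration on $\cV_k$, induced by $\omega\subset\cH^1_{dR}(E/X)$, is $F^p\cV_k=\omega^p\cdot\Symm^{k-p}\cH^1_{dR}(E/X)$. By mixed Hodge theory for open varieties with non-trivial coefficients (Deligne, Zucker), the Hodge-to-de Rham spectral sequence for the log de Rham complex above degenerates at $E_1$, and $\rH^1(Y_\C(\C),\V_{k,\C})$ inherits a mixed Hodge structure whose weight-$(k+1)$ part is pure (cuspidal) and whose weight-$(k+2)$ part is supported at the cusps (Eisenstein). Via the Kodaira--Spencer isomorphism $\Omega^1_{X_\C}(\log\,\mathrm{cusps})\cong\omega^2$, the top step of the Hodge filtration computes to
$$
F^{k+1}\rH^1\ \cong\ \rH^0\bigl(X_\C,\,\omega^k\otimes\Omega^1_{X_\C}(\log\,\mathrm{cusps})\bigr)\ \cong\ \rH^0\bigl(X_\C,\,\omega^{k+2}\bigr),
$$
yielding the holomorphic summand (all modular forms of weight $k+2$). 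Complex conjugation, using the real structure $V_{k,\R}=\Symm^k(\R^2)$ on $V_{k,\C}$, identifies the complementary piece of the Hodge decomposition with $\overline{\rH^0(X_\C,\omega^k\otimes\Omega^1_{X/\C})}$; the log poles disappear on the anti-holomorphic side because only cuspidal classes admit anti-holomorphic complex conjugate partners.

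An explicit realization of the first map is via Eichler integrals: to $f\in\rH^0(X_\C,\omega^{k+2})$ one associates the $\Gamma$-invariant holomorphic $V_{k,\C}$-valued $1$-form $\omega_f:=f(\tau)(X-\tau Y)^k\,d\tau$ on $\mathfrak{H}$, whose cohomology class in $\rH^1(Y_\C(\C),\V_{k,\C})$ is represented by the cocycle $\gamma\mapsto\int_{z_0}^{\gamma z_0}\omega_f$. Anti-holomorphic cusp forms produce the second summand by the conjugate construction. Hecke-equivariance is immediate because Hecke operators act on both sides through the standard correspondences on $X$.

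The main technical obstacle is the analytic input: the degeneration of the Hodge-to-de Rham spectral sequence for a non-constant variation of Hodge structure on an open curve with log poles at the boundary, together with the asymmetric identification of $F^{k+1}\rH^1$ with $\rH^0(X,\omega^{k+2})$ (all modular forms) and of its complement with $\overline{\rH^0(X,\omega^k\otimes\Omega^1_{X/\C})}$ (cusp forms only). This asymmetry reflects the mixed Hodge structure on $\rH^1(Y,\V_{k,\C})$---whose cuspidal weight-$(k+1)$ piece is symmetric under complex conjugation while the Eisenstein weight-$(k+2)$ piece is purely holomorphic---and the correct book-keeping of the boundary contribution at the cusps is the delicate step.
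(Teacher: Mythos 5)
The paper itself gives no proof of this statement: it is recalled as a classical result with a bare citation to Deligne's Bourbaki expos\'e, so there is nothing internal to compare your argument against. Your outline is the standard Hodge-theoretic proof and is essentially correct: since $N\ge 3$ the group $\Gamma$ is torsion-free, $\rH^1(\Gamma,V_{k,\C})\cong\rH^1(Y(\C),\V_{k,\C})$, and the passage to the log de Rham complex of Deligne's canonical extension, the $E_1$-degeneration, the computation $F^{k+1}\rH^1\cong\rH^0(X_\C,\omega^k\otimes\Omega^1_{X_\C}(\log\mathrm{cusps}))\cong\rH^0(X_\C,\omega^{k+2})$ via Kodaira--Spencer, and the Eichler-integral description of the resulting map are all as in the literature. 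The one step you flag as delicate is indeed the only place needing more than a citation: you must justify that $\rH^1$ is the \emph{direct sum} of $F^{k+1}\rH^1$ and $\overline{F^{k+1}\rH^1_{!}}$, where $\rH^1_{!}$ is the parabolic (interior) cohomology. This follows from the structure of the mixed Hodge structure: $\rH^1_!$ is pure of weight $k+1$ with Hodge types $(k+1,0)$ and $(0,k+1)$, while the Eisenstein quotient $\rH^1/\rH^1_!$ is of type $(k+1,k+1)$, hence entirely contained in the image of $F^{k+1}$; a dimension count (or the vanishing $F^{k+1}\cap\overline{F^{k+1}_{!}}=0$) then gives the asserted splitting and explains the asymmetry between all modular forms on one side and cusp forms on the other. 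With that point made precise, your proof is complete and is, in substance, the argument of the cited reference.
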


The elements of $\rH^1\bigl(\Gamma, V_{k,\C}\bigr)$ are called (classical) weight $k$  {\em modular symbols}
while the elements appearing on the right hand side of the Eichler-Shimura isomorphism
are (classical) {\em modular, respectively cusp forms} of weight $k+2$.

\bigskip
\noindent
There is a more arithmetic version of the above theorem, which we will also call a classical Eichler-Shimura isomorphism.
Namely let us consider now the modular curve $X$ over the $p$-adic field $K$ and for $k\ge 0$ an integer, we let
$V_k:={\Symm}^k(\Q_p^2)$ with its natural action of $\Gamma$. Then $\rH^1\bigl(\Gamma, V_k\bigr)$ can be seen as an
\'etale cohomology group over $Y_{\overline{K}}:=(X-\{\mbox{cusps}\})_{\Kbar}$ (see section \S 5 for more details), this $\Q_p$-vector space is endowed
both with a natural action of the Galois group $G_K:={\rm Gal}(\Kbar/K)$ and a commuting action of the Hecke operators.

\begin{theorem}[\cite{faltingsmodular}]
\label{thm:faltings}
 We have a natural, $G_K$ and Hecke equivariant isomorphism
$$
\rH^1\bigl(\Gamma, V_k\bigr)\otimes_K\C_p\cong  \Bigl(\rH^0\bigl(X, \omega^{k+2}\bigr)\otimes_K\C_p\Bigr)\oplus
\Bigl(\rH^1\bigl(X, \omega^{-k}\bigr)\otimes_K\C_p(k+1)\Bigr),
$$
where $\C_p$ is the $p$-adic completion of $\Kbar$ and $(k+1)$ referes to a Tate twist.
\end{theorem}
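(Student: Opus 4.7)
My plan is to realise $\rH^1(\Gamma,V_k)$ as $p$-adic étale cohomology of a $\Q_p$-local system on the modular curve, and then to invoke Faltings's Hodge-Tate comparison theorem with non-trivial coefficients. Let $Y:=X\setminus\{\text{cusps}\}$ and let $\pi:E\to Y$ denote the universal elliptic curve. The $\Q_p$-étale local system $\cL:=R^1\pi^{\et}_{\ast}\Q_p$ has rank two, and its stalk at a geometric point is naturally $\Q_p^2$ with the standard action of $\pi_1(Y(\C))=\Gamma$. Combining the Artin comparison over $\C$, smooth base change to $\Kbar$, and contractibility of the complex upper half plane yields a canonical $G_K$- and Hecke-equivariant isomorphism
$$
\rH^1(\Gamma,V_k)\cong\rH^1_{\et}\bigl(Y_{\Kbar},\Symm^k\cL\bigr).
$$

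The filtered de Rham realisation of $\cL$ is the rank two locally free sheaf $\cH:=R^1\pi^{\rm dR}_{\ast}\cO_E$, fitting into the Hodge short exact sequence $0\to\omega\to\cH\to\omega^{-1}\to 0$. Extending $\cH$ to the cuspidal compactification $X$ via the canonical extension of Deligne, and equipping $X$ with the log structure along the cusps, Faltings's log-smooth $p$-adic comparison theorem applied to the filtered sheaf $\Symm^k\cH$ produces a $G_K$-equivariant decomposition of $\rH^1_{\et}(Y_{\Kbar},\Symm^k\cL)\otimes_{\Q_p}\C_p$ whose summands are coherent cohomology groups of the graded pieces of the Hodge filtration on $\Symm^k\cH$, twisted by appropriate powers of $\C_p(1)$. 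The top graded piece ${\rm gr}^k=\omega^k$ contributes, via the Kodaira-Spencer isomorphism $\Omega^1_X(\log\text{cusps})\cong\omega^2$, the summand $\rH^0(X,\omega^{k+2})\otimes\C_p$; the bottom graded piece ${\rm gr}^0=\omega^{-k}$ contributes $\rH^1(X,\omega^{-k})\otimes\C_p(k+1)$.

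Equivalently, and closer to Faltings's original route, one may introduce a smooth proper compactification $\cE_k$ of the $k$-fold self-fibre product $E\times_Y\cdots\times_Y E$ (a Kuga-Sato variety), remark that a suitable Hecke projector cuts $\rH^1_{\et}(Y_{\Kbar},\Symm^k\cL)$ out of $\rH^{k+1}_{\et}(\cE_{k,\Kbar},\Q_p)$ as a direct summand, and apply the classical Hodge-Tate decomposition of the proper smooth variety $\cE_k$. The extremal $(p,q)=(k+1,0)$ and $(0,k+1)$ Hodge components then project onto the two summands named in the theorem. The principal obstacle, in either formulation, is to verify that only the two extremal graded pieces survive: on the modular curve the intermediate coherent groups $\rH^1(X,\omega^{k-2j})$ and $\rH^0(X,\omega^{k-2j+2})$ for $0<j<k$ are generally nonzero, so one must exploit the fact that $\cL$ has Hodge-Tate weights only in $\{0,1\}$ — equivalently, that the Hodge filtration on $\cH$ is two-step — to see that after forming $\Symm^k$ and passing to degree one only the extremal contributions remain, the intermediate ones being annihilated either by the Hecke projector or, intrinsically, by the refined statement of Faltings's comparison. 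Galois and Hecke equivariance are automatic from the functoriality of all the constructions involved.
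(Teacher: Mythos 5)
The paper offers no proof of this statement: Theorem \ref{thm:faltings} is quoted from \cite{faltingsmodular}, and only the projection onto the first summand is later reconstructed (in the proof of Claim 1 inside the proof of Theorem \ref{thm:muh}, via the Hodge--Tate sequence of the universal elliptic curve on Faltings' site, the surjection $\Symm^k(\cT)\hat{\otimes}\hO_{\fX(N,p)}\otimes K(1)\to \omega^k_{\fX(N,p)}\otimes K(1)$, and the computation $R^1v_{\fX(N,p),\ast}(\omega^k_{\fX(N,p)}\otimes K(1))\cong\omega^{k+2}\hat{\otimes}\C_p$ using Kodaira--Spencer). Your sketch follows Faltings's actual route, so the overall strategy --- identify $\rH^1(\Gamma,V_k)$ with $\rH^1_{\et}(Y_{\Kbar},\Symm^k\cL)$ and apply the log-smooth comparison with coefficients in the canonically extended filtered bundle $\Symm^k\cH$ --- is the right one.

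There is, however, a genuine gap exactly where you flag the ``principal obstacle,'' and neither mechanism you offer closes it. The fact that the Hodge filtration on $\cH$ is two-step does \emph{not} make the intermediate graded pieces of $\Symm^k\cH$ disappear: $\Symm^k\cH$ carries a $(k+1)$-step filtration with graded pieces ${\rm gr}^j\cong\omega^{2j-k}$, all nonzero, and their coherent cohomology is nonzero in general, as you yourself observe. Nor is a Hecke projector the operative mechanism on the modular curve itself. What kills the intermediate terms is the Gauss--Manin connection: the comparison isomorphism decomposes $\rH^1_{\et}(Y_{\Kbar},\Symm^k\cL)\otimes\C_p$ according to the graded pieces of the de Rham complex $\Symm^k\cH\to\Symm^k\cH\otimes\Omega^1_X(\log\mathrm{cusps})$, i.e.\ the two-term complexes $\bigl[{\rm gr}^j\to{\rm gr}^{j-1}\otimes\Omega^1_X(\log\mathrm{cusps})\bigr]$. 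By Griffiths transversality and the Kodaira--Spencer isomorphism $\Omega^1_X(\log\mathrm{cusps})\cong\omega^2$, for $1\le j\le k$ this differential becomes $\omega^{2j-k}\to\omega^{2j-k}$, multiplication by a nonzero integer, so these complexes are acyclic; only $j=0$ (contributing $\rH^1(X,\omega^{-k})$ with its Tate twist) and $j=k+1$ (contributing $\rH^0(X,\omega^{k+2})$) survive. Without this input your decomposition would a priori contain all the intermediate coherent groups and the asserted two-term splitting would be false. The Kuga--Sato variant does not bypass this point either: identifying the Hodge types of the image of the projector as the two extremal ones rests on the same degeneration, so in either formulation the Kodaira--Spencer acyclicity of the intermediate graded complexes is the missing step that must be supplied.
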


\bigskip
\noindent
In this article we are mainly concerned with the $p$-adic variation of modular forms and modular symbols,
and in fact with the relationship between these two variations.

The parameter space for the above mentioned variations, denoted  $\cW$ and
called {\em the weight space}, is the rigid analytic space associated to
the complete noetherian semilocal algebra $\Lambda:=\Z_p[\![\Z_p^\times]\!]$.  We set $T_0:=\Z_p^\times\times\Z_p$,
seen as a compact subset of $\Z_p^2$, endowed with a natural action of the compact group
$\Z_p^\times$ and of the Iwahori subgroup of $\GL_2(\Z_p)$. If $k\in \cW(K)$ is a weight, we denote by
$D_k$ the $K$-Banach space of analytic distributions on $T_0$, homogeneous of degree $k$ for the action of
of $\Z_p^\times$. Then $D_k$ is a $\Gamma$-representation. The same construction can be performed in a
slightly more
complicated situation: let $U\subset \cW$ be a wide open disk defined over
$K$, let $\Lambda_U$ denote the $\cO_K$-algebra of bounded rigid analytic functions on $U$,
let $B_U:=\Lambda_U\otimes_{\cO_K}K$  and we denote by $k_U:\Z_p^\times\lra \Lambda_U^\times$ the associated universal
character. We denote by $D_U$ the $B_U$-Banach module of $B_U$-valued  compact analytic distributions of $T_0$, homogeneous
of degree $k_U$ for the action of $\Z_p^\times$. Then $D_U$ is also a $\Gamma$-representation and we denote by $D^o_U$ the integral distributions, i.e. the ones with values in $\Lambda_U$. See section \S 3 for more details.

Of course if $k\in U(K)$ the two $\Gamma$-representations above are connected by a  $\Gamma$-equivariant specialization map
$D_U\rightarrow  D_k$.
We say that the classes in $\rH^1\bigl(\Gamma, D_k\bigr)$ are overconvergent modular symbols and the ones in
$\rH^1\bigl(\Gamma, D_U\bigr)$ are $p$-adic families of overconvergent modular symbols.

At this point we would like to point out that we have introduced a small modification to the usual way $p$-adic families of modular symbols are defined, namely we have used a wide open disk
instead of an affinoid as parameter space for the weights of our family. As a result the (integral) family of modular symbols  $\rH^1\bigl(\Gamma, D^o_U(1)\bigr)$ is a $\Lambda_U$-module. Without wishing to be pedantic, we'd like to stress that this small modification is essential for the following
interpretation: the integral distribution module $D^o_U$ has a natural filtration $\{{\rm Fil}^i(D^o_U)\}_{i\ge 0}$ which is $\Gamma$-invariant and which has  quotients which are
artinian $\cO_K$-modules  (see section \S 3 for more details).
This allows one to identify naturally the $p$-adic family of modular symbols over $U$ with
the continuous cohomology group
$$
\rH^1_{\rm cont}\Bigl(\Gamma, \bigl(D^o_U/{\rm Fil}^i(D^o_U)\bigr)_{i\ge 0}\otimes K\Bigr),
$$
which then can be identified by GAGA with the \'etale cohomology group on $Y_{\overline{\Q}}$
of the associated ind-continuous \'etale sheaf.

Due to this identification $\rH^1\bigl(\Gamma, D_U\bigr)$ has a natural $G_{\Q}$-action and at the same
time the completely continuous action of $U_p$ allows finite slope decompositions (to the expense of maybe shrinking $U$).

Let us say this again: {\em we only have Galois actions on \'etale cohomology groups and we only know how to perform  slope decompositions on arithmetic cohomology groups, therefore to have both
properties on our $p$-adic families of modular symbols we have to be able to make the above identification.} This is allowed by the small modification pointed out above, i.e. the use of the wide open disk $U$ and its ring of bounded rigid analytic functions $\Lambda_U$.

On the other hand for each $w\in \Q$  such that $0< w< p/(p+1)$ we denote by $X(w)$ the strict neighborhood of
the component containing the cusp $\infty$ of the ordinary locus of width $p^w$ in the rigid analytic curve $(X_{/K})^{\rm an}$
(see section \S2 for more details). For every $k\in \cW(K)$, in \cite{andreatta_iovita_stevens} we have shown that there exist a $w$ as above and an invertible, modular sheaf
$\omega^{\dagger,k}_w$ on $X(w)$ such that if $k\in \Z$ then  $\omega_w^{\dagger,k}\cong \omega^k|_{X(w)}$.
We call the elements of $\rH^0\bigl(X(w), \omega_w^{\dagger,k}\bigr)$ {\em overconvergent modular forms of weight} $k$ (and  radius of overconvergence $w$). In \cite{andreatta_iovita_stevens} it is shown that after taking the limit for $w\to 0$ we obtain precisely the Hecke module of overconvergent modular forms of weight $k$ introduced by Robert Coleman \cite{coleman}.
Similarly, if $U\subset \cW$ is a wide open disk and $k_U$ its universal weight, there is a $w$ and a modular sheaf
of $B_U$-Banach modules $\omega_w^{\dagger, k_U}$ such that the elements of $\rH^0\bigl(X(w), \omega_w^{\dagger,k_U}\bigr)$
are $p$-adic families of overconvergent modular forms over $U$.

\bigskip
\noindent
We can now display the main result of this article.

Fix $U\subset \cW^\ast$ a wide open disk defined over $K$ of so called {\em accessible weights}, namely of weights
$k$ such that $|k(t)^{p-1}-1|<p^{-1/(p-1}$. Let $w\in \Q$, $0<  w< p/(p+1)$ be such that $\omega_w^{\dagger, k_U+2}$ is defined over $X(w)$.
We define a {\bf geometric} $(B_U\ho\C_p)$-linear homomorphism
$$
\Psi_U\colon\rH^1\bigl(\Gamma, D_U\bigr)\ho_K\C_p(1)\lra \rH^0\bigl(X(w), \omega^{\dagger, k_U+2}_w\bigr)\ho_K\C_p,
$$
which is equivariant for the action of the Galois group $G_K={\rm Gal}(\Kbar/K)$ and the action of
the Hecke operators $T_\ell$, for $\ell$ not dividing $Np$ and $U_p$ and most importantly it is compatible with {\bf specializations}.
Let now $h\ge 0$ be an integer. We suppose that $U$ is such that both $\rH^1\bigl(\Gamma, D_U\bigr)$ and
$\rH^0\bigl(X(w), \omega^{\dagger, k_U}_w\bigr)$ have slope $\le h$ decompositions and that
there is an integer $k_0>h-1$ such that $k_0\in U(K)$.

If
$N$ is a $B_U[U_p]$-module which has a slope $\le h$-decomposition we denote by $N^{(h)}$
the slope $\le h$ submodule of $N$. All these being said, $\Psi_U$ induces a morphism on slope $\le h$ parts:
$$
\Psi_U^{(h)}\colon \rH^1\bigl(\Gamma, D_U\bigr)^{(h)}\ho_K\C_p(1)\lra \rH^0\bigl(X(w), \omega^{\dagger, k_U}_w\bigr)^{(h)}\ho_K\C_p.
$$

\noindent
We prove the following:

\begin{theorem}
\label{thm:muhintro}

There is a finite subset of weights $Z\subset U(\C_p)$ such that:

a) For each $k\in U(K)-Z$ there exists a finite dimensional $K$-vector space $S_{k+2}^{(h)}$ on which the Hecke operators
$T_{\ell}$ for $(\ell, Np)=1$ and $U_p$ act, $U_p$ acts with slope $\le h$ and such that we have natural, $G_K$ and
Hecke-equivariant isomorphisms
$$
\rH^1\bigl(\Gamma, D_k\bigr)^{(h)}\otimes_K\C_p(1)\cong \Bigl(\rH^0\bigl(X(w), \omega_w^{\dagger, k+2}\bigr)^{(h)}\otimes_K\C_p\Bigr)
\oplus \Bigl(S_{k+2}^{(h)}\otimes_K\C_p(k+2)\Bigr).
$$
Here the projection  of $\rH^1\bigl(\Gamma, D_k\bigr)^{(h)}\otimes_K\C_p(1)$ onto $\rH^0\bigl(X(w), \omega_w^{\dagger, k+2}\bigr)^{(h)}\otimes_K\C_p $ is
determined by the geometric morphism $\Psi_U^{(h)}$ above.

Moreover, the characteristic polynomial of $T_{\ell}$ acting on $S_{k+2}^{(h)}$ is equal to the characteristic
polynomial of $T_{\ell}$ acting on the space of
overconvergent cusp forms of weight $k+2$ and slope less or equal to $h$, $\rH^0\bigl(X(w), \omega_w^{\dagger, k}\otimes\Omega^1_{X(w)/K}\bigr)^{(h)}$.

b) We have a family version of a) above: for every wide open disk $V\subset U$ defined over $K$ such that
$V(\C_p)\cap Z=\phi$, there is a finite free $B_V$-module $S_V^{(h)}$ on which the Hecke operators $T_{\ell}, U_p$ act,
$U_p$ acts with slope less or eqaul to $h$, and we have a natural isomorphism $G_K$ and Hecke equivariant
$$
\rH^1\bigl(\Gamma, D_V^{(h)}\bigr)\ho_K\C_p(1)\cong \Bigl(\rH^0\bigl(X(w), \omega_w^{\dagger,k_V+2}\bigr)^{(h)}\ho_K\C_p\Bigr)\oplus
\Bigl(S_V\ho_K\C_p(\chi^{\rm univ}_V\cdot\chi)\Bigr),
$$
where $\chi^{\rm univ}_V:G_K\lra \Lambda_V^\times$ is the universal cyclotomic character of $V$. As at a), the first projection is determined by
the geometric map $\Psi_U^{(h)}$.

c) If $V$ is as at b) above let $k\in V(K)$ and let us denote by $t_k$ a uniformizer of $B_V$ at $k$.
Then we have natural isomorphisms as Hecke modules
$$
S_V^{(h)}/t_kS_V^{(h)}\cong S_{k+2}^{(h)},
$$
where $S_{k+2}^{(h)}$ is the Hecke module appearing at a).

\end{theorem}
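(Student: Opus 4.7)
The plan is to prove part (b) as the central statement and deduce (a) and (c) as specialization and base-change consequences. The geometric map $\Psi_U^{(h)}$ is already in hand; its kernel, after reinterpretation of the Galois action as a twist by the universal cyclotomic character, will provide $S_V^{(h)}$. First I would form the short exact sequence
$$
0\lra K_V^{(h)}\lra \rH^1(\Gamma, D_V)^{(h)}\ho_K\C_p(1)\stackrel{\Psi_V^{(h)}}{\lra} \rH^0\bigl(X(w),\omega_w^{\dagger,k_V+2}\bigr)^{(h)}\ho_K\C_p\lra 0
$$
for each wide open subdisk $V\subset U$, and show surjectivity of $\Psi_V^{(h)}$ together with finite freeness of the kernel $K_V^{(h)}$ as a $B_V\ho_K\C_p$-module. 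Surjectivity is checked on the Zariski-dense set of classical weights $k\in V(K)\cap \Z_{\ge h-1}$: at such $k$, Stevens' control theorem identifies $\rH^1(\Gamma, D_k)^{(h)}$ with the classical modular symbols, and Theorem \ref{thm:faltings} together with Coleman's classicality realizes $\Psi_V^{(h)}$ at $k$ as the classical Eichler-Shimura projection. Buzzard's slope-decomposition formalism over affinoids exhausting $V$ then lifts the surjection to the family and exhibits $K_V^{(h)}$ as projective of constant rank, hence free.

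Next I would pin down the Galois structure on $K_V^{(h)}$. At each classical weight $k\in V(K)$, Theorem \ref{thm:faltings} identifies the specialization of $K_V^{(h)}$ as a $\C_p$-representation with Hodge-Tate weight concentrated in degree $-(k+2)$. Interpolating through $V$ by a Sen-theoretic argument in families forces the $G_K$-action on $K_V^{(h)}$ to take the shape $\chi_V^{\rm univ}\cdot\chi$ tensored with a $G_K$-trivial $(B_V\ho_K\C_p)$-module, and descent from $B_V\ho_K\C_p$ to $B_V$ produces $S_V^{(h)}$ as a finite free $B_V$-module of rank equal to the dimension of the family of overconvergent cusp forms of slope $\le h$. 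Splitting the resulting sequence is then equivalent to the vanishing of
$$
\rH^1_{\cont}\Bigl(G_K,\,\Hom_{B_V\ho_K\C_p}\!\bigl(\rH^0(X(w),\omega_w^{\dagger,k_V+2})^{(h)}\ho_K\C_p,\,S_V^{(h)}\ho_K\C_p(\chi_V^{\rm univ}\cdot\chi)\bigr)\Bigr),
$$
and Tate's computation of $\rH^\ast_{\cont}(G_K,\C_p(j))$ yields vanishing precisely when $\chi_V^{\rm univ}\cdot\chi$ is non-trivial on an open subgroup. The finite set $Z$ consists of those weights in $U(\C_p)$ at which this fails, so a unique splitting exists over any $V$ disjoint from $Z$, proving (b). Part (a) is the specialization of (b) at $k\in V(K)-Z$ via flat base change of the slope decomposition, and (c) is the base change of the kernel of a surjection of finite free $B_V$-modules, which is automatic.

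The main obstacle I expect is the second step: establishing finite freeness, not merely finite generation, of $K_V^{(h)}$ as a $B_V\ho_K\C_p$-module and exhibiting the correct twist structure on the Galois action in families. Finite generation follows from standard slope-decomposition arguments and the classical control theorems, but finite freeness is essential both for the Ext-vanishing argument to produce a splitting over all of $V$ (not just at individual weights) and for the clean base-change statement in part (c); it crucially exploits the rigid-analytic structure of $B_V$ over a wide open disk rather than an affinoid, a design choice emphasized in the introduction. The Sen-theoretic identification of the twist character in families is similarly delicate and presumably rests on a family version of the Hodge-Tate comparison underlying the construction of $\Psi_U^{(h)}$ itself. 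Once these ingredients are secured, the Tate-cohomological splitting and the specialization arguments for parts (a) and (c) are comparatively formal.
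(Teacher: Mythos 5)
Your overall architecture coincides with the paper's: build the short exact sequence cut out by $\Psi_V^{(h)}$, identify the Galois action on the kernel via Sen theory in families by interpolating from classical specializations, split by Tate/Sen, and obtain (a) and (c) by specialization and base change of free modules. However, there are two genuine gaps.

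First, your surjectivity argument for $\Psi_V^{(h)}$ does not work as stated, and this is exactly where the set $Z$ comes from in the paper. Surjectivity of the specializations $\Psi_k^{(h)}$ at a Zariski-dense set of classical weights $k$ with $k+1>h$ does \emph{not} imply surjectivity of the map of finite free $(B_V\ho_K\C_p)$-modules: the cokernel can be a torsion module supported at finitely many non-classical points (compare multiplication by $t_{k_1}$ on $B_V$, which is surjective at every specialization except $k_1$). The paper's argument is: surjectivity at \emph{one} classical weight $k$ shows that some maximal minor $Q$ of the matrix of $\Psi_U^{(h)}$ has $\det(Q)(k)\neq 0$, hence $b:=\det(Q)\neq 0$ annihilates the cokernel; $Z$ is then defined to contain the zero locus of $b$, and only over $V$ with $V(\C_p)\cap Z=\emptyset$ is $b$ a unit and $\Psi_V^{(h)}$ surjective. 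Your $Z$ consists only of the weights where the Tate-type vanishing of $\rH^1_{\cont}(G_K,\cdot)$ fails (the analogue of $s=-1$); it must also contain the support of ${\rm Coker}(\Psi_U^{(h)})$, or the exact sequence you start from need not exist over $V$. Relatedly, for the splitting in the family version (b) the paper does not use Tate's theorem directly (which applies to $\C_p(j)$ for a fixed integer $j$, not to a twist by the varying character $\chi_V^{\rm univ}\cdot\chi$); it uses Sen's result that $\det(\phi)$ of the Sen operator of $\cH=\Hom\bigl(\rH^0(X(w),\omega_w^{\dagger,k_V+2})^{(h)}\ho\C_p, S_V(\chi\cdot\chi_V^{\rm univ})\bigr)$ annihilates $\rH^1(G_K,\cH)$, and obtains the family splitting only after localizing at $\det(\phi)$; Tate's theorem is what gives the unconditional pointwise splitting in (a) at weights outside $Z$.

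Second, you do not address the last assertion of part (a): that the characteristic polynomial of $T_\ell$ on $S_{k+2}^{(h)}$ equals that on $\rH^0\bigl(X(w),\omega_w^{\dagger,k}\otimes\Omega^1_{X(w)/K}\bigr)^{(h)}$. Your remark that the rank of $S_V$ equals the dimension of the space of cusp forms is not a proof of this. The paper proves it by identifying $S_V^{(h)}/t_kS_V^{(h)}$ at infinitely many classical $k>h-1$ with $\rH^1\bigl(X(N,p),\omega^{-k}\bigr)^{(h)}$, invoking Serre duality with $\rH^0\bigl(X(N,p),\omega^{k}\otimes\Omega^1\bigr)^{(h)}$ to equate characteristic polynomials there, and then concluding the equality of the two polynomials in $B_V[T]$ because they agree at infinitely many points. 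This step needs to be supplied.
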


The theorem above has as immediate consequence the following geometric interpretation of the global
Galois representations attached to  generic overconvergent cuspidal eigenforms of finite slope. Let $U,h,Z$ be as in theorem
\ref{thm:muhintro} and $k\in U(K)-Z$. Let $f$ be an overconvergent cuspidal eigenform of weight $k$ and slope $\le h$
and let us denote by $K_f$ the finite extension of $K$ generated by the $f$-eigenvalues of $T_\ell$, for all $\ell$
not dividing $Np$.

\begin{theorem}
The $G_\Q$-representation $\rH^1\bigl(\Gamma, D_k(1)\bigr)^{(h)}_f$ is a two dimensional $K_f$-vector space
and it is isomorphic to the $G_\Q$-representation attached to $f$ by the theory of pseudo-representations.
\end{theorem}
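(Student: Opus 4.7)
The plan is to apply Theorem~\ref{thm:muhintro}(a) at the weight $k\in U(K)-Z$, localize its Eichler-Shimura decomposition at the maximal ideal of the Hecke algebra cut out by $f$, and identify the resulting two-dimensional $K_f$-representation with the Galois representation $\rho_f$ attached to $f$ by comparing Frobenius traces via the Eichler-Shimura congruence relations and the theory of pseudo-representations.

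The first step is a dimension count on the right-hand side of the Eichler-Shimura decomposition. Since $f$ is a cuspidal Hecke eigenform whose $T_\ell$-eigenvalues ($\ell\nmid Np$) generate $K_f$, its isotypic component inside $\rH^0\bigl(X(w),\omega_w^{\dagger,k+2}\bigr)^{(h)}\otimes_K K_f$ is one-dimensional over $K_f$. By the last assertion of Theorem~\ref{thm:muhintro}(a), the characteristic polynomials of the $T_\ell$ on $S_{k+2}^{(h)}$ coincide with those on the space of slope $\le h$ overconvergent cusp forms of weight $k+2$, so the $f$-isotypic component of $S_{k+2}^{(h)}$ is also one-dimensional over $K_f$. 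Taking $f$-parts in the decomposition of Theorem~\ref{thm:muhintro}(a) and tensoring with $\C_p$ then produces a $G_K$-equivariant isomorphism
$$
\rH^1\bigl(\Gamma,D_k(1)\bigr)^{(h)}_f\otimes_{K_f}\bigl(K_f\ho_K\C_p\bigr)\cong\bigl(K_f\ho_K\C_p\bigr)\oplus\bigl(K_f\ho_K\C_p(k+1)\bigr),
$$
which forces $\dim_{K_f}\rH^1(\Gamma,D_k(1))^{(h)}_f=2$ and pins down its Hodge-Tate weights, matching those of $\rho_f$.

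The second step is to identify the full $G_\Q$-action. Using the identification sketched in the introduction of $\rH^1(\Gamma,D_k)^{(h)}$ with the slope $\le h$ part of the étale cohomology of the ind-continuous sheaf $\{D^o_U/\Fil^i(D^o_U)\}_i$ on $Y_{\overline{\Q}}$, the Eichler-Shimura congruence relations at primes $\ell\nmid Np$ yield
$$
\det\bigl(X-\mathrm{Frob}_\ell\mid\rH^1(\Gamma,D_k(1))^{(h)}_f\bigr)=X^2-a_\ell(f)\,X+\chi(\ell)\,\ell^{k+1},
$$
with $\chi$ the nebentypus of $f$. By Chebotarev the pseudo-representation carried by $\rH^1(\Gamma,D_k(1))^{(h)}_f$ therefore coincides with that of $\rho_f$. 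Since $\rho_f$ is absolutely irreducible for $f$ outside the finite set $Z$ (obtainable by interpolating from the classical integer weight $k_0\in U(K)$ with $k_0>h-1$ via the family decomposition of Theorem~\ref{thm:muhintro}(b)), and since a two-dimensional irreducible representation is determined up to isomorphism by its pseudo-representation, we conclude $\rH^1(\Gamma,D_k(1))^{(h)}_f\cong\rho_f$ as $K_f[G_\Q]$-modules.

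The hard step, I expect, will be verifying that the Eichler-Shimura congruence relation really survives the passage to the slope $\le h$ part of étale cohomology with the ind-continuous coefficient system: one must check that the Hecke operator $T_\ell$ acting on the overconvergent distributions $D_k$ corresponds, under the identification with étale cohomology, to the geometric correspondence $T_\ell$ that governs the classical congruence, and that this compatibility is preserved in the limit over the filtration $\{\Fil^i(D^o_U)\}_i$. Once this compatibility is in hand, the argument above goes through essentially formally.
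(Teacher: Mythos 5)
Your overall shape (dimension count from the overconvergent Eichler--Shimura decomposition, then Frobenius/Hecke comparison plus pseudo-representations) matches the paper's, and the dimension count itself is essentially the paper's argument: it deduces that $W_f\otimes_K\C_p$ is free of rank $2$ over $K_f\otimes_K\C_p$ from corollary \ref{cor:consequences} together with lemma \ref{lemma:heckesv}. But the step you defer as ``the hard step'' is a genuine gap, and the route you sketch for closing it --- verifying the Eichler--Shimura congruence relation directly on the slope $\le h$ part of \'etale cohomology with coefficients in the ind-continuous sheaf attached to $D_k$ --- is not viable as stated: for non-integral $k$ there is no $\Symm^k$ of the Tate module and no mod-$\ell$ geometric argument available for $\cD_k$, so there is nothing to ``check compatibility'' against at the weight $k$ itself. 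The paper avoids this entirely by an interpolation argument in the family (theorem \ref{thm:global1}): $\rH^1_p\bigl(\Gamma, D_U(1)\bigr)^{(h)}$ is a finite free $B_U$-module carrying commuting actions of $\varphi_\ell$ and $T_\ell$, whose characteristic polynomials have coefficients in $B_U$; at the infinitely many classical specializations $k\in U\cap\Z$ with $k>h+1$ the required relation is Deligne's theorem, and a rigid function on $U$ vanishing at infinitely many points vanishes identically. One then specializes at the given non-classical $k$. Note also that the relevant module is the parabolic cohomology $\rH^1_p$ (one first uses that $f$ is cuspidal to identify $W_f$ with the $f$-part of $\rH^1_p$), since that is where the classical congruence relation lives.

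A secondary issue: your claim that $\rho_f$ is absolutely irreducible for $k\notin Z$ is unsupported --- $Z$ is defined as the zero locus of a determinant controlling the surjectivity of $\Psi_U^{(h)}$ and has nothing to do with reducibility, and irreducibility is not a property one can propagate to non-classical points by interpolation from $k_0$ (reducible, e.g.\ critical Eisenstein, points do occur in families). Since the theorem only asserts an isomorphism with the representation attached to $f$ \emph{by the theory of pseudo-representations}, what is actually needed (and all that the equality of characteristic polynomials of $\varphi_\ell$ and of the Hecke action delivers, via Chebotarev) is equality of pseudo-representations on the two-dimensional space $W_f$; you should phrase the conclusion at that level rather than invoking irreducibility.
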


\bigskip
\noindent
The main difficulty in proving these theorems is the definition of the  geometric  map  $\Psi_U^{(h)}$
having all the required properties. We see it as a
map comparing a $p$-adic \'etale cohomology group, $\rH^1\bigl(\Gamma, D_U\bigr)(1)$ with a differential object,
namely $\rH^0\bigl(X(w), \omega_w^{\dagger, k_U+2}\bigr)$. We obtain it as a Hodge-Tate comparison map except that
on the one hand the \'etale cohomology group is global (on $X$) while the differential object only lives on the affinoid
$X(w)$. Moreover, to make things worse the \'etale sheaf associated to the $\Gamma$-representation
$D_U$ is {\bf not} a Hodge-Tate sheaf.

Let us explain the main new ideas in this article. We denote by $\fX(N,p)$ and $\fX(w)$ the logarithmic  Faltings' sites associated to
certain log formal models of $X$ and respectivel $X(w)$. There is a continuous functor $\nu: \fX(N,p)\lra \fX(w)$
which allows to move sheaves from one site to another.

Let $\cD_U$ denote the ind-continuous \'etale sheaf associated to $D_U$, it can be seen as sheaf on $\fX(N,p)$, then $\nu^\ast(\cD_U)$
is a sheaf on $\fX(w)$. At this point  something remarkable happens, namely using the Hodge-Tate sequence one can construct a natural
$\widehat{\cO}_{\fX(w)}[1/p]$-linear morphism of shaves on $\fX(w)$:
 $$
\delta^\vee_k(w)\colon \nu^\ast(\cD_U)\hat{\otimes}\widehat{\cO}_{\fX(w)}\lra  \omega_w^{\dagger, k_U+2}\ho\widehat{\cO}_{\fX(w)}.
$$

This fact allows us to define the map $\Psi_U^{(h)}$ as the composition
$$
\rH^1\bigl(\Gamma, D_U\bigr)\ho_K\C_p(1)\lra \rH^1\bigl(\fX(N,p), \cD_U\ho\widehat{\cO}_{\fX(N,p)}(1)\bigr)\lra
\rH^1\bigl(\fX(w), \nu^\ast(\cD_U)\ho\widehat{\cO}_{\fX(w)}(1)\bigr)\lra
$$
$$
\lra \rH^1\bigl(\fX(w), \omega_w^{\dagger, k_U}\ho\widehat{\cO}_{\fX(w)}(1)\bigr)\lra \rH^0\bigl(X(w), \omega^{\dagger, k_U+2}_w\bigr)\ho\C_p.
$$

 The theory developed in \cite{andreatta_iovita_stevens} plays a crucial role in the definition of the maps
 $\delta^\vee_k(w)$ . It was in fact the search for such  maps which lead us to discover  the modular sheaves $\omega_w^{\dagger, k}$.

Our finding was that  the theory of the canonical subgroup can be used in order to provide a new integral structure on the sheaf of invariant differentials of the universal generalized elliptic curve making the Hodge-Tate sequence exact integrally (namely without inverting $p$!). With this accomplished,  a definition \`a la Katz  provides the sought for sheaves $\omega_w^{\dagger, k}$ for any $k$ and the maps $\delta^\vee_k(w)$. We believe that this application to the problem of making the Hodge-Tate sequence integrally  exact constitutes the essence of the theory of the canonical subgroup.
 This intimate relation with the existence of the canonical subgroup  should justify the fact that $\delta^\vee_k(w)$ can be defined {\bf only} over $\fX(w)$ and not over the whole $\fX(N,p)$.

\

We believe that the ideas and techniques presented here could be applied without much change  in other settings in
order to prove overconvergent Eichler-Shimura isomorphisms (for example for Shimura curves, for Hilbert modular varieties etc.).
We realized that a very convenient concept to use in order to define
Hecke operators on Faltings' cohomology groups is that of localized (or induced) topos.
We made a careful study of various localized logarithmic Faltings' sites and showed that trace maps can be defined (see
section \S 2.4).
This allows us to
work in situations where we do not have explicit descriptions of good  integral models of the curves involved (for example $X_1(Np^r)$
for $r>1$) and would allow extensions of these results to higher dimensional Shimura varieties.

Let us finally remark that it would be possible to give geometric interpretations both of the Hecke modules $S_V^{(h)}$ and of the maps
$\rH^1\bigl(\Gamma, D_V\bigr)^{(h)}\ho\C_p(1)\lra S_V^{(h)}\ho\C_p(\chi_V^{\rm univ}\cdot\chi)$ appearing in theorem
\ref{thm:muhintro} and we propose to write these in a future article.

\bigskip
\noindent
{\bf Notations} In what follows we will denote by calligraphic letters $\cX,\cY,\cZ,\ldots$ log formal schemes over $\cO_K$ and by $\ucX$, $\ucY$, $\ucZ$
respectively the formal schemes underlying $\cX$ respectively $\cY$, respectively $\cZ$. We will denote by $X$, $Y$, $Z$, $\ldots$ respectively the log rigid
analytic generic fibers of $\cX$, $\cY$, $\cZ$, $\ldots$ and by $\uX$, $\uY$, $\uZ$, $\ldots$ respectively the underlying rigid spaces.

\section{Faltings' topoi}

\subsection{The geometric set-up.}
\label{sec:setup}

Let $p\ge 3$ be a prime integer, $K$ a complete discrete valuation field of characteristic $0$ and perfect residue field $\F$ of characteristic $p$ and
$N\ge 3$ a positive integer not divisible by $p$. We fix once for all
an algebraic closure $\Kbar$ of $K$ and an embedding $\overline{\Q}\hookrightarrow
\Kbar$, where $\overline{\Q}$ is the algebraic closure of $\Q$ in $\C$.
We denote by $\C_p$ the completion of $\Kbar$ and by $G_K$ the Galois group
of $\Kbar$ over $K$. We denote by $v$ the valuation on $\C_p$, normalized such that
$v(p)=1$.

Now we'd like to recall the basic geometric set-up from \cite{andreatta_iovita_stevens}.
Let $w\in \Q$ be such that $0\le w\le p/(p+1)$ and let us suppose that there is an
element (which will be denoted $p^v$) in $K$ whose valuation is $v:=w/(p-1)$.
We fix an integer $r\ge 1$ and we suppose that $w<2/(p^r-1)$ if $p>3$
 and $w<1/3^r$ if $p=3$.

We consider the following tower of rigid analytic modular curves over $K$
(in this section there are no log structures):
$$
X_1(Np^r)\lra X(N,p^r)\lra X_1(N),
$$
where $X_1(Np^r)$, respectively $X_1(N)$ classify generalized elliptic curves with $\Gamma_1(Np^r)$
respectively $\Gamma_1(N)$)-level
structure, while $X(N,p^r)$ classifies
generalized elliptic curves with $\Gamma_1(N)\cap\Gamma_0(p^r)$-level structure.
The morphism $X(N,p^r)\lra X_1(N)$ is the one which forgets the $\Gamma_0(p^r)$-level
structure.

We denote by ${\rm Ha}$ a lift of the Hasse invariant (for example ${\rm Ha}=E_{p-1}$, the normalized Eisenstein series of level $1$
 and weight $p-1$, if $p>3$)
which we view as a modular form on $X_1(N)$. We define the rigid analytic space
$$
X(w):=\{x\in X_1(N)\quad |\quad |{\rm Ha}(x)|\ge p^{-w}\}\subset X_1(N),
$$
and remark that the morphism $X(N,p^r)\lra X_1(N)$ has a canonical section
over $X(w)$ whose image we also denote by $X(w)$). We define
$X(p^r)(w):=X_1(Np^r)\times_{X_1(N,p^r)}X(w)$ and view $X(w)$ (respectively $X(p^r)(w)$) as a
connected affinoid subdomain of $X_1(N)$ and via the above mentioned section of $X(N,p^r)$ (respectively
of $X_1(Np^r)$).

We denote by $\cX_1(N)$, $\cX(N,p^r)$ and $\cX_1(Np^r)$ the $p$-adic formal schemes over $\cO_K$ obtained by
completing the proper schemes over $\cO_K$ classifying generalized elliptic curves with
$\Gamma_1(N)$, respectively $\Gamma_1(N)\cap\Gamma_0(p^r)$, respectively $\Gamma_1(Np^r)$-level
structures along, respectively, their special fibers.
Let $\cX(w)$ denote the open formal sub-scheme of the formal blow-up
of $\cX_1(N)$ defined by the ideal sheaf of $\cO_{\cX_1(N)}$ generated by
the sections $p^w$ and ${\rm Ha}(\cE/\cX_1(N),\omega)$ which is the complement
of the section at $\infty$ of the exceptional divisor of the blowing-up. Here
$\cE\lra \cX_1(N)$ is the universal generalized elliptic curve and $\omega$ is
a global invariant $1$-differential form of $\cE$ over $\cX_1(N)$.
Finally we let $\cX(p^r)(w)$ denote the normalization of $\cX(w)$
in $X_1(Np)(w)$ (see \S 3 of \cite{andreatta_iovita_stevens} for more details).

Let us remark that we have constructed a natural commutative diagram
of formal schemes, rigid analytic spaces and morphisms which is our basic geometric setup:
$$
\begin{array}{ccccccccc}
\cX(p^r)(w)&\lra&\cX(w)&=&\cX(w)\\
u\uparrow&&u\uparrow&&u\uparrow\\
X(p^r)(w)&\lra&X(w)&=&X(w)\\
\cap&&\cap&&\cap\\
X_1(Np^r)&\lra&X(N,p^r)&\lra&X_1(N)
\end{array}
$$
In the above diagram $u$ denotes the various specialization (or reduction) morphisms.

Finally, we have the following basic commutative diagram of formal schemes and rigid spaces:
$$
\begin{array}{cccccccccc}
&\cX(w)&\stackrel{\nu}{\lra}&\cX(N,p)\\
(\ast)&u\uparrow&&u\uparrow\\
&X(w)&\subset&X(N,p)
\end{array}
$$

\subsection{Log Structures}
\label{sec:logstr}

In this section we will describe log structures on the formal schemes and rigid spaces appearing in
the commutative diagram $(\ast)$ in section \S \ref{sec:setup}.

Let us now fix $N$, $r$ and $w$ as above and denote by $\ucX(w), \ucX(N,p)$ the formal schemes denoted $\cX(w), \cX(N,p)$
in section \ref{sec:setup}. We denote by $\pi$
a fixed uniformizer of $K$.
By its definition, if $\cU=\Spf(R_U)\hookrightarrow \ucX(w)$ is an affine open then
$\cU$ is either smooth over $\cO_K$ or if $\cU$ contains a supersingualr point,
then there is $a\in \N$, which depends only on $K$ and $w$,
and a formally \'etale morphism $\cU\lra \Spf(R')$, where
$R':=\cO_K\{X,Y\}/(XY-\pi^a)$.

Let us consider on $\uS:=\Spf(\cO_K)$ the log structure $M$ given by the closed point
and let us denote by $S:=(\uS,M)$ the associated log formal scheme. Let us recall that it
has a local chart given by $\N\lra \cO_K$ sening $1\rightarrow \pi$.

There exists a fine and saturated log structure $N_\cX$ on $\ucX(w)$, with a morphism of
log formal schemes $f: \cX(w):=(\ucX(w),N)\lra S=(\uS,M)$ which can be described locally as follows.
Let $\cU=\Spf(R)$ be an open affine of $\ucX$ as above, then

i) if $\cU$ is smooth over $S$ let us denote by
$\Sigma_\cU$ the divisor of cusps of $\cU$. Then $N|_{\cU}$ is the log structure
associated to the divisor $\Sigma_\cU$. It has a local chart of the form
$\N\lra R$ sending $1$ to a uniformizer at all the cusps.

ii) if $\cU$ is not smooth over $\uS$ we'll suppose that it does not contain cusps,
let $R'$ be as above and let us denote
$\Psi_R:R'\lra R_\cU$ the \'etale morphism of $\cO_K$-algebras defined above.

Let us consider the following commutative diagram of monoids and morphisms
of monoids (see \cite{andreatta_iovita3}, \S 2.1).
$$
\begin{array}{rrrrrr}
\N^2&\stackrel{\psi_R}{\lra}&R'\\
\Delta\uparrow&&\uparrow\\
\N&\stackrel{\psi_a}{\lra}&\cO_K
\end{array}
$$
where $\psi_R(m,n)=X^mY^n$, $\psi_a(n)=\pi^{an}$ and $\Delta(n)=(n,n)$
for all $n,m\in \N$.

Then the above diagram induces a natural isomorphism of $\cO_K$-algebras
$$
R'\cong \cO_K\{\N^2\}\otimes_{\cO_K\{\N\}}\cO_K.
$$
Let $P$ denote the amalgamated sum (or co-fibered product),
$\displaystyle P:=\N^2\oplus_{\N}\N$ associated to the diagram of monoids
$$
\begin{array}{ccccccccc}
&&\N\\
&&\uparrow\\
\N^2&\stackrel{\Delta}{\longleftarrow}&\N
\end{array}
$$
where the vertical morphism sends $n\rightarrow an$. By functoriality
we obtain a canonical morphism of monoids
$\displaystyle P\lra R'\stackrel{\Psi_R}{\lra} R_\cU$ which
defines a local chart of $\cX$, i.e. $N_{\cX}|_{\cU}$ is the log structure
associated to the pre log structure $P\lra R_\cU$. Let us consider the natural diagram of
monoids which defines $P$ as the amalgamated sum $\N^2\oplus_\N\N$

$$
\begin{array}{ccccccccc}
P&\stackrel{h}{\longleftarrow}&\N\\
\uparrow&&\uparrow\\
\N^2&\stackrel{\Delta}{\longleftarrow}&\N
\end{array}
$$

The morphism $h:\N\lra P$ defined by  the above diagram is a local chart of the morphism
$f:\cX\lra (S,M)$.

\begin{lemma}
\label{lemma:logsmooth}
The morphism $f:\cX(w)\lra S$ is log smooth.
\end{lemma}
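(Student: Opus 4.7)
The plan is to apply Kato's criterion: the morphism $f\colon \cX(w) \to S$ is log smooth if and only if, \'etale-locally on $\ucX(w)$, there exist fine monoid charts $Q \to \cO_K$ for $M$ and $P \to R$ for $N|_\cU$ together with a chart morphism $Q \to P$ such that (a) the kernel and the torsion of the cokernel of $Q^{\rm gp} \to P^{\rm gp}$ are finite groups of order invertible in $\cO_K$, and (b) the induced morphism $\Spf(R) \to \Spf(\cO_K) \times_{\Spec \Z[Q]} \Spec \Z[P]$ is formally smooth in the classical sense. In every case below I take $Q = \N$ with chart map $1 \mapsto \pi$, so it remains to produce a chart on the source at each of the two local situations described in the excerpt and verify (a)--(b).

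In the smooth case (i), on an affine $\cU = \Spf(R)$ which is either formally \'etale over $\Spf(\cO_K)$ away from cusps, or \'etale-locally presentable as $\Spf(\cO_K\{T\})$ with the cuspidal divisor $\Sigma_\cU$ cut out by $T = 0$, I use the chart $P = \N \to R$, $1 \mapsto \pi$ in the former case, and $P = \N^2 \to R$, $(m,n) \mapsto T^m \pi^n$ in the latter, together with the chart morphism $Q \to P$ sending $1 \mapsto 1$, respectively $1 \mapsto (0,1)$. In either case $Q^{\rm gp} \to P^{\rm gp}$ is a split inclusion of free abelian groups, so (a) holds trivially, while (b) is just the formal \'etaleness of the given local presentation.

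In the supersingular case (ii), I use the chart $P := \N^2 \oplus_\N \N \to R$ of the excerpt together with the chart morphism $h\colon Q = \N \to P$ it provides, and then compute $P^{\rm gp}$. Writing $e_1, e_2$ for the standard basis of the first $\Z^2$ and $e_3$ for the generator of the right-hand $\Z$, the pushout relation $e_1 + e_2 = a\,e_3$ lets us eliminate $e_2$ and yields $P^{\rm gp} \cong \Z\langle e_1, e_3\rangle \cong \Z^2$. Since $h^{\rm gp}(1) = e_3$, the kernel of $h^{\rm gp}$ is trivial and the cokernel is the torsion-free group $\Z\langle e_1\rangle$, so (a) is verified. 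For (b) the base change $\Spec \cO_K \times_{\Spec \Z[\N]} \Spec \Z[P]$ is canonically identified with $\Spec \cO_K[X,Y]/(XY - \pi^a)$, whose $p$-adic completion is $\Spf(R')$; by hypothesis $\cU \to \Spf(R')$ is formally \'etale, so (b) holds.

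The one subtle point is to choose the chart in case (i) so that the pullback $f^{-1}M \to \cO_\cU$ actually lands in $N|_\cU$: since $\pi$ is not invertible on $\cU$, we are forced to incorporate the special fibre into the chart on the source, hence the extra factor of $\N$ tracking $\pi$ even away from cusps. Once this is done, Kato's criterion is verified mechanically in (i), and in (ii) the only content of the lemma is that the amalgamated-sum monoid $P$ faithfully encodes the relation $XY = \pi^a$ at the monoid level, after which (a)--(b) follow from the direct computation of $P^{\rm gp}$ above.
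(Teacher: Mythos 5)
Your proof is correct and follows essentially the same route as the paper: Kato's chart criterion applied to the chart $h\colon \N\to P=\N^2\oplus_{\N}\N$, reducing everything to the injectivity of $h^{\rm gp}$ and the torsion-freeness of $P^{\rm gp}/h^{\rm gp}(\Z)$, with the base change $\cO_K\otimes_{\Z[\N]}\Z[P]\cong \cO_K[X,Y]/(XY-\pi^a)$ and the \'etaleness of $\Psi_R$ supplying the remaining condition. The only real difference is that you compute $P^{\rm gp}\cong\Z^2$ directly from the pushout presentation $e_1+e_2=a\,e_3$, whereas the paper first identifies $P$ with the explicit submonoid $\frac{1}{a}\Delta(\N)+\N^2\subset\Q^2$ --- a step that also verifies that $P$ is integral (indeed fine and saturated), a hypothesis of the chart criterion that your groupification computation leaves implicit.
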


\begin{proof}
Given the local description of $\cX(w)=(\ucX(w),N_\cX)$, $S=(\uS,M)$ and $f$ in terms of charts, it
is enough to consider
the case ii) above, i.e. we have a local chart $\displaystyle P\lra R'\stackrel{\Psi_R}{\lra}R_\cU$
and $\Psi_R$ is \'etale. By the description in \cite{kato1} \S 5 of log smooth morphisms,
it is enough the show that the morphism $h$ is injective and that the order of torsion of the group
$P^{\rm gp}/h(\N^{\rm gp})$ is invertible in $R_\cU$. For this it would be useful
to have an explicit description of $P$ as amalgamated sum  of monoids (see also
\cite{andreatta_iovita3} \S 2.1).

Let us define the sequence of monoids
$$ \frac{1}{a}\Delta(\N)+N^2\subset \frac{1}{a}\N^2\subset \Q^2
$$
as: $\displaystyle \frac{1}{a}\N^2$ is the (additive) submonoid of $\Q^2$ of
pairs of rational numbers $\displaystyle \bigl(\frac{n}{a}, \frac{m}{a}\bigr)$ with
$n,m\in \N$ and $\displaystyle
\frac{1}{a}\Delta(\N)+\N^2$ is its submonoid of pairs of rational numbers of the form
$\displaystyle \bigl(\frac{n}{a}+\alpha, \frac{n}{a}+\beta\bigr)$, where
$n,\alpha,\beta\in \N$.

We have natural morphisms of monoids
$\displaystyle \N^2\lra \frac{1}{a}\Delta(\N)+\N^2$ sending
$(\alpha,\beta)\rightarrow (\alpha,\beta)$ and $\displaystyle h':\N\lra   \frac{1}{a}\Delta(\N)+\N^2$
given by $\displaystyle n\rightarrow \bigl(\frac{n}{a}, \frac{n}{a}\bigr) $
such that the diagram is commutative

$$
\begin{array}{ccccccccc}
\frac{1}{a}\Delta+\N^2&\stackrel{h'}{\longleftarrow}&\N\\
\uparrow&&\uparrow\\
\N^2&\stackrel{\Delta}{\longleftarrow}&\N
\end{array}
$$

It is then easy to see that $\displaystyle P=\N^2\oplus_{\N}\N\cong \frac{1}{a}\Delta(\N)+\N^2$
by verifying that the latter monoid satisfies the universal properties of
the co-fibered product.
It follows that the above chart on $R'$ is explicitly given by $\displaystyle \frac{1}{a}\Delta(\N)+\N^2\lra R'$
where $\displaystyle \bigl(\frac{n}{a}+\alpha, \frac{n}{a}+\beta\bigr)\lra X^\alpha Y^\beta\pi^n$.
Of course one has to first verify that the association is well defined, which it is. One sees immediately that
the monoid $P$ is fine and saturated (as claimed at the beginning of this section) and moreover
that the morphism $h:\N\lra P$ is under the identifications between $P$ and
$\displaystyle \frac{1}{a}\Delta(\N)+\N^2$
equal $h'$, therefore it is injective and moreover it follows that
the quotient group $P^{\rm gp}/h(\Z)$ is torsion free.
This proves the lemma.
\end{proof}

\bigskip
\noindent
Recall that in the previous section we defined a morphism of formal schemes
$\cX(p^r)(w)\lra \cX(w)$.
We denote by $\ucX^{(r)}(w):=\cX(p^r)(w)$, we let also $N_r\lra \cO_{\ucX^{(r)}(w)}$ denote the
inverse image log structure via the above morphism and denote by
$\cX^{(r)}(w):=\bigl(\ucX^{(r)}(w), N_r\bigr)$ the associated log formal scheme. We also denote by
$\uX(w):=\cX(w)^{\rm rig}=X(w), \uX^{(r)}(w):=\cX(p^r)(w)^{\rm rig}=X(p^r)(w)$
the rigid analytic  generic fibers of the two formal schemes. We recall that $\uX^{(r)}(w)\lra \uX(w)$
is a finite, \'etale,
Galois  morphism with Galois group $G_r:=(\Z/p^r\Z)^\times$ and
as $\ucX^{(r)}(w)$ is the normalization of $\ucX(w)$ in $\uX^{(r)}(w)$,
we have that $G_r$ acts without fixed points on $\ucX^{(r)}(w)$ and $\ucX^{(r)}(w)/G_r\cong\ucX(w)$.
 In what follows we denote by
$X(w)$ and $X^{(r)}(w)$ the log rigid analytic generic fibers
of the log formal schemes $\cX(w)$ and respectively $\cX^{(r)}(w)$.
Their log structures are the horizontal ones defined by the divisors of cusps.

Moreover, as the formal scheme $\ucX(N,p)$ is semistable, its special fiber is a divisor with normal crossings.
We define the log structure on $\ucX(N,p)$ to be the one associated to the divisor consisting in the union of
the special fiber and the divisor of cusps and denote by $\cX(N,p)$ the corresponding log formal scheme.
Moreover we define on $\uX(N,p)$ the log structure associated to the cusps of this modular curve and by
$X(N,p)$ the corresponding log rigid space. Let us remark that the diagram $(\ast)$ of section
\S \ref{sec:setup}, written there for formal schemes and rigid spaces in fact  holds for log formal
scheme and log rigid spaces and it is commutative.

\begin{corollary}
\label{cor:normal}
The formal scheme $\ucX(w)$ is flat over $\Spf(\cO_K)$, it is Cohen-Macaulay and so
in particular normal. If $a=1$ then
$\ucX(w)$ is a regular formal scheme.
\end{corollary}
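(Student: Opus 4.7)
The plan is to reduce everything to the explicit local model $R'=\cO_K\{X,Y\}/(XY-\pi^a)$, since by the setup every affine open $\cU=\Spf(R)\hookrightarrow \ucX(w)$ either is smooth over $\cO_K$ (and there is nothing to check) or admits a formally étale map $\cU\to \Spf(R')$. Formally étale morphisms preserve flatness over the base, Cohen-Macaulayness, normality and regularity as properties of the source, so once the assertions are established for $R'$ they propagate to $\ucX(w)$.

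First I would check flatness of $R'$ over $\cO_K$. The element $f:=XY-\pi^a$ is a regular element in the regular ring $\cO_K\{X,Y\}$ (it is nonzero modulo $\pi$, where it becomes $XY$, which is itself a nonzero divisor in $\F\{X,Y\}$), so by the local criterion of flatness $R'$ is flat over $\cO_K$. Next, since $\cO_K\{X,Y\}$ is regular of dimension $3$ and $f$ is a regular element, the quotient $R'$ is a complete intersection of dimension $2$, hence Cohen-Macaulay. Both properties transfer through the étale maps $\cU\to \Spf(R')$, giving the flatness and Cohen-Macaulay assertions for $\ucX(w)$.

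For normality I would invoke Serre's criterion $R_1+S_2$. The $S_2$ condition is automatic from Cohen-Macaulayness, so it remains to verify $R_1$ for $R'$. The singular locus of $\Spec(R')$ is cut out by the Jacobian ideal $(Y,X,a\pi^{a-1})$ together with $f$; computing, one sees that it is empty if $a=1$ and otherwise consists of the single closed point $V(X,Y,\pi)$, which has codimension $2$ in the two-dimensional scheme $\Spec(R')$. Thus $R'$ is regular in codimension one, hence normal, and this transfers through the étale charts to $\ucX(w)$. Finally, when $a=1$ the relation $XY=\pi$ shows $\pi\in (X,Y)$ in $R'$, so the maximal ideal at the only potentially singular point is generated by the two elements $X,Y$, matching the dimension; hence $R'$ is regular at that point and therefore everywhere, and the regularity descends to $\ucX(w)$ via the étale charts.

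The only mildly delicate step is the last bookkeeping in step three: one must check that the singular locus really has codimension $\ge 2$, i.e.\ that the potential singular point exists and is closed (it is the image of the unique closed point of characteristic $p$ at the crossing), and that no generic point of an irreducible component is contained in it. Both verifications are immediate from the explicit shape of $R'$, so no serious obstacle is expected.
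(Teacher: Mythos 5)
Your proof is correct, but it takes a genuinely different route from the paper's. The paper disposes of this corollary with a citation: it invokes \cite{andreatta_iovita3} \S 2.1.1 (3), which in turn reduces the statement to Kato's general theorem on toric singularities (\cite{kato3}, Thm.~4.1) — i.e.\ the fact that the local charts of $\ucX(w)$ are, by the monoid description given in \S\ref{sec:logstr}, étale over affine toric schemes, and such schemes are automatically CM and normal. You instead verify everything by hand on the explicit hypersurface model $R'=\cO_K\{X,Y\}/(XY-\pi^a)$: flatness because $XY-\pi^a$ stays a nonzerodivisor modulo $\pi$ (so $(f,\pi)$ is a regular sequence in the Cohen--Macaulay ring $\cO_K\{X,Y\}$ and $\pi$ is a nonzerodivisor on $R'$, which over a DVR is all that flatness requires — a cleaner justification than "the local criterion of flatness"), Cohen--Macaulayness because $R'$ is a hypersurface in a three-dimensional regular ring, normality by Serre's $R_1+S_2$ with the singular locus being at worst the closed point $V(X,Y,\pi)$ of codimension $2$, and regularity for $a=1$ because then $\pi=XY$ forces $\fm=(X,Y)$. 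The one cosmetic caveat is your appeal to a "Jacobian ideal" containing $a\pi^{a-1}$: in mixed characteristic there is no derivation with respect to $\pi$, so the correct criterion is that the hypersurface $S/(f)$ is regular at $\fp$ iff $f\notin\fp^2 S_\fp$; this gives exactly your answer ($f\equiv -\pi \bmod \fp^2$ when $a=1$, and $f\in\fp^2$ at the origin when $a\ge 2$), so the conclusion stands. Your argument is more elementary and self-contained, exploiting that the local model here is just an $A_{a-1}$-type surface singularity; the paper's route via Kato is the one that scales to higher-dimensional toroidal charts, which matters for the generalizations to Shimura varieties the authors advertise in the introduction.
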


\begin{proof}
See \cite{andreatta_iovita3} \S 2.1.1 (3), where we show how to reduce to \cite{kato3}
Thm.~4.1.
\end{proof}

In particular, if $\cU=\Spf(R_\cU)\hookrightarrow \ucX$ is an affine open of $\ucX$,
then the $\cO_K$-algebra $R_\cU$ satisfies the assumptions (1), (2), (3) (FORM) and
(4) of section \S 2.1 of \cite{andreatta_iovita3}.

\subsection{Faltings' topoi}
\label{sec:topoi}

Our main reference for the constructions in this section is \cite{andreatta_iovita3}
section \S 1.2.

We will define Faltings' sites and topoi associated to the pairs of a log formal schemes and log rigid spaces: $\bigl(\cX(w), X(w)\bigr)$
and respectively $\bigl(\cX(N,p), X(N,p)\bigr)$ which will be denoted $\fX(w)$ and respectively $\fX(N,p)$.

We start by  writing $\bigl(\cX, X\bigr)$ for any one of the two pairs above. We'll define Faltings' site associated to this pair which we
denote  by $\fX$.  Namely we first let $\cX^{\rm ket}$
be the Kummer \'etale site of $\cX$, which is the full sub-category of the category of log schemes $\cT$, endowed with a Kummer log \'etale morphism $\cT\lra
\cX$ (see \cite{andreatta_iovita3} \S 1.2 or \cite{illusie2} section \S 2.1). We recall that the fiber product in this category is the fiber product of log
formal schemes in the category of fine and saturated log formal schemes so in particular the underlying formal scheme of the fiber product is not necessarily the
fiber product of the underlying formal schemes (see \cite{kato1}).

If $\cU$ is an object in $\cX^{\rm ket}$
then we denote by $\cU_{\Kbar}^{\rm fket}$ the finite Kummer \'etale site
attached to $\cU$ over $\Kbar$ as defined in \cite{andreatta_iovita3}
\S 1.2.2. An object in this site is a pair $(W, L)$
where $L$ is a finite extension of $K$ contained in $\Kbar$ and
$W$ is an object of the finite Kummer \'etale site of
$\cU_L$ which we denote by $\cU_L^{\rm fket}$. Given two objects
$(W', L')$ and $(W, L)$ of $\cU_{\Kbar}^{\rm fket}$, we define the morphisms
in the category as
$$
{\rm Hom}_{\cU_{\Kbar}^{\rm fket}}\bigl((W', L'), (W, L)\bigr):=
\lim_{\rightarrow}{\rm Hom}_{L''}\bigl(W'\times_{L'}L'',
W\otimes_LL''\bigr)
$$
where the limit is over the finite extensions $L''$ of $K$ contained in $\Kbar$
which contain both $L$ and $L'$.

Now, to define $\fX$ we denote by $E_{\cX_\Kbar}$ the category such that

i)  the objects are pairs $(\cU, W)$ such that $\cU\in \cX^{\rm ket}$ and
$W\in \cU_{\Kbar}^{\rm fket}$

ii) a morphism $(\cU',W')\lra (\cU,W)$ in $E_{\cX(w)_{\Kbar}}$ is a pair
$(\alpha, \beta)$, where $\alpha: \cU'\lra \cU$ is a morphism in $\cU^{\rm ket}$
and $\beta:W'\lra W\times_{\cU_K}\cU'_K$ is a morphism in $\bigl(\cU'\bigr)_{\Kbar}^{fket}$.

The pair $(\cX, X)$ is a final object in $E_{\cX_{\Kbar}}$ and moreover in this
category finite projective limits are representable and in particular fiber products exist
(see \cite{andreatta_iovita3} section \S 1.2.3 and \cite{erratum} proposition
2.6 for an explicit description of the fiber product).

\noindent
A family of morphisms $\{(\cU_i,W_i)\lra (\cU,W)\}_{i\in I}$
is a covering family if either

\noindent
($\alpha$) $\{\cU_i\lra \cU\}_{i\in I}$ is a covering family in $\cX^{\rm ket}$
and $W_i\cong W\times_{\cU_K}\cU{i,K}$ for every $i\in I$

or

\noindent
($\beta$) there exists $\cU$ in $\cX^{\rm ket}$ such that $\cU_i\cong \cU$ for all $i\in I$
and $\{W_i\lra W\}_{i\in I}$ is a covering in $\cU_{\Kbar}^{\rm fket}$.

We endow $E_{\cX_{\Kbar}}$ with the topology generated by the covering families
defined above and denote by $\fX$ the associated site.

Finally, the basic commutative diagram of log formal schemes and log rigid spaces
$$
\begin{array}{cccccccccc}
&\cX(w)&\stackrel{\nu}{\lra}&\cX(N,p)\\
(\ast)&u\uparrow&&u\uparrow\\
&X(w)&\subset&X(N,p)
\end{array}
$$
defines a natural functor $\nu:\fX(N,p)\lra \fX(w)$ defined, say on objects by:
$\nu(\cU,W):=\bigl(\cU\times_{\cX(N,p)}\cX(w), W\times_{X(N,p)}X(w)  \bigr)$.
This functor sends covering families to covering families and final objects to final objects therefore
it is a continuous functor of sites.

\begin{remark} Due to the mild singularities of the special fiber of the formal scheme
$\ucX(w)$, the site $\fX(w)$ and the sheaves on it were studied carefully and were well understood in \cite{andreatta_iovita3}. In the present paper however we
would need to study Faltings' site associated to $\cX^{(r)}(w)$, for various $r$'s. Unfortunately we do not understand well enough the geometry of $\ucX^{(r)}(w)$
to be able to work with this site directly, so instead we'll use a trick. Let us observe that $(\cX(w),X^{(r)}(w))$ is an object of $E_{\cX(w)_\Kbar}$, for all
$r\ge 1$ (while $(\cX^{(r)}(w),X^{(r)}(w))$ is not) so we define the induced (or localized) site $\fX^{(r)}(w):=\fX(w)_{/\bigl(\cX(w),X^{(r)}(w)\bigr)}$ and the
sheaves on it and this will be our substitute for Faltings' site attached to $\cX^{(r)}(w)$. Everything will be defined and explained in the next two sections.
\end{remark}

\subsection{Generalities on induced topoi}
\label{sec:induced}

In this section we'll recall some fundamental constructions and
results from \cite{SGA4}, Expos\'e IV, \S 5 (Topos induit), in the
restricted generality that we need.

Let $E$ denote a topos, namely the category of sheaves of sets on a site $S$, whose underlying category will be henceforth denoted $C$ and let $X$ be an object of
$C$. We denote by $C_{/X}$ the category of pairs $(Y,u)$ where $Y$ is an object of $C$ and $u\colon Y\lra X$ is a morphism in $C$. A morphism $(Y,u)\lra (Y',u')$ in
$C_{/X}$ is a morphism $\gamma:Y\lra Y'$ in $C$ such that $u'\circ \gamma=u$.

Let $\alpha_X\colon C_{/X}\lra C$ be the functor forgetting the morphism to $X$ i.e., for example, on objects it is defined by $\alpha_X(Y,u)=Y$. We endow the
category $C_{/X}$ with the topology induced from $C$ via $\alpha_X$ and denote the site thus obtained $S_{/X}$. We denote   by $E_{/X}$ the topos of sheaves on
$S_{/X}$ and call $S_{/X}$ and $E_{/X}$ the site and respectively the topos induced by $X$. We have natural functors $\alpha_X^\ast\colon E_{/X}\lra E$,
$\alpha_{X,\ast}\colon E\lra E_{/X}$ such that $\alpha_X^\ast$ is left adjoint to $\alpha_{X,\ast}$.

Suppose that $C$ has a final object $f$ and that it has fiber products. Then we have another functor $j_X\colon C\lra C_{/X}$ defined by $j_X(Z):=\bigl(X\times_fZ,
{\rm pr}_1\bigr)$, i.e. $j_X$ is the base change to $X$-functor. Then $j_X$ defines a continuous functor of sites $j_X\colon S\lra S_{/X}$ sending final object to
final object and so it defines a morphism of topoi $j_X^\ast\colon E\lra E_{/X}$ and $j_{X,\ast}\colon E_{/X}\lra E$. In particular,  $j_X^\ast$ is left adjoint to
$j_{X,\ast}$. Moreover by loc. cit. we have
$$
j_{X}^\ast(\cF)(Y,u)=\cF\bigl(Y)=\cF(\alpha_X(Y,u))=\alpha_{X,\ast}(\cF)(Y,u)
\mbox{ for every }\cF\in E, (Y,u)\in C_{/X}.
$$
Therefore we have a canonical isomorphism of functors $j_X^\ast\cong \alpha_{X,\ast}$ which implies that $j_X^\ast$ has a canonical left adjoint, namely
$\alpha_X^\ast$. This left adjoint of $j_X^\ast$ is denoted $j_{X,!}$ and we have an explicit description of it. Namely, for every $\cF\in E_{/X}$ we have
$j_{X,!}(\cF)=\alpha_X^\ast(\cF)$ is the sheaf associated to the presheaf on $C$ given by $\displaystyle Z\lra \lim_{\rightarrow}\cF(Y,u)$, where the limit is over
the category of triples  $(Y,u,v)$ where $(Y,u)$ is an object of $C_{/X}$ and $v\colon Z\lra Y$ is a morphism in $C$. As the limit is isomorphic to $\amalg_{g\in
Hom_C(Z,X)}\cF(Z,g)$  we conclude that $j_{X,!}(\cF)$ is the sheaf associated to the presheaf
$$
Z\lra \amalg_{g\in Hom_C(Z,X)}\cF(Z,g).
$$

\subsection{The site $\fX_{/(\cX, Z)}$}
\label{sec:fy}

Our main application of the theory in section \ref{sec:induced} is the following. Let us recall
that we denoted in section \ref{sec:topoi} by
$(\cX,X)$ any one of the two pairs $\bigl(\cX(w), X(w)\bigr)$ and $\bigl(\cX(N,p), X(N,p)\bigr)$ and
let $Z\lra X$ be a finite Kummer \'etale morphism of log rigid spaces, i.e. a morphism in
the category $\cX_\Kbar^{\rm fket}$.  Therefore the pair $(\cX, Z)$ is an object of $E_{\cX_\Kbar}$. We denote by $\bigl(E_{\cX_\Kbar}\bigr)_{/(\cX, Z)}$ the
induced category and by $\fZ:=\fX_{/(\cX, Z)}$ the associated induced site.

\bigskip
 As pointed out in section \ref{sec:induced} we have a functor
$\alpha:=\alpha_{(\cX,Z)}\colon \fZ\lra \fX$ and the associated adjoint functors $\alpha^\ast, \alpha_\ast$. As in the category  $E_{\cX_\Kbar}$ fiber products exist and
the category has a final element $(\cX, X)$, we also have a base change functor $j:=j_{(\cX, Z)}\colon\fX\lra \fZ$ defined by $j(\cU, W):=\bigl(\cU,
Z\times_{X}W, {\rm pr}_1\bigr)$. This functor commutes with fiber products, final elements and maps covering families to covering families so it it induces a
morphism of topoi $j^\ast\colon {\rm Sh}(\fX)\lra {\rm Sh}(\fZ)$ and $j_\ast \colon {\rm Sh}(\fZ)\lra {\rm Sh}(\fX)$ such that $j^\ast$ is left adjoint to
$j_\ast$. We further have a left adjoint $j_!\colon {\rm Sh}(\fZ)\lra {\rm Sh}(\fX)$ to $j^\ast$. More precisely, for every sheaf of abelian groups $\cF$ on
$\fZ$, the sheaf $j_!(\cF)$ on $\fX$ is the sheaf associated to the presheaf
$$
(\cU,W)\lra \oplus_{g\in Hom_{\cX(w)_\Kbar^{\rm fket}}(W,Z)}\cF(\cU,W,g).
$$

We have the following fundamental facts:

\begin{proposition}
\label{prop:left=right} For all $Z$ as above there is a natural isomorphism of functors $j_{(\cX, Z),!}\to j_{(\cX, Z),\ast}$.

\end{proposition}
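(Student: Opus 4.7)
The plan is to give an explicit formula for $j_{(\cX,Z),\ast}\cF$, find a basis of $\fX$ on which the presheaf defining $j_{(\cX,Z),!}\cF$ automatically coincides with $j_{(\cX,Z),\ast}\cF$, and then use sheafification to deduce the global isomorphism. Throughout I write $j$ for $j_{(\cX,Z)}$ and $j_{!}^{\mathrm{pre}}\cF$ for the presheaf $(\cU,W)\mapsto \bigoplus_{g\in\mathrm{Hom}(W,Z)}\cF(\cU,W,g)$ whose sheafification is $j_{!}\cF$.

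First I would compute $j_{\ast}\cF(\cU,W)$ explicitly. Given $(\cU,W)\in\fX$, by the universal property of the fibre product in $E_{\cX_\Kbar}$ a morphism $(\cU',W',g')\to (\cU,Z\times_X W,\mathrm{pr}_1)$ in $\fZ$ is the same datum as a morphism $(\cU',W')\to (\cU,W)$ in $\fX$: the given section $g'\colon W'\to Z$ automatically ensures compatibility with the projection $\mathrm{pr}_1\colon Z\times_X W\to Z$. Thus $j^{\ast}h_{(\cU,W)}$ is represented in $\fZ$ by $(\cU,Z\times_X W,\mathrm{pr}_1)$, and the adjunction gives
$$
j_{\ast}\cF(\cU,W)=\mathrm{Hom}_{\fZ}\!\bigl(j^{\ast}h_{(\cU,W)},\cF\bigr)=\cF(\cU,Z\times_X W,\mathrm{pr}_1).
$$

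Next I would produce a basis of $\fX$ trivialising $Z\to X$. Since $Z\to X$ is finite Kummer \'etale, for any $(\cU,W)\in\fX$ the single morphism $W':=Z\times_X W\to W$ is already a $(\beta)$-cover of $(\cU,W)$. On $W'$ itself one has $Z\times_X W'=(Z\times_X Z)\times_X W$, and after passing if necessary to a Kummer \'etale Galois closure of $Z\to X$ this splits canonically as $Z\times_X W'\cong \coprod_{\alpha=1}^{d}W'$, the corresponding sections $g_{\alpha}\colon W'\to Z$ exhausting $\mathrm{Hom}(W',Z)$. Thus the trivialising objects $(\cU,W')$ form a basis of $\fX$. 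On any such $(\cU,W')$, the sheaf axiom for $\cF$ applied to the disjoint-union cover $\{(\cU,W',g_{\alpha})\to (\cU,Z\times_X W',\mathrm{pr}_1)\}_{\alpha=1}^{d}$ in $\fZ$, together with the identification of finite direct sums with finite products for abelian sheaves, yields
$$
j_{\ast}\cF(\cU,W')=\cF(\cU,Z\times_X W',\mathrm{pr}_1)=\prod_{\alpha}\cF(\cU,W',g_{\alpha})=\bigoplus_{g\in\mathrm{Hom}(W',Z)}\cF(\cU,W',g)=j_{!}^{\mathrm{pre}}\cF(\cU,W').
$$
This identification is canonical and functorial for restriction maps $W''\to W'$ between trivialising objects, since the decomposition of $Z\times_X W''$ is the pullback of that of $Z\times_X W'$.

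Finally I would globalise. On the basis of trivialising objects, $j_{!}^{\mathrm{pre}}\cF$ already agrees with the sheaf $j_{\ast}\cF$ and hence is a sheaf there. Sheafifying on all of $\fX$ then produces an isomorphism $j_{!}\cF\xrightarrow{\sim} j_{\ast}\cF$: for arbitrary $(\cU,W)$, picking the trivialising cover $(\cU,W')\to(\cU,W)$ with $W'=Z\times_X W$ (and noting that $W'\times_W W'$ is again trivialising), both sides are computed as the same equaliser over these two levels. Naturality in $\cF$ is built into the construction. I expect the principal technical obstacle to be verifying cleanly that the decomposition $(\cU,Z\times_X W',\mathrm{pr}_1)=\coprod_{\alpha}(\cU,W',g_{\alpha})$ really is a covering family in $\fZ$ whose sheaf axiom gives the finite product $\prod_{\alpha}\cF(\cU,W',g_{\alpha})$; this amounts to checking the expected compatibility of coproducts in $\cU_{\Kbar}^{\mathrm{fket}}$ with the covering families of Section~\ref{sec:topoi}, possibly after replacing $Z\to X$ by its Galois closure.
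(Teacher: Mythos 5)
Your plan is correct, and its core strategy coincides with the paper's: both arguments come down to exhibiting, for each $(\cU,W)$, a finite Kummer \'etale cover $W'\to W$ over which $Z\times_X W'$ decomposes as $\amalg_{g\colon W'\to Z}W'$, so that $j_!^{\rm pre}\cF$ and $j_\ast\cF$ visibly agree. The differences are in the mechanics. For the complete splitting you invoke a Galois closure of $Z\to X$, whereas the paper (Claim 2 in the proof of Proposition~\ref{prop:left=right}) argues by induction on ${\rm deg}_{Z/X}$, splitting off the diagonal section of $Z\times_X Z\to Z$ at each step; these are interchangeable. You build the isomorphism on a trivialising basis and then sheafify, whereas the paper first constructs the natural transformation $j_!\to j_\ast$ on every object (Claim 1: any $g\colon W\to Z$ yields $Z\times_X W\cong W\amalg Z'_g$ with $g$ corresponding to the inclusion of the first factor) and only afterwards checks it is an isomorphism locally; the paper's order of operations makes naturality automatic, while yours needs the compatibility of the basis identifications under restriction, which you do record.

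The step you flag as the principal technical obstacle---that a section of a finite Kummer log \'etale map of normal log affinoids splits off a clopen direct factor, so that the family $\{(\cU,W',g_\alpha)\}_\alpha$ really is a covering of $(\cU,Z\times_X W',{\rm pr}_1)$ computing the finite product---is precisely Lemma~\ref{lemma:normal} of the paper, and it is not a formal consequence of the covering axioms: the proof inverts the image $s$ of the chart monoid, produces an idempotent $e\in B[1/s]$ from the trace form of the section, and uses integrality and normality of the affinoid algebras to show $e\in B$ and that the factor of $B$ through which the section factors equals $A$. You should treat that lemma as the real content rather than as a routine verification. One further small point: $Z\to X$ need not be surjective, so $Z\times_X W\to W$ need not be a covering on every component of $W$; on components where ${\rm deg}_{Z/X}=0$, however, both $j_!^{\rm pre}\cF$ and $j_\ast\cF$ vanish, so nothing is lost.
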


\begin{proof} We divide the proof in two steps. First we construct a natural transformation of functors $j_{(\cX, Z),!}\to j_{(\cX, Z),\ast}$. then we prove
that it is an isomorphism.

\bigskip
\noindent
{\it Claim 1} Let $(\cU, W)$ be an object of $\fX$ and let $g\colon W\lra Z$ be a morphism in $\cX_\Kbar^{\rm fket}$. Then  we have a canonical isomorphism:
$Z\times_{X}W\cong W\amalg Z'_g$ for some object $Z'_g$ in $\cX_\Kbar^{\rm fket}$, such that this isomorphism composed with the morphism induced by $g$ is the
natural inclusion $W\hookrightarrow W\amalg Z'_g$.

\bigskip
\noindent Let us first point out that Claim 1 implies the existence of a canonical isomorphism
$$
(\ast) \ Z\times_{X}W\cong \bigl(\amalg_{g\colon W\rightarrow Z}W\bigr)\amalg Z'_W, \mbox{ for some object } Z'_W \mbox{ of  }\cX_\Kbar^{\rm fket}.
$$
Thus for every sheaf $\cF\in {\rm Sh}(\fZ)$ we have a canonical morphism
$$
j_!(\cF)(\cU,W)=\oplus_{g\colon W\rightarrow Z}\cF(\cU,W,g)\lra \cF(\cU\times_{X(w)}Z, {\rm pr}_1)=:j_\ast(\cF)(\cU,W).
$$

\bigskip
\noindent
Now let us prove Claim 1. We first prove the following

\begin{lemma}
\label{lemma:normal} Suppose $f\colon U\lra V$ is a finite Kummer log \'etale map of log affinoid spaces given by a chart of the form
$$
\begin{array}{rcccccc}
P&\lra&B\\
\uparrow&&\uparrow f\\
Q&\lra&A
\end{array}
$$
with $P,Q$ fine saturated monoids. We also suppose that $A,B$ are normal $K$-algebras, $A$ is an integral domain and the images of the elements of $P$ in $B$ are
not zero divisors. Then if $g\colon V\lra U$ is a morphism of log affinoids over $f$, there is an object $W$   and an isomorphism $U\cong V\amalg W$  in the
category $V^{\rm fket}$ such that the following diagram is commutative
$$
\begin{array}{ccccc}
U&\cong&V\amalg W\\
g\uparrow&&\iota\uparrow\\
V&=&V
\end{array}
$$
where $\iota\colon V\hookrightarrow V\amalg W$ is the natural map.
\end{lemma}

\begin{proof}
Let $s\in A$ be the product of the images in $A$ of a set of generators of $Q$ and let us remark that the image of $s$ in $B$ is not a zero divisor. Then $f_s\colon
A[1/s]\lra B[1/s]$ is a finite and \'etale morphism of $K$-algebras such that we have a morphism of $K$-algebras $g_s\colon B[1/s]\lra A[1/s]$ which is a section of
$f_s$. Then there is an $A[1/s]$-algebra $C'$, finite and \'etale and an isomorphism of $K$-algebras $B[1/s]\cong A[1/s]\times C'$ such that the following diagram
is commutative
$$
\begin{array}{ccccccc}
B[1/s]&\cong&A[1/s]\times C'\\
f_s\downarrow&&{\rm pr}_1\downarrow\\
A[1/s]&=&A[1/s]
\end{array}
$$
This is a well known fact but let us briefly recall the idea. As $B[1/s]$ is a finite $A[1/s]$-algebra, it is a finite projective and separable
$A[1/s]$-algebra. Then there is a unique element $e\in B[1/s]$ such that $g_s(x)={\rm Tr}_{B[1/s]/A[1/s]}(ex)$, for every $x\in B[1/s]$. Now it is not difficult to
prove that $e$ is an idempotent which gives a decomposition first  as $A[1/s]$-modules $B[1/s]\cong A[1/s]\times {\rm Ker}(g_s)$. Now one proves that $C':={\rm
Ker}(g_s)$ has a natural structure of $K$-algebra and the isomorphism is as $K$-algebras.

Let as above  $e$ and $1-e$ be  the indempotents which give the decomposition $B[1/s]\cong A[1/s]\times C'$.  Then $e$ satisfies $e^2-e=0$ i.e. $e$ is an element of
$B[1/s]$ integral over $A$ therefore $e\in B$. Therefore $e$, $1-e$ give an isomorphism as $K$-algebras $B\cong B'\times C$, where $A\subset B'\subset A[1/s]$. But
$B'$ is finite over $A$ therefore $B'=A$ as $A$ was supposed normal. Moreover $C$ is a finite $A$-algebra, which is an affinoid algebra as it is a quotient of $B$.

Now we endow $C$ with the prelog structure: $P\lra B\stackrel{\rm pr_2}{\lra} C$
and notice that the log rigid space $W:=\bigl({\rm Spm}(C), P^a\bigr)$ satisfies
$U\cong V\times W$ and makes the diagram of the lemma commutative.

Moreover as $U\lra V$ is a finite Kummer log \'etale map, therefore
$W\lra V$ is also Kummer log \'etale, as this can be read on stalks
of geometric points.

\end{proof}

\bigskip
\noindent Now let, as in Claim 1, $W\rightarrow X, Z\rightarrow X$ be morphisms in $\cX_\Kbar^{\rm fket}$, i.e. there is a finite extension $L$ of $K$ such that $W$ and
$Z$ are both defined over $L$ and we have finite Kummer log \'etale morphisms $W\lra X_L$ and $Z\lra X_L$. In fact it is enough to assume that
$\uZ$ and $\uW$ are both affinoids (if $\fX=\fX(w)$ this is always the case: as $\uX(w)$ is an affinoid and both maps
$W\rightarrow X(w)$ and $Z\rightarrow X(w)$ are finite it follows that $\uW$ and $\uZ$ are affinoids).
Moreover, as the morphism $X\lra {\rm Spm}(K)$ (with trivial log structure on ${\rm Spm}(K)$) is
log smooth, it follows that $Z\lra {\rm Spm}(K)$ and $W\lra {\rm Spm}(K)$ are both log smooth and so $\uX$, $\uZ$ and $\uW$ are all normal affinoids.

If $g\colon W\lra Z$ is a morphism over $X$, we have a natural morphism $g'\colon W\lra Z\times_{X}W$ which is a section of the projection
$Z\times_{X}W\lra W$. We apply the lemma \ref{lemma:normal} and we get Claim 1.

\bigskip
\noindent To conclude the proof of proposition \ref{prop:left=right} we make the following

\bigskip
\noindent {\it Claim 2} For every $(\cU,W)$ object of $\fX$, there is $W'\lra W$ a surjective morphism in $\cX_\Kbar^{\rm fket}$ with the property
$\displaystyle Z\times_{X}W'\cong \amalg_{g\colon W'\lra Z}W'$, i.e. in formula $(\ast)$ we have $Z'_{W'}=\phi$.

\bigskip

Clearly, Claim 2 implies that for a sheaf $\cF$ on $\fX$ the natural morphism $j_!(\cF)\lra j_\ast(\cF)$  is an isomorphism.
So we are left with the task of proving
Claim 2.

Here and elsewhere if $W$ is a log rigid space or a log formal scheme, we denote by $W^{\rm triv}$ the sub-space (or sub-formal scheme) on which the log structure
is trivial. Given our finite, Kummer \'etale  morphism $Z\lra X$ let ${\rm deg}_{Z/X}\colon Z^{\rm triv}\lra \Z$ denote the degree of $Z^{\rm triv}$ over
$X^{\rm triv}$. By restricting to a connected component of $Z$ we may suppose that ${\rm deg}_{Z/X}$ is constant equal to $n$. We prove Claim 2 by induction on
$n={\rm deg}_{Z/X}$.

If $n=0$  there is nothing to prove so let us suppose $n>0$. As the morphism $Z\lra X$ is a morphism of normal affinoids, if we regard $Z\times_{X}Z$ as a log
rigid space over $Z$ via the second projection then the diagonal $\Delta\colon Z\lra Z\times_{X}Z$ provides a section as in lemma \ref{lemma:normal}. Therefore
there is an object  $Z'$ in $\cX_\Kbar^{\rm fket}$ such that $Z\times_{X}Z\cong Z\amalg Z'$. It follows that ${\rm deg}_{Z'/Z}=n-1$ and applying the induction
hypothesis we find an object $W\lra Z$ in $Z^{\rm fket}$ such that $Z'\times_ZW=\amalg_{i=1}^m W_i$, where  $W_i= W$ for all $1\le i\le m$. Then the composition
$W\lra Z'\lra X$ makes $W$ an object in $\cX_\Kbar^{\rm fket}$ and we have $Z\times_{X}W$ is isomorphic to a disjoint union of objects isomorphic to $W$.

\end{proof}

Proposition \ref{prop:left=right} has the following immediate consequence.

\begin{corollary}
\label{cor:jacyclic} Suppose $Z\lra X$ is a morphism in $\cX_\Kbar^{\rm fket}$. Then we have\smallskip

a) The functor $j_{\ast}$ is an exact functor.\smallskip

b) $R^ij_\ast=0$ for all $i\ge 1$.\smallskip

\end{corollary}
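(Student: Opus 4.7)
The plan is to extract both statements as essentially formal consequences of the isomorphism $j_! \cong j_\ast$ established in Proposition \ref{prop:left=right}. The key observation is that $j_\ast$ is always left exact (being right adjoint to $j^\ast$), but the proposition identifies it with $j_!$, which is itself left adjoint to $j^\ast$. Having both adjoints, the functor preserves all small limits and all small colimits simultaneously.

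For part (a), I would argue as follows. Since $j_\ast$ is a right adjoint to $j^\ast$, it is left exact; since $j_\ast \cong j_!$ is a left adjoint to $j^\ast$, it is right exact. Combining these, $j_\ast$ is exact on the category of abelian sheaves on $\fZ$.

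For part (b), I would invoke the standard principle that the higher right derived functors of an exact functor vanish. Concretely, given a sheaf of abelian groups $\cF$ on $\fZ$, choose an injective resolution $\cF \hookrightarrow \cI^\bullet$. By definition $R^i j_\ast(\cF) = H^i\bigl(j_\ast(\cI^\bullet)\bigr)$. Since $j_\ast$ is exact by part (a), the complex $j_\ast(\cI^\bullet)$ remains a resolution of $j_\ast(\cF)$, so its cohomology vanishes in positive degrees. Hence $R^i j_\ast(\cF) = 0$ for all $i \ge 1$.

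I do not foresee a genuine obstacle here — both statements are formal consequences of the proposition, with part (a) using only the two-sided adjointness property and part (b) being the standard derived-functor vanishing for exact functors. The only minor point to be careful about is that the isomorphism $j_! \cong j_\ast$ is an isomorphism of functors on sheaves of abelian groups (or of sets, compatibly with the abelian structure), so that the left adjointness transfers to $j_\ast$ at the level of abelian sheaves; this is immediate from the explicit description of $j_!$ as the sheafification of $(\cU,W) \mapsto \oplus_{g\colon W \to Z} \cF(\cU,W,g)$ together with the identification in the proof of Proposition \ref{prop:left=right}.
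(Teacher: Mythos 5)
Your proof is correct and follows essentially the same route as the paper: Proposition \ref{prop:left=right} identifies $j_\ast$ with the left adjoint $j_!\cong\alpha^\ast$, so $j_\ast$ is right exact as well as left exact, hence exact, and b) is the standard vanishing of higher derived functors of an exact functor. No issues.
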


\begin{proof} For a) we remark that $j_\ast\cong j_!$ by proposition \ref{prop:left=right} and $j_!\cong \alpha^\ast$.
It follows that $j_!$, and so also $j_\ast$, is right exact. As $j_\ast$ is left exact, it is exact.  This immediately implies b).
\end{proof}

As $j^\ast$ admits a left adjoint $j_!$ by adjunction we get a morphism
\begin{equation}\label{def:trace} S_Z\colon j_\ast\bigl( j^\ast(\cF)\big) \cong  j_!\bigl( j^\ast(\cF)\bigr) \longrightarrow \cF,
\end{equation}  functorial on the category of sheaves of abelian groups on the site $E_{\cX_\Kbar}$. We call it the {\it trace map} relative to $Z$. More
explicitly, given a sheaf of abelian groups  $\cF$ on $E_{\cX_\Kbar}$ it is the map of sheaves associated to the map of presheaves:
$$j_!\bigl(j^\ast(\cF)\bigr)(\cU,W)=  \oplus_{g\in Hom_{\cX_\Kbar^{\rm fket}}(W,Z)}j^\ast(\cF)(\cU,W,g)=\oplus_{g\in Hom_{\cX_\Kbar^{\rm fket}}(W,Z)}\cF(\cU,W)
\longrightarrow \cF(\cU,W),$$given by the sum.

\subsection{Sheaves on $\fX$}
\label{sec:hodgetate}

We continue, as in the previous section, to denote by $\fX$ any one of the sites $\fX(w)$ or $\fX(N,p)$ and
we will describe certain sheaves on this site.

We denote by $\cO_{\fX}$ the presheaf of $\cO_\Kbar$-algebras on $\fX$ defined by
$$
\cO_{\fX}(\cU, W):=\mbox{ the normalization of }\rH^0(\ucU, \cO_{\ucU})\mbox{ in } \rH^0(\uW, \cO_{\uW}).
$$

We also define by $\cO_{\fX}^{\rm un}$ the sub-presheaf  of $\WW(k)$-algebras of $\cO_{\fX}$ whose sections over $(\cU, W)$ consist of the elements $x\in
\cO_{\fX}(\cU, W)$ such that there exist a finite unramified extension $M$ of $K$ contained in $\Kbar$, a Kummer log \'etale morphism $\cV\lra
\cU\otimes_{\cO_K}\cO_M$ and a morphism $W\lra \cV_K$ over $\cU_K$ such that $x$, viewed as an element of $\rH^0(\uW, \cO_{\uW})$ lies in the image of $\rH^0(\ucV,
\cO_{\ucV})$. Then we have

\begin{proposition}[\cite{andreatta_iovita3}, Proposition 1.10]
The presheaves $\cO_{\fX}$ and $\cO_{\fX}^{\rm un}$ are sheaves and
$\cO_{\fX}^{\rm un}$ is isomorphic to the sheaf $v_{\cX}^\ast\bigl(\cO_{\cX^{\rm ket}}\bigr)$
\end{proposition}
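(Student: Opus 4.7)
The plan is to verify the sheaf axioms for $\cO_\fX$ and $\cO_\fX^{\rm un}$ separately on the two types of covering families defining the topology on $\fX$, and then to exhibit a natural identification of $\cO_\fX^{\rm un}$ with $v_\cX^\ast(\cO_{\cX^{\rm ket}})$ by unwinding definitions.

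First I would handle $\cO_\fX$. For a covering family of type $(\alpha)$, $\{(\cU_i, W_i) \to (\cU, W)\}_i$ with $\{\cU_i \to \cU\}$ a Kummer \'etale covering and $W_i \cong W \times_{\cU_K} \cU_{i,K}$, the sheaf axiom follows by combining Kummer \'etale descent for $\cO_{\ucU}$ (which by corollary \ref{cor:normal} is normal and for which Kummer \'etale descent of global sections holds) with the observation that the formation of the normalization of $\rH^0(\ucU, \cO_{\ucU})$ inside a fixed ring commutes with such descent. For a covering of type $(\beta)$, $\{(\cU, W_i) \to (\cU, W)\}$ with $\{W_i \to W\}$ a covering in $\cU_\Kbar^{\rm fket}$, the formal scheme $\ucU$ is constant along the cover, so the sheaf property reduces to finite Kummer \'etale descent on the rigid side applied to $\rH^0(\uW_i, \cO_{\uW_i})$, together with the compatibility of taking the normalization with such descent.

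Next I would upgrade the result to $\cO_\fX^{\rm un}$. The subsheaf condition, which asks that a section lift to $\rH^0(\ucV, \cO_{\ucV})$ for some Kummer \'etale $\cV \to \cU \otimes_{\cO_K} \cO_M$ with $M/K$ finite unramified, is local on the cover: given compatible sections in $\cO_\fX^{\rm un}(\cU_i, W_i)$ that glue into a section of $\cO_\fX$, one takes a common finite unramified $M$ and a common Kummer \'etale refinement to produce the required witness over $(\cU, W)$. For the identification with $v_\cX^\ast(\cO_{\cX^{\rm ket}})$, recall that $v_\cX$ is the morphism of topoi induced by the continuous functor $\cX^{\rm ket} \to E_{\cX_\Kbar}$ sending $\cU$ to $(\cU, \cU_K)$. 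The general formula for the pullback along such a functor expresses the value of $v_\cX^\ast(\cO_{\cX^{\rm ket}})$ on $(\cU, W)$ as the sheafification of $\lim_{\rightarrow} \rH^0(\ucV, \cO_{\ucV})$, where the colimit runs over Kummer \'etale $\cV \to \cU \otimes_{\cO_K} \cO_M$ equipped with a $\cU_K$-morphism $W \to \cV_K$ and with $M/K$ finite unramified; this is visibly the definition of $\cO_\fX^{\rm un}(\cU, W)$.

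The main obstacle will be the sheaf property under type $(\alpha)$ coverings: Kummer \'etale morphisms are not flat in the classical sense, so one must pass through the explicit local description of $\ucX(w)$ from section \ref{sec:logstr} (in particular the chart $P \cong \frac{1}{a}\Delta(\N) + \N^2$ on the semistable part) to check that faithfully flat descent for $\cO_{\ucU}$ after inverting $p$, combined with normality, suffices to descend the normalization inside a fixed finite extension of the generic fiber. Once this is in place, as worked out in \cite{andreatta_iovita3}, both the sheaf property on $\cO_\fX^{\rm un}$ and the identification with $v_\cX^\ast(\cO_{\cX^{\rm ket}})$ follow formally from the construction of Faltings' topos as a fibered category over $\cX^{\rm ket}$.
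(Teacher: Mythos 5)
First, a point of reference: the paper does not actually prove this proposition — it is quoted from \cite{andreatta_iovita3}, Proposition 1.10 — so there is no in-paper argument to measure your proposal against, only the cited reference. Your overall plan (check the equalizer condition separately on the two generating classes of coverings, then identify $\cO_{\fX}^{\rm un}$ with $v_{\cX}^\ast\bigl(\cO_{\cX^{\rm ket}}\bigr)$ by unwinding the colimit formula for the inverse image along $\cU\mapsto(\cU,\cU_K)$) is the standard one and is consistent in spirit with the reference. You are also right to isolate the non-flatness of Kummer \'etale morphisms as the obstacle in the type $(\alpha)$ case: that the integral closure of $R_\cU$ in $\rH^0(\uW,\cO_{\uW})$ is recovered from the integral closures over the members of a Kummer \'etale cover is exactly where normality and the explicit charts of \S\ref{sec:logstr} (via Corollary \ref{cor:normal}) must enter. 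Your proposal defers this entirely to the reference; as a plan that is tolerable, but be aware it is the actual content of the first assertion, not a routine compatibility.

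The genuine gap is in your treatment of $\cO_{\fX}^{\rm un}$. For a type $(\alpha)$ covering $\{(\cU_i,W_i)\to(\cU,W)\}$ the witnesses $\cV_i\to\cU_i\otimes_{\cO_K}\cO_{M_i}$ live over \emph{different} members of the cover, and ``taking a common finite unramified $M$ and a common Kummer \'etale refinement'' does not produce the required single object $\cV\to\cU\otimes_{\cO_K}\cO_M$ with a morphism $W\to\cV_K$ over $\cU_K$: each composite $\cV_i\to\cU\otimes_{\cO_K}\cO_M$ is indeed Kummer \'etale, but $W$ admits no map to $\coprod_i\cV_{i,K}$ — one only has the maps $W_i\to\cV_{i,K}$. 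Relatedly, your identification with $v_{\cX}^\ast\bigl(\cO_{\cX^{\rm ket}}\bigr)$ elides the sheafification: $\cO_{\fX}^{\rm un}(\cU,W)$ is by definition the \emph{image} of the colimit presheaf in $\cO_{\fX}(\cU,W)$, so the isomorphism requires precisely that this image sub-presheaf already be a sheaf, i.e. that the ``unramified'' condition be local — which is the step above. To close this one must either exhibit a canonical (maximal or minimal) witness over $(\cU,W)$ whose formation commutes with restriction along the cover — for instance the normalization of $\cU\otimes_{\cO_K}\cO_M$ in the maximal intermediate quotient $W\to V\to \cU_K$ admitting a Kummer \'etale formal model, which exists since such intermediate covers are stable under compositum — or argue via the localization functors of \S\ref{sec:localization} over small affines. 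As written, the second and third assertions of the proposition are therefore not yet established.
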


We denote by $\hO_{\fX(w)}$ and $\hO_{\fX}^{\rm un}$ the continuous sheaves on $\fX$ given by the
projective systems of sheaves $\{\cO_{\fX}/p^n\cO_{\fX}\}_{n\ge 0}$ and respectively
$\{\cO_{\fX}^{\rm un}/p^n\cO_{\fX}^{\rm un}\}_{n\ge 0}$.

\bigskip
In the notations of section \ref{sec:fy} let $r\ge 1$ and $w\in \Q$ adapted to $r$ and let $Z:=X^{(r)}(w)\lra X(w)$
if $\fX$ denotes $\fX(w)$ or $Z:=X^{(r)}\lra X(N,p)$ if $\fX$ denotes $\fX(N,p)$ and let us denote by
$\fX^{(r)}:=\fX_{/(\cX, Z)}$ the site induced by $(\cX, Z)$ and  by $j_r^\ast$, $j_{r,\ast}(\cong j_{r,!})$ the associated morphism of
topoi. We have functors $$v_{\fX}\colon \cX^{\rm ket}\lra \fX, \qquad v_{r}\colon\cX^{\rm ket}\lra \fX^{(r)}$$defined by $v_{\fX}(\cU):=(\cU,
\cU_K)$ and $v_r:=j_r\circ v_{\fX}$. More explicitly,  $v_r(\cU):=j(\cU, \cU_K)=\bigl(\cU, Z\times_{X}\cU_K, {\rm pr}_1\bigr)$. These functors send
covering families to covering families, commute with fiber products and send final objects to final objects. In particular they define morphisms of topoi. Corollary
\ref{cor:jacyclic} implies that the Leray spectral sequence for $v_{r,\ast}=v_{\fX,\ast}\circ j_{r,\ast}$ degenerates and we have $R^iv_{r,\ast}\cong
R^iv_{\fX,\ast}\circ j_{r,\ast}.$

We denote by $\cO_{\fX^{(r)}}:=j_r^\ast(\cO_{\fX})$ and by $\hO_{\fX^{(r)}}:=j_r^\ast(\hO_{\fX})$. Let us recall the morphism $\theta_r\colon
\ucX^{(r)}\lra \ucX$ which is finite and defines $\ucX^{(r)}$ as the normalization of $\ucX$ in $X^{(r)}$ and let $G_r\cong (\Z/p^r\Z)^\times$ denote
the Galois group of $X^{(r)}/X$. Then $G_r$ acts naturally on $\ucX^{(r)}$ over $\ucX$ and $\ucX\cong \ucX^{(r)}/G_r$.

\begin{lemma}
\label{lemma:vyox} We have a natural isomorphism of sheaves on $\cX^{\rm ket}$
$$\bigl(v_{r,\ast}(\cO_{\fX^{(r)}})\bigr)^{G_r}\cong
\cO_{\ucX}\mbox{ and similarly } \bigl(v_{r,\ast}(\hO_{\fX^{(r)}})\bigr)^{G_r}\cong \hO_{\ucX}.
$$

\end{lemma}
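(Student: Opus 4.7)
The plan is to unfold all the definitions down to a concrete statement about normalizations of rings and then apply the normality results already established. For any object $\cU\in\cX^{\rm ket}$, by the definition of $v_r$ and $\cO_{\fX^{(r)}}=j_r^\ast\cO_{\fX}$, I would first compute
$$v_{r,\ast}(\cO_{\fX^{(r)}})(\cU)=\cO_{\fX}\bigl(\cU,Z\times_{X}\cU_K\bigr),$$
which, by the explicit description of $\cO_{\fX}$, equals the normalization $N$ of $\rH^0(\ucU,\cO_{\ucU})$ inside $\rH^0(Z\times_{X}\uU_K,\cO)$. The Galois action of $G_r$ on $Z$ over $X$ pulls back to an action on $Z\times_{X}\cU_K$ over $\cU_K$, trivial on the second factor, so $G_r$ acts on this ring, fixes $\rH^0(\ucU,\cO_{\ucU})$, and hence stabilizes $N$.

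For the first isomorphism, since $Z\to X$ is a finite Kummer \'etale $G_r$-Galois cover with $Z/G_r\cong X$ and Galois descent is preserved under base change, I would conclude that $\rH^0(Z\times_{X}\uU_K,\cO)^{G_r}=\rH^0(\uU_K,\cO)$. Because $\ucU$ is Kummer \'etale over $\ucX$, which by Corollary \ref{cor:normal} is normal, $\ucU$ is itself normal; therefore $\rH^0(\ucU,\cO_{\ucU})$ is integrally closed in $\rH^0(\uU_K,\cO)=\rH^0(\ucU,\cO_{\ucU})[1/p]$. Since $N^{G_r}\subseteq \rH^0(\uU_K,\cO)$ is a subring integral over $\rH^0(\ucU,\cO_{\ucU})$, this forces $N^{G_r}=\rH^0(\ucU,\cO_{\ucU})=\cO_{\ucX}(\cU)$, which is the first claim.

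For the completed version, the continuous sheaf $\hO_{\fX^{(r)}}$ is the pro-system $\{\cO_{\fX^{(r)}}/p^n\cO_{\fX^{(r)}}\}_n$, so $v_{r,\ast}$ produces the pro-system $\{v_{r,\ast}(\cO_{\fX^{(r)}}/p^n)\}_n$; taking $G_r$-invariants commutes with projective limits. I would apply $v_{r,\ast}$ to the short exact sequence $0\to\cO_{\fX^{(r)}}\xrightarrow{p^n}\cO_{\fX^{(r)}}\to\cO_{\fX^{(r)}}/p^n\to 0$, using the factorization $v_{r,\ast}=v_{\fX,\ast}\circ j_{r,\ast}$ together with the exactness of $j_{r,\ast}$ from Corollary \ref{cor:jacyclic}, take $G_r$-invariants of the resulting long exact sequence, and identify the invariants of $v_{r,\ast}\cO_{\fX^{(r)}}$ with $\cO_{\ucX}$ via the first part.

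The main obstacle is controlling $G_r$-group cohomology at each finite $n$: because $|G_r|=(p-1)p^{r-1}$ has a $p$-power factor, $H^1(G_r,v_{r,\ast}\cO_{\fX^{(r)}})$ may carry $p$-power torsion, so the invariant functor need not commute with reduction mod $p^n$ on the nose. The saving point is that $H^1(G_r,-)$ is annihilated by $|G_r|$, so any failure is bounded by a fixed power of $p$ independent of $n$; this bounded discrepancy is absorbed by a standard reindexing argument at the level of continuous (pro-system) sheaves, giving the claimed isomorphism $\bigl(v_{r,\ast}\hO_{\fX^{(r)}}\bigr)^{G_r}\cong\hO_{\ucX}$.
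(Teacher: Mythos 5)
Your proof is correct and follows essentially the same route as the paper: unwind $v_{r,\ast}(\cO_{\fX^{(r)}})(\cU)=\cO_{\fX}(\cU, X^{(r)}\times_X\cU_K)$, identify it with $\theta_{r,\ast}(\cO_{\ucX^{(r)}})(\cU)$ (i.e.\ the normalization), and take $G_r$-invariants using $\ucX\cong\ucX^{(r)}/G_r$, which is exactly the normality-plus-Galois-descent argument you spell out. The only difference is that you make explicit the passage to the completed sheaves (the uniform $p^{r-1}$-bound on $H^1(G_r,-)$ and the reindexing of the pro-system), a detail the paper compresses into ``from this the claim follows''; your treatment of it is sound.
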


\begin{proof}
Let $\cU\lra \cX$ be a morphism in $\cX^{\rm ket}$. Then we have
$$
v_{r,\ast}(\cO_{\fX^{(r)}})(\cU)=\cO_{\fX}(\cU, X^{(r)}\times_{X}\cU_K)=\rH^0(\cX^{(r)}\times_{\cX}\cU,
\cO_{\ucX^{(r)}})=\theta_{r,\ast}(\cO_{\ucX^{(r)}})(\cU).
$$
Form this the claim follows.

\end{proof}

\subsection{The localization functors}
\label{sec:localization}

As in the previous section, $\fX$ denotes any one of the sites $\fX(w)$ or $\fX(N,p)$.
We recall here the localization of a sheaf or a continuous sheaf on $\fX$ to a ``small affine of $\cX^{\rm ket}$'' (for more details see
\cite{andreatta_iovita3} section \S 1.2.6). Let $\cU=\bigl(\Spf(R_\cU), N_\cU\bigr)$ be a connected small affine object of $\cX^{\rm ket}$ and we denote by
$U:=\cU_K$ the log rigid analytic generic fiber of $\cU$. Let us recall that under the above hypothesis $\cU$ is a log formal scheme whose log structure is given by
the sheaf of monoids denoted $N_\cU$.

We write $R_\cU\otimes \Kbar=\prod_{i=1}^n R_{\cU,i}$ with $\Spf(R_{\cU,i})$ connected, we let $N_{\cU,i}$ denote the monoids which give the respective log
structures and we let $U_i$ denote the respective log rigid analytic generic fiber. Then each $R_{\cU,i}$ is an integral domain, so we let $\C_{\cU,i}$ denote an
algebraic closure of the fraction field of $R_{\cU,i}$ for all $1\le i\le n$ and let $\C_{\cU,i}^{\rm log}:=\Spec\bigl(\C_{\cU,i}), N_{\cU,i}\bigr)$ denote the log
geometric point of $\cU_i:=\bigl(\Spf(R_{\cU,i}), N_{\cU,i}) \bigr)$ over $\C_{\cU,i}$ (see \cite{illusie2} definition 4.1 or \cite{andreatta_iovita3} section 1.2.5
for the definition of a log geometric point). We denote by $\cG_{U,i}:= \pi_1^{\rm log}\bigl(U_i, \C_{\cU,i}^{\rm log}\bigr)$ the Kummer \'etale fundamental group
of $U_i$. We have then that the category $U_i^{\rm fket}$ is equivalent to the category of finite sets with continuous $\cG_{U,i}$-action. We write $(\Rbar_{\cU,i},
\Nbar_{\cU,i})$ for the direct limit over all finite normal extensions $R_{\cU,i}\subset S\subset \C_{\cU,i}$, all log structures $N_S$ on $\Spm(S_K)$ such that
there are Kummer \'etale morphisms $\C_{\cU,i}\lra \bigl(\Spm(S_K), N_S\bigr)\lra U_i$ compatible with the one between the underlying formal schemes. Finally we
denote $\Rbar_\cU:=\prod_{i=1}^n\Rbar_{\cU,i}$, $\Nbar_\cU:=\prod_{i=1}^n \Nbar_{\cU,i}$ and $\cG_{U_\Kbar}:=\prod_{i=1}^n\cG_{U,i}$.

We denote by ${\rm Rep}\bigl(\cG_{U_\Kbar}\bigr)$ and  ${\rm Rep}\bigl(\cG_{U_\Kbar}\bigr)^\N$ the category
of discrete abelian groups with continuous action by $\cG_{U_\Kbar}$, respectively the category
of projective systems of such. It follows from \cite{illusie2} section \S 4.5 that
we have an equivalence of
categories
$$\mbox{Sh}(U_\Kbar^{\rm fket})\cong {\rm Rep}\bigl(\cG_{U_\Kbar}\bigr)$$
sending $\displaystyle \cF\rightarrow \lim_{\rightarrow}\cF(\Spm(S_K), N_S)$.
Therefore composing with the restriction ${\rm Sh}(\fX)\lra {\rm Sh}(U_\Kbar^{\rm fket})$
defined by $\cF\rightarrow \bigl(W\rightarrow \cF(\cU, W)\bigr)$, we obtain a functor,
called localization functor
$$
{\rm Sh}(\fX)\lra {\rm Rep}\bigl(\cG_{U_\Kbar}\bigr) \mbox{ denoted } \cF \rightarrow \cF(\Rbar_{\cU}, \Nbar_{\cU}).
$$

We consider the following variant. Let $Z\to X$, with $X=X(w)$ or $X=X(N,p)$, be a finite Kummer \'etale morphism in $X_\Kbar^{\rm fket}$. Consider the associated site $\fZ:=\fX_{/(\cX, Z)}$ as in section \S\ref{sec:fy} and let $j_{(\cX, Z)}\colon \fX\to \fZ$ be the induced morphism of sites. Consider a sheaf $\cF\in {\rm Sh}(\fZ)$ and fix  a connected small affine object $\cU=\bigl(\Spf(R_\cU), N_\cU\bigr)$ of $\cX^{\rm ket}$  as before. Denote by $\Upsilon_\cU$ the set of homomorphisms of $R_\cU\otimes \Kbar$-algebras $\Gamma\bigl(Z\times_X U,\cO_{Z\times_X U}\bigr)\to \Rbar_{\cU}[1/p]$. For any $g\in \Upsilon_\cU$  we write $\cF\bigl(\Rbar_{\cU}, \Nbar_{\cU},g\bigr):=\lim \cF\bigl(\cU,W\bigr)$, where the limit is taken over all finite and Kummer \'etale maps $\Spm(S_K)=W\to Z\times_X U$  with $S_K\subset  \Rbar_{\cU}[1/p]$ a $\Gamma\bigl(Z\times_X U,\cO_{Z\times_X U}\bigr)$-subalgebra (using $g$). Let $\cG_{U_\Kbar,Z,g}$ be the subgroup of $\cG_{U_\Kbar} $ fixing $\Gamma\bigl(Z\times_X U,\cO_{Z\times_X U}\bigr)$.  Then ${\rm Sh}\bigl((Z\times_X U)^{\rm fket}\bigr)\cong {\rm Rep}\bigl(\cG_{\cU_\Kbar,Z,g}\bigr)$ and we obtain as before a localization functor:

$$
{\rm Sh}(\fZ)\lra {\rm Rep}\bigl(\cG_{\cU_\Kbar,Z,g}\bigr),\qquad \cF \rightarrow \cF\bigl(\Rbar_{\cU}, \Nbar_{\cU},g\bigr).
$$

If $\{\cU_i\}_i$ is a covering of $\cX^{\rm ket}$ and for every $i$ we choose $g_i\in \Upsilon_{\cU_i}$, it follows from the definition of coverings in the site $\fZ$
that the map ${\rm Sh}(\fZ)\lra \prod_i {\rm Rep}\bigl(\cG_{U_{i,\Kbar},Z,g_i}\bigr) $ is faithful. It also follows from proposition \ref{prop:left=right} that
\begin{equation}\label{localizationjast} j_{(\cX, Z),\ast}(\cF)\bigl(\Rbar_{\cU}, \Nbar_{\cU}\bigr)\cong \oplus_{g\in \Upsilon_\cU}\cF\bigl(\Rbar_{\cU}, \Nbar_{\cU},g\bigr).\end{equation}

\

\section{Analytic Modular Symbols}\label{sec:dist}


\subsection{Analytic functions and distributions}
\label{sec:locallyanalytic}

In this section we recall a number of definitions and results from \cite{ash_stevens} and \cite{harris_iovita_stevens} and also
define some new objects.
Let $T_0:=\Z_p^\times\times\Z_p$, which we regard as a compact open subset
of the space of row vectors $(\Z_p)^2$.  We have the following structures on $T_0$:
\begin{itemize}
\item[a)] a natural left action of $\Z_p^\times$ by scalar multiplication;
\item[b)] a natural right action of the semigroup
$$
\Xi(\Z_p) = \left\{\left( \begin{array}{cc}{a} & {b} \\ {c} &
{d}\end{array}\right)\in M_2(\Z_p)\, \biggm|\,(a,c) \in \Z_p^\times\times p\Z_p\,\right\}
$$
and its subgroup
$$
\Iw(\Z_p) :=  \Xi(\Z_p)\cap \GL(\Z_p)
$$
given by matrix multiplication on the right.
\end{itemize}
The two actions obviously commute.

Let us recall that we denoted by $\cW^\ast$ the rigid subspace of $\cW$ of weights
$k$ such that $|k(t)^{p-1}-1|<p^{-1/(p-1}$. Let $U\subset \cW^\ast$ denote a
wide open disk which is an admissible open rigid subspace of $\cW_1$, let $A_U$ denote the $O_K$-algebra
of rigid functions on $U$  and denote by
$$
\Lambda_U=\{f \mbox{ rigid function on } U \mbox{ such that } |f(x)|\le 1 \mbox{ for every point } x\in U\},
$$
the $\cO_K$-algebra of bounded rigid functions on $U$. Then, as remarked in section \ref{sec:modularsheaves},
$\Lambda_U$ is a complete, regular, local, noetherian $\cO_K$-algebra, in fact
$\Lambda_U$ is (non canonically) isomorphic to the $\cO_K$-algebra $\cO_K[\![T]\!]$. The completeness refers to the
$\underline{m}_U$-adic topology (called the {\bf weak topology} of $\Lambda_U$), for $\underline{m}_U$ the maximal ideal of $\Lambda_U$.
As remarked in
section \ref{sec:modularsheaves}, $\Lambda_U$ is also complete for the $p$-adic topology.

Let now $B$ denote one of the complete, regular, local, noetherian rings: $\cO_K$ or $\Lambda_U$, for
$U\subset \cW^\ast$ a wide open disk as above. Let also $k\in \cW^\ast(B_K)$ be as follows: if
$B=\cO_K$, $k\in \cW^\ast(K)$ and if $B=\Lambda_U$ we set $k=k_U:\Z_p^\times\lra \Lambda_U^\times$
defined by $t^{k_U}(x)=t^x$ for all $x\in U(K)$.

\begin{definition}
\label{def:A_k}
We set
 $$
 A_{k}^o := \Bigl\{\,f: T_0\lra B\,\biggm|\, i) \  \forall a\in \Z_p^\times,\,t\in T_0,\ \mbox{ we have }  f(at) = k(a) f(t)
$$
and
$$
ii) \mbox{ the function } z\rightarrow f(1,z) \mbox{ extends to a  rigid analytic function
on the closed unit disk } B[0,1] \Bigr\}.
$$
\end{definition}

Let us make ii) of the above definition more precise. Let ${\rm ord}_p:B-\{0\} \lra \Z$ be defined by
${\rm ord}_{\pi}(\alpha)=\sup\{n\in \Z\quad | \quad\alpha\in \pi^nB\}$. Then we say that
the function $z\rightarrow f(1,z)$ in the definition above is rigid analytic
on $B[0,1]$ if there exists a power series $F(X)=\sum_{n=0}^\infty a_nX^n\in B[\![X]\!]$
with $\displaystyle {\rm ord}_{\pi}(a_n)\stackrel{n\rightarrow\infty}{\lra}\infty$ and such that
$f(1,z)=F(z)$ for all $z\in \Z_p$. Let us denote by $A_k:=A_k^o\otimes_{\cO_K}K$, which is naturally a
$B_K$-module. As $B_K$ si a $K$-Banach space (for its
$p$-adic toplogy) let us point out that $A_k$ is an orthonormalizable  Banach $B_K$-module, where
an orthonormal basis is given by: $\{f_n\}_{n\ge 0}$ where $f_n\in A_k^o$ are the unique elements such that
$f_n(1,z)=z^n$ for all $z\in \Z_p$. In other words $f_n(x,y)=k(x)(y/x)^n$ for all $(x,y)\in T_0$.

For every $\gamma\in \Xi(\Z_p)$ and function $f:T_0\lra  B$ we define $(\gamma f)(v):=f(v\gamma)$. We have

\begin{lemma}
\label{lemma:Akaction}
 If $f\in A_k^o$ and $\gamma\in \Xi(\Z_p)$ then $\gamma f\in A_k^o$.
\end{lemma}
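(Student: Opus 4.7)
My plan is to verify the two defining conditions of $A_k^o$ for $\gamma f$ directly, exploiting the homogeneity of $f$ a second time to reduce the analyticity statement to standard closure properties of rigid analytic functions. Homogeneity is immediate: since the left $\Z_p^\times$-action and the right $\Xi(\Z_p)$-action on $T_0$ commute, for $\alpha\in\Z_p^\times$ and $t\in T_0$ one has $(\gamma f)(\alpha t)=f(\alpha t\gamma)=k(\alpha)f(t\gamma)=k(\alpha)(\gamma f)(t)$. For the analyticity condition, write $\gamma=\begin{pmatrix}a&b\\c&d\end{pmatrix}$, so $a\in\Z_p^\times$ and $c\in p\Z_p$. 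Then $(\gamma f)(1,z)=f(a+zc,b+zd)$; since $a+zc\in\Z_p^\times$ for every $z\in\Z_p$, homogeneity of $f$ applied with scalar $a+zc$ gives
$$
(\gamma f)(1,z)\;=\;k(a+zc)\cdot F\!\left(\frac{b+zd}{a+zc}\right),
$$
where $F\in B[\![X]\!]$ is the power series representing $z\mapsto f(1,z)$.

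It thus remains to show that both factors on the right extend to rigid analytic functions on $B[0,1]$ and that the argument of $F$ takes values in $B[0,1]$. For the quotient $(b+zd)/(a+zc)$, the element $a+zc$ is a unit in the Tate algebra $B\langle z\rangle$ (since $zc\in pB\langle z\rangle$) with inverse equal to the geometric series $a^{-1}\sum_{n\ge 0}(-a^{-1}cz)^n$; multiplying by $b+zd$ yields a power series in $\Z_p[\![z]\!]$ whose sup-norm on $B[0,1]$ is $\le 1$, defining a rigid analytic map $B[0,1]\to B[0,1]$. For $z\mapsto k(a+zc)$, the accessibility condition $k\in\cW^\ast(K)$ (or the universality of $k_U$ in the family case $B=\Lambda_U$) is precisely what guarantees that $k$ extends analytically across disks of radius exceeding $p^{-1}$ around each point of $\Z_p^\times$; since $|zc|\le p^{-1}$ for $z\in B[0,1]$, the composite $z\mapsto k(a+zc)$ is rigid analytic on $B[0,1]$. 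Composing $F$ with the quotient and multiplying by $k(a+zc)$ then produces the desired analytic extension of $(\gamma f)(1,z)$.

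The main technical point is the analyticity of $z\mapsto k(a+zc)$, which is where both the condition $c\in p\Z_p$ built into $\Xi(\Z_p)$ and the definition of accessible weights enter essentially: without either, the point $a+zc$ could leave the disk on which $k$ admits an analytic extension, and the construction would break down. This is exactly why the semigroup $\Xi(\Z_p)$ is cut out by the Iwahori-type condition $c\in p\Z_p$ rather than allowing arbitrary $c\in\Z_p$, and it should also be noted that the same argument manifestly preserves the integrality of the coefficients (the leading factor $k(a+zc)$ has values in $B$, and $F$ already has coefficients in $B$), so that $\gamma f$ indeed lies in $A_k^o$ and not merely in $A_k$.
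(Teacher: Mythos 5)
Your proof is correct and follows essentially the same route as the paper's: homogeneity is checked by commuting the two actions, and analyticity is obtained by writing $(\gamma f)(1,z)=k(a+zc)\,F\bigl((b+zd)/(a+zc)\bigr)$ and invoking the analyticity of $k$ together with the convergence of the coefficients of $F$. Your version merely spells out in more detail the role of the Iwahori condition $c\in p\Z_p$ and of the accessibility of $k$, which the paper leaves implicit.
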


\begin{proof}
Let $\gamma=\left( \begin{array}{cc}{a} & {b} \\ {c} &{d}\end{array}\right)$. Then for every
$v\in T_0$ and $a\in \Z_p^\times$  we have $(\gamma f)(av)=f\bigl((av)\gamma\bigr)=k(a)f(v\gamma)
k(a)(\gamma f)(v).$

Moreover,
$$
(\gamma f)(1,z)=f(a+cz, b+dz)=k(a+cz)f\Bigl(1,\frac{b+dz}{a+cz}\Bigr)=k(a)k(1+ca^{-1}x)\sum_{n=0}^\infty a_n\Bigl(\frac{b+dz}{a+cz}  \Bigr)^n.
$$
Using the fact that $k$ is analytic and $\displaystyle a_n\stackrel{n\rightarrow \infty}{\lra}0$ we deduce that the function
$z\rightarrow (\gamma f)(1,z)$ extends to an analytic function on the closed unit disk $B[0,1]$.
\end{proof}

\begin{definition}
\label{def:dk}
a) Let $k\in \cW^\ast(K)$ be a weight. We define
 $D_k^o:={\rm Hom}_{\rm cont,\cO_K}\bigl(A_k^o, \cO_K\bigr)$, i.e. the $\cO_K$-module of continuous,
$\cO_K$-linear homomorphisms from $A_k^o$ to $\cO_K$. We also denote by $D_k:=D_k^o\otimes_{\cO_K}K$.

b) If $U\subset \cW^\ast$ is a wide open disk defined over $K$, we define
$D_U^o:={\rm Hom}_{\Lambda_U}\bigl(A_{U}^o, \Lambda_U\bigr)$, i.e. the $\Lambda_U$-module of continuous
for the $\underline{m}_U$-adic topology $\lambda_U$-linear homomorphisms from $A_{U}^o:=A_{k_U}^o$ to $\Lambda_U$. We denote by
$D_{U}:=D_{U}^o\otimes_{\cO_K}K.$
\end{definition}

\begin{remark}
\label{rmk:topologies}
i) The (left) action of the semigroup $\Xi(\Z_p)$ on $A_k^o$ induces a (right) action on $D^o$
by $(\mu|\gamma)(f):=\mu(\gamma f)$ for all $\gamma\in \Xi(\Z_p)$, $f\in A_k^o$ and
$\mu\in D^o$.

ii) We have a natural, fundamental homomorphism of $B$-modules
$$
\psi\colon D^o\lra \prod_{n\in \N}B, \mbox{ defined by } \mu\rightarrow \bigl(\mu(f_n)\bigr)_{n\in \N}.
$$
As the family $\bigl(f_n\bigr)_n$ is an orthonormal basis of $A_k$ over $B_K$, the above
morphism is a $B$-linear isomorphism. Moreover, under this isomorphism, the weak topology on $D^o$
corresponds to the weak topology (i.e. the product of the $\underline{m}_B$-adic
topologies on the product).

iii) A more common definition in the literature (see \cite{ash_stevens}) would be:
$$
\widetilde{D}_{U}^o={\rm Hom}^{\rm cpt}_{p, \Lambda_U}(A_{U}^o, \Lambda_U)\subset D_{U}^o,
$$
i.e. $\widetilde{D}_{U}^o$ consists of the  continuous and compact (or completely continuous) in the $p$-adic topology,
$\Lambda_U$-linear homomorphisms.
Then $\widetilde{D}_{U}:=\widetilde{D}_{U}^o\otimes_{\cO_K}K$ is an orhtonormalizable
$\Lambda_{U,K}$-module.

\end{remark}

Our $D_U^o$ has a property very similar to being orthonormalizable, which will be sufficient to define Fredholm
characteristic series of compact $\Lambda_U$-linear operators on it. More precisely we have the following.

\begin{lemma}
\label{lemma:weakON}
Let $\bigl( e_i \bigr)_{i\in I}$ be a sequence of elements in $D_U^o$ such that their image
$\bigl( \overline{e}_i \bigr)_{i\in I}$ in $D_U^o/\underline{m}_UD_U^o$ is a basis of this vector space over $\F:=\Lambda_U/\underline{m}_U\cong \cO_K/\pi\cO_K$. Then for every $n\in \N$ the natural map
$$
\Psi_n:\oplus_{i\in I}\Bigl(\Lambda_U/\underline{m}_U^n \Bigr)e_i\lra D_U^o/\underline{m}_U^nD_U^o
$$
is an isomorphism of $\Lambda_U$-modules.
\end{lemma}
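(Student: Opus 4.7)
The strategy is to transport everything across the isomorphism $\psi\colon D_U^o\xrightarrow{\sim}\prod_{n\in\N}\Lambda_U$ of Remark \ref{rmk:topologies}(ii) and then proceed by d\'evissage on $n$. Write $R:=\Lambda_U$, $\mathfrak m:=\underline m_U$, $\kappa:=R/\mathfrak m\cong\F$, and $M:=\prod_{n\in\N}R$. Under $\psi$ the assumption becomes that $(\bar e_i)_{i\in I}$ is a $\kappa$-basis of $M/\mathfrak m M\cong\prod_\N\kappa$, and $\Psi_n$ becomes the natural $R$-linear map $\oplus_{i\in I}(R/\mathfrak m^n)e_i\to M/\mathfrak m^n M$.

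The main preliminary step is the identity $\mathfrak a\cdot M=\prod_\N\mathfrak a$ for every ideal $\mathfrak a\subset R$. One inclusion is clear, and for the reverse I would use that $R$ is Noetherian, so $\mathfrak a$ is finitely generated, say by $a_1,\dots,a_r$: any $(x_i)_i\in\prod_\N\mathfrak a$ can be written componentwise as $x_i=\sum_j a_j r_{ji}$, hence as the finite sum $\sum_j a_j(r_{ji})_i\in\mathfrak a M$. Applying this with $\mathfrak a=\mathfrak m^n$ and $\mathfrak a=\mathfrak m^{n-1}$ produces the canonical identifications $M/\mathfrak m^n M\cong\prod_\N(R/\mathfrak m^n)$ and $\mathfrak m^{n-1}M/\mathfrak m^n M\cong\prod_\N(\mathfrak m^{n-1}/\mathfrak m^n)$.

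With this in hand I would proceed by induction on $n$. The case $n=1$ is exactly the hypothesis. For $n\geq 2$, the short exact sequence $0\to\mathfrak m^{n-1}/\mathfrak m^n\to R/\mathfrak m^n\to R/\mathfrak m^{n-1}\to 0$ gives a commutative diagram whose top row is the direct sum of that sequence indexed by $I$, and whose bottom row is the tautologically exact sequence $0\to\mathfrak m^{n-1}M/\mathfrak m^n M\to M/\mathfrak m^n M\to M/\mathfrak m^{n-1}M\to 0$; the middle and right vertical arrows are $\Psi_n$ and $\Psi_{n-1}$, respectively. Since the right vertical is an isomorphism by the inductive hypothesis, the five lemma reduces the proof to showing that the leftmost vertical arrow is an isomorphism.

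Finally, because $R\cong\cO_K[\![T]\!]$ is Noetherian, the graded piece $\mathfrak m^{n-1}/\mathfrak m^n$ is a finite-dimensional $\kappa$-vector space; choosing a $\kappa$-basis $y_1,\dots,y_d$ splits both the source $\oplus_{i\in I}(\mathfrak m^{n-1}/\mathfrak m^n)e_i$ and the target $\prod_\N(\mathfrak m^{n-1}/\mathfrak m^n)$ of the left vertical map into $d$ copies of $\oplus_{i\in I}\kappa\cdot\bar e_i$ and $\prod_\N\kappa$, respectively, and under this splitting the map becomes a $d$-fold direct sum of $\Psi_1$, hence an isomorphism. The only genuinely delicate point is the identity $\mathfrak a\cdot\prod_\N R=\prod_\N\mathfrak a$ of the second paragraph: it relies crucially on the finite generation of $\mathfrak a$, would fail for non-Noetherian $R$, and is precisely what makes the infinite-product structure on $D_U^o$ compatible with the $\underline m_U$-adic filtration needed to run the d\'evissage.
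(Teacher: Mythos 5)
Your proof is correct and follows essentially the same route as the paper's: transport to $\prod_{\N}\Lambda_U$, identify the graded pieces $\underline{m}_U^{n-1}D_U^o/\underline{m}_U^nD_U^o$ with $\oplus_{i\in I}(\underline{m}_U^{n-1}/\underline{m}_U^n)e_i$ using the finite dimensionality of $\underline{m}_U^{n-1}/\underline{m}_U^n$ over $\F$, and run an induction on $n$ via the five lemma. Your explicit justification of $\mathfrak a\cdot\prod_\N R=\prod_\N\mathfrak a$ from finite generation of $\mathfrak a$ is a detail the paper leaves implicit, but it is not a different argument.
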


\begin{proof}
Let us first recall that we have a natural isomorphism of $\Lambda_U$-modules $D_U^o\cong \prod_{i\in \N}\Lambda_U$.
In particular it follows that for every $n\ge 0$
$$
\underline{m}^nD_U^o/\underline{m}_U^{n+1}D_U^o\cong \prod_{i\in \N}\underline{m}_U^n/\underline{m}_U^{n+1}\cong \underline{m}_U^n/\underline{m}_U^{n+1}\otimes_{\F}
D_U^o/\underline{m}_UD_U^o\cong \oplus_{i\in I}\bigl(\underline{m}_U^n/\underline{m}_U^{n+1} \bigr)e_i.
$$
The second isomorphism follows from the fact that $\underline{m}_U^n/\underline{m}_U^{n+1}$ is a finite dimensional $\F$-vector space.

Now we prove  the lemma by induction on $n$. The case $n=1$ is clear, therefore let us suppose that the property is true for
$n\ge 1$ and we'll prove that $\Psi_{n+1}$ is an isomorphism. We have the following commutative diagram withexact rows:
$$
\begin{array}{ccccccccccc}
0&\lra&\underline{m}_U^nD_U^o/\underline{m}_U^{n+1}D_U^o&\lra&D_U^o/\underline{m}_U^{n+1}D_U^o&
\lra&D_U^o/\underline{m}_U^n&\lra&0\\
&&\uparrow \varphi&&\uparrow\Psi_{n+1}&&\uparrow\Psi_n\\
0&\lra&\oplus_{i\in I}\bigl(\underline{m}_U^n/\underline{m}_U^{n+1} \bigr)e_i&\lra&\oplus_{i\in I}\bigl(\Lambda_U/\underline{m}_U^{n+1}\bigr)e_i&\lra&\oplus_{i\in I}\bigl(\Lambda_U/\underline{m}_U^n\bigr)e_i&\lra&0
\end{array}
$$
By inductive hypothesis $\Psi_n$ is an isomorphism and by the comment before the diagram, $\varphi$ is an isomorphism as well.
Therefore $\Psi_{n+1}$ is an isomorphism.

\end{proof}

\begin{corollary}
\label{cor:ONbasis}
Let us fix  a family $\bigl(e_i\bigr)_{i\in I}$ as in lemma \ref{lemma:weakON}. Then for every $x\in D_U^o$ there is a unique
sequence $a_i\in \Lambda_U$, $i\in I$ such that

i) $a_i\rightarrow 0$ in the filter of complements of finite sets in $I$, in the $\underline{m}_U$-topology, i.e.  for every $h\in \N$ the subset $i\in I$ with the property
$a_i\not\in \underline{m}_U^h$ is finite.

ii) $x=\sum_{i\in I}a_ie_i$.

\end{corollary}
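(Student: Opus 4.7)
The plan is to use Lemma~\ref{lemma:weakON} levelwise and assemble the coefficients via the completeness of $\Lambda_U$ in its $\underline{m}_U$-adic topology. For each $n\ge 1$, Lemma~\ref{lemma:weakON} gives an isomorphism $\Psi_n\colon \oplus_{i\in I}(\Lambda_U/\underline{m}_U^n)e_i \cong D_U^o/\underline{m}_U^n D_U^o$, so the reduction $\bar x_n$ of $x$ modulo $\underline{m}_U^n$ has a unique expression $\bar x_n=\sum_{i\in I}a_i^{(n)}e_i$ with $a_i^{(n)}\in \Lambda_U/\underline{m}_U^n$ and only finitely many $a_i^{(n)}$ nonzero. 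The $\Psi_n$ are compatible with the reductions $D_U^o/\underline{m}_U^{n+1}\to D_U^o/\underline{m}_U^n$, so $(a_i^{(n)})_n$ is a compatible sequence and defines a unique $a_i\in \lim_n \Lambda_U/\underline{m}_U^n=\Lambda_U$.

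Next I would check the convergence condition (i): for any $h\ge 1$, the set $\{i\in I : a_i\notin \underline{m}_U^h\}$ is exactly the set where $a_i^{(h)}\ne 0$, which is finite by the very definition of the direct sum in $\Psi_h$. Then I would verify (ii), i.e.\ that $\sum_i a_i e_i$ converges to $x$ in the weak topology on $D_U^o\cong \prod_{n\in\N}\Lambda_U$. Under this identification the weak topology is the product of the $\underline{m}_U$-adic topologies, so a series $\sum_i c_i e_i$ converges provided that for every $h$ one has $c_i e_i\in \underline{m}_U^h D_U^o$ for all but finitely many $i$; this is guaranteed by (i), once one observes that $a e_i\in \underline{m}_U^h D_U^o$ iff $a\in \underline{m}_U^h$, an immediate consequence of the injectivity of $\Psi_h$. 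For the identification of the limit with $x$, the partial sum $\sum_{i\in I_h}a_ie_i$, where $I_h=\{i:a_i^{(h)}\ne 0\}$, reduces modulo $\underline{m}_U^h$ to $\bar x_h$ by construction, so $x-\sum_i a_i e_i\in \bigcap_h \underline{m}_U^h D_U^o=0$, the latter intersection vanishing because $D_U^o\cong\prod_n \Lambda_U$ is separated for the weak topology.

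Uniqueness is then immediate: any other expression $x=\sum_i b_ie_i$ satisfying (i) and (ii) reduces modulo $\underline{m}_U^n$ to a finite sum that, by the uniqueness half of the isomorphism $\Psi_n$, must coincide with $\sum_i a_i^{(n)}e_i$; hence $b_i\equiv a_i\pmod{\underline{m}_U^n}$ for all $n$, and so $b_i=a_i$. The only nontrivial point in the whole argument is the identification $a e_i\in \underline{m}_U^h D_U^o\Leftrightarrow a\in \underline{m}_U^h$, which pins down condition (i) as the precise convergence condition for the weak topology and which relies essentially on the injectivity half of Lemma~\ref{lemma:weakON}; once this is noted, the rest is bookkeeping on inverse limits.
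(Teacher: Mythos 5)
Your proof is correct and follows the same route the paper intends: the paper simply states that the corollary follows immediately from Lemma~\ref{lemma:weakON}, and your argument is the natural fleshing-out of that deduction, passing to the inverse limit of the isomorphisms $\Psi_n$ and using that $D_U^o\cong\prod_{n\in\N}\Lambda_U$ is complete and separated for the weak topology. The key observation you isolate --- that $a e_i\in \underline{m}_U^h D_U^o$ if and only if $a\in\underline{m}_U^h$, via the injectivity of $\Psi_h$ --- is exactly what makes condition (i) the right convergence condition, and your uniqueness argument via the injectivity of each $\Psi_n$ is sound.
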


\begin{proof}
The corollary follows immediately from lemma \ref{lemma:weakON}.
\end{proof}

\begin{remark}
\label{rmk:ONbase}
1) A family of elements $\bigl(e_i  \bigr)_{i\in I}$ as in corollary \ref{cor:ONbasis} plays the role of an orthonormal basis
of a Banach $B_U$-module. In particular it can be used to define the Fredholm series of a compact $B_U$-linear operator on
${\rm H}^1\bigl(\Gamma, D_U\bigr)$.

2) If $k\in U(K)$ is a weight, the image of a family of elements $\bigl(e_i\bigr)_{i\in I}$ of $D_U^o$ as at 1) above in
$D_k^o$ is a true ON basis of $D_k$ over $K$.

\end{remark}

Keeping the notations above, let $V\subset U$ be an affinoid disk with affinoid algebra $B_V$ and
let $B_V^o$ denote the bounded by $1$ rigid functions on $V$. We have

\begin{lemma}
\label{lemma:restrizione}
$D_U|_V:=D_U\hat{\otimes}_{B_U}B_V$ is an ON-able $B_V$-module with orthorormal basis $(e_i\hat{\otimes}1)_{i\in I}$ and in fact coincides with the $B_V$-module of completely continuous
$B_V$-linear maps from $A_V$ to $B_V$.
\end{lemma}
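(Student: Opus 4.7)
The plan is to exploit the fact that $V$ is an affinoid disk strictly contained in the wide open disk $U$, so that the restriction map $\Lambda_U \to B_V^o$ sends the weak ($\underline{m}_U$-adic) topology on $\Lambda_U$ to the $p$-adic topology on $B_V^o$. Identifying $\Lambda_U \cong \cO_K[\![T]\!]$ with $U=\{|T|<1\}$ and $V=\{|T|\le r\}$ for some $r = |\pi|^\alpha < 1$, any monomial $\pi^a T^b \in \underline{m}_U^{a+b}$ has $|\pi^a T^b|_V = |\pi|^{a+b\alpha}$, so for every $k\ge 0$ there exists $N(k)$ with $\underline{m}_U^{N(k)}$ mapping into $p^k B_V^o$. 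In particular, any sequence $(a_i)_i$ in $\Lambda_U$ tending to $0$ in the weak topology restricts to a sequence in $B_V^o$ tending to $0$ $p$-adically.

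Apply this to the decomposition of Corollary \ref{cor:ONbasis}: every $\mu \in D_U^o$ admits a unique expression $\mu = \sum_{i \in I} a_i e_i$ with $a_i \in \Lambda_U$ weakly null. For $b \in B_V^o$ one then has
$$\mu \otimes b \;=\; \sum_{i\in I} (a_i|_V \, b)\,(e_i \hat{\otimes} 1),$$
with $p$-adically null coefficients in $B_V^o$, the series converging in the $p$-adic completion $D_U \hat{\otimes}_{B_U} B_V$. Since this completion is by definition the space of $p$-adic limits of finitely supported tensors, one deduces
$$D_U \hat{\otimes}_{B_U} B_V \;=\; \Bigl\{\sum_{i \in I} c_i\, (e_i \hat{\otimes} 1) \ :\ c_i \in B_V,\ c_i \to 0 \text{ $p$-adically}\Bigr\}.$$
Uniqueness of the expansion is inherited from Corollary \ref{cor:ONbasis}: if $\sum c_i (e_i \hat{\otimes} 1) = 0$ with $p$-adically null $c_i$, reducing modulo successive powers of $p$ and lifting the resulting relations back to $\Lambda_U$-coefficients (using that restriction $\Lambda_U \to B_V^o$ is injective on elements which do not vanish to all orders in $\underline{m}_U$) forces all $c_i = 0$. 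This shows $(e_i \hat{\otimes} 1)_{i \in I}$ is an orthonormal basis.

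To identify $D_U \hat{\otimes}_{B_U} B_V$ with completely continuous $B_V$-linear maps $A_V \to B_V$, one works with the specific choice $(e_n)_{n\in\N}$ dual to the ON basis $(f_n)$ of $A_U^o$, i.e.\ the preimages under $\psi\colon D_U^o \xrightarrow{\sim} \prod_n \Lambda_U$ of the standard coordinate vectors. Then $e_n \hat{\otimes} 1$ acts on $A_V$ as the $B_V$-dual of $f_n^V := f_n \otimes 1$, and the orthonormal decomposition above becomes the evaluation map $\mu \mapsto (\mu(f_n^V))_n$, a bijection onto the $B_V$-module of $p$-adically null sequences. Since $(f_n^V)$ is an ON basis of $A_V$, such null sequences are precisely the completely continuous $B_V$-linear maps $A_V \to B_V$.

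The only substantive step is the passage from weak nullity in $\Lambda_U$ to $p$-adic nullity in $B_V^o$ afforded by the strict inclusion $V \subset U$; everything else unwinds from the definitions and from Corollary \ref{cor:ONbasis}.
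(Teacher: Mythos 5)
Your proof is correct and follows essentially the same route as the paper's: the whole content is the observation that the strict inclusion $V\subset U$ sends the $\underline{m}_U$-adic topology of $\Lambda_U$ into the $p$-adic topology of $B_V^o$ (i.e.\ $\underline{m}_U^{N(n)}$ dies in $B_V^o/p^nB_V^o$), after which the weak orthonormal basis of lemma \ref{lemma:weakON} becomes a genuine one. The only wobble is your uniqueness step: the coefficients $c_i$ live in $B_V$ and are not restrictions of elements of $\Lambda_U$, so ``lifting the relations back to $\Lambda_U$-coefficients'' does not parse; the clean fix is to tensor the isomorphism $\Psi_{N(n)}$ of lemma \ref{lemma:weakON} with $B_V^o/p^nB_V^o$ over $\Lambda_U$, which gives freeness of $D_U^o\otimes_{\Lambda_U}B_V^o/p^nB_V^o$ on the $e_i$ at every finite level and hence uniqueness in the limit.
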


\begin{proof}
Let us first make precise the completed tensor product in the statement of the lemma.
We have $D_U|_V=\bigl(D_U^o|_V\bigr)\otimes_{\cO_K}K$ where we define
$$
D_U^o|_V:=\lim_{\infty\leftarrow n} \Bigl(D_U^o\otimes_{\Lambda_U} B_V^o/p^nB_V^o\Bigr).
$$
Let us remark that for every $n\in \N$ there is $N:=N(n)\in\N$ such that
the image of $\underline{m}_U^N$ in $B_V^o/p^nB_V^o$ is $0$. Therefore we may write
$$
D_U^o|_V=\lim_{\infty\leftarrow n}\Bigl(D_U^o/\underline{m}_U^{N(n)}\otimes_{\Lambda_U}B_V^o/p^nB_V^o\Bigr).
$$
Therefore the first claim follows from lemma \ref{lemma:weakON} and the second is clear.

\end{proof}

Let now $(B, \underline{m})$ be any of the pairs $(\cO_K, \underline{m}_K)$ or $(\Lambda_U, \underline{m}_U)$, for $U\subset \cW^\ast$ a wide open disk defined over $K$
and let $A^o$, $D^o$ be either $A_k^o$, $D_k^o$, for some $k\in \cW^\ast(K)$ in the first case or $A_U^o$, $D_{U}^o$ in the second.

\begin{definition}
\label{def:filtration}

i) We define for every $n\in \N$ the following $B/\underline{m}^n$-submodule ${\rm Fil}_n (A^o/\underline{m}^nA^o)$ of $A^o/\underline{m}^nA^o$ by setting
$$
{\rm Fil}_n (A^o/\underline{m}^nA^o):=\oplus_{j=0}^n\bigl(\underline{m}^j/\underline{m}^n\bigr)f_j,
$$
where  $\{f_i\}_{i\in \N}$ is the orthornomal basis of $A^o$ described above.\smallskip

ii) We define the following filtration ${\rm Fil}^\bullet(D^o)$ of $D^o$. For $n\in \N$ we set
$$
{\rm Fil}^n(D^o):=\{\mu\in D^o\quad |\quad \mu(f_j)\in \underline{m}_B^{n-j} \mbox{ for all }
0\le j\le n.\}
$$
\end{definition}

\begin{proposition}
\label{prop:finitefil}

i) $D^o/{\rm Fil}^n(D^o)$ is a finite
$B/\underline{m}^n$-module and consequently an artinian $\cO_K$-module. Moreover the image of ${\rm Fil}^n(D^o)$ in
$D^o/\underline{m}^nD^o$, which we identify with the $B/\underline{m}^n$-dual of $A^o/\underline{m}^nA^o$, is the
orthogonal complement of ${\rm Fil}_n(A^o/\underline{m}^nA^o)$.

ii) For every $\gamma\in \Xi(\Z_p)$ and $\mu\in {\rm Fil}^n(D^o)$ we have
$\mu|\gamma\in {\rm Fil}^n(D^o)$. In particular $D^o/{\rm Fil}^n(D^o)$
is an artinian $\cO_K$ and $\Xi(\Z_p)$ module for every $n\ge 0$.

iii) The natural $B$-linear morphism $\displaystyle D^o\lra \lim_{\infty\leftarrow n}D^o/
{\rm Fil}^n(D^o)$ is an isomorphism.
\end{proposition}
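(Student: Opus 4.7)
My plan is to deduce all three parts by making explicit the isomorphism $\psi\colon D^o \to \prod_{n\in \N} B$ of Remark \ref{rmk:topologies}(ii), which sends $\mu \mapsto \bigl(\mu(f_n)\bigr)_n$ and identifies the weak topology on $D^o$ with the product of $\underline{m}$-adic topologies. Unwinding the definition of ${\rm Fil}^n(D^o)$ under $\psi$ gives the explicit description
\[
{\rm Fil}^n(D^o) \;\cong\; \prod_{j=0}^n \underline{m}^{n-j} \;\times\; \prod_{j>n} B,
\]
so that $D^o/{\rm Fil}^n(D^o)\cong \prod_{j=0}^n B/\underline{m}^{n-j}$ is a finite product of finite $B/\underline{m}^n$-modules, each of which is artinian over $\cO_K$. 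For the orthogonal-complement assertion in (i), I would observe that the pairing $A^o/\underline{m}^n A^o \times D^o/\underline{m}^n D^o \to B/\underline{m}^n$ is diagonal in the basis $\{f_j\}$, so the annihilator of ${\rm Fil}_n(A^o/\underline{m}^n A^o)=\bigoplus_{j=0}^n (\underline{m}^j/\underline{m}^n)f_j$ consists exactly of those tuples $(a_j)$ with $a_j\in \underline{m}^{n-j}/\underline{m}^n$ for $j\le n$ and $a_j$ arbitrary for $j>n$, which is precisely the image of ${\rm Fil}^n(D^o)$.

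The core computation is (ii). For $\gamma \in \Xi(\Z_p)$ with entries $a,b,c,d$ (so $a\in \Z_p^\times$ and $c\in p\Z_p$), the definitions give
\[
(\gamma f_j)(1,z)=k(a+cz)\left(\frac{b+dz}{a+cz}\right)^j=k(a)a^{-j}(b+dz)^j\cdot k\bigl(1+(c/a)z\bigr)\bigl(1+(c/a)z\bigr)^{-j}.
\]
Since $k\in \cW^\ast$, the character $k$ is analytic on $1+p\Z_p$, so $k(1+u)(1+u)^{-j}=\sum_{n\ge 0}\beta_n u^n$ with uniformly bounded $\beta_n\in B$. Substituting $u=(c/a)z$ with $c/a\in p\Z_p$ shows that the $z^n$-coefficient of this factor lies in $p^n B\subseteq \underline{m}^n$. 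Multiplying by the degree-$j$ polynomial $(b+dz)^j$ and reading off the coefficient of $z^i$ in the basis expansion $\gamma f_j=\sum_i c_{i,j}f_i$ yields $c_{i,j}\in \underline{m}^{\max(0,\,i-j)}$. Granting this, for $\mu\in {\rm Fil}^n(D^o)$ and $j\le n$ the identity $(\mu|\gamma)(f_j)=\sum_i c_{i,j}\mu(f_i)$ reduces to a case check: for $i<j$ one has $\mu(f_i)\in \underline{m}^{n-i}\subseteq \underline{m}^{n-j}$; for $j\le i\le n$ the exponents add to $(i-j)+(n-i)=n-j$; and for $i>n$ one has $c_{i,j}\in \underline{m}^{i-j}\subseteq \underline{m}^{n-j}$. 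The sum converges $\underline{m}$-adically since $c_{i,j}\to 0$, and its limit lies in the closed subgroup $\underline{m}^{n-j}$, so $\mu|\gamma\in {\rm Fil}^n(D^o)$; the ``in particular'' assertion is then formal.

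For (iii), under $\psi$ the transition map $D^o/{\rm Fil}^{n+1}(D^o)\to D^o/{\rm Fil}^n(D^o)$ is the obvious truncation-and-reduction in each coordinate, so taking inverse limits the $j$-th coordinate contributes $\lim_m B/\underline{m}^m=B$ by $\underline{m}$-adic completeness of $B$. Thus $\lim_n D^o/{\rm Fil}^n(D^o)\cong \prod_{j\ge 0}B\cong D^o$, and the natural map becomes the identity under $\psi$; separatedness of the filtration also falls out. The main obstacle is the valuation bound in (ii): this requires using $k\in \cW^\ast$ essentially (so that $k$ is analytic on a neighborhood of $1+p\Z_p$ with bounded Taylor coefficients) and the defining condition $c\in p\Z_p$ of the semigroup $\Xi(\Z_p)$, which together produce the factor $(c/a)^n$ supplying the $\underline{m}$-adic smallness.
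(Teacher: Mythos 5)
Your proof is correct. Parts (i) and (iii) coincide with the paper's argument: both rest on the identification $\psi\bigl({\rm Fil}^n(D^o)\bigr)=\prod_{j\le n}\underline{m}^{n-j}\times\prod_{j>n}B$, the resulting finite product description of $D^o/{\rm Fil}^n(D^o)$, the diagonal pairing for the annihilator statement, and $\underline{m}$-adic completeness of $B$ for the inverse limit. Part (ii) is where you genuinely diverge: the paper reduces to the decomposition $\Xi(\Z_p)=N^{\rm opp}T^+N$ and checks the topological generators $\left(\begin{smallmatrix}1&1\\0&1\end{smallmatrix}\right)$, $\left(\begin{smallmatrix}1&0\\0&p\end{smallmatrix}\right)$ and $\left(\begin{smallmatrix}1&0\\p&1\end{smallmatrix}\right)$ separately (explicitly leaving the last, which is the only nontrivial one, to the reader), whereas you prove a uniform bound $c_{i,j}\in\underline{m}^{\max(0,\,i-j)}$ on the matrix coefficients of an arbitrary $\gamma\in\Xi(\Z_p)$ in the basis $\{f_i\}$ and run a single case check. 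Your route avoids having to justify that preservation by topological generators of $N$ and $N^{\rm opp}$ propagates to the full (closed) subgroups, and it actually supplies the computation the paper omits for $N^{\rm opp}$; the paper's route isolates more cleanly where each hypothesis enters (the binomial expansion for $N$, powers of $p$ for $T^+$, analyticity of $k$ for $N^{\rm opp}$). In both arguments the one point to be careful about is the same: you need the coefficients $\beta_n$ of $k(1+u)(1+u)^{-j}$ to lie in $B$ integrally, which is exactly the accessibility hypothesis $k\in\cW^\ast$ already invoked in Lemma \ref{lemma:Akaction}; granting that, your estimate and the ensuing three-case verification are sound, as is the appeal to continuity of $\mu$ and closedness of $\underline{m}^{n-j}$ to sum the series.
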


\begin{proof}
Let us recall the $B$-linear map $\psi\colon D^o\lra \prod_{n\in \N} B$ of remark \ref{rmk:topologies}  defined by
$\psi(\mu)=\bigl(\mu(f_n)  \bigr)_{n\in \N}$. Let us remark that
$$
\psi\bigl({\rm Fil}^n(D^o)\bigr)=\prod_{j=0}^{n-1}\underline{m}_B^{n-j}\times \prod_{m\ge n}B,
$$
and therefore $\psi$ induces a $B$-linear isomorphism $D^o/{\rm Fil}^n(D^o)
\cong \prod_{j=0}^{n-1}B/\underline{m}_B^{n-j}$.
This proves the first statement in i) and also iii) because it shows that $D^o$ is separated and complete in the topology given by
${\rm Fil}^\bullet(D^o)$. For the second statetent of i) let us remark that ${\rm Ann}_{B/\underline{m}^n}\bigl(\underline{m}^i/\underline{m}^n\bigr)=\underline{m}^{n-i}/\underline{m}^n$ for all
$0\le i\le n$.

In order to prove ii) we could proceed as in the proof of lemma \ref{lemma:Akaction}, or better let us recall
that we have a natural decomposition $\Xi(\Z_p)=N^{\rm opp}T^+N$ where $N$ is the subgroup of $\GL_2(\Z_p)$
of upper triangular matrices, $N^{\rm opp}$ is the subgroup of lower triangular matrices and $T^+$ is the semigroup
of matrices $\left( \begin{array}{cc}{a} & {0} \\ {0} &{d}\end{array}\right)$ with $a\in \Z_p^\times$ and $d\in \Z_p-\{0\}$.
In order to show that ${\rm Fil}^n(D^o)$ is preserved by the matrices in $N$ it is enough to do it
for $\gamma:=\left( \begin{array}{cc}{1} & {1} \\ {0} &{1}\end{array}\right)$ (which is a topological generator of $N$).
Let $\mu\in {\rm Fil}^n(D^o)$, then we have
$(\mu |\gamma)(f_j)=\mu(\gamma f_j)$ and moreover
$$
(\gamma f_j)(x,y)=f_j(x, y+x)=k(x)(1+y/x)^j=\sum_{k=0}^j\left( \begin{array}{cc}{n} \\ {k}\end{array}\right)k(x)(y/x)^k
=\sum_{k=0}^j\left( \begin{array}{cc}{n} \\ {k}\end{array}\right)f_k(x,y).
$$
Therefore $(\mu |\gamma)(f_j)\in \sum_{k=0}^j\underline{m}_B^{n-k}\subset \underline{m}_B^{n-j}.$

To show that the matrices in $T^+$ preserve
${\rm Fil}^n(D^o)$ is is enough to show it for
$\delta:=\left( \begin{array}{cc}{1} & {0} \\ {0} &{p}\end{array}\right)\in T^+$.
We have $(\delta f_j)(x,y)=f_j(x, py)=k(x)p^j(y/x)^j=p^jf_j(x,y).$ Therefore for all
$\mu\in {\rm Fil}^n(D^o)$ we have $(\mu |\delta)(f_j)=p^j\mu(f_j)\in p^j\underline{m}_B^{n-j}\subset \underline{m}_B^{n-j}.$

Finally we leave it to the reader to check that for $\epsilon:=\left( \begin{array}{cc}{1} & {0} \\ {p} &{1}\end{array}\right)\in
N^{\rm opp}$ (which topologically generates this group), if $\mu\in {\rm Fil}^n(D^o)$ then
$(\mu |\epsilon)(f_j)\in \underline{m}_B^{n-j}$.
\end{proof}

We'd now like to show that the formation of the above defined filtrations commutes with base change.
More precisely, let as at the beginning of this section $U\subset \cW^\ast$ denote a wide open disk
and $\Lambda_U$ the $\cO_K$-algebra of bounded rigid functions on $U$. Let $k\in U(K)$.
Then if we denote by $t_k\in \Lambda_U$ a uniformizer at $k$, i.e. an element which vanishes of order $1$
at $k$ and nowhere else, then on the one hand $(\pi, t_k)=\underline{m}_U$ (let us recall that we denoted by
$\pi$ a fixed uniformizer of $K$) and we have an exact sequence
$$
0\lra \Lambda_U\stackrel{t_k}{\lra}\Lambda_U\stackrel{\rho_k}{\lra} \cO_K\lra 0
$$
which we call the specialization exact sequence.  Moreover the weak topolgy
on $\Lambda_U$ induces the $p$-adic topology on $\cO_K\cong \Lambda_U/t_k\Lambda_U$.

We have natural specialization maps

\begin{equation}\label{specialization_defn}
\begin{array}{rclcrcl}
A_{U}^o &\lra& A_k^o & and & \eta_k : D_{U}^o &\lra& D^o_k\\
f &\longmapsto & f_k && \mu &\longmapsto & \mu_\kappa\\
\end{array}
\end{equation}
where $f_U(x,y) := f(x,y)(k)$ and $\mu_k \in D_k^o$ is given by
$\mu_k \colon f \in A_k^o \longmapsto \mu(f_U)(k)$, where $f_U\in A_{U}^o$ is given
by $f_U(x,y) :=k_U(x)f(1,y/x)$.

\begin{proposition}\label{specialization_prop}
Let $U \subseteq \cW^\ast$ be a wide open disk, let $k\in U(K)$, and let $t_k\in \Lambda_U$
be a uniformizer at $k$.   Then we have canonical exact
sequences of $\Xi(\Z_p)$-modules
$$
\begin{array}{rccccccl}
0 &\lra& A_{U}^o &\buildrel t_k \over \lra& A_{U}^o &\buildrel \over \lra & A_k^o&\ra 0\\
\\
0 &\lra& D_{U}^o &\buildrel t_k \over \lra& D_{U}^o &\buildrel \eta_k \over \lra & D_k^o&\ra 0.\\
\end{array}
$$
\end{proposition}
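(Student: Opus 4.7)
The plan is to prove each sequence separately using the explicit $\Lambda_U$-module structures of $A_U^o$ and $D_U^o$; $\Xi(\Z_p)$-equivariance will follow automatically. I would fix an isomorphism $\Lambda_U \cong \cO_K[\![T]\!]$ so that $k \in U(K)$ corresponds to a point $\alpha$ in the maximal ideal of $\cO_K$, and take $t_k := T - \alpha$ as a uniformizer at $k$. The key algebraic observation is that for every $N \ge 0$ the image of $t_k$ in $\Lambda_U/\pi^N\Lambda_U \cong (\cO_K/\pi^N)[\![T]\!]$ is $T - \bar\alpha$, which is a monic polynomial in $T$, hence a non-zerodivisor.

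For the first sequence, I would use the orthonormal basis $\{f_n\}$ to identify $A_U^o$ with the $\Lambda_U$-module of formal series $\sum_n a_n f_n$, $a_n \in \Lambda_U$, $a_n \to 0$ $\pi$-adically; then the specialization is $(a_n) \mapsto (\rho_k(a_n))$, where $\rho_k\colon\Lambda_U \to \cO_K$ is evaluation at $k$. Injectivity of multiplication by $t_k$ is immediate since $\Lambda_U$ is a domain. Surjectivity follows because the inclusion $\cO_K \hookrightarrow \Lambda_U$ as constants is a $\cO_K$-linear section of $\rho_k$ preserving $\pi$-adic valuation, so it lifts any $(\bar a_n) \in A_k^o$ coordinatewise. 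For middle exactness, suppose $(a_n) \in A_U^o$ with $\rho_k(a_n) = 0$ for all $n$, and write uniquely $a_n = t_k b_n$ in $\Lambda_U$; writing $a_n = \pi^{N_n} c_n$ with $N_n \to \infty$, the relation $t_k b_n \equiv 0 \pmod{\pi^{N_n}}$ and the fact that $t_k$ is a non-zerodivisor modulo $\pi^{N_n}$ force $b_n \in \pi^{N_n} \Lambda_U$, so $b_n \to 0$ $\pi$-adically and $(b_n) \in A_U^o$.

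For the second sequence, I would apply the $\Lambda_U$-linear isomorphism $\psi\colon D_U^o \xrightarrow{\sim} \prod_n \Lambda_U$ of remark \ref{rmk:topologies}(ii), $\mu \mapsto (\mu(f_n))_n$, together with the analogous $D_k^o \cong \prod_n \cO_K$. Under these identifications $\eta_k$ becomes the coordinatewise specialization $\prod_n \rho_k$, because for a basis element $f_n \in A_k^o$ the canonical lift $(f_n)_U$ defined by $(x,y) \mapsto k_U(x)(y/x)^n$ agrees with the basis element $f_n \in A_U^o$. Since arbitrary products are exact in the category of $\Lambda_U$-modules, the already established sequence $0 \to \Lambda_U \xrightarrow{t_k} \Lambda_U \xrightarrow{\rho_k} \cO_K \to 0$ yields the desired sequence upon taking $\prod_n$. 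Finally, $\Xi(\Z_p)$-equivariance is automatic in both sequences: multiplication by $t_k$ is $\Lambda_U$-linear and therefore commutes with the $\Xi(\Z_p)$-action, while the two specialization maps are induced by the $\cO_K$-algebra homomorphism $\rho_k$ applied coefficient-wise and are compatible with the formulas $(\gamma f)(v) = f(v\gamma)$ and $(\mu\mid\gamma)(f) = \mu(\gamma f)$.

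The main technical point is middle exactness of the first sequence: the nontrivial input is that dividing by $t_k$ does not increase the $\pi$-adic valuation of coefficients, which reduces to the fact that $t_k$ remains a non-zerodivisor in $\Lambda_U/\pi^N \Lambda_U$ for every $N$ (equivalently, that $\pi$ and $t_k$ generate distinct height-one primes in the regular local UFD $\Lambda_U$). Once this is in place, everything else is formal.
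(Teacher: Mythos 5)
Your proof is correct. Note, however, that the paper does not actually write out an argument for this proposition: it simply refers to \cite{harris_iovita_stevens}, \S 3, where the analogous statement is proved with $\Lambda_U$ replaced by an affinoid algebra, and asserts that ``the arguments are the same in this case.'' Your write-up therefore supplies exactly the content the paper elides, and it does so along the expected lines: coordinatewise reduction via the orthonormal basis $\{f_n\}$ and the isomorphism $\psi\colon D^o_U\cong\prod_n\Lambda_U$, so that both sequences become products of copies of the specialization sequence $0\to\Lambda_U\stackrel{t_k}{\to}\Lambda_U\stackrel{\rho_k}{\to}\cO_K\to 0$. The genuine added value is that you correctly isolate the one step that is not purely formal when passing from an affinoid (where $D$ is honestly orthonormalizable over a Banach algebra) to $\Lambda_U$ with its weak topology, namely middle exactness on the $A$-side: dividing a null sequence of coefficients by $t_k$ must again yield a null sequence, which you reduce to $t_k$ being a non-zerodivisor in $\Lambda_U/\pi^N\Lambda_U$ for every $N$ (equivalently, $(\pi)$ and $(t_k)$ are distinct height-one primes of the regular local ring $\Lambda_U$). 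Two small points worth making explicit in a final version: first, the reduction to $t_k=T-\alpha$ is harmless because any uniformizer at $k$ in the paper's sense generates $\ker(\rho_k)$ and hence is a unit multiple of $T-\alpha$; second, the $\Xi(\Z_p)$-equivariance of $\eta_k$ is not quite ``automatic'' from $\Lambda_U$-linearity alone, since the lift $f\mapsto f_U$ used to define $\eta_k$ is itself not equivariant --- one either checks, as you indicate, that the matrix coefficients of $\gamma$ on the basis $\{f_n\}$ over $\Lambda_U$ specialize under $\rho_k$ to those over $\cO_K$, or one observes that $\gamma(f_U)-(\gamma f)_U$ lies in $t_kA^o_U$ (using the already-established exactness of the first sequence) and hence is killed by $\rho_k\circ\mu$.
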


For the proof if $\Lambda_U$ is replaced by an affinoid algebra  see \cite{harris_iovita_stevens}, section \S 3.
The arguments are the same in this case.

\begin{lemma}
\label{lemma:filspec}
With notations as above we have: $\eta_k\bigl({\rm Fil}^n(D_{U}^o)\bigr)={\rm Fil}^n(D_k^o).$
\end{lemma}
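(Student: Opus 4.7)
The plan is to translate both the filtration and the specialization map $\eta_k$ into the explicit coordinate model furnished by Remark \ref{rmk:topologies}(ii), where matters become essentially arithmetic inside the ring $\Lambda_U$.

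First, I would use the $\Lambda_U$-linear isomorphism
$$
\psi\colon D_U^o\;\stackrel{\sim}{\lra}\;\prod_{n\in\N}\Lambda_U,\qquad \mu\longmapsto\bigl(\mu(f_n)\bigr)_{n\in\N},
$$
and its analogue for $D_k^o$. As recalled in the proof of Proposition \ref{prop:finitefil}, $\psi$ identifies ${\rm Fil}^n(D_U^o)$ with $\prod_{j=0}^{n-1}\underline{m}_U^{n-j}\times \prod_{j\ge n}\Lambda_U$, and likewise ${\rm Fil}^n(D_k^o)$ with $\prod_{j=0}^{n-1}\pi^{n-j}\cO_K\times \prod_{j\ge n}\cO_K$.

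Next, I would verify that under $\psi$ the map $\eta_k$ corresponds to coordinate-wise application of $\rho_k\colon\Lambda_U\to\cO_K$. By the definition in \eqref{specialization_defn}, for $\mu\in D_U^o$ we have $\mu_k(f)=\mu(f_U)(k)=\rho_k(\mu(f_U))$ for every $f\in A_k^o$. Taking $f=f_{j,k}$, the lift prescribed in \eqref{specialization_defn} is $(f_{j,k})_U(x,y)=k_U(x)(y/x)^j=f_{j,U}(x,y)$, since the basis elements are exactly homogeneous of degree $k_U$. Hence $\mu_k(f_{j,k})=\rho_k(\mu(f_j))$, as desired.

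It then remains to check that for every integer $m\ge 0$ one has $\rho_k(\underline{m}_U^m)=\pi^m\cO_K$. This follows from $\underline{m}_U=(\pi,t_k)$ together with $\rho_k(\pi)=\pi$ and $\rho_k(t_k)=0$: expanding $(\pi,t_k)^m$ and applying $\rho_k$ kills all terms involving $t_k$ and leaves exactly $\pi^m\cO_K$. Combined with the previous two steps, this gives $\eta_k({\rm Fil}^n(D_U^o))\subseteq {\rm Fil}^n(D_k^o)$ (the inclusion $\rho_k(\underline{m}_U^{n-j})\subseteq \pi^{n-j}\cO_K$ being the easy direction) and also the reverse inclusion via surjectivity, concluding the proof. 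The only place where one needs to be careful is the second step, namely matching the lift $f_U$ used to define $\eta_k$ with the basis element $f_{j,U}$ in $A_U^o$; once the homogeneity is used this is immediate, but it is the point where the two $\psi$-isomorphisms have to be reconciled and so is the main (modest) obstacle.
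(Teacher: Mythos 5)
Your proof is correct and follows essentially the same route as the paper: both arguments pass through the isomorphism $\psi$ of Remark \ref{rmk:topologies}, identify $\eta_k$ with the coordinate-wise application of $\rho_k$, and conclude from $\rho_k(\underline{m}_U^m)=\underline{m}_K^m$. Your explicit verification that the lift $f_U$ of a basis element matches $f_{j,U}$ is a detail the paper leaves implicit, but the argument is the same.
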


\begin{proof}
Let us recall that we have two commutative diagrams
$$
\begin{array}{ccccccccc}
D_{U}^o&\stackrel{\psi_{U}}{\cong}&\prod_{n\in \N}\Lambda_U&&D_k^o&\stackrel{\psi_k}{\cong}&
\prod_{n\in\N}\cO_K\\
\cup&&\cup&\mbox{ and }&\cup&&\cup\\
{\rm Fil}^n(D_{U})^o&\cong&\prod_{j=0}^{n-1}\underline{m}_U^{n-j}\times\prod_{m\ge n}\Lambda_U&&
{\rm Fil}^n(D_k^o)&\cong&
\prod_{j=0}^{n-1}\underline{m}_K^{n-j}\times\prod_{m\ge n}\cO_K
\end{array}
$$
The lemma follows observing that the diagram
$$
\begin{array}{llllllll}
D_{U}^o&\stackrel{\eta_k}{\lra}&D_k^o\\
\downarrow\psi_{k_U}&&\downarrow\psi_{k}\\
\prod_{n\in \N}\Lambda_U&\stackrel{\prod_{n\in \N}\rho_k}{\lra}&\prod_{n\in \N}\cO_K
\end{array}
$$
is commutative and that for every $n\in \N$ we have $\rho_k(\underline{m}_U^n)=\underline{m}_K^n.$
\end{proof}

Let us now suppose that $k\in \cW^\ast(K)$ is a classical weight i.e. $k$ is associated to a
pair $(k,k)$ with $k\in \N$. We define ${\cal P}_k^o\subset A_k^o$ as the subset of functions $f\colon T_0\to \cO_K$ which are homogeneous polynomials of degree $k$. It is an $\cO_K$-submodule invariant for the action of the semigroup $\Xi(\Z_p)\cap \GL_2(\Q_p)$. Dualizing we obtain a $\Xi(\Z_p)\cap \GL_2(\Q_p)$-equivariant, surjective, $\cO_K$-linear map
$$
\rho_{k}\colon D_k^o\lra V_k^o:={\rm Hom}_{\cO_K}\big({\cal P}_k^o,\cO_K\big).
$$
Let us remark that we may identify $V_k^o$ with ${\Symm}^k(T)\otimes_{\Z_p} \cO_K$ with its natural right action of $\Xi(\Z_p)\cap \GL_2(\Q_p)$.  For every $n\ge 1$ we set ${\rm Fil}^n(V_k^o):=\rho_k({\rm Fil}^n(D_k^o))$. We get a filtration inducing the $p$-adic topology on $V_k^o$. We view $V_k^o$ as the continuous representation of $\Gamma$ defined by the projective system $\big(V_{k,m}^o\big)_{m\in\N}$ with  $V_{k,m}^o:= V_k^o/{\rm Fil}^m V_k^o$ and we set $V_k:=V_k^o\otimes_{\cO_K} K$.

Therefore, if $U\subset \cW^\ast$ is a wide open disk which contains the classical weight
$k$, we have natural $\Xi(\Z_p)\cap \GL_2(\Q_p)$-equivariant maps
\begin{equation}\label{rhok}
D_{U}\lra D_k \stackrel{\rho_{k}}{\lra} V_k,
\end{equation}
and the maps are compatible with the filtrations.

\subsection{Overconvergent and $p$-Adic Families of Modular Symbols}
\label{sec:overmodularsymbols}

\bigskip
\noindent
Let us fix an integer $N\ge 3$ as
in the introduction and let $\Gamma=\Gamma_1(N)\cap \Gamma_0(p)\subset \Iw\subset \Xi(\Z_p)$.
Let us also fix a wide open disk $U\subset \cW^\ast$ and its associated universal character $k_U$
and let $k\in U(K)$ be a weight.
Let us also recall that we have natural orthonormalizable $\Lambda_{U,K}$-Banach-modules
$D_U:= D_{k_U}$ with continuous action of the monoid $\Xi(\Z_p)$ and
specialization maps $D_U\lra D_k$ which are $\Xi(\Z_p)$-equivariant. In particular we'll be interested in the
$\Lambda_{U,K}$-module $\rH^1\bigl(\Gamma,
D_U\bigr)$, which we call {\bf module of $p$-adic families of modular symbols},
 the $K$-vector space $\rH^1\bigl(\Gamma, D_k\bigr)$, which we call {\bf overconvergent modular symbols of weight $k$} and
the specialization map $\rH^1\bigl(\Gamma, D_U\bigr)\lra \rH^1\bigl(\Gamma, D_k\bigr)$.

Moreover, if $k\in U(K)$ happens to be a classical weight we also have the module
$\rH^1\bigl(\Gamma, V_k\bigr)$, which we call {\bf module of classical modular
symbols}, and maps $\rho_{k}\colon \rH^1\bigl(\Gamma, D_k\bigr)\lra \rH^1\bigl(\Gamma, V_k\bigr)$ obtained from (\ref{rhok}) in \S\ref{sec:locallyanalytic}.

\bigskip
\noindent
{\em Relationship with continuous $\Gamma$-cohomology}.

\bigskip\noindent
In the notations above let $D$ be any of the $\Xi(\Z_p)$-modules $D_{U}^o, D_k^o$, with $k\in U(K)$  and if $k\in U(K)$
is a classical weight $V_k^o$. We consider the pair $\bigl( D, {\rm Fil}^\bullet(D)\bigr)$ and wish to study the relationship
between $\rH^1(\Gamma, D)$ and $\rH^1_{\rm cont}\Bigl(\Gamma, \bigr(D/{\rm Fil}^n(D)\bigl)_{n\in \N}\Bigr)$.

Let us recall that if we denote by ${\rm Cont}(\Gamma)$ the category of projective systems
$(M_n)_{n\in \N}$ where each $M_n$ is a discrete, torsion $\Gamma$-module, then
$\rH^i_{\rm cont}(\Gamma, - )$ is the $i$-th right derived functor of the functor
$$
(M_n)_{n\in \N}\lra \rH^0\bigl(\Gamma, \lim_{\infty \leftarrow n}M_n\bigr).
$$
The degeneration of the Leray spectral sequence for the composition of the two left exact functors in
the above definition gives for every object  $(M_n)_{n\in \N}\in {\rm Cont}(\Gamma)$ an exact sequence of abelian groups
$$
0\lra {\lim}^{(1)}\rH^0(\Gamma, M_n)\lra \rH^1_{\rm cont}\bigl(\Gamma, (M_n)_{n\in \N}\bigr)\lra
\lim_{\infty \leftarrow n}\rH^1(\Gamma, M_n)\lra 0.
$$
For a pair $\bigl(D, {\rm Fil}^\bullet(D)\bigr)$ as above we have the projective system of {\bf artinian} $\cO_K$ and $\Gamma$-modules $(D_n)_{n\in \N}$, where
$D_n:=D/{\rm Fil}^n(D)$ and the morphisms in the projective limit are the natural projections. Therefore the projective system $\Bigl(\rH^0(\Gamma, D_n)
\Bigr)_{n\in \N}$ satisfies the Mittag-Leffler condition and  consequently we have a natural isomorphism
$$
\rH^1_{\rm cont}\bigl(\Gamma, (D_n)_{n\in \N}\bigr)\cong \lim_{\infty \leftarrow n}\rH^1(\Gamma, D_n).
$$

On the other hand we have:

\begin{lemma}
\label{lemma:arithmeticoh}
The natural map
$$
\rH^1(\Gamma, D)\lra \lim_{\infty\leftarrow n}\rH^1(\Gamma, D_n)
$$
is an isomorphism.
\end{lemma}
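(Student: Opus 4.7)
The plan is to realize $\rH^1(\Gamma, D)$ as the first cohomology of the inhomogeneous bar cochain complex $C^\bullet(\Gamma, -)$, where $C^i(\Gamma, M) := \mbox{Maps}(\Gamma^i, M)$ for any abstract $\Gamma$-module $M$. Since $D \cong \lim_n D_n$ by Proposition \ref{prop:finitefil}(iii), and the functor $\mbox{Maps}(\Gamma^i, -)$ (as maps of sets) commutes with arbitrary inverse limits, we obtain a termwise identification $C^\bullet(\Gamma, D) \cong \lim_n C^\bullet(\Gamma, D_n)$. Moreover each transition $C^i(\Gamma, D_{n+1}) \to C^i(\Gamma, D_n)$ is surjective, since the surjection of $\Gamma$-modules $D_{n+1} \to D_n$ admits a set-theoretic section.

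Next I would invoke the standard $\lim/{\lim}^{(1)}$ short exact sequence for inverse limits of cochain complexes whose transition maps are termwise surjective: for every $i \geq 1$ one has
$$
0 \lra {\lim}^{(1)}_n \rH^{i-1}(\Gamma, D_n) \lra \rH^i(\Gamma, D) \lra \lim_n \rH^i(\Gamma, D_n) \lra 0.
$$
Specializing to $i = 1$ reduces the lemma to the vanishing ${\lim}^{(1)}_n \rH^0(\Gamma, D_n) = 0$.

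This vanishing follows from the Mittag-Leffler condition on the tower $\bigl(D_n^\Gamma\bigr)_n$. By Proposition \ref{prop:finitefil}(i)--(ii) each $D_n$ is an artinian $\cO_K$-module equipped with a $\Gamma$-action, so $\rH^0(\Gamma, D_n) = D_n^\Gamma$ is an artinian $\cO_K$-submodule of $D_n$. The transitions $D_{n+1}^\Gamma \to D_n^\Gamma$ need not be surjective, but for fixed $n$ the descending chain of $\cO_K$-submodules $\bigl\{\mbox{Im}(D_{n+k}^\Gamma \to D_n^\Gamma)\bigr\}_{k \geq 0}$ of $D_n^\Gamma$ must stabilize by the descending chain condition; this is precisely the Mittag-Leffler property for the tower, so ${\lim}^{(1)}_n \rH^0(\Gamma, D_n) = 0$ and the lemma follows.

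I do not expect any serious obstacle: the argument is entirely formal once the artinian-quotient structure from Proposition \ref{prop:finitefil} is in hand. The subtlety worth flagging is that the left-hand side $\rH^1(\Gamma, D)$ is \emph{ordinary} (abstract) group cohomology, for which the bar complex -- insensitive to any topology on $D$ -- gives the correct computation; it is exactly this compatibility with naive inverse limits that makes the Milnor-type sequence apply and forces the two cohomologies to agree.
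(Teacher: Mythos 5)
Your proof is correct and is essentially the paper's argument in standard packaging: the paper performs the diagram chase on the cocycle/coboundary sequences by hand and concludes with the five lemma, which is exactly the content of the Milnor $\lim$--${\lim}^{(1)}$ sequence you invoke, and both proofs ultimately rest on the same two inputs — that cochains (and cocycles) commute with the inverse limit $D\cong\lim_n D_n$, and that Mittag--Leffler holds in degree $0$ because the $D_n$ are artinian $\cO_K$-modules. If anything, your route is slightly cleaner, since it makes the paper's appeal to finite generation of $\Gamma$ (used there to show $Z^1(\Gamma,D)\to\lim_n Z^1(\Gamma,D_n)$ is surjective) unnecessary.
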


\begin{proof}
If $M$ is a $\Gamma$-module we denote by $B^1(\Gamma, M)$ the group of $1$-coboundaries with coefficients in $M$
and by $Z^1(\Gamma, M)$ the group of $1$-cocycles with values in $M$ (no continuity condition is involved in the
definition of either coboundaries or cocycles).
We have a natural commutative diagram with exact rows:

$$
\begin{array}{ccccccccccc}
\rH^0(\Gamma, D)&\hookrightarrow&B^1(\Gamma, D)&\stackrel{d}{\lra}&Z^1(\Gamma, D)&\lra& \rH^1(\Gamma, D)&\lra&0\\
\downarrow f&&\downarrow g&&\downarrow h&&\downarrow u\\
\lim_{\leftarrow}\rH^0(\Gamma, D_n)&\hookrightarrow&\lim_{\leftarrow}B^1(\Gamma, D_n)&\stackrel{\lim_{\leftarrow}d_n}{\lra}&
\lim_{\leftarrow}Z^1(\Gamma, D_n)&\stackrel{\alpha}{\lra}&\lim_{\leftarrow}\rH^1(\Gamma, D_n)
\end{array}
$$
We first claim that the map $\alpha$ in the above diagram is surjective. For this, let us recall that for every
$n\in\N$ we have an exact sequence of abelian groups
$$
0\lra B^1(\Gamma, D_n)/\rH^0(\Gamma, D_n)\lra Z^1(\Gamma, D_n)\lra \rH^1(\Gamma, D_n)\lra 0.
$$
Then let us remark that $B^1(\Gamma, D_n)$ is an artinian $\cO_K$-module and so is $B^1(\Gamma, D_n)/H^0(\Gamma, D_n)$. This implies that the projective system
$\bigl(B^1(\Gamma, D_n)/\rH^0(\Gamma, D_n)   \bigr)_{n\in \N}$ satisfies the Mittag-Leffler condition and so $\alpha$ is indeed surjective.

Moreover, let us remark that in the above diagram the maps $f$, $g$, $h$ are all isomorphisms.  For $f$ and $g$ this follows immediately from the definition and the fact that $\displaystyle D=\lim_{\infty \leftarrow n}D_n$.
The injectivity of $h$ follows from the fact that $\cap_{n\ge 0}{\rm Fil}^n(D)=\{0\}$.
On the other hand let us  first remark that $\Gamma$ is a finitely generated group and moreover
that a $1$-cocycle
of $\Gamma$ is determined by its values on a family of generators. This implies that $h$ is surjective.

Now by the five lemma,
$u$ is then an isomorphism as well.
\end{proof}

\begin{remark}
\label{rmk:toph1}
In the notations of the proof of lemma \ref{lemma:arithmeticoh}, let us remark that
for every $n\in \N$, $Z^1(\Gamma, D_n)$ is also an artinian $\cO_K$-module. The reason is: $\Gamma$ is a finitely generated group and
a $1$-cocycle is determined on its values on a set of generators of $\Gamma$. It follows that
$\rH^1(\Gamma, D_n)$ is an artinian $\cO_K$-module for all $n\in \N$ and therefore $\rH^1(\Gamma, D)$ has a natural structure
as profinite $\cO_K$-module. In particular it is compact. Moreover, if $D=D_{U}$ for a certain $U\subset \cW^\ast$, then
$\rH^1(\Gamma, D_{U})$ is naturally a $\Lambda_U$-module and its profinite topology is the same as the weak topology, i.e.
the $\underline{m}_U$-adic topology on $\rH^1(\Gamma, D_{U})$. In particular $\rH^1(\Gamma, D_{U})$ is
complete and separated for the $\underline{m}_U$-adic topology.
\end{remark}

Let us remark that we proved the following theorem.

\begin{theorem}
\label{thm:cohcont}
Let $D$ be one of $D_{U}^o$, $D_k^o$ or if $k$ is a classical weight, $V_k^o$.

a) We have canonical isomorphisms
$$
\rH^1_{\rm cont}\bigl(\Gamma, (D_n)_{n\in \N}\bigr)\cong \lim_{\infty \leftarrow n}\rH^1(\Gamma, D_n)\cong \rH^1(\Gamma, D).
$$

b) The isomorphisms at a) above are compatible with specializations.
\end{theorem}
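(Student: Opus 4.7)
The plan is to observe that part (a) is a concatenation of two facts essentially established in the preceding discussion, while part (b) is a formal functoriality statement.

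For the first isomorphism in (a), I would invoke the Leray spectral sequence for the continuous $\Gamma$-cohomology functor, which factors as $(M_n)_n \mapsto \invlim M_n$ composed with $\rH^0(\Gamma,-)$. This yields the short exact sequence
$$
0 \lra {\lim}^{(1)} \rH^0(\Gamma, D_n) \lra \rH^1_{\rm cont}\bigl(\Gamma, (D_n)_{n\in\N}\bigr) \lra \invlim \rH^1(\Gamma, D_n) \lra 0
$$
displayed earlier. By Proposition \ref{prop:finitefil}(i) each quotient $D_n := D/\Fil^n(D)$ is an artinian $\cO_K$-module, hence so is the $\Gamma$-invariant submodule $\rH^0(\Gamma, D_n)$. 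The projective system $\bigl(\rH^0(\Gamma, D_n)\bigr)_n$ therefore satisfies the Mittag-Leffler condition, the $\lim^{(1)}$-term vanishes, and the first isomorphism follows.

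For the second isomorphism I would simply cite Lemma \ref{lemma:arithmeticoh}, whose three key inputs are already in place: (i) the identification $D \cong \invlim D_n$ coming from Proposition \ref{prop:finitefil}(iii), which makes the vertical maps on cochains and coboundaries into isomorphisms; (ii) the artinianness of $B^1(\Gamma, D_n)/\rH^0(\Gamma, D_n)$, which yields Mittag-Leffler on the image systems and hence the surjectivity of $\alpha$ in the diagram of that lemma; and (iii) the finite generation of $\Gamma$, which forces a cocycle to be determined by its values on generators and so makes $Z^1(\Gamma, D) \to \invlim Z^1(\Gamma, D_n)$ an isomorphism. A five-lemma chase then concludes.

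For part (b), I would note that each relevant specialization is induced by a morphism in the category ${\rm Cont}(\Gamma)$: the map $\eta_k\colon D_U^o \to D_k^o$ of Proposition \ref{specialization_prop} satisfies $\eta_k\bigl(\Fil^n(D_U^o)\bigr) = \Fil^n(D_k^o)$ by Lemma \ref{lemma:filspec}, and the map $\rho_k\colon D_k^o \to V_k^o$ of (\ref{rhok}) is compatible with the filtrations by construction. Applying $\rH^1_{\rm cont}$ and $\rH^1$ functorially, the two isomorphisms of (a) fit into a commutative diagram intertwining the specialization maps on all three sides.

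I anticipate no serious obstacle: all the hard work --- the finiteness and artinianness of the layers $D_n$, the completeness of $D$ with respect to $\Fil^\bullet$, and the control of cocycles by values on generators of $\Gamma$ --- has been carried out in Proposition \ref{prop:finitefil}, Lemma \ref{lemma:arithmeticoh} and Lemma \ref{lemma:filspec}. The only point requiring mild care is checking that for $D = D_U^o$ the inverse system $(D_n)_n$ in ${\rm Cont}(\Gamma)$ consists of \emph{torsion} $\Gamma$-modules as required, which follows from Proposition \ref{prop:finitefil}(i) since $D_n$ is annihilated by a power of the maximal ideal $\underline{m}_U$ and in particular by a power of $\pi$.
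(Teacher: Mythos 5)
Your proposal is correct and follows the paper's own route exactly: the first isomorphism comes from the vanishing of the $\lim^{(1)}$-term via Mittag--Leffler (using the artinianness of the layers $D_n$ from Proposition \ref{prop:finitefil}), the second is Lemma \ref{lemma:arithmeticoh}, and part b) is the functoriality of the specialization maps, which respect the filtrations by Lemma \ref{lemma:filspec}. Your closing remark that each $D_n$ is genuinely a torsion module (being killed by a power of $\underline{m}$, hence of $\pi$) is a correct and worthwhile observation that the paper leaves implicit.
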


\begin{proof}
a) follows from lemma \ref{lemma:arithmeticoh} and b) from the discussion on specialization in the previous section.
\end{proof}

\bigskip
\noindent {\em Hecke operators.}

\bigskip\noindent
Let $M$ be any one of the modules $\rH^1\bigl(\Gamma, D_U\bigr)$, $\rH^1\bigl(\Gamma, D_k\bigr)$,
$\rH^1\bigl(\Gamma,
V_k\bigr)$. The action of $\Xi(\Z_p)$ on the coefficients defines actions of Hecke operators $T_\ell$ for $\ell$ not dividing
$Np$ and $U_p$ on the module $M$ (see
\cite{ash_stevens} and \cite{harris_iovita_stevens}). Moreover $U_p$ is completely continuous on $M$.
Let now fix $h\in \Q$, $h\ge 0$ and denote by $D$ one of
$D_k$ or $V_k$. Then we have a natural direct sum decomposition
$$
\rH^1\bigl(\Gamma, D\bigr)\cong \rH^1\bigl(\Gamma, D\bigr)^{(h)}\oplus \rH^1\bigl(\Gamma, D\bigr)^{(>h)},
$$
where the decomposition is characterized by the following properties:

a) $\rH^1\bigl(\Gamma, D\bigr)^{(h)}$ is a finite $K$-vector space and  for every
$x\in \rH^1\bigl(\Gamma, D\bigr)^{( h)}$ there is a non-zero polynomial
$Q(t)\in K[t]$ of slope smaller or equal to $h$ such that $Q^\ast(U_p)\cdot x=0$, where $Q^\ast(t)=t^{{\rm deg}Q}Q(1/t)$.

b) For every polynomial $Q(t)$ as at a) above the linear map $Q^\ast(U_p)\colon \rH^1\bigl(\Gamma, D\bigr)^{(> h)}\lra \rH^1\bigl(\Gamma, D\bigr)^{(> h)}$ is an
isomorphism.

Moreover we have the following two results (see \cite{ash_stevens})

\begin{theorem}
\label{thm:classicity}
Suppose $k=(k_0,i)\in \cW^\ast(K)$ a classical weight, $h\ge 0$ is a slope such that $h<k_0-1$.
Then $\rho_{k}$ induces an isomorphisms
$$
\rH^1\bigl(\Gamma, D_k\bigr)^{( h)}\cong \rH^1\bigl(\Gamma, V_k\bigr)^{(h)}.
$$
\end{theorem}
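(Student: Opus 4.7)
The plan is to exploit the tautological short exact sequence of $\Xi(\Z_p)$-modules
\begin{equation*}
0 \lra N_k \lra D_k \stackrel{\rho_k}{\lra} V_k \lra 0,
\end{equation*}
where $N_k := \ker(\rho_k)$ is the closed $\Xi(\Z_p)$-stable subspace of distributions on $T_0$ that annihilate the finite-dimensional subspace ${\cal P}_k^o\otimes_{\cO_K} K$ of polynomial functions homogeneous of degree $k_0$. Passing to $\Gamma$-cohomology produces a long exact sequence which is equivariant for the Hecke operators $T_\ell$ (for $\ell \nmid Np$) and $U_p$. Since the functor $M\mapsto M^{(h)}$ is exact on $K[U_p]$-modules admitting a slope $\le h$ decomposition (see \cite{ash_stevens}), the claimed isomorphism reduces to the vanishing $\rH^j(\Gamma, N_k)^{(h)}=0$ for $j=0,1,2$. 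The case $j=2$ is automatic since $\Gamma$ is (after passing to a torsion-free finite-index subgroup if needed) the fundamental group of an open Riemann surface, hence of cohomological dimension $\le 1$.

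The main content is then a slope estimate at the coefficient level: $U_p$ acts on $N_k$ as $p^{k_0+1}$ times a bounded $\cO_K$-linear operator. Using the orthonormal basis $\{f_n\}_{n\ge 0}$ of $A_k$ from section \S\ref{sec:locallyanalytic} and expanding
\begin{equation*}
(\pi_i f_n)(x,y) \;=\; f_n(x,\, ix+py) \;=\; k(x)\bigl(i+py/x\bigr)^n \;=\; \sum_{j=0}^{n} \binom{n}{j}\, i^{n-j}\, p^{j}\, f_j(x,y),
\end{equation*}
where $\pi_i=\left(\begin{smallmatrix}1&i\\ 0&p\end{smallmatrix}\right)$ are the coset representatives defining $U_p = \sum_{i=0}^{p-1}\pi_i$, one computes for $\mu\in N_k$, i.e. with $\mu(f_j)=0$ for $0\le j\le k_0$, that
\begin{equation*}
(\mu|U_p)(f_n) \;=\; \sum_{j=k_0+1}^{n} \binom{n}{j}\, p^{j}\, \Bigl(\sum_{i=0}^{p-1}i^{n-j}\Bigr)\, \mu(f_j),
\end{equation*}
which is divisible by $p^{k_0+1}$ uniformly in $n$. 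Thus $U_p|_{N_k}=p^{k_0+1}\cdot V$ for a bounded $\cO_K$-linear operator $V$, forcing all $U_p$-slopes on $N_k$, and a fortiori on $\rH^\bullet(\Gamma, N_k)$, to be at least $k_0+1$, in particular strictly greater than $h$.

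The principal obstacle is propagating this coefficient-level slope bound to cohomology in a manner compatible with the slope decomposition on $\rH^1(\Gamma, D_k)$. For this one invokes theorem \ref{thm:cohcont}, which identifies $\rH^\bullet(\Gamma, N_k)$ with the continuous cohomology of the pro-artinian system $\{N_k/{\rm Fil}^m N_k\}_m$ provided by proposition \ref{prop:finitefil}, together with the functorial slope decomposition for compact operators on Banach modules from \cite{ash_stevens}. Each finite artinian quotient inherits the $U_p$-divisibility by $p^{k_0+1}$ derived above; this divisibility is preserved by the $\Gamma$-cochain functor, and taking the inverse limit of the resulting finite-dimensional slope decompositions delivers the required vanishing of $\rH^j(\Gamma, N_k)^{(h)}$. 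Feeding this vanishing back into the long exact sequence produces the claimed isomorphism $\rH^1(\Gamma, D_k)^{(h)}\cong \rH^1(\Gamma, V_k)^{(h)}$.
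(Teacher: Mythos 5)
Your argument is correct, but note that the paper itself offers no proof of this statement: it is quoted from \cite{ash_stevens}, so there is no internal argument to compare against. What you have written is essentially the standard Stevens control-theorem proof that the cited reference uses: pass to the kernel $N_k=\ker(\rho_k)$, observe that $\mu\in N_k$ means $\mu(f_j)=0$ for $0\le j\le k_0$, compute that each coefficient map $\mu\mapsto \mu\vert\pi_i$ with $\pi_i=\left(\begin{smallmatrix}1&i\\0&p\end{smallmatrix}\right)$ sends $N_k^o$ into $p^{k_0+1}N_k^o$, and conclude via the long exact sequence, exactness of $M\mapsto M^{(h)}$, and ${\rm cd}(\Gamma)=1$. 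Two small points of hygiene: (i) $U_p$ on cohomology is a double-coset operator, not literally $\sum_i\pi_i$ acting on the coefficient module, so the displayed formula for $(\mu|U_p)(f_n)$ should be read as the computation of each individual coefficient map $\cdot\vert\pi_i$; what the slope bound actually requires is that $U_p$ carries the integral cochain complex $C^\bullet(\Gamma,N_k^o)$ into $p^{k_0+1}C^\bullet(\Gamma,N_k^o)$, and this does follow term by term from your computation. (ii) Your estimate yields the vanishing of $\rH^j(\Gamma,N_k)^{(h)}$ for all $h<k_0+1$, which is the sharp critical-slope bound and in particular covers the paper's (more restrictive) hypothesis $h<k_0-1$; the remark about passing to a torsion-free finite-index subgroup is unnecessary since $N\ge 3$ already makes $\Gamma$ torsion free, as the paper itself uses elsewhere.
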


and

\begin{theorem}
\label{familiesslopedec}
Let $k\in \cW^\ast(K)$ be an accessible weight and $h\ge 0$ a slope. Then there is a wide open disk
defined over $K$, $U\subset \cW^\ast$, containing $k$ such that

i) We have a natural, $B_U$-linear  slope $\le h$-decomposition
$$
\rH^1\bigl(\Gamma, D_U\bigr)\cong \rH^1\bigl(\Gamma, D_U\bigr)^{(h)}\oplus \rH^1\bigl(\Gamma, D_U\bigr)^{(> h)}
$$
satisfying analogue properties as in a) and b) above.\smallskip

ii) The slope decomposition at i) above is compatible with specialization i.e. the map $$\psi_k\colon \rH^1\bigl(\Gamma,
D_U\bigr)\lra \rH^1\bigl(\Gamma,
D_k\bigr)$$satisfies
$\psi_k\bigl(\rH^1\bigl(\Gamma, D_U\bigr)^{(\&)}\bigr)\subset \rH^1\bigl(\Gamma, D_k\bigr)^{(\&)}$,
where $(\&)\in \{ (h), (> h)\}$

\end{theorem}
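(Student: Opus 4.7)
\textbf{Proof plan for Theorem \ref{familiesslopedec}.} Part (i) is a Riesz/Serre slope decomposition for the completely continuous Hecke operator $U_p$ acting on the $B_U$-module $\rH^1(\Gamma,D_U)$; part (ii) will follow once the slope $\le h$ projector is exhibited as a polynomial expression in $U_p$ with coefficients in $B_U$. The plan is to (a) set up a workable notion of orthonormality in the weak $\underline{m}_U$-adic topology on $D_U^o$ and hence on cohomology, (b) construct the Fredholm characteristic series of $U_p$ as an entire power series in $B_U\{\{T\}\}$, (c) factor it by a Hensel-style argument after shrinking the disk $U$ around $k$, and (d) deduce specialization compatibility.

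For (a) and (b), fix a family $(e_i)_{i\in I}\subset D_U^o$ as in Corollary \ref{cor:ONbasis}, so that every distribution has a unique weak-topology-convergent expansion $\sum_i a_i e_i$ with $a_i\to 0$ along the cofinite filter on $I$. By Remark \ref{rmk:toph1}, $\rH^1(\Gamma,D_U^o)$ is $\underline{m}_U$-adically complete and separated, and by Theorem \ref{thm:cohcont} it is the inverse limit of the artinian $\cO_K$-modules $\rH^1\bigl(\Gamma,D_U^o/{\rm Fil}^n D_U^o\bigr)$. The crucial compactness input is that the operator $U_p=\sum_{a=0}^{p-1}\mu\mid\mat{1}{a}{0}{p}$ strictly improves the filtration ${\rm Fil}^\bullet$ on $D_U^o$: as in the proof of Proposition \ref{prop:finitefil}(ii), the diagonal matrix $\mat{1}{0}{0}{p}$ multiplies $f_j$ by $p^j$, while the unipotent translates $\mat{1}{a}{0}{1}$ preserve ${\rm Fil}^\bullet$. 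In terms of the basis $(e_i)$, this means the matrix of $U_p$ has rows tending to zero for the weak topology, and so the Fredholm determinant $P_U(T):=\det\bigl(1-T\,U_p\mid\rH^1(\Gamma,D_U)\bigr)$ may be computed as the inverse limit of the characteristic polynomials of $U_p$ on the finite-length $\Lambda_U$-modules $\rH^1(\Gamma,D_U^o/{\rm Fil}^n)\otimes K$, yielding an entire power series in $B_U\{\{T\}\}$ in the Coleman/Serre sense.

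For (c), Proposition \ref{specialization_prop} together with Lemma \ref{lemma:filspec} gives, on specialization at $k$, the analogous Fredholm series $P_k(T)\in K\{\{T\}\}$ for $U_p$ on $\rH^1(\Gamma,D_k)$. By classical finite-slope theory, $P_k$ factors uniquely as $P_k=Q_kR_k$ with $Q_k\in K[T]$ a unit polynomial whose reciprocal roots are exactly the eigenvalues of $U_p$ of slope $\le h$, $R_k$ an entire series corresponding to the complementary slope $>h$ spectrum, and $(Q_k,R_k)=1$ in $K\{\{T\}\}$. Shrinking $U$ around $k$ so that the slope $\le h$ segment of the Newton polygon of $P_U(T)$ is constant over $U$, one lifts this factorization to $P_U=Q_UR_U$ in $B_U\{\{T\}\}$ with $Q_U\in B_U[T]$ a unit polynomial of degree $d=\deg Q_k$; the resulting Bezout relation $A_UQ_U^\ast+C_UR_U^\ast=1$ in $B_U[T]$ produces a $B_U$-linear idempotent $\mathbf{e}_U$ on $\rH^1(\Gamma,D_U)$ whose image is annihilated by $Q_U^\ast(U_p)$ and on whose complement $Q_U^\ast(U_p)$ is invertible. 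Standard Fredholm arguments then show $\mathbf{e}_U\,\rH^1(\Gamma,D_U)$ is finitely generated projective of rank $d$ over the PID $B_U$, hence finite free, yielding the decomposition of (i).

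For (d), since $\mathbf{e}_U$ is a polynomial in $U_p$ with coefficients in $B_U$, every Hecke-equivariant specialization map $\psi_{k'}\colon \rH^1(\Gamma,D_U)\to\rH^1(\Gamma,D_{k'})$ for $k'\in U(K)$ carries $\mathbf{e}_U$ to the analogous idempotent built from the factorization $P_{k'}=\psi_{k'}(Q_U)\psi_{k'}(R_U)$, which enforces $\psi_{k'}\bigl(\rH^1(\Gamma,D_U)^{(h)}\bigr)\subseteq \rH^1(\Gamma,D_{k'})^{(h)}$ and similarly for the $(>h)$ part. The main technical obstacle is step (b): justifying the convergence and well-definedness of $\det(1-TU_p)$ over $B_U$, given that $D_U^o$ is \emph{not} an orthonormalizable $B_U$-Banach module in the classical sense but only carries the weak orthonormal basis of Corollary \ref{cor:ONbasis}. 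This is precisely the reason, highlighted in the introduction, for using a wide open disk with its complete local Noetherian ring $\Lambda_U$ in place of an affinoid: it allows the Fredholm determinant to be computed through the artinian quotients $D_U^o/{\rm Fil}^n$, where $U_p$ acts on a genuinely finite $\Lambda_U$-module and every classical Fredholm manipulation is legitimate.
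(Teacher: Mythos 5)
Your overall architecture (weak orthonormal basis, Fredholm determinant of $U_p$, slope factorization, functoriality of the resulting projector) is the same as the paper's, but step (b) — which you yourself identify as the crux — contains a genuine error. You propose to compute $\det(1-TU_p)$ as an inverse limit of ``characteristic polynomials of $U_p$ on the finite-length $\Lambda_U$-modules $\rH^1(\Gamma,D_U^o/{\rm Fil}^n)\otimes K$.'' By the proof of Proposition \ref{prop:finitefil}, $D_U^o/{\rm Fil}^n(D_U^o)\cong\prod_{j=0}^{n-1}\Lambda_U/\underline{m}_U^{n-j}$ is killed by a power of $\pi$, so these cohomology groups vanish after tensoring with $K$; and even integrally they are artinian torsion modules on which ``characteristic polynomial of $U_p$'' has no meaning that would assemble into an entire series over $B_U$. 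The closing sentence of your proposal, asserting that the filtration is what makes ``every classical Fredholm manipulation legitimate,'' inverts the actual logic of the paper: the filtration ${\rm Fil}^\bullet(D_U^o)$ exists to identify $\rH^1(\Gamma,D_U)$ with continuous/\'etale cohomology (Theorem \ref{thm:cohcont}) and thereby obtain the Galois action; it plays no role in defining the Fredholm determinant.

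What the paper does instead, and what you would need to repair the argument, is the following. First, the slope decomposition is produced at the level of the cochain complex $C^\bullet(\Gamma,D_{U'})$ built from a finite resolution of $\Z$ by finite free $\Z[\Gamma]$-modules (so that $U_p$ acts on finite direct sums of $D_{U'}$, which carry the weak ON basis of Lemma \ref{lemma:weakON}), not directly on $\rH^1$. Second, one restricts to a closed affinoid disk $V$ about $k$ inside $U'$: by Lemma \ref{lemma:restrizione}, $D_{U'}|_V$ is a genuinely orthonormalizable Banach $B_V$-module, so the Ash--Stevens/Coleman theory applies verbatim and yields, after possibly shrinking $V$, a slope $\le h$ factorization of the Fredholm determinant $F^\bullet_V$. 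Third, one observes that $F^\bullet_V$ coincides with the Fredholm determinant $F^\bullet_U$ over the wide open disk $U$ (both are computed from the same weak ON basis), and that the Banach norm of $B_U=\Lambda_U[1/p]$ restricts to the Gauss norm of $B_V$, so the factorization over $B_V$ descends to one over $B_U$; this replaces your Hensel-style lifting. Finally, for part (ii) note that the slope $\le h$ projector is not literally a polynomial in $U_p$ (it is only a limit of such, arising from a Bezout identity between the unit-polynomial factor and an entire series); what one actually invokes is the functoriality of slope decompositions under $U_p$-equivariant maps, applied to $\psi_k$.
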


\begin{proof}
Claim ii) follows from the functoriality of the slope $\le h$-decompositions proved in \cite{ash_stevens}.

To show i)  it suffices to produce a slope $\le h$-decomposition at the level of cochains $C^\bullet(\Gamma, D_U)$ using a finite resolution of $\Z$ by finite and free $\Z[\Gamma]$-modules as explained in \cite{ash_stevens} section \S 4.

We start by choosing a wide open disk $U'\subset \cW^\ast$ defined over $K$ such that
$k\in U'(K)$.
Let  $(e_i)_{i\in I}$ be a $B_{U'}$-orthonormal basis of $D_{U'}$  defined in lemma
\ref{lemma:weakON}) and let $V={\rm Spm}(K\langle T\rangle)$ be a closed disk centered at
$k$ contained in $U'$. Thanks to lemma \ref{lemma:restrizione} and the theory in \cite{ash_stevens}
section \S 4, possibly after shrinking $V$ to a smaller affinoid disk, we may assume that
the Fredholm determinant of the $U_p$-operator $F^\bullet_V$ on the complex of group cochains
$C^\bullet\bigl(\Gamma, D_{U'}|_V\bigr)$ admits a slope $\le h$-decomposition. Let $U$ be the wide open disk associated to the
noetherian local $\cO_K$-algebra $\Lambda_U:=\cO_K[\![T]\!]$. Let us remark that the Fredholm determinant of $U_p$  acting on $C^\bullet\bigl(\Gamma, D_{U'}|_V\bigr)$, $F^\bullet_V$, is the same as the Fredholm determinant of $U_p$ acting on $C^\bullet\big(\Gamma, D_U\bigr)$, $F^\bullet_U$,
 as they are both computed using the same (weak) ON basis.
Because the Banach norm of $\Lambda_U[1/p]=B_U$ restricts to the Gauss norm of $B_V$ it follows that
the slope $\le h$-decomposition of $F^\bullet_V$ determines a slope $\le h$-decompositon of
$F^\bullet_U$.  Therefore we obtain the slope $\le h$-decomposition of $C^\bullet(\Gamma, D_U)$.

\end{proof}

\subsection{The geometric picture}
\label{sec:geometric}

Let us recall from the beginning of section \S 2 the modular curves $X(N,p)\lra X_1(N)$, their natural formal models
$\cX(N,p)\lra \cX_1(N)$ and if $w\in \Q$, $0\le w\le p/(p+1)$ we also had  a rigid analytic space $X(w)\subset X(N,p)$
and its formal model $\cX(w)$, with its natural morphism $\cX(w)\lra \cX(N,p)$. All these rigid spaces and formal schemes are
in fact log rigid spaces and respectively
log formal schemes, which are all log smooth and all the maps described are maps of log formal schemes or log rigid spaces.

\
\noindent
{\em Sheaves on $X(N,p)_{\Kbar}^{\rm ket}$ associated to modular symbols:}\enspace Let $\cE\lra X(N,p)$ be the universal generalised
elliptic curve, and let us denote, as
in section \S 3.1, by $\cT$ the $p$-adic Tate-module of $\cE$, seen as a continuous sheaf on the Kummer \'etale site of $X(N,p)$,
denoted $X(N,p)^{\rm ket}$. If
$\eta=\Spec({\Bbb K})$ denotes a geometric generic point of $X(N,p)$, let $\cG$ denote the geometric Kummer \'etale
fundamental group associated to
$\bigl(X(N,p),\eta\bigr)$ and let $T:=\cT_\eta$. One can easily see that $T$ is a free $\Z_p$-module of rank $2$ with continuous
action of $\cG$. Let us choose a
$\Z_p$-basis $\{\epsilon_1,\epsilon_2\}$ of $T$ satisfying the properties:
$\langle \epsilon_1, \epsilon_2\rangle=1$ and $\epsilon_1(\mbox{ mod }pT)\in \cE_\eta[p]$
does not belong to the universal level $p$-subgroup $C$. We let $T_0:=\{a\epsilon_1+b\epsilon_2\quad |\quad a\in
\Z_p^\ast, b\in \Z_p\}$. Then $T_0$ is a compact subset
of $T$ preserved by $\cG$ which can be identified to $\Z_p^\times\times \Z_p$.
Moreover the right action of $\cG$ on the above chosen basis defines a continuous
group homomorphism
$$
\gamma\colon \cG\lra \Iw\mbox{ defined by } (\epsilon_1\sigma,\epsilon_2\sigma)= (\epsilon_1,\epsilon_2)
\gamma(\sigma)\mbox{ for }\sigma\in \cG.
$$
Therefore, if $k\in U\subset \cW^\ast$ and $n\ge 1$ as in section \ref{sec:locallyanalytic} then via the homomorphism
$\gamma$ above, the $\Iw$-modules $A_U$, $D_U$, $A_k$,
$D_k$ and if $k$ is classical $V_k$, can be seen as ind-continuous representations of $\cG$.  More
precisely, for example  $\displaystyle A_U=\lim_{\rightarrow}\Bigl( \lim_{\infty \leftarrow n} (A_U^o/\underline{m}_U^n A_U^o)  \Bigr)$ and $\displaystyle D_U=\lim_{\rightarrow}\Bigl( \lim_{\infty \leftarrow n}(D_U^o/{\rm Fil}^n(D_U^o)  \Bigr)$, where the inductive limits are taken with respect to the multiplication by $p$-map. We denote by $\cA_U$,
$\cD_U$, $\cA_k$, $\cD_k$  and if $k$ is classical by $\cV_k$ the ind-continuous shaves on $X(N,p)_{\Kbar}^{\rm ket}$ associated to these representations.  Let us remark that if the classical weight is associated to the pair $\bigl(k,k({\rm mod} (p-1)\bigr)$,
for $k\in \Z$, $k\ge 0$ in fact we have $\cV_k\cong \Symm^{k}(\cT)\otimes_{\Z_p}K$, as ind-continuous sheaves on $X(N,p)_{\Kbar}^{\rm ket}$.

\

{\it Notation:} For later use, for $A=A_U$ or $A=A_k$ we write $\cA^o:=\bigl(\cA^o_{n}\bigr)_{n\in \N}$ for the continuous sheaf on Faltings' site $\fX(N,p)$
associated to  the continuous representation of  $A^o=\bigl( A^o/\underline{m}^n A^o \bigr)_{n\in\N}$  of the Kummer \'etale fundamental group $\cG$ of $X(N,p)$.
The ind-continuous sheaf  $\cA$ is simply $\cA^o[1/p]$.

Analogously, for $D=D_U$ or $D_k$, we write $\cD^o:=\bigl(\cD_{n}\bigr)_{n\in\N}$  for the continuous sheaf associated to $D^o/{\rm Fil}^n(D^o)$. Then, $\cD$ is the
ind-continuous sheaf $\cD^o[1/p]$.

\

We proceed as in section \ref{sec:heckeoperators} in order to define Hecke operators
on $\rH^1\bigl( X(N,p)_{\Kbar}^{\rm ket}, \cD_U \bigr)$ and $\rH^1\bigl( X(N,p)_{\Kbar}^{\rm ket}, \cD_k \bigr)$
and if $k$ is classical on $\rH^1\bigl( X(N,p)_{\Kbar}^{\rm ket}, \cV_k \bigr)$. Let $\ell$ denote a prime integer not dividing
$N$. Let $X(N,p)_{\ell}$ be the
modular curve classifying generalized elliptic curves with $\Gamma_1(N)\cap \Gamma_0(p)\cap \Gamma_0(\ell)$-level structure for
$\ell$ not dividing $p$ and with
$\Gamma_1(N)$-level structure and two group scheme $C$, $H\subset \cE$ defining a $\Gamma_0(p)$-level structures such that
$C\cap H=\{0\}$. We  have morphisms
$$X(N,p) \stackrel{\pi_1}{\longleftarrow} X(N,p)_{\ell} \stackrel{\pi_2}{\lra} X(N,p),$$
where $\pi_1$ forgets $H$ while $\pi_2$ is defined by taking the quotient
by $H$. They are finite and Kummer log \'etale. The dual $\pi_\ell^\vee\colon \bigl(\cE/H\bigr)^\vee \to \cE^\vee$ of the
universal isogeny $\pi_\ell\colon \cE\to
\cE/H$ over $X(N,p)_{\ell}$ provides a map $\pi_\ell^\vee \colon p_2^\ast(T) \to p_2^\ast\bigl(T\bigr)$.
Here we identify $\cE\cong \cE^\vee$ and $\cE/H\cong
\bigl(\cE/H\bigr)^\vee$ via the principal polarizations. The condition $H\cap C=\{0\}$ implies that $\pi_\ell^\ast$
restricts to a map $p_2^\ast\big(T_0\big) \to
p_1^\ast\bigl(T_0\bigr)$. Proceeding as in section \ref{sec:heckeoperators} we get Hecke operators  on
$\rH^1\bigl(X(N,p)_{\Kbar}^{\rm ket}, \cD_U\bigr)$ and on
$\rH^1\bigl(X(N,p)_{\Kbar}^{\rm ket}, \cD_k\bigr)$, which commute with the action of $G_K$ and are compatible with specializations.

\begin{proposition} We have natural isomorphisms as Hecke modules, compatible with specializations
$$
\rH^1\bigl(\Gamma, D_U\bigr)\cong \rH^1\bigl(X(N,p)_{\Kbar}^{\rm ket}, \cD_U\bigr)\mbox{ and }\rH^1\bigl(\Gamma, D_k\bigr)\cong
\rH^1\bigl(X(N,p)_{\Kbar}^{\rm ket}, \cD_k\bigr).
$$
Here on the left modules the Hecke operators are defined by the action of the monoid $\Xi(\Z_p)$ on the coefficients.
\end{proposition}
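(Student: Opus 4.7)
The plan is to reduce, on both sides, to cohomology with finite discrete coefficients, where the comparison between group cohomology and Kummer-\'etale cohomology is a standard consequence of the identification of the relevant fundamental groups.

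First I would apply Theorem \ref{thm:cohcont}: on the group-theoretic side,
$$\rH^1\bigl(\Gamma, D_U\bigr) \cong \Bigl(\lim_{\infty\leftarrow n}\rH^1\bigl(\Gamma, D_U^o/\mbox{\rm Fil}^n(D_U^o)\bigr)\Bigr)\otimes_{\cO_K} K,$$
and similarly for $D_k$. On the geometric side, by the very definition of the ind-continuous sheaf $\cD_U = \cD_U^o[1/p]$ with $\cD_U^o = \bigl(\cD_{U,n}\bigr)_{n\in\N}$, we have $\rH^1\bigl(X(N,p)_{\Kbar}^{\rm ket}, \cD_U\bigr) \cong \rH^1_{\rm cont}\bigl(X(N,p)_{\Kbar}^{\rm ket}, (\cD_{U,n})_{n\in\N}\bigr)\otimes K$, and Mittag-Leffler applies (each $\cD_{U,n}$ has finite stalks by Proposition \ref{prop:finitefil}(i)) so this equals $\bigl(\lim_n \rH^1(X(N,p)_{\Kbar}^{\rm ket}, \cD_{U,n})\bigr)\otimes K$. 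Thus it suffices to construct, compatibly in $n$ and compatibly with specialization (the latter follows from Lemma \ref{lemma:filspec}), isomorphisms $\rH^1\bigl(\Gamma, D_U^o/\mbox{\rm Fil}^n(D_U^o)\bigr)\cong \rH^1\bigl(X(N,p)_{\Kbar}^{\rm ket}, \cD_{U,n}\bigr)$.

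For each $n$, Proposition \ref{prop:finitefil}(i) shows $D_U^o/\mbox{\rm Fil}^n(D_U^o)$ is a finite $\cO_K$-module, so the $\cG$-action (via $\gamma\colon \cG\to \Iw$) factors through a finite quotient and $\cD_{U,n}$ is a finite locally constant sheaf on $X(N,p)_\Kbar^{\rm ket}$. The key geometric input is that the Kummer-\'etale fundamental group $\cG$ of $(X(N,p)_\Kbar, \mbox{cusps})$ is canonically isomorphic to the profinite completion $\hat\Gamma$: this is the Kummer-\'etale version of the classical uniformization $\mathbb{H}\to \Gamma\backslash\mathbb{H} = Y(N,p)(\C)$ together with the comparison between topological and Kummer-\'etale fundamental groups (logarithmic coverings at cusps correspond to going around the parabolic generators). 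For a finite discrete $\hat\Gamma$-module $M$, continuous cohomology $\rH^i_{\rm cont}(\hat\Gamma, M)$ agrees with abstract cohomology $\rH^i(\Gamma, M)$, and by the standard theory of locally constant sheaves this equals $\rH^i(X(N,p)_\Kbar^{\rm ket}, \widetilde M)$. Applying this with $M = D_U^o/\mbox{\rm Fil}^n(D_U^o)$ gives the required isomorphism at level $n$; passing to the inverse limit and inverting $p$ yields the statement for $\cD_U$, and the same argument applies to $\cD_k$ using the surjection $D_U^o\to D_k^o$ of Proposition \ref{specialization_prop} together with Lemma \ref{lemma:filspec}.

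For Hecke equivariance: on the group side, $T_\ell$ and $U_p$ are defined by the action of explicit double cosets in $\Xi(\Z_p)$ on $D_U$; on the geometric side they are defined via the correspondence $X(N,p)\stackrel{\pi_1}{\leftarrow}X(N,p)_\ell\stackrel{\pi_2}{\to}X(N,p)$ together with the map $\pi_\ell^\vee$ on Tate modules, and then the trace map of \S\ref{sec:localization}. The action of $\Xi(\Z_p)$ on $D_U$ used to build $\cD_U$ factors through the Tate module representation $\gamma$, and the universal isogeny $\pi_\ell$ implements the same double coset on the Tate module at the level of the correspondence; hence the two Hecke actions coincide under the isomorphism.

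The main obstacle is verifying that the representation of $\cG$ on $D_U$ coming from $\gamma$ is precisely the representation of $\Gamma$ on $D_U$ used to compute $\rH^1(\Gamma, D_U)$, under the identification $\cG\cong \hat\Gamma$. This amounts to a compatibility of base points: one must match the choice of geometric generic point together with the ordered Tate module basis $\{\epsilon_1,\epsilon_2\}$ (with $\epsilon_1$ avoiding the canonical level-$p$ subgroup) with the classical identification $\pi_1^{\rm top}(Y(N,p)(\C))\cong \Gamma$ coming from the upper half plane uniformization, in such a way that $\gamma$ restricted to $\Gamma\subset \hat\Gamma$ is the standard embedding $\Gamma\hookrightarrow \Iw$. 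This is classical for the sheaves $\cV_k$ associated to $\Symm^k(\cT)$, and the present case is reduced to it because the $\Iw$-module structure on $A_U^o$, $D_U^o$ is defined purely representation-theoretically from that of $T_0$. Once this base-point tracking is in place, the comparison and Hecke compatibility follow formally.
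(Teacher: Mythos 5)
Your proposal is correct and follows essentially the same route as the paper: reduce to the finite quotients $D^o/{\rm Fil}^n(D^o)$ via Theorem \ref{thm:cohcont} and Proposition \ref{prop:finitefil}, identify the resulting finite locally constant Kummer-\'etale sheaves' cohomology with group cohomology of $\Gamma$ through the fundamental group of the open curve $Y(N,p)$, and match Hecke actions through the correspondence $X(N,p)_\ell$ and the Tate-module description of $\gamma$. The only cosmetic difference is that the paper phrases the finite-coefficient comparison as $\rH^1\bigl(X(N,p)_\Kbar^{\rm ket},\cF\bigr)\cong \rH^1\bigl(Y(N,p)_\Kbar^{\rm et},\cF\bigr)\cong \rH^1\bigl(Y(N,p)_{\C}^{\rm et},\cF\bigr)\cong \rH^1(\Gamma,F)$ using Illusie's theorem and the classical comparison over $\C$, whereas you invoke the equivalent identification $\cG\cong\hat\Gamma$ directly; your explicit attention to the base-point/basis compatibility is a point the paper leaves implicit.
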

\begin{proof} We first prove that, given a finite representation $F$ of $\cG$ and considering the associated locally constant sheaf
$\cF$ on $X(N,p)_{\Kbar}^{\rm ket}$, we have an isomorphism $\rH^1\bigl(\Gamma,F\bigr)\cong \rH^1\bigl(X(N,p)_{\Kbar}^{\rm ket}, \cF\bigr)$, functorial in $F$.
First of all notice that  restriction to the open modular curve where the log-structure is trivial, i.e. to $Y(N,p)\subset X(N,p)$, induces an isomorphism
$\rH^1\bigl(X(N,p)_{\Kbar}^{\rm ket}, \cF\bigr)\cong \rH^1\bigl(Y(N,p)_{\Kbar}^{\rm et}, \cF\bigr)$ thanks to \cite[Cor.~7.5]{illusie2}. As $Y(N,p)_{\Kbar}$ is a
smooth affine curve and for every embedding $\Kbar\subset \mathbb{C}$  the fundamental group of $Y(N,p)_{\mathbb{C}}$ is $\Gamma$, it is a classical result that
$\rH^1\bigl(Y(N,p)_{\Kbar}^{\rm et}, \cF\bigr)\cong \rH^1\bigl(Y(N,p)_{\mathbb{C}}^{\rm et}, \cF\bigr)\cong \rH^1\bigl(\Gamma,F\bigr)$.

Therefore, if we denote by $D$ any one of
$D_U$, $D_k$ or if $k$ is classical $V_k$ and by $\displaystyle\bigl((D^o_n)_{n\in \N}  \bigr)\otimes_{\cO_K}K$ with
$D_n^o:=D^o/{\rm Fil}^n(D^o)$, the ind-continuous representation of $\cG$ associated to $D$, let
$\displaystyle \cD:=\bigl((\cD_n^o)_{n\in \N}\bigr)\otimes_{\cO_K}K$ be the ind-continuous \'etale sheaf
on $X(N,p)_{\Kbar}^{\rm ket}$ associated to it. By theorem \ref{thm:cohcont} and the discussion above
we have natural isomorphisms
$$
\rH^1\bigl(\Gamma, D\bigr)\cong \rH^1_{\rm cont}\Bigl(\Gamma, \bigl((D_n^o)_{n}\otimes K\Bigr)
\cong \rH^1_{\rm cont}\Bigl(X(N,p)_{\Kbar}^{\rm ket}, \bigl((\cD_n^o)_{n}\bigr)\otimes K\Bigr)
=: \rH^1\bigl(X(N,p)_{\Kbar}^{\rm ket}, \cD\bigr).
$$

As the definition of the Hecke
operators uses the Hecke
correspondence $X(N,p)_{\ell}$, it is clear that for $D$ one of $D_U$, $D_k$ or $V_k$ the isomorphism $\rH^1\bigl(X(N,p)_{\Kbar}^{\rm ket},
\cD\bigr)\cong
\rH^1\bigl(Y(N,p)_{\mathbb{C}}^{\rm et}, \cD\bigr)$ is Hecke equivariant and the claim follows.

\end{proof}

\

{\em Sheaves on Faltings' site associated to modular symbols:}\enspace Let us now denote by $\fX(N,p)$, Faltings' site associated to
the pair $\bigl(\cX(N,p),
X(N,p)\bigr)$. The map of sites $u\colon \fX(N,p) \lra X_{\Kbar}^{\rm ket}$, given by $(U,W)\mapsto W$,
sends covering families to covering families, commutes with
fibre products and sends the final object to the final object. It defines a morphism of topoi
$u_\ast\colon \Sh(X_{\Kbar}^{\rm ket}) \lra \Sh(\fX(N,p))$ which
extends to inductive systems of continuous sheaves. In particular all the ind-continuous Kummer \'etale sheaves
$\cD_U$, $\cD_k$ and, if $k$ is a classical weight, $\cV_k$ can be seen as ind-continuous sheaves on $\fX(N,p)$ by
 applying $u_\ast$. For simplicity we omit
$u_\ast$ from the notation.

\begin{proposition}\label{prop:XfXcompcoh} The natural morphisms $$ \rH^1\bigl(\fX(N,p), \cD_U\bigr)\lra
\rH^1\bigl(X(N,p)_{\Kbar}^{\rm ket}, \cD_U\bigr) \mbox{ and }\rH^1\bigl(\fX(N,p), \cD_k\bigr)\lra \rH^1\bigl(X(N,p)_{\Kbar}^{\rm ket},
\cD_k\bigr)$$are
isomorphisms of $G_K$-modules,compatible with specializations and action of the Hecke operators.
\end{proposition}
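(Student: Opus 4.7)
The plan is to route both cohomologies through continuous group cohomology of the Kummer étale fundamental group $\cG$ of $X(N,p)_{\Kbar}$. Let $\cD$ denote either $\cD_U$ or $\cD_k$, and recall that $\cD$ is the ind-continuous sheaf associated to a projective system $(\cD_n^o)_n$ of locally constant Kummer étale sheaves built from the artinian continuous $\cG$-modules $D_n^o = D^o/\Fil^n(D^o)$. Since each $D_n^o$ is artinian and $\Gamma$ is finitely generated, the Mittag-Leffler arguments of Lemma \ref{lemma:arithmeticoh} and Theorem \ref{thm:cohcont} apply equally on both sides; it suffices to prove for each $n$ and each $i\le 1$ that the natural map
\[
\rH^i\bigl(\fX(N,p), u_\ast\cD_n^o\bigr)\lra \rH^i\bigl(X(N,p)_{\Kbar}^{\rm ket}, \cD_n^o\bigr)
\]
is an isomorphism, since the claim for $\cD$ then follows by passing to the inverse limit and inverting $p$.

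For such a torsion locally constant sheaf $\cD_n^o$ I would identify both sides with the continuous group cohomology $\rH^i(\cG, D_n^o)$. On the Kummer étale side this is standard (see \cite[Cor.~7.5]{illusie2} together with the identification of the Kummer étale fundamental group of $Y(N,p)_{\Kbar}$ with $\Gamma$ already used in the previous proposition). On the Faltings' side, the key observation is that the explicit description $u_\ast\cF(\cU,W)=\cF(W)$ shows the sheaf depends only on the second coordinate. Analysing the Leray spectral sequence for the right adjoint of $u_\ast$ (which pushes sheaves from $\fX(N,p)$ back to $X(N,p)_{\Kbar}^{\rm ket}$), the comparison reduces to showing that the higher derived functors of that right adjoint applied to $u_\ast\cD_n^o$ vanish. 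Using the localization formalism of \S\ref{sec:localization}, this reduces in turn to the local claim that for every small affine $\cU$ of $\cX(N,p)^{\rm ket}$ the Kummer étale coverings of $\cU$ contribute no new cohomology to a sheaf that only sees the generic fibre.

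Compatibility with specializations, with the Galois action of $G_K$, and with the Hecke operators all follow from naturality: the specialization maps $\cD_U^o \to \cD_k^o$ give morphisms of projective systems of sheaves compatible with $u_\ast$; the $G_K$-action arises from the geometric action on $X(N,p)_{\Kbar}$ and is preserved by both constructions; and the Hecke correspondence $X(N,p)_\ell$ induces compatible morphisms on both sites, so the comparison map automatically respects the induced Hecke operators.

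The principal technical obstacle is the vanishing of the higher derived functors of the right adjoint to $u_\ast$ applied to $u_\ast\cD_n^o$, which controls the interaction between Kummer étale covers of the formal model and finite Kummer étale covers of the generic fibre. This is a local calculation on $X(N,p)_{\Kbar}^{\rm ket}$ that is naturally carried out using the localization machinery set up in \S\ref{sec:localization}.
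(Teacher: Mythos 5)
Your overall skeleton --- reduce via Proposition \ref{prop:finitefil} and Mittag--Leffler to the finite locally constant sheaves $\cD_n^o$, pass to the limit and invert $p$, and get the Galois, Hecke and specialization compatibilities from naturality of $u_\ast$ and of the trace maps of (\ref{def:trace}) --- agrees with the paper. The gap is in the one step that carries all the content: the comparison $\rH^i\bigl(\fX(N,p), u_\ast\cF\bigr)\cong \rH^i\bigl(X(N,p)_{\Kbar}^{\rm ket},\cF\bigr)$ for a single finite locally constant sheaf $\cF$. You propose to obtain this from a Leray spectral sequence together with a ``local claim that the Kummer \'etale coverings of $\cU$ contribute no new cohomology to a sheaf that only sees the generic fibre.'' That local vanishing is not a formal consequence of the localization machinery of \S\ref{sec:localization}: the cohomology of the Faltings topos genuinely interleaves the Kummer \'etale cohomology of the formal model with the Galois cohomology of the fundamental groups $\cG_\cU$ of the generic fibres of small affines, and identifying the total result with the \'etale cohomology of $X(N,p)_{\Kbar}$ is precisely Faltings' comparison theorem \cite[Thm.~9]{faltingsAsterisque}, whose proof rests on almost purity and on the hypotheses that $\cX(N,p)$ is \emph{proper} and log smooth over $\Spf(\cO_K)$. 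The paper's proof consists of citing exactly this theorem.

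A concrete symptom that your argument cannot work as stated is that it never uses properness. A purely local computation of the kind you describe would apply verbatim to the non-proper site $\fX(w)$, where the conclusion is false: Theorem \ref{thm:cohomologyomega} shows that $\rH^1(\fX(w),-)$ of a sheaf pulled back from the generic fibre is computed by coherent data such as $\rH^0\bigl(X(w),\omega_w^{\dagger,k+2}\bigr)\hat{\otimes}_K\C_p$, not by \'etale cohomology of $X(w)_{\Kbar}$ --- this discrepancy is the engine of the whole paper. So the step you flag as ``the principal technical obstacle'' must be discharged by invoking (or genuinely reproving) Faltings' theorem for the proper log smooth model $\cX(N,p)$, not by the localization formalism. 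Once that citation is in place, the rest of your argument is sound and matches the paper's.
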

\begin{proof} As $\cX(N,p)$ is proper and log smooth over $\Spf(\cO_K)$ it follows from
\cite[Thm.~9]{faltingsAsterisque} that for $\cF$ a finite locally constant sheaf the natural maps
$\rH^1\bigl(\fX(N,p), \cF\bigr)\lra \rH^1\bigl(X(N,p)_{\Kbar}^{\rm
ket}, \cF\bigr)$ are isomorphisms. As $\cD_U$ and $\cD_k$ are inductive limits of projective limits of finite locally constant
sheaves in a compatible way by \ref{prop:finitefil},
the maps in the proposition are isomorphisms. The Hecke operators are defined in terms of the Hecke correspondence
$p_1$, $p_2\colon X(N,p)_{\ell} \lra X(N,p)$ and
the trace maps $p_{1,\ast}\bigl(p_1^\ast(\cF)\big)\to \cF$ on $X(N,p)_{\Kbar}^{\rm ket}$ and on $\fX(N,p)$ defined in
(\ref{def:trace}). As they are compatible via
$u_\ast$, the displayed isomorphisms are equivariant for the action of the Hecke operators.
\end{proof}



\section{Modular sheaves}
\label{sec:modularsheaves}

Modular sheaves are only defined on the site $\fX(w)$, $0\le w< p/(p+1)$.
Let $\cW$ denote the weight space for $\GL_{2/\Q}$ i.e. the rigid analytic
space over $\Q_p$ associated to the noetherian $\Z_p$-algebra $\Z_p[\![\Z_p^\times]\!]$
and let us fix $(B, \underline{m})$, a complete, local, regular, noetherian $\cO_K$-algebra.
Let us recall that $B$ is complete and separated for its $\underline{m}$-adic toplogy
(the weak topology) and therefore  also for the $p$-adic toplogy.
We denote by $B_K:=B\otimes_{\cO_K}K$ and by $|| \ ||$ the Gauss norm on the Banach $K$-algebra
$B_K$. Let $k\in \cW(B_K)$ be a $B_K$-valued weight, i.e. a continuous
group homomorphism $k:\Z_p^\times\lra B^\times$.
We embed $\Z$ in $\cW(\Q_p)$ by sending $k\in \Z$ to the character $a\rightarrow a^k$ and in general if
$k\in \cW(B_K)$ as above and $t\in \Z_p^\times$ we use the additive notation $t^k:=k(t)$.

 Once we fixed $B$ and $k\in \cW(B_K)$ we define
$$
r:=\min\{n\in \N\quad | \quad n>0 \mbox{ and } ||k(1+p^n\Z_p)-1||<p^{\frac{-1}{p-1}}\},
$$
and we fix $w\in \Q$, $w\ge 0$ and $w<2/(p^r-1)$ if $p>3$ and $w<1/3^r$ if $p=3$. We say that
$w$ is ${\bf adapted}$ to $r$ (and to $k$). Let us also note that given $B,k,r$ as above there is a unique
$a\in B_K$ such that for all $t\in 1+p^r\Z_p$ we have $t^k=\exp(a\log(t))$.

There are two instances of the above general situation which will be relevant in what follows:

a) $B=\cO_K$ so $B_K=K$, in that case $k\in \cW(K)$ is a $K$-valued weight.

b) We fix first $r>0$, $r\in \N$ and denote $\cW_r:=\{k\quad |\quad ||k(1+p^r\Z_p)-1||<p^{-1/(p-1)}\}$.
It is a wide open rigid subspace of $\cW$. Let now $U\subset \cW_r$ be a  wide open
disk of $\cW_r$, let $A_U$ be the affinoid algebra of $U$ and
we define
$$
\Lambda_U=A_U^{\mathfrak{b}}:=\{f\in A_U\mbox{ such that } |f(x)|\le 1\mbox{ for all points } x\in U\}
$$
the $\cO_K$-algebra of the {\bf bounded rigid functions} on $U$. Then $\Lambda_U$
is a complete, local, noetherian $\cO_K$-algebra non-canonically isomorphic to $\cO_K[\![T]\!]$.
Let also:
$k_U:\Z_p^\times\lra \Lambda^\times_U$ be the universal character, i.e. if $t\in \Z_p^\times$ and
$x\in U$ we have $t^{k_U}(x)=t^x$.

\begin{remark}
\label{rmk:lambdar}
Let $(B, \underline{m})$ be as above and let $R$ be a $p$-adically complete and separated $\cO_K$-algebra
in which $p$ is not a zero divisor. We denote by $R\hat{\otimes}B$ the ring
$$
R\hat{\otimes}B:=\lim_{\infty \leftarrow n}\bigl(R/p^nR\otimes_{\cO_K}B/\underline{m}^n \bigr)\cong
\lim_{\leftarrow,n}\bigl(R/p^nR\hat{\otimes}_{\cO_K}B/p^nB\bigr),
$$
where we denoted by $R/p^nR\hat{\otimes}_{\cO_K}B/p^nB$ the completion of the usual tensor product with
respect to the ideal generated by the image of $R\otimes_{\cO_K}\underline{m}$.

In particular, if $B=\cO_K$ then $R\hat{\otimes}B=R$ and if $B\cong \cO_K[\![T]\!]$ then
$R\hat{\otimes}B\cong R[\![T]\!]$.
\end{remark}

\subsection{The sheaves $\Omega^k_\fX(w)$}
\label{sec:omegakappa}

In this section we recall the main constructions of chapter 3 of
\cite{andreatta_iovita_stevens}
in a slightly different context. Let $w\in \Q$ be such that $0\le w <p/(p+1)$.

Let $f\colon \cE\lra \cX(w)$ and $f_K\colon \cE_K\lra X(w)$ denote the universal semi-abelian scheme over $\cX(w)$ and respectively its generic fiber and let
$\cT\lra X(w)$ denote the $p$-adic Tate module of $\cE_K^\vee\lra X(w)$ seen as a continuous sheaf on $\cX(w)_\Kbar^{\rm ket}$. Notice that $\cE$ admits a canonical
subgroup $\cC_1\subset \cE$. Let $\cT_0\subset \cT$ be the inverse image of $\cC_{1,K}^\vee\backslash \{0\}$ in $\cT$ via the natural map $\cT\to \cE_K^\vee[p] \to
\cC_{1,K}^\vee$. Then $\cT$ and $\cT_0$ are continuous, locally constant sheaves on $\cX(w)_\Kbar^{\rm ket}$ and as such can be seen as a continuous sheaf on
$\fX(w)$. Notice that the sheaf $\cT$ is a continuous sheaf of abelian groups, while $\cT_0$ is a continuous sheaves of sets and it is endowed with a natural action
of $\Z_p^\ast$.

Let $e\colon \cX(w)\lra \cE$ denote the identity section of $f$ and
let $\omega_{\cE/\cX(w)}:=e^\ast\bigl(\Omega^1_{\cE/\cX(w)}\bigr)$. It is a locally free
$\cO_{\cX(w)}$-module of rank $1$ and we denote by $\omega_{\cE/\fX(w)}:=
v_{\cX(w)}^\ast\bigl(\omega_{\cE/\cX(w)}\bigr)$. Then $\omega_{\cE/\fX(w)}$ is a
a continuous sheaf on $\fX(w)$, a locally free
$\hO_{\fX(w)}^{\rm un}$-module of rank $1$.

We have a natural sequence of sheaves and morphisms of sheaves on $\fX(w)$ called the Hodge-Tate
sequence of sheaves for $\cE/\cX(w)$
$$
0\lra \omega_{\cE/\fX(w)}^{-1}\otimes_{\hO_{\fX(w)}^{\rm un}}\hO_{\fX(w)}(1)\lra \cT\otimes\hO_{\fX(w)}
\stackrel{\rm dlog}{\lra}\omega_{\cE/\fX(w)}\otimes_{\hO_{\fX(w)}^{\rm un}}\hO_{\fX(w)}\lra 0.
$$

\begin{lemma}
\label{lemma:locht}
For every connected, small affine object $\cU=\bigl(\Spf(R_\cU), N_\cU\bigr)$
of $\cX(w)^{\rm ket}$, the localization of the Hodge-Tate sequence of sheaves at $\cU$ is the
Hodge-Tate sequence of continuous $\cG_{\cU}$-representations which appears in \cite{andreatta_iovita_stevens}
section \S 2
$$
0\lra \omega_{\cE/\cX(w)}^{-1}(\cU)\otimes_{R_\cU}\hR_\cU(1)\lra T_p\bigl(\cE^\vee_{\cU}\bigr)\otimes\hR_\cU
\stackrel{\rm dlog_\cU}{\lra}\omega_{\cE/\cX(w)}(\cU)\otimes_{R_\cU}\hR_\cU\lra 0
$$

\end{lemma}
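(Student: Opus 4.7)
The plan is to apply the localization functor of section \ref{sec:localization} termwise to the global Hodge-Tate sequence on $\fX(w)$ and identify each piece with the corresponding object in the sequence constructed in \cite{andreatta_iovita_stevens}, \S 2. Fix a connected small affine $\cU = (\Spf(R_\cU), N_\cU)$ in $\cX(w)^{\rm ket}$ and recall the localization functor $\cF \mapsto \cF(\Rbar_\cU, \Nbar_\cU)$ giving a continuous representation of $\cG_{U_\Kbar}$.

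First I would identify the localization of each of the structure sheaves: by the definition of $\cO_\fX$ and $\cO_\fX^{\rm un}$ from section \ref{sec:hodgetate}, together with Proposition 1.10 of \cite{andreatta_iovita3}, the continuous sheaf $\hO_{\fX(w)}$ localizes at $\cU$ to $\hR_\cU$ (the $p$-adic completion of $\Rbar_\cU$) with its natural $\cG_{U_\Kbar}$-action, while $\hO_{\fX(w)}^{\rm un}$ localizes to $\widehat{R_\cU^{\rm un}}$, which under the small affine hypothesis is identified with $\widehat{R_\cU \otimes_{\cO_K} \cO_{\widehat K^{\rm un}}}$ and coincides with $R_\cU$ after ignoring the arithmetic Galois action irrelevant here. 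Next, because the formation of $\omega_{\cE/\cX(w)}$ is Zariski-local on $\cX(w)$ and the functor $v_{\cX(w)}^\ast$ is defined precisely so that $v_{\cX(w),\ast}\bigl(v_{\cX(w)}^\ast(\cF)\bigr) = \cF$ on $\cO_{\cX^{\rm ket}}$-modules, the localization of $\omega_{\cE/\fX(w)}$ at $\cU$ is $\omega_{\cE/\cX(w)}(\cU) \otimes_{R_\cU} \widehat{R_\cU^{\rm un}}$ with trivial action on the first factor. Finally, by the very definition of the Tate module as a continuous sheaf on $\cX(w)^{\rm ket}_\Kbar$, the localization of $\cT$ at $\cU$ is $T_p(\cE^\vee_\cU)$ as a continuous $\cG_{U_\Kbar}$-module.

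Having matched the objects, I would check that the three maps of the global sequence specialize to the maps of the local Hodge-Tate sequence. The $\mathrm{dlog}$ map is defined as a map of continuous sheaves by the standard recipe $P \mapsto \mathrm{dlog}(P)$ on $p$-power torsion points, which coincides on stalks with the $\mathrm{dlog}_\cU$ of loc. cit. The dual map, giving the kernel, is similarly compatible because both are obtained from Cartier duality between $\cE$ and $\cE^\vee$ composed with $\mathrm{dlog}$, which is functorial. Compatibility of these identifications with the tensor products that appear is a formal matter, since localization is exact and commutes with tensor products over rings that localize correctly (as verified above).

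The only real point of verification is that the sequence, obtained globally by sheafifying the local Hodge-Tate sequences, indeed gives back the local sequence after localization; but this is built into the construction since the global sequence in section \ref{sec:hodgetate} is defined as the sheaf associated to the presheaf whose sections on $(\cU, W)$ are precisely the local Hodge-Tate sequences of \cite{andreatta_iovita_stevens}, \S 2. The main (minor) obstacle is keeping track of the distinction between $\hO_{\fX(w)}^{\rm un}$ and $\hO_{\fX(w)}$ and verifying that the tensor product formulas in the global and local sequences match under the respective identifications; this reduces to noting that $R_\cU$ injects into $\hR_\cU$ and that the localization of $v_{\cX(w)}^\ast(\omega_{\cE/\cX(w)})$ computes $\omega_{\cE/\cX(w)}(\cU)$ tensored with the structure sheaf on the unramified site, as in \cite{andreatta_iovita3}, \S 1.2.6.
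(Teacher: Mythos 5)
Your proposal is correct and is exactly the termwise unwinding of the localization functor that the paper has in mind: the paper's own proof of this lemma consists of the single sentence ``The proof is clear.'' Your identification of the localizations of $\hO_{\fX(w)}$, $\hO_{\fX(w)}^{\rm un}$, $v_{\cX(w)}^\ast(\omega_{\cE/\cX(w)})$ and $\cT$, and the compatibility of ${\rm dlog}$ with localization, is precisely the verification being left to the reader.
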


\begin{proof} The proof is clear.
\end{proof}

\begin{lemma}
\label{lemma:correctionHT}
Let $\cF^0:={\rm Im}({\rm dlog})$ and $\cF^1:={\rm Ker}({\rm dlog})$. Then

i) $\cF^0, \cF^1$ are locally free sheaves of $\hO_{\fX(w)}$-modules on $\fX(w)$ of rank $1$.
We denote by $\cF^{i,(r)}:=j_r^\ast(\cF^i)$ for $i=0,1$, they are locally free
$\hO_{\fX^{(r)}(w)}$-modules of rank $1$.

ii) We set $v:=w/(p-1)$ and let us suppose that $w$ is adapted to $r$, for a certain
$r\ge 1$, $r\in \N$. We denote $\cC_r\subset \cE[p^r]$ the canonical subgroup
of level $p^r$ of $\cE[p^r]$ over $X^{(r)}(w)$ (which exists by the assumption on $w$),
 denote by $\cC_r^\vee$ its Cartier dual and we also denote by
$\cC_r$ and $\cC_r^\vee$ the groups of points of these group-schemes over $X^{(r)}(w)$, and by the same
symbols the constant abelian sheaves on $\bigl(X^{(r)}(w)\bigr)^{\rm ket}$. We have natural
isomorphisms as $\hO_{\fX(w)}$-modules on $\fX(w)$:
$\cF^0/p^{(1-v)r}\cF^0\cong \cC_r^\vee\otimes\cO_{\fX(w)}/p^{(1-v)r}\cO_{\fX(w)}$ and
$\cF^1/p^{(1-v)r}\cF^1\cong \cC_r\otimes\cO_{\fX(w)}/p^{(1-v)r}\cO_{\fX(w)}$.

iii) we have natural isomorphisms of $\cO_{\cX^{(r)}(w)}$-modules with $G_r$-action:
$$
v_{\fX^{(r)}(w),\ast}\bigl(\cF^{i,(r)}\bigr)\cong \cF_i^{(r)}\otimes_{\cO_K}\cO_{\C_p},
$$
for $i=0,1$. Here $\cF_i^{(r)}$ are the sheaves on
$\cX^{(r)}(w)$ defined in section \S ???? of \cite{andreatta_iovita_stevens}. Moreover
$\cF^{i,(r)}\cong v_{\fX^{(r)}(w)}^\ast(\cF_i^{(r)})\otimes_{\hO_{\fX^{(r)}(w)}^{\rm un}}\hO_{\fX^{(r)}(w)}$.
\end{lemma}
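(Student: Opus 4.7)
The plan is to reduce every statement to a localization calculation on connected small affine objects $\cU = (\Spf(R_\cU), N_\cU)$ of $\cX(w)^{\rm ket}$, invoke Lemma \ref{lemma:locht} to identify the sections of the Hodge-Tate sequence with the classical Hodge-Tate sequence for $\cE^\vee_\cU$ studied in \cite{andreatta_iovita_stevens}, and then import the results proved there.

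For part (i), I would first work with a small affine $\cU$ as above. Localizing the Hodge-Tate sequence via the functor of section \ref{sec:localization} gives, by Lemma \ref{lemma:locht}, the Hodge-Tate sequence for $T_p(\cE^\vee_\cU)$ tensored with $\hR_\cU$. The point is that $\mathrm{dlog}_\cU$ is not surjective integrally, but after inverting $p$ (and in fact modulo $p^{(1-v)r}$ for $w$ adapted to $r$) its image is a free $\hR_\cU$-module of rank $1$ --- this is the content of the main theorem on the canonical subgroup in \cite{andreatta_iovita_stevens}. Because the formation of $\cF^0$ and $\cF^1$ commutes with localization (they are defined as image and kernel of a map of sheaves whose localization is explicit), the local freeness of $\cF^0$ follows, and then $\cF^1$ is locally free of rank $1$ as the kernel of a surjection between a rank-$2$ and a rank-$1$ locally free module. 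The statement for $\cF^{i,(r)} = j_r^\ast(\cF^i)$ is then automatic since $j_r^\ast$ is exact and preserves local freeness.

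For part (ii), the hypothesis $w < 2/(p^r-1)$ (or $w < 1/3^r$ when $p=3$) ensures the existence of the canonical subgroup $\cC_r \subset \cE[p^r]$ over $X^{(r)}(w)$. The key input is the integral comparison: the dlog map sends $\cC_r^\vee$ isomorphically onto $\cF^0$ modulo $p^{(1-v)r}$, which is exactly the statement proved for affines in \cite{andreatta_iovita_stevens} using the theory of the canonical subgroup. Concretely, I would show that the map $\cC_r^\vee \otimes \cO_{\fX(w)}/p^{(1-v)r} \to \cF^0/p^{(1-v)r}$ induced by dlog is an isomorphism of sheaves on $\fX(w)$ by checking it on sections over small affines, where it reduces to the isomorphism proved locally in loc.cit. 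The statement for $\cF^1$ then follows by duality (via the Weil pairing identification $T \cong T^\vee(1)$ and the fact that $\cC_r$ is the Cartier dual of $\cC_r^\vee$) together with the short exact Hodge-Tate sequence.

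For part (iii), the sheaves $\cF_i^{(r)}$ on $\cX^{(r)}(w)$ from \cite{andreatta_iovita_stevens} are, by construction, the $G_r$-twisted forms glued from the local modules $\cF_i^{(r)}(\cU)$ attached to the canonical subgroup data. I would verify the isomorphism $v_{\fX^{(r)}(w),\ast}(\cF^{i,(r)}) \cong \cF_i^{(r)} \otimes_{\cO_K} \cO_{\C_p}$ by computing sections: by Lemma \ref{lemma:vyox} the functor $v_{\fX^{(r)}(w),\ast}$ applied to $\hO_{\fX^{(r)}(w)}$ recovers $\hO_{\ucX^{(r)}(w)}$, and combining this with the localization identification from part (ii) yields the claim. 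The pullback identity $\cF^{i,(r)} \cong v^\ast_{\fX^{(r)}(w)}(\cF_i^{(r)}) \otimes_{\hO^{\rm un}} \hO$ follows by adjunction and the fact that both sides are locally free of rank one with isomorphic global sections.

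The main obstacle is part (ii), since it requires translating the integral canonical-subgroup calculation of \cite{andreatta_iovita_stevens} (which is done at the level of local $R_\cU$-algebras) into a statement about sheaves on the Faltings site $\fX(w)$. The translation is clean only because small affines form a basis of $\cX(w)^{\rm ket}$ and the localization functor of section \ref{sec:localization} is faithful on the full subcategory of sheaves of the relevant type, so one really does recover a sheaf-theoretic isomorphism from the local ones. Once this translation is set up, parts (i) and (iii) are essentially bookkeeping with $j_r^\ast$ and $v_{\fX^{(r)}(w),\ast}$.
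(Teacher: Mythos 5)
Your proposal follows essentially the same route as the paper: localize at connected small affine objects of $\cX(w)^{\rm ket}$, use Lemma \ref{lemma:locht} to identify the localized Hodge--Tate sequence with the one studied in \cite{andreatta_iovita_stevens}, and import Propositions 2.4 and 2.6 of that reference for parts (i)--(ii) and (iii) respectively. The extra care you take about why the local isomorphisms glue to sheaf isomorphisms is implicit in the paper's (very terse) argument, so there is no substantive difference.
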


\begin{proof}
We consider a connected, small affine object $\cU=\bigl(\Spf(R_\cU), N_\cU\bigr)$
of $\cX^{\rm ket}$ as in lemma \ref{lemma:locht}. Then the localizations of
$\cF^i$ at $\cU$ are: $\cF^0\bigl(\Rbar_\cU, \Nbar_\cU\bigr)={\rm Im}({\rm dlog}_\cU)=F^0$
and $\cF^1\bigl(\Rbar_\cU,\Nbar_\cU\bigr)={\rm Ker}({\rm dlog}_\cU)=F^1$ and we apply
proposition 2.4 of \cite{andreatta_iovita_stevens}. This proves i) and ii).
Now we apply proposition 2.6 of \cite{andreatta_iovita_stevens} and iii) follows.
\end{proof}

\bigskip
\noindent
Let now $B$, $k$, $r$, $w$ be as at the beginning of  section \ref{sec:modularsheaves}, i.e. $B$ is a complete, regular, local, noetherian
 $\cO_K$-algebra, $k\in \cW(B_K)$ and $r\in \N$ and $w\in \Q$  such that
$w$ is adapted to $r$ and $k$. Let us also recall that we denoted $v:=w/(p-1)$.

Let us denote $S_{\fX^{(r)}(w)}:=\Z_p^\times\bigl(1+p^{(1-v)r}\hO_{\fX^{(r)}(w)}\bigr)$, it is a sheaf of abelian
groups on $\fX^{(r)}(w)$ which acts on $\displaystyle \hO_{\fX^{(r)}(w)}\hat{\otimes}B:=\lim_{\infty \leftarrow n}
\bigl(\hO_{\fX^{(r)}(w)}/\pi^n\hO_{\fX^{(r)}(w)}\otimes_{\cO_K}B/\underline{m}^n\bigr)$ as follows:
let $s=c\cdot x\in S_{\fX^{(r)}(w)}(\cU,W,\alpha)=\Z_p^\times\bigl(1+p^{(1-v)r}\hO_{\fX(w)}(\cU,W)\bigr)$
and $y\in \hO_{\fX^{(r)}(w)}(\cU,W,\alpha)\hat{\otimes}B=\hO_{\fX(w)}(\cU,W)\hat{\otimes}B$. Then we define
$$
s\ast y:=\exp(a\log(x))\cdot c^k\cdot y, \mbox{ where } a\in B_K \mbox{ is such that } t^k=\exp(a\log(t)), t\in 1+p^r\Z_p.
$$
Let us remark that $s\ast y\in \hO_{\fX^{(r)}(w)}(\cU,W,\alpha)\hat{\otimes}B$. We denote by $\bigl(\hO_{\fX^{(r)}(w)}\hat{\otimes}B\bigr)^{(k)}$
the continuous sheaf $\hO_{\fX^{(r)}(w)}\hat{\otimes}B$ with the above defined action of $S_{\fX^{(r)}(w)}$.

Thanks to lemma \ref{lemma:correctionHT} ii) that we have an isomorphism of sheaves
$$
\varphi\colon \cF^{0,(r)}/p^{(1-v)r}\cF^{0,(r)}\cong  \bigl(\cC_r\bigr)^\vee\otimes\hO_{\fX^{(r)}(w)}/p^{(1-v)r}\hO_{\fX^{(r)}(w)}.
$$
Let $\cF^{(r)'}$ denote the inverse image under
the isomorphism $\varphi$ above of
the sheaf of sets $(\cC_r)^\vee-(\cC_r)^\vee[p^{r-1}]$.   It is endowed with an action of $S_{\fX^{(r)}(w)}$.

Recall from \S\ref{sec:hodgetate} that we have define a morphism of sites $j_r\colon \fX(w) \to \fX^{(r)}(w)$. It then follows from the construction that  ${\rm
dlog}$ induces a map
$${\rm dlog}\colon j_r^\ast(\cT_0) \lra \cF^{(r)'},$$compatible with the actions of $\Z_p^\ast$ on the two sides.

\begin{lemma}\label{lemma:F'torsor} The sheaf
$\cF^{(r)'}$ is an $S_{\fX^{(r)}(w)}$-torsor and there exists a covering of $\cX(w)$ by small affine objects $\{U_i\}$ such that
$\cF^{(r)'}\vert_{(U_i,U_i\times_{\cX(w)} X^{(r)}}$ is the trivial torsor for every $i$.
\end{lemma}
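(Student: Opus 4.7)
The plan is to reduce the claim to a local computation exploiting that $\cF^{0,(r)}$ is invertible over $\hO_{\fX^{(r)}(w)}$ and that $\cC_r^\vee$ is a locally constant sheaf. I first choose a covering $\{\cU_i\}$ of $\cX(w)$ by small affine opens in $\cX(w)^{\rm ket}$ with two properties: (a) $\cF^{0,(r)}$, restricted to $\bigl(\cU_i,\cU_i\times_{X(w)} X^{(r)}(w)\bigr)$, is free of rank $1$ with basis $e_i$; and (b) $\cC_r^\vee$ is constant on this restriction with a chosen generator $P_i^\vee$. Property (a) follows from Lemma \ref{lemma:correctionHT}(i), since $\cF^0$ is locally free on $\fX(w)$ and $j_r^\ast$ preserves local freeness. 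Property (b) is available because the $\Gamma_1(Np^r)$-level structure on $X^{(r)}(w)$ canonically trivializes $\cC_r\cong \Z/p^r\Z$; Cartier duality together with the trivialization of $\mu_{p^r}$ over $\Kbar$ built into the Faltings site then forces $\cC_r^\vee$ to be constant as well.

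Over such a trivialization, write $\varphi(\bar{e_i}) = \alpha_i\, P_i^\vee$ with $\alpha_i \in \hO_{\fX^{(r)}(w)}/p^{(1-v)r}$; necessarily $\alpha_i$ is a unit, since $\varphi$ is an isomorphism of invertible modules. Choosing a lift $\beta_i \in \hO_{\fX^{(r)}(w)}$ of $\alpha_i^{-1}$ and setting $f_i := \beta_i\cdot e_i$ yields a local section $f_i$ of $\cF^{(r)'}$, since $\varphi(\bar{f_i}) = P_i^\vee$ is a generator of $\cC_r^\vee$. The action of $S_{\fX^{(r)}(w)}$ on $\cF^{(r)'}$ is defined by restriction of scalar multiplication on $\cF^{0,(r)}$ via the inclusion $S_{\fX^{(r)}(w)}\hookrightarrow \hO_{\fX^{(r)}(w)}^\times$ (this is an inclusion because $p^{(1-v)r}\hO_{\fX^{(r)}(w)}$ is topologically nilpotent). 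To see that it preserves $\cF^{(r)'}$ and is simply transitive, I work in the local trivialization: for $s = c(1+p^{(1-v)r}z)\in S_{\fX^{(r)}(w)}$ with $c\in \Z_p^\times$, one computes $\varphi(\overline{sf_i}) = c\cdot P_i^\vee$, again a generator, showing preservation; conversely, if $f,g \in \cF^{(r)'}$ locally, the unique unit $s\in\hO_{\fX^{(r)}(w)}^\times$ with $f = sg$ has $\bar s \in (\Z/p^r\Z)^\times\subset \hO_{\fX^{(r)}(w)}/p^{(1-v)r}$ (since both $\varphi(\bar f)$ and $\varphi(\bar g)$ must be generators of $\cC_r^\vee$); lifting $\bar s$ to $c \in \Z_p^\times$ gives $s/c \in 1+ p^{(1-v)r}\hO_{\fX^{(r)}(w)}$, so $s \in S_{\fX^{(r)}(w)}$. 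Uniqueness of $s$ provides freeness.

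The main obstacle is step (b) above: ensuring the global (not merely local) constancy of $\cC_r^\vee$ on the restrictions $\cU_i\times_{X(w)} X^{(r)}(w)$. The resolution combines two inputs. First, $\cC_r$ is globally trivialized on $X^{(r)}(w)$ by the $\Gamma_1(Np^r)$-structure coming from the construction $X^{(r)}(w) = X_1(Np^r)\times_{X_1(N,p^r)} X(w)$. Second, Cartier duality identifies $\cC_r^\vee$ with $\Hom(\cC_r,\mu_{p^r})$, and as the Faltings site admits a compatible system of sections of $\mu_{p^r}$ over $\Kbar$, the sheaf $\cC_r^\vee$ becomes the constant sheaf $\Z/p^r\Z$ with an explicit generator $P_i^\vee$ attached to the chosen generator of $\cC_r$. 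Once this global trivialization is in place, the local construction of $f_i$ and the verification of the torsor axioms reduce to the elementary computations outlined above.
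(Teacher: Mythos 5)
Your argument is correct in substance, but it takes a more self-contained route than the paper, which simply localizes at a connected small affine $\cU$ and quotes Lemma \ref{lemma:correctionHT}(iii) together with \S 3 of \cite{andreatta_iovita_stevens}, where the torsor property is already established for the localized modules $F^0$. You instead rebuild the torsor structure from Lemma \ref{lemma:correctionHT}(i)--(ii): trivialize $\cF^{0,(r)}$ and $\cC_r^\vee$ locally, rescale a basis $e_i$ so that $\varphi(\bar f_i)$ hits a chosen generator $P_i^\vee$, and check simple transitivity by hand. This buys independence from the external reference at the cost of two points you should make explicit. First, the trivialization of $\cF^{0,(r)}$ on objects of the specific form $(\cU_i,\cU_i\times_{X(w)}X^{(r)}(w))$ does not follow formally from ``$j_r^\ast$ preserves local freeness'' (a covering in Faltings' site may refine the rigid component); what one actually uses is the localization statement behind Lemma \ref{lemma:correctionHT}, namely that over a small affine $\cU$ the module $F^0(\Rbar_\cU,\Nbar_\cU)$ is free of rank $1$ over $\hR_\cU$, and likewise the surjectivity of $\hR_\cU\to \hR_\cU/p^{(1-v)r}$ needed to lift $\alpha_i^{-1}$ holds at the level of localizations. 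Second, since $(1-v)r<r$ when $v>0$, the map from the set of generators $(\cC_r)^\vee-(\cC_r)^\vee[p^{r-1}]$ into $\cC_r^\vee\otimes\cO_{\fX^{(r)}(w)}/p^{(1-v)r}$ is not injective: two generators $aP^\vee$, $bP^\vee$ have the same image whenever $ab^{-1}\equiv 1 \pmod{p^{(1-v)r}}$. Your transitivity argument survives this --- you only conclude $\bar s\equiv ab^{-1}\pmod{p^{(1-v)r}}$ and then lift $ab^{-1}$ to $c\in\Z_p^\times$, exactly as written --- but the phrase ``$\varphi(\bar f)$ must be a generator'' should be read as ``lies in the image of the generator set'', which is how $\cF^{(r)'}$ is defined. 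With these two clarifications your proof is complete and agrees with the paper's conclusion.
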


\begin{proof}
We localize at a connected, small affine object $\cU$ of $(\cX^{(r)}(w))^{\rm ket}$ and apply lemma \ref{lemma:correctionHT}(iii) and
\cite{andreatta_iovita_stevens} section \S 3.
\end{proof}

Let us now consider the $\cO_{\fX^{(r)}(w)}\hat{\otimes}B$-module
$$
\cM^{(r)}_k(w):={\mathfrak Hom}_{S_{\fX^{(r)}(w)}}\bigl(\cF^{(r)'}, (\hO_{\fX^{(r)}(w)}\hat{\otimes}B)^{(-k)}\bigr).
$$Thanks to lemma \ref{lemma:F'torsor} it is a locally free $\cO_{\fX^{(r)}(w)}\hat{\otimes}B$-module of rank $1$ and we have a natural isomorphism of $\cO_{\fX^{(r)}(w)}\hat{\otimes}B$-modules
$${\mathfrak Hom}_{\hO_{\fX^{(r)}(w)}\hat{\otimes}B}\left(\cM^{(r)}_k(w), \hO_{\fX^{(r)}(w)}\hat{\otimes}B\right) \lra  \cM^{(r)}_{-k}(w).$$
We also have the continuous sheaf of $\cO_{\fX^{(r)}(w)}\hat{\otimes}B$-modules $${\mathbb A}_k^{(r)}:= {\mathfrak Hom}_{\Z_p^\ast}\bigl(j_r^\ast(\cT_0),
(\hO_{\fX^{(r)}(w)}\hat{\otimes}B)^{(-k)}\bigr)$$ and a map of continuous sheaves of $\cO_{\fX^{(r)}(w)}\hat{\otimes}B$-modules $${\rm dlog}^{\vee,k}\colon
\cM_k^{(r)}(w)\lra {\mathbb A}_k^{(r)}$$ induced by ${\rm dlog}$.

\

For every element $\sigma\in G_r$ we denote also by $\sigma$ the functor
$\bigl(E_{\cX(w)_\Kbar}\bigr)_{/(\cX(w),X^{(r)}(w))}\lra \bigl(E_{\cX(w)_\Kbar}\bigr)_{/(\cX(w),X^{(r)}(w))}$
defined on objects by $(\cU,W,\alpha)\rightarrow (\cU,W,\sigma\circ \alpha)$ and by identity on the morphisms.
This functor induces a continuous functor on the site $\fX^{(r)}(w)$. If $\cH$ is a sheaf (or continuous sheaf) on
$\fX^{(r)}(w)$ we denote by $\cH^\sigma$ the sheaf: $\cH^\sigma(\cU,W,\alpha):=\cH\bigl(\sigma(\cU,W,\alpha)\bigr)=
\cH(\cU,W, \sigma\circ\alpha)$.

\begin{lemma}
\label{lemma:sigma}
a) Let us suppose that $\cG$ is a sheaf of abelian groups on $\fX(w)$ and $\cH:=j_r^\ast(\cG)$.
Then $\cH^\sigma=\cH$ for all $\sigma\in G_r$.\smallskip

b) For $\cH= \cF^{(r)'}$, $(\hO_{\fX^{(r)}(w)}\hat{\otimes}B)^{(-k)}$, or $j_r^\ast(\cT_0)$ we have $\bigl(\cH\bigr)^\sigma=\cH$ for every $\sigma\in G_r$. Hence,
the same applies for $\cM_k^{(r)}(w)$ and ${\mathbb A}_k^{(r)}$ compatibly with ${\rm dlog}^{\vee,k}$.\smallskip

c) Suppose that $\cH$ is a sheaf on $\fX^{(r)}(w)$ such that $\cH^\sigma=\cH$ for all $\sigma\in G_r$. Then each element $\sigma\in G_r$ defines a canonical
automorphism of the sheaf $j_{r,\ast}(\cH)$, i.e., we have a canonical action of the group $G_r$ on the sheaf $j_{r,\ast}(\cH)$.
\end{lemma}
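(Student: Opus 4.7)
The plan is to treat parts (a) and (b) as essentially formal consequences of the explicit description of $j_r^\ast$ from section \ref{sec:induced}, and to handle part (c) by producing, for each $\sigma \in G_r$, a concrete automorphism of $j_{r,\ast}(\cH)$ coming from functoriality of $\cH$ applied to the $\sigma$-automorphism of $X^{(r)}(w)$.

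For (a), I would invoke the natural isomorphism $j_r^\ast \cong \alpha_{r,\ast}$ recorded in section \ref{sec:induced}, which gives the formula $j_r^\ast(\cG)(\cU,W,\alpha) = \cG(\cU,W)$. Since the endofunctor $\sigma$ on $\fX^{(r)}(w)$ modifies only the structure map $\alpha$, replacing it by $\sigma\circ\alpha$, while leaving the pair $(\cU,W)$ untouched, the equality $(j_r^\ast\cG)^\sigma = j_r^\ast\cG$ is immediate from the definition.

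For (b), I would verify the claim for each sheaf in the list. The sheaves $j_r^\ast(\cT_0)$ and $\hO_{\fX^{(r)}(w)} = j_r^\ast(\hO_{\fX(w)})$ are pullbacks from $\fX(w)$, hence $G_r$-invariant by (a). The twist $(\hO_{\fX^{(r)}(w)}\hat\otimes B)^{(-k)}$ only modifies the $S_{\fX^{(r)}(w)}$-action (and $S_{\fX^{(r)}(w)}$ is again of the form $j_r^\ast$ of a sheaf on $\fX(w)$), not the underlying sheaf, so $\sigma$-invariance is inherited. For $\cF^{(r)'}$, the ambient sheaf $\cF^{0,(r)}/p^{(1-v)r}\cF^{0,(r)}$ is the pullback $j_r^\ast\bigl(\cF^0/p^{(1-v)r}\cF^0\bigr)$ of a sheaf on $\fX(w)$ (the Hodge-Tate data live on $\fX(w)$), and the subsheaf $\cF^{(r)'}$ is carved out inside it by a condition involving the constant abelian sheaf $\cC_r^\vee$, whose sections depend only on the underlying $W$ and not on $\alpha$. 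Finally, $\cM_k^{(r)}(w)$ and $\mathbb{A}_k^{(r)}$ are internal $\mathfrak{Hom}$-sheaves of $G_r$-invariant sheaves, so they inherit $G_r$-invariance, and compatibility with $\mathrm{dlog}^{\vee,k}$ follows from the naturality of $\mathrm{dlog}$.

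For (c), the construction I would use is the following. Given $\sigma \in G_r$, the automorphism of $X^{(r)}(w)$ over $X(w)$ induces an automorphism $\mathrm{id}_W\times \sigma$ of $W\times_{X(w)} X^{(r)}(w)$, which is a morphism in $\cU_{\Kbar}^{\mathrm{fket}}$. This yields a morphism $g_\sigma$ in $\fX^{(r)}(w)$ between two objects whose underlying data are identical but whose structure maps differ by composition with $\sigma$. Applying $\cH$ to $g_\sigma$ and then identifying source and target via the hypothesis $\cH^\sigma=\cH$, one obtains an automorphism of $\cH(j_r(\cU,W)) = j_{r,\ast}(\cH)(\cU,W)$. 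Naturality in $(\cU,W)$ promotes this to an endomorphism of the sheaf $j_{r,\ast}(\cH)$, and the multiplicativity in $\sigma$ follows from how $g_\sigma$ behaves under composition, producing a genuine action of $G_r$ on $j_{r,\ast}(\cH)$. The only real bookkeeping hurdle will be matching the variance conventions so that $\sigma\mapsto \sigma_\ast$ is a left (rather than right) action and satisfies the cocycle condition; once these conventions are pinned down, both assertions follow formally from the functoriality of sheaves.
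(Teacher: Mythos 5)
Your proposal is correct and follows essentially the same route as the paper: part (a) via the formula $j_r^\ast(\cG)(\cU,W,\alpha)=\cG(\cU,W)$, part (b) by reducing to the pullback sheaves and to $\cF^{(r)'}$, and part (c) by using that $\sigma$ is an automorphism of $X^{(r)}(w)$ over $X(w)$ to produce a morphism in the localized site and then identifying source and target via $\cH^\sigma=\cH$.
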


\begin{proof}
a) follows immediately as $j_r^\ast(\cG)(\cU,W,\alpha)=\cG(\cU,W)$.

b) As $\cO_{\fX^{(r)}(w)}=j_r^\ast(\cO_{\fX(w)})$ we only need to verify
the property for the sheaf $\cF^{(r)'}$ which is clear from its definition.

c) Let us recall that $j_{r,\ast}(\cH )(\cU,W):=\cH\bigl(\cU,W\times_{X(w)_\Kbar}X^{(r)}(w)_\Kbar, {\rm pr}_1 \bigr)$.
We define the automorphism
$$
\sigma\colon j_{r,\ast}(\cH)(\cU,W)=\cH\bigl(\cU,W\times_{X(w)_\Kbar}X^{(r)}(w)_\Kbar,{\rm pr}_1\bigr) \lra
$$
$$
\lra j_{r,\ast}(\cH)(\cU,W)=
\cH\bigl(\cU,W\times_{X(w)_\Kbar}X^{(r)}(w)_\Kbar, \sigma^{-1}\circ{\rm pr}_1\bigr)
$$
by the fact that $\sigma\colon X^{(r)}(w)\lra X^{(r)}(w)$ is an automorphism over $X(w)$.
\end{proof}

\begin{definition}\label{def:omegak}
We define the sheaves $\Omega^k_{\fX(w)}$ and $\omega^{\dagger,k}_{\fX(w)}$ on $\fX(w)$ by

$$
\Omega^k_{\fX(w)}:=\Bigl(j_{r,\ast}\bigl({\mathfrak Hom}_{S_{\fX^{(r)}(w)}}(\cF^{(r)'}, (\hO_{\fX^{(r)}(w)}\hat{\otimes}B)^{(-k)})\bigr)\Bigr)^{G_r}
$$
and
$$
\omega^{\dagger,k}_{\fX(w)}:=\Bigl(j_{r,\ast}\bigl({\mathfrak Hom}_{S_{\fX^{(r)}(w)}}(\cF^{(r)'}, (\hO_{\fX^{(r)}(w)}\hat{\otimes}B)^{(-k)})\bigr)[1/p]\Bigr)^{G_r}.
$$
\end{definition}

The sheaves thus defined enjoy the following properties.

\begin{lemma}\label{lemma:omegaklocallyfree} For every $B$, $k$ and $w$ as above we have

i)  $\omega^{\dagger,k}_{\fX(w)}$ is a locally free $(\hO_{\fX(w)}\hat{\otimes}B)[1/p]$-module of rank $1$.

ii) $v_{\fX(w),\ast}\bigl(\omega_\fX^{\dagger,k}\bigr)\cong \omega^{\dagger,k}_w\otimes_K\C_p$, where $\omega^{\dagger,k}_w$ is the sheaf on $X(w)$ given in
definition 3.2 of \cite{andreatta_iovita_stevens}.

iii) $\omega^{\dagger,k}_{\fX(w)}\cong \omega^{\dagger,k}_w\hat{\otimes}_{\hO_{\cX(w)}} \hO_{\fX(w)} $.

\end{lemma}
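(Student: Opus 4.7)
My plan is to verify all three statements by localizing to a covering of $\cX(w)^{\rm ket}$ by small affine objects $\{\cU_i\}$ on which the $S_{\fX^{(r)}(w)}$-torsor $\cF^{(r)'}$ trivializes, as guaranteed by Lemma \ref{lemma:F'torsor}, and then performing Galois descent along the Kummer log \'etale map $j_r\colon \fX(w)\to \fX^{(r)}(w)$ with group $G_r=(\Z/p^r\Z)^\times$. Over $\fX^{(r)}(w)$ the torsor trivialization identifies $\cM^{(r)}_k(w)$ with a twist of $(\hO_{\fX^{(r)}(w)}\hat\otimes B)^{(-k)}$, hence realizes it as a locally free $\hO_{\fX^{(r)}(w)}\hat\otimes B$-module of rank one. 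Now $j_{r,\ast}$ is exact (Corollary \ref{cor:jacyclic}) and coincides with $j_{r,!}$ (Proposition \ref{prop:left=right}), and it commutes with formation of $G_r$-invariants via the canonical $G_r$-action provided by Lemma \ref{lemma:sigma}(c). After inverting $p$, descent for the Kummer log \'etale cover (as developed in \cite{andreatta_iovita3}, which handles the $p$-part of $|G_r|$ in almost-\'etale style) identifies $\omega^{\dagger,k}_{\fX(w)}$ with the $G_r$-descent of this locally free object, giving (i).

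For (ii), I use the identity $v_{\fX(w),\ast}\circ j_{r,\ast}=v_{r,\ast}$ already recorded in \S\ref{sec:hodgetate}. Applying $v_{r,\ast}$ to $\cM^{(r)}_k(w)$, Lemma \ref{lemma:correctionHT}(iii) computes $v_{r,\ast}(\cF^{(r)'})$ in terms of $\cF_0^{(r)}\otimes_{\cO_K}\cO_{\C_p}$ on $\cX^{(r)}(w)$, and Lemma \ref{lemma:vyox} (together with the compatibility of $v_{r,\ast}$ with the completion $\hat\otimes B$) computes $v_{r,\ast}(\hO_{\fX^{(r)}(w)}\hat\otimes B)$ in terms of $\hO_{\ucX^{(r)}(w)}\hat\otimes B\hat\otimes_{\cO_K}\cO_{\C_p}$, both compatibly with the $S$-action. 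The resulting internal Hom is exactly the $\cO_{\C_p}$-base change of the sheaf used to construct $\omega^{\dagger,k}_w$ in Definition~3.2 of \cite{andreatta_iovita_stevens}; taking $G_r$-invariants and inverting $p$ recovers $\omega^{\dagger,k}_w\otimes_K\C_p$, which is (ii).

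Part (iii) is then a local matter: on a small-affine $\cU\subset \cX(w)$ where the line bundle $\omega^{\dagger,k}_w$ is trivial, the assertion reduces to the tautology $\hO_{\fX(w)}\hat\otimes B\cong \hO_{\fX(w)}\hat\otimes B$, and the transition cocycles on overlaps agree because both sides are, by (i) and (ii), determined by the transition cocycles of the line bundle $\omega^{\dagger,k}_w$ on $\cX(w)$ base-changed to $\fX(w)$ via the structure map $v_{\fX(w),\ast}\hO_{\fX(w)}\supset \hO_{\ucX(w)}$.

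The main obstacle is the interaction of $G_r$-invariants with the $p$-adic completion $\hat\otimes B$ and with the two pushforwards $v_{\fX(w),\ast}$ and $j_{r,\ast}$: since $p$ divides $|G_r|$ once $r\ge 2$, integral Galois descent fails and one must instead invoke almost-\'etale descent for Kummer log \'etale covers in the style of \cite{andreatta_iovita3}. This is precisely what forces the $[1/p]$ in the definition of $\omega^{\dagger,k}_{\fX(w)}$ and is the reason why the sheaf $\Omega^k_{\fX(w)}$, without inverting $p$, need not be locally free of rank one.
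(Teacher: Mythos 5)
Your proposal is correct and follows essentially the same route as the paper, whose proof is a one-line appeal to the local triviality of the $S_{\fX^{(r)}(w)}$-torsor $\cF^{(r)'}$ (Lemma \ref{lemma:F'torsor}) together with localization at small affines of $\cX(w)^{\rm ket}$ and the computations of \cite{andreatta_iovita_stevens}. You simply make explicit the descent along $j_r$ and the $G_r$-invariants (via Lemma \ref{lemma:sigma}, Corollary \ref{cor:jacyclic}, Lemma \ref{lemma:vyox} and Lemma \ref{lemma:correctionHT}(iii)) that the paper leaves implicit, including the correct observation that inverting $p$ is what makes the $G_r$-descent work when $p$ divides $|G_r|$.
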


\begin{proof} i) is a consequence of the fact that $\cF^{(r)'}$ is a locally trivial $S_{\fX^{(r)}(w)}$-torsor
and ii) and iii) follow  by localization at a connected small affine $\cU$ of $\cX(w)^{\rm ket}$ using lemma \ref{lemma:F'torsor}.
\end{proof}

As mentioned at the beginning of this section we shall be most interested in two instances of these constructions corresponding to choices of
pairs $(B,k)$ as above.

1) The first is the simplest, i.e. when $B=\cO_K$ and so an $B_K=K$-valued weight is simply
an element $k\in \cW(K)$.

2) The second instance appears as follows.
Let $U\subset \cW^\ast$ be a wide open disk. Let us
recall that $\Lambda_U$ is a $\cO_K$-algebra of bounded rigid functions on $U$ and
let us denote by  $|| \ ||$ the norm on the $K$-Banach algebra $\Lambda_{U,K}$.

We denote by $k_U\colon \Z_p^\times\lra \Lambda_U^\times$ the universal character of $U$ defined by the relation
$k_U(t)(x)=t^x$ for $t\in \Z_p^\times, x\in U$.

The constructions for the two instances above are connected as follows. Let $U, \Lambda_U, k_U$ be as at 2) above.
Let also $k\in U(K)$ be a $K$-valued weight
and $t_k$ a uniformizer at $k$, i.e. an element of $\Lambda_U$ which vanishes of order $1$ at $k$ and nowhere else.
Then we have an exact sequence of $K$-algebras
$$
0\lra \Lambda_U\stackrel{t_k}{\lra}\Lambda_U\lra \cO_K\lra 0
$$
which induces an exact sequence of sheaves on $\fX(w)$:
$$
0\lra \omega^{\dagger,k_U}_{\fX(w)}\stackrel{t_k}{\lra}\omega^{\dagger,k_U}_{\fX(w)}\lra \omega^{\dagger,k}_{\fX(w)}\lra 0
$$
which will be called the specialization exact sequence.

\subsection{The map ${\rm dlog}^{\vee,k}$.}
\label{sec:dkappa}

We start by fixing a triple $B$, $k$ as in the previous section such that
the ssociated $r=1$ and let $w$ be adapted to $k$.   We have explained in section \S\ref{sec:geometric} how to construct a continuous sheaf
$\cA^o_k(w)=\bigl(\cA_{k,n}^o(w)\bigr)_{n\in\N}:=\bigl(\nu^\ast(\cA_{k,n}^o)\bigr)_{n\in\N}$ on Faltings' site $\fX(w)$ associated to  the continuous representation
of  $A_{k}^o=\bigl( A_{k}^o/\underline{m}^n A_{k}^o \bigr)_{n\in\N}$ (see definition \ref{def:A_k}) of the Kummer \'etale fundamental group $\cG$ of $X(N,p)$.
Similarly we have the sheaves $\cD^o_k(w)=\bigl(\cD_{k,n}^o(w)\bigr)_{n\in\N}:=\bigl(\nu^\ast(\cD_{k,n}^o)\bigr)_{n\in\N}$. By construction and proposition
\ref{prop:finitefil}, the sheaf $\cD_{k,n}^o(w)$ is a quotient of $\mathfrak {Hom}_{B}\bigl(\cA_{k,n}^o(w), B/\underline{m}^m \bigr)$.

Write $\cT_0$ as the continuous sheaf on $\fX(w)$ obtained similarly from the $\cG$-representation $T_0$. Then we have an inclusion of sheaves
$$\cA_{k,n}^o(w)\subset \mathfrak{Hom}_{\Z_p^\ast}\bigl(\cT_0, \bigl(B/\underline{m}^n\bigr)^{(k)}\bigr)$$on $\fX(w)$, which for every $r$ and $n\in \N$ provides a
map of sheaves of $\cO_{\fX^{(r)}(w)}\otimes B/\underline{m}^n$-modules $$\beta_n^{(r)}\colon j_r^\ast\bigl(\cA_{k,n}^o(w)\bigr)\otimes_{\cO_K}
\bigl(\cO_{\fX^{(r)}(w)}/p^n \cO_{\fX^{(r)}(w)}\bigr) \lra \mathfrak{Hom}_{\Z_p^\ast}\bigl(j_r^\ast\bigl(\cT_{0}\bigr), (\cO_{\fX^{(r)}(w)} \otimes
B/\underline{m}^n)^{(-k)}\bigr).$$These maps are compatible for varying $n$  and define a map of continuous sheaves $$\beta^{(r)}\colon
j_r^\ast\bigl(\cA_{k}^o(w)\bigr)\widehat{\otimes}_{\cO_K} \hO_{\fX^{(r)}(w)} \lra {\mathbb A}_k^{(r)}.$$

\begin{proposition}\label{prop:factorization}  (1) The map $\beta^{(r)}$ is injective  and $G_r$-invariant.

(2) The map ${\rm dlog}^{\vee,k}$ is $G_r$-invariant and factors via $\beta^{(r)}$.
\end{proposition}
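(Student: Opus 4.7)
The overall strategy is to work locally by passing to a connected small affine object $\cU = (\Spf R_\cU, N_\cU)$ of $\cX(w)^{\rm ket}$, as in Section~\ref{sec:localization}. Each sheaf in the statement then becomes an explicit continuous representation of the associated Kummer \'etale fundamental group on a module over $\Rbar_\cU$, and the Hodge--Tate map combined with the canonical-subgroup identification of Lemma~\ref{lemma:correctionHT}(ii) is exactly the one analyzed in \cite{andreatta_iovita_stevens} Sections~2--3. Both claims thereby reduce to concrete statements about functions on $T_0 = \Z_p^\times \times \Z_p$.

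For part~(1), the $G_r$-invariance follows from Lemma~\ref{lemma:sigma}(a),(b): both the source $j_r^\ast(\cA_k^o(w)) \hat{\otimes} \hO_{\fX^{(r)}(w)}$ and the target ${\mathbb A}_k^{(r)}$ satisfy $\cH^\sigma = \cH$, being assembled from pullbacks along $j_r^\ast$ and from internal Homs of such, while $\beta^{(r)}$ is itself induced by the natural inclusion $\cA_k^o(w) \subset \mathfrak{Hom}_{\Z_p^\times}(\cT_0, B^{(k)})$ that already lives on $\fX(w)$. For injectivity, one passes to the inverse limit over $n$ and checks the statement on $j_r^\ast(\cA_k^o(w)) \hat{\otimes} \hO_{\fX^{(r)}(w)}$ directly: after localization, this becomes the evaluation map $A_k^o \hat{\otimes}_{\cO_K} \Rbar_\cU \to \mathfrak{Hom}_{\Z_p^\times}\bigl(T_0, B \hat{\otimes}_{\cO_K} \Rbar_\cU\bigr)^{(k)}$. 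Using the orthonormal basis $\{f_m\}$ of $A_k^o$, an element is a sequence $(r_m)$ with $r_m \in B \hat{\otimes}_{\cO_K} \Rbar_\cU$ tending to zero, and its vanishing as a function on $T_0$ implies the vanishing of the power series $\sum_m r_m z^m$ on $\Z_p$; by the identity principle (valid since each component of $\Rbar_\cU$ is $p$-adically complete and $p$-torsion free), all $r_m$ are zero.

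For part~(2), the $G_r$-invariance of $\mathrm{dlog}^{\vee,k}$ is again a direct application of Lemma~\ref{lemma:sigma}(b), since $\cM_k^{(r)}(w)$, ${\mathbb A}_k^{(r)}$, and the underlying map $\mathrm{dlog}\colon j_r^\ast(\cT_0) \to \cF^{(r)'}$ are all built from sheaves pulled back from $\fX(w)$ and internal Homs thereof. The factorization through $\beta^{(r)}$ is the heart of the proposition. Over a small affine, Lemma~\ref{lemma:F'torsor} provides a trivialization $e$ of the $S_{\fX^{(r)}(w)}$-torsor $\cF^{(r)'}$; a section $\phi$ of $\cM_k^{(r)}(w)$ is then determined by $y := \phi(e) \in (\hO \hat{\otimes} B)^{(-k)}$. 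Writing $\mathrm{dlog}(t) = s(t)\cdot e$ with $s(t) = c(t) \cdot x(t) \in \Z_p^\times \cdot \bigl(1 + p^{(1-v)r}\hO\bigr)$, the $S$-equivariance in the $(-k)$-twist yields
\[
(\phi \circ \mathrm{dlog})(t) \;=\; c(t)^{-k} \exp\!\bigl(-a \log x(t)\bigr) \cdot y.
\]
The explicit Hodge--Tate computation of \cite{andreatta_iovita_stevens} Section~3 identifies $c(t)$ with the first coordinate of $t \in T_0$ (providing the $k$-homogeneity under $\Z_p^\times$) and identifies $x(t)$ as an analytic function of the second coordinate, placing the composition in $j_r^\ast(\cA_k^o(w)) \hat{\otimes} \hO$, i.e.\ in the image of $\beta^{(r)}$.

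The main obstacle is the analytic identification of the map $t \mapsto s(t)$: in particular, verifying that the $\bigl(1 + p^{(1-v)r}\hO\bigr)$-component $x(t)$ extends to an analytic function in the second coordinate in the precise sense of Definition~\ref{def:A_k}. This is the essential input from the Hodge--Tate computation in \cite{andreatta_iovita_stevens} Section~3, and once it is in place both the factorization and the membership in $\cA_k^o(w) \hat{\otimes} \hO$ are immediate.
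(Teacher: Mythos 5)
Your proof is correct and follows essentially the same route as the paper: localize at a connected small affine, use the explicit bases $\{x^k(y/x)^h\}$ and the torsor trivialization coming from the Hodge--Tate map, prove injectivity by observing that a convergent series in $y/x$ vanishing on all of $T_0$ has zero coefficients, and obtain the factorization from the congruence (Lemma \ref{lemma:correctionHT}(ii)) between the Hodge--Tate trivialization and $\epsilon_0$ modulo $p^{1-v}$. The one step you defer to \cite{andreatta_iovita_stevens} --- that the cocycle $s(t)$ is $x^k$ times an analytic function of $y/x$ --- is exactly the point the paper settles by writing $X=ux+vy$ with $u\equiv 1$, $v\equiv 0 \pmod{p^{1-v}}$ and expanding $X^k=x^k\gamma$ with $\gamma\in 1+p^{1-v}\hR_\cU\langle y/x\rangle$, so this is a presentational difference rather than a gap.
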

\begin{proof}
The fact that $\beta^{(r)}$ is $G_r$-invariant is clear as it is defined already over $\fX(w)$. The $G_r$-invariance of ${\rm dlog}^{\vee,k}$ follows from the
$G_r$-invariance of ${\rm dlog}$ which is clear from its definition. We prove that $\beta_n^{(r)}$ is injective for every $n\in \N$ and that the map $${\mathfrak
Hom}_{S_{\fX^{(r)}(w)}}\bigl(\cF^{(r)'}, (\cO_{\fX^{(r)}(w)}\otimes B/\underline{m}^n)^{(-k)}\bigr)\lra {\mathfrak Hom}_{\Z_p^\ast}\bigl(j_r^\ast(\cT_0),
(\cO_{\fX^{(r)}(w)}\otimes B/\underline{m}^n)^{(-k)}\bigr)$$ factors via $ \beta_n^{(r)}$. It suffices to show this on localizations after localizing at small
affine objects of $\cX(w)^{\rm ket}$ covering $\cX(w)$; see \S\ref{sec:localization}. Let  $\cU=(\Spf(R_\cU,N_\cU)$ be any small affine.  \smallskip

{\it The localization of $\cA_{k}^o(w)$:} Form now until the end of this secrion we set $r=1$.
Using the notation of section \S\ref{sec:geometric} choose a $\Z_p$-basis $\{\epsilon_0,\epsilon_1\}$ of $T$ such that
$\langle \epsilon_0, \epsilon_1\rangle=1$ and $\epsilon_1(\mbox{ mod }p T)\in \cE_\eta[p]$ belongs to the canonical subgroup $C=C_1$ of level $p$. Then
$T_0:=\{a\epsilon_0+b\epsilon_1 \vert  a\in \Z_p^\ast, b\in \Z_p\}$. Let $x$ be the $\Z_p$-dual of $\epsilon_0$ and $y$ the $\Z_p$-dual of $\epsilon_1$. We deduce
from the discussion after definition \ref{def:A_k} that $$j_1^\ast\bigl( \cA_{k,n}^o(w)\bigr)(\Rbar_\cU,\Nbar_\cU,g):=\oplus_{h\in \N} (B/\underline{m}^n)  x^{k}
\bigl(y/x\bigr)^h. $$Thus, if we let $D:=\bigl(\cO_{\fX^{(1)}(w)}\otimes B/\underline{m}^n\bigr)(\Rbar_\cU,\Nbar_\cU,g)$, then
$$\bigl(j_1^\ast\bigl(\cA_{k,n}^o(w)\bigr)\otimes_{\cO_K} \cO_{\fX^{(1)}(w)} \bigr)(\Rbar_\cU,\Nbar_\cU,g)=\oplus_{h\in \N} D \cdot x^{k} \bigl(y/x\bigr)^h
.$$Similarly $\mathfrak{Hom}_{\Z_p^\ast}\bigl(j_1^\ast\bigl(\cT_0\bigr), (\hO_{\fX^{(1)}(w)}\hat{\otimes}B/\underline{m}^n)^{(-k)}\bigr)(\Rbar_\cU,\Nbar_\cU,g)$ is
the $D$-module of continuous maps  ${\rm Hom}_{\Z_p^\ast}\bigl(\Z_p^\ast \epsilon_0+ \Z_p \epsilon_1, D^{(-k)}\bigr)$. To an element $f(x,y):=\sum \alpha_h x^k
(y/x)^h$ we associate the function  $\Z_p^\ast \epsilon_0+ \Z_p \epsilon_1 \to D$ sending $a \epsilon_0 + b \epsilon_1\mapsto f(a,b)=\sum \alpha_h k(a) (b/a)^h$. If
such function is zero then $f(x,y)$ is zero, proving the first claim.\smallskip

{\it The localization of $\cM_{k}^{(1)}(w)$:}  Using the notation of lemma \ref{lemma:locht} we let $e_0$, $e_1$ denote an $\hR_\cU$-basis of $T\otimes \hR_\cU$
such that $e_1$ is a basis of $F^1$ over $\hR_\cU$  reducing to $\epsilon_1$ modulo $p^{1-v}$ and ${\rm dlog}_\cU(e_0)$ is a basis of $F^0$ over $\hR_\cU$
reducing to $\epsilon_0$ modulo $p^{1-v}$. Let $X$, $Y$ denote the basis of $T\otimes \hR_\cU$ which is $\hR_\cU$-dual to $e_0$, $e_1$ respectively (i.e.
$X(e_1)=Y(e_0)=0$ and $X(e_0)=Y(e_1)=1$).  Then,  $$\mathfrak{Hom}_{S_{\fX^{(1)}(w)}}\bigl(\cF^{(1)'}, (\cO_{\fX^{(1)}(w)} \otimes B/\underline{m}^n)^{(-k)}\bigr)
(\Rbar_\cU,\Nbar_\cU,g)=  D \cdot X^{k}.$$As $X= ux + v y$ with $u\in \hR_{\cU}$ congruent to $1$ modulo $p^{1-v} \hR_{\cU}$ and $v\in \hR_{\cU}$ congruent to
$0$ modulo $p^{1-v} \hR_{\cU}$, it follows that $X^k=x^k \gamma $ with $\gamma\in  1+ p^{1-v}\hR_\cU\langle (y/x) \rangle$. Here, $\hR_\cU\langle y/x \rangle$
denotes the $p$-adically convergent power series in the variable $y/x$. The  second claim follows.

\end{proof}

In particular,  ${\rm dlog}^{\vee,k}$ induces a $G_1$-invariant morphism of $\hO_{\fX^{(1)}(w)}\hat{\otimes}B $-modules $$\cM^{(1)}_k(w)\lra
j_1^\ast\bigl(\cA_{k}^o(w)\bigr)\widehat{\otimes}_{\cO_K} \hO_{\fX^{(1)}(w)}.$$Taking $\mathfrak{Hom}_{\cO_{\fX^{(1)}(w)} \hat{\otimes} B} \bigl( \ -\ ,
\cO_{\fX^{(1)}(w)} \hat{\otimes} B\bigr)$ and using the identification $$\mathfrak{Hom}_{\cO_{\fX^{(1)}(w)} \hat{\otimes} B} \bigl( \cM^{(1)}_k(w),
\cO_{\fX^{(1)}(w)} \hat{\otimes} B\bigr)\cong \cM^{(1)}_{-k}(w),$$we get an induced $G_1$-invariant morphism $\widehat{B}$-modules
$$\delta\colon \mathfrak{Hom}_{\widehat{B}} \bigl(j_1^\ast\bigl(\cA_{k}^o(w)\bigr) ,
\widehat{B}\bigr)   \lra \cM^{(1)}_{-k}(w).$$Then,

\begin{lemma}\label{lemma:continuousfactorization} For every $n\in\N$ there exists $m\geq n$
such that the map $$\mathfrak{Hom}_{B}\bigl(j_1^\ast\bigl(\cA_{k,m}^o(w)\bigr), B/\underline{m}^m \bigr) \lra {\mathfrak Hom}_{\cO_{\fX^{(1)}(w)}\otimes
B}\bigl(\cM_{k}^{(1)}(w), (\cO_{\fX^{(1)}(w)}\otimes B/\underline{m}^n)^{(-k)}\bigr),$$induced by $\delta$, factors via $j_1^\ast\bigl(\cD_{k,m}^o(w)\bigr)$.

\end{lemma}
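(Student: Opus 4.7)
The plan is to verify the factorization by localizing to a finite cover of $\cX(w)$ by small affines, using the explicit local description of the map $\gamma\colon \cM^{(1)}_k(w)\to j_1^\ast \cA^o_k(w)\ho\hO_{\fX^{(1)}(w)}$ extracted from the proof of Proposition~\ref{prop:factorization}. By Proposition~\ref{prop:finitefil}, the kernel of the projection ${\mathfrak Hom}_B\bigl(j_1^\ast\cA^o_{k,m}(w), B/\underline{m}^m\bigr)\twoheadrightarrow j_1^\ast\cD^o_{k,m}(w)$ corresponds locally to $B$-linear maps $\mu$ satisfying $\mu(f_j)\in \underline{m}^{m-j}/\underline{m}^m$ for $0\leq j\leq m$, so the lemma is equivalent to the vanishing of $\delta(\mu)$ modulo $\underline{m}^n$ for every such $\mu$. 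By quasi-compactness of $\cX(w)$, it suffices to verify this on a finite family of such localizations.

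At a fixed small affine $\cU=\Spf(R_\cU)$, the computation in the proof of Proposition~\ref{prop:factorization} gives $\gamma(X^k)=\sum_{h\geq 0} c_h f_h$ with $c_0\in 1+p^{1-v}\hR_\cU$ and, for $h\geq 1$, $c_h = p^{1-v}\,g_h$ where $g_h\in \hR_\cU$ tends to $0$ in the $p$-adic topology (this is the content of $\hR_\cU\langle y/x\rangle$ being the $p$-adic Tate algebra in $y/x$). Dualizing, $\delta(\mu)(X^k)=\sum_h c_h\,\mu(f_h)$ in $\hO_{\fX^{(1)}(w)}\ho B$, which converges $p$-adically since $c_h\to 0$. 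Let $a$ denote the $\pi$-adic valuation of $p^{1-v}$; this is a positive integer because $p^v\in K$ and $v<1$, so $p^{1-v}\in \underline{m}^a$. For each $\cU$ in the finite cover I choose $H(\cU,n)$ with $g_h\in \pi^{n-a}\hR_\cU$ for every $h\geq H(\cU,n)$, and I set $m\geq \max\bigl(n,\,\max_\cU H(\cU,n)+n-a\bigr)$.

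With these choices I split the sum $\delta(\mu)(X^k)$ into three ranges:
(i) the term $h=0$ lies in $(1+\underline{m}^a)\underline{m}^m\subset \underline{m}^n$ since $m\geq n$;
(ii) for $1\leq h\leq m-(n-a)$, combining $\mu(f_h)\in\underline{m}^{m-h}$ with $c_h\in\underline{m}^a$ yields $c_h\mu(f_h)\in\underline{m}^{a+m-h}\subset\underline{m}^n$;
(iii) for $h>m-(n-a)$ one has $h\geq H(\cU,n)$, hence $c_h=p^{1-v}g_h\in \underline{m}^n(\hO_{\fX^{(1)}(w)}\ho B)$, so $c_h\mu(f_h)$ is in $\underline{m}^n(\hO_{\fX^{(1)}(w)}\ho B)$ regardless of $\mu(f_h)$. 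This forces $\delta(\mu)(X^k)\equiv 0\pmod{\underline{m}^n(\hO_{\fX^{(1)}(w)}\ho B)}$, proving the factorization. The main technical obstacle is the \emph{uniform} choice of $m$: although the constant $H(\cU,n)$ depends on the small affine, quasi-compactness of $\cX(w)$ permits reducing to a finite cover and taking a common~$m$. The key geometric input is the smallness of $p^{1-v}$ (namely $a\geq 1$), which originates in the theory of the canonical subgroup and the integral correction of the Hodge--Tate sequence established in Lemma~\ref{lemma:correctionHT}; without this correction one could not achieve $a\geq 1$ and the middle range in (ii) would fail.
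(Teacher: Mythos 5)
Your proof is correct and follows essentially the same route as the paper's: localize at a finite cover by small affines, use Proposition \ref{prop:finitefil} to identify the kernel of the projection onto $j_1^\ast\bigl(\cD^o_{k,m}(w)\bigr)$ with the maps $\mu$ satisfying $\mu(f_j)\in\underline{m}^{m-j}$, and exploit that $X^k=x^k\gamma$ with $\gamma\in 1+p^{1-v}\hR_\cU\langle y/x\rangle$, so the coefficients $c_h$ tend to zero $p$-adically. The paper's bookkeeping is slightly coarser — it takes $N(n)$ to be the largest index with $c_h$ nonzero modulo $\underline{m}^n$ and requires $m-N(n)\geq n$, without needing the extra factor $p^{1-v}$ that you use in your middle range — but this is the same argument.
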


\begin{proof}
As $\cX(w)^{\rm ket}$ can be covered by finitely many small affines, it suffices to show the claim  on localizations  at a small affine $\cU=(\Spf(R_\cU,N_\cU)$ of
$\cX(w)^{\rm ket}$ (see section \S\ref{sec:localization}). We use the notation of the proof of proposition \ref{prop:factorization}. Thanks to proposition
\ref{prop:finitefil} the quotient map
$$
{\mathfrak Hom}_{B}\bigl(j_1^\ast\bigl(\cA_{k,m}^o(w)\bigr), B/\underline{m}^m \bigr)(\Rbar_\cU,\Nbar_\cU,g)=\oplus_{h\in \N} (B/\underline{m}^m) \cdot \bigl(x^k
(y/x)^h\bigr)^\vee \lra j_r^\ast\bigl(\cD_{k,m}^o(w)\bigr)(\Rbar_\cU,\Nbar_\cU,g) $$identifies the latter with the $B$-module $\oplus_{0\leq h\leq m}
(B/\underline{m}^{m-h}) \cdot \bigl(x^k (y/x)^h\bigr)^\vee$.   Recall from the proof of proposition \ref{prop:finitefil} that, setting $D:=\bigl(\cO_{\fX(w)}\otimes
B/\underline{m}^n\bigr)(\Rbar_\cU,\Nbar_\cU)$, we have defined a generator $X^k$ of $\cM^{(r)}_{-k}(w)(\Rbar_\cU,\Nbar_\cU,g)$ as $D$-module. We conclude that
$${\mathfrak Hom}_{\cO_{\fX^{(1)}(w)}\otimes B}\left(\cM_{k}^{(1)}(w), (\cO_{\fX^{(1)}(w)}\otimes B/\underline{m}^n)^{(-k)}\bigr)\right)(\Rbar_\cU,\Nbar_\cU,g)\cong D
\cdot (X^k)^\vee.$$Let $N(n)$ be the degree of  $X^k$ in $\oplus_{h\in \N} D \cdot x^{k} \bigl(y/x\bigr)^h$, i.e., the maximal $N$ such that the coordinate of $X^k$
with respect to $x^{k} \bigl(y/x\bigr)^N$ is non zero. If we take $m$ so that $m-N\geq n$, then the map $\delta$ localized at $\cU$ factors via $\mathfrak{Hom}_{\cO_{\fX^{(1)}(w)}\otimes B}\bigl(\cM_{k}^{(1)}(w), (\cO_{\fX^{(1)}(w)}\otimes B/\underline{m}^n)^{(-k)}\bigr)(\Rbar_\cU,\Nbar_\cU,g)$ as wanted.
\end{proof}

It follows from the lemma \ref{lemma:continuousfactorization} that we get a map of continuous sheaves of $\hO_{\fX^{(r)}(w)}\hat{\otimes}  B$-modules $$\cD_{k}^o(w)
\lra \left(j_{1,\ast} \bigl(j_1^\ast\bigl(\cD_{k,m}^o(w)\bigr)\bigr)\right)^{G_1} \lra
\left(j_{1,\ast}\bigl(\cM^{(1)}_{-k}(w)\bigr)\right)^{G_1}=\Omega^{\dagger,k}_{\fX(w)}.$$Passing to ind-sheaves and using lemma \ref{lemma:omegaklocallyfree} we
obtain a map
\begin{equation}\label{deltakw} \delta^\vee_k(w)\colon \nu^\ast(\cD_k)=\cD_{k}^o(w)[1/p] \lra \omega^{\dagger,k}_{\fX(w)}\cong
\omega^{\dagger,k}_w\hat{\otimes}_{\hO_{\cX(w)}} \hO_{\fX(w)}.\end{equation}

In the next section we will calculate the cohomology of the ind-continuous sheaves $\omega^{\dagger,k}_{\fX(w)}\cong
\omega^{\dagger,k}_w\hat{\otimes}_{\hO_{\cX(w)}} \hO_{\fX(w)}$.

\subsection{The cohomology of the sheaves $\omega^{\dagger,k}_{\fX(w)}$}
\label{sec:cohomegadaggerk}

Let $\iota\colon Z\lra X(w)$ be a morphism in $\cX(w)_\Kbar^{\rm fket}$. Let $\fZ:=\fX(w)_{/(\cX(w),Z)}$ the associated induced site and
$j:=j_{(\cX(w),Z)}\colon\fX(w)\lra \fZ$ the map $j(\cU, W):=\bigl(\cU, Z\times_{X(w)}W, {\rm pr}_1\bigr)$; see \ref{sec:fy}. It induces a morphism of topoi. For
$i\ge 0$ we shall calculate $\rH^i\bigl(\cZ, j^\ast\big(\omega^{\dagger,k}_{\fX(w)}\bigr)\bigr)$. For $\iota={\rm id}$ we get in particular the calculation of
$\rH^{i}\bigl(\fX(w), \omega^{\dagger,k}_{\fX(w)}\bigr)$. We will need the following:

\begin{lemma}\label{computRcont} Let $\cF$ be a locally free $(\hO_{\fX(w)}\hat{\otimes}B)[1/p]$-module of finite rank.
The sheaf $R^b v_{\fX(w),\ast}\bigl(\cF\bigr)$ is the sheaf associated to the presheaf on $\cX(w)^{\rm ket}$:
$$
\cU=(\Spf(R_\cU),N_\cU)\rightarrow \rH^b\bigl(\cG_\cU,\cF(\Rbar_\cU,\Nbar_\cU)\bigr),
$$
where $\cG_\cU$ is the Kummer-\'etale geometric fundamental group of $\cU$, for a choice of
a geometric generic point, i.e. $\cG_\cU={\rm Gal}\bigl(\Rbar_\cU[1/p]/(R_\cU\Kbar)\bigr)$.
\end{lemma}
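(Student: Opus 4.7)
The plan is to identify $R^b v_{\fX(w),\ast}\bigl(\cF\bigr)$ by unwinding the definition of higher direct image and then computing the cohomology of $\cF$ on the localized Faltings site above a small affine by means of a pro-covering argument that reduces it to continuous group cohomology. By definition, $R^b v_{\fX(w),\ast}(\cF)$ is the sheafification on $\cX(w)^{\rm ket}$ of the presheaf $\cU \longmapsto \rH^b\bigl(\fX(w)_{/(\cU,\cU_K)}, \cF|_{\fX(w)_{/(\cU,\cU_K)}}\bigr)$. Since small affines form a basis of $\cX(w)^{\rm ket}$, it suffices to compute these cohomology groups for $\cU=(\Spf(R_\cU),N_\cU)$ small affine and exhibit a natural isomorphism with $\rH^b\bigl(\cG_\cU, \cF(\Rbar_\cU,\Nbar_\cU)\bigr)$.

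First I would recall from section \S\ref{sec:localization} that the pro-system of connected finite Kummer \'etale covers of $\cU_\Kbar$ is cofinal in the localized site $\fX(w)_{/(\cU,\cU_K)}$ restricted to the single object $\cU$ in the first coordinate, and that its limit is the pro-object $(\Rbar_\cU,\Nbar_\cU)$ with Galois group $\cG_\cU$. A Cartan--Leray type argument then presents $\rH^b\bigl(\fX(w)_{/(\cU,\cU_K)},\cF\bigr)$ as the abutment of a spectral sequence whose $E_2^{a,b}$ page is $\rH^a_{\rm cont}\bigl(\cG_\cU, \rH^b(-,\cF)(\Rbar_\cU,\Nbar_\cU)\bigr)$, where $\rH^b(-,\cF)$ is the presheaf of cohomology over finite Kummer \'etale covers.

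The key technical step is to show that, once we invert $p$, the higher cohomology of a locally free $(\hO_{\fX(w)}\hat\otimes B)[1/p]$-module of finite rank vanishes on the finite Kummer \'etale site of $\cU_\Kbar$, so that only the $a=b$ diagonal survives and the spectral sequence collapses. This is the heart of the matter: for the sheaf $\hO_{\fX(w)}$ itself this is precisely Faltings' almost purity theorem in the log setting, proved in \cite{andreatta_iovita3}; tensoring with $B$ and passing to the inverse limit along the defining projective system $\{B/\underline{m}^n\}$ is harmless since the system is Mittag--Leffler (the transition maps are surjective with artinian kernels), and inverting $p$ upgrades the almost-vanishing to genuine vanishing. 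The finite rank locally free case reduces to the trivial rank-one case after passing to a Kummer \'etale cover of $\cU$ on which $\cF$ trivializes, which is allowed because sheafification is insensitive to such refinement.

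The main obstacle will be keeping track of the various topologies and limits: $\cF$ is not itself a sheaf but an ind-system of continuous sheaves, and one must carefully justify the exchanges of $\lim_{\leftarrow n}$, $\varinjlim$ (inverting $p$), and $\rH^b$. The Mittag--Leffler property noted above controls the $\lim_\leftarrow$; that the $\cG_\cU$-action on $\cF(\Rbar_\cU,\Nbar_\cU)$ is continuous for the topology inherited from the defining system ensures that continuous cohomology is the correct target on the right-hand side. Modulo these bookkeeping steps, combining the collapse of the spectral sequence with the sheafification of the presheaf $\cU\mapsto \rH^b_{\rm cont}\bigl(\cG_\cU,\cF(\Rbar_\cU,\Nbar_\cU)\bigr)$ yields the statement.
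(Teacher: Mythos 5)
Your proposal is correct and follows essentially the same route as the paper, which simply defers to the localization argument of \cite[Prop.~2.12 and Lemma~2.24]{andreatta_iovita3}: sheafify the presheaf of cohomology over localized sites at small affines, identify the fibre direction with continuous $\cG_\cU$-cohomology, and kill the higher terms of the spectral sequence using Faltings' almost purity (made exact after inverting $p$). One small wording slip: the spectral sequence collapses because the row $b\ge 1$ vanishes (so $E_2^{a,0}$ alone survives), not because ``only the $a=b$ diagonal survives.''
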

\begin{proof} The lemma follows arguing as in  \cite[Prop. 2.12 and Lemma 2.24]{andreatta_iovita3}.

\end{proof}

\begin{theorem}
\label{thm:cohomologyomega} We have isomorphisms as $G_K$-modules\smallskip

a) $\rH^0\bigl(\fZ, j^\ast\big(\omega^{\dagger,k}_{\fX(w)}(1)\big)\bigr)\cong \rH^0\bigl(Z,
\iota^\ast\bigl(\omega^{\dagger,k}_w\bigr)\bigr)\hat{\otimes}_K\C_p(1)$;\smallskip

b) $\rH^1\bigl(\fZ, j^\ast\big(\omega^{\dagger,k}_{\fX(w)}(1)\big)\bigr)\cong \rH^0\bigl(Z,
\iota^\ast\bigl(\omega^{\dagger,k+2}_w\bigr)\bigr)\hat{\otimes}_K\C_p$;\smallskip

c) $\rH^i\bigl(\fZ, j^\ast\big(\omega^{\dagger,k}_{\fX(w)}(1)\big)\bigr)=0$ for $i\ge 2$.

\end{theorem}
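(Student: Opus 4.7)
The plan is to transport the cohomology on $\fZ$ to $\fX(w)$ using the exactness of $j_\ast$ (Corollary~\ref{cor:jacyclic}), then reduce to a local Galois computation via the Leray spectral sequence for the morphism of sites $v_{\fX(w)}\colon \cX(w)^{\rm ket}\to \fX(w)$, and finally apply the relative Hodge-Tate result of Faltings. Writing $\cG:=\omega^{\dagger,k}_{\fX(w)}(1)$, the exactness of $j_\ast\cong j_!$ gives $\rH^i(\fZ, j^\ast\cG)\cong \rH^i(\fX(w), j_\ast j^\ast \cG)$, and Lemma~\ref{computRcont} describes $R^q v_{\fX(w),\ast}(j_\ast j^\ast\cG)$ on a small affine $\cU$ as the sheaf associated to $\rH^q\bigl(\cG_\cU, (j_\ast j^\ast\cG)(\Rbar_\cU,\Nbar_\cU)\bigr)$.

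The next step is the local computation. By the localization formula~(\ref{localizationjast}),
$(j_\ast j^\ast \cG)(\Rbar_\cU,\Nbar_\cU) = \bigoplus_{g\in\Upsilon_\cU} \omega^{\dagger,k}_w(Z\times_X \cU_K)_g \hat\otimes \widehat{\Rbar_\cU}(1)$,
which is an induced $\cG_\cU$-representation from the subgroup $\cG_{\cU,Z,g}$. Shapiro's lemma identifies its $\cG_\cU$-cohomology with $\rH^q(\cG_{\cU,Z,g}, \omega^{\dagger,k}_w(Z\times_X\cU_K)_g\hat\otimes \widehat{\Rbar_\cU}(1))$. Since $\omega^{\dagger,k}_w$ is locally free of rank one on $X(w)$, it factors out of the Galois cohomology, reducing us after inverting $p$ to computing $\rH^q(\cG_{\cU,Z,g}, \widehat{\Rbar_\cU}(1))[1/p]$. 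Faltings' relative Hodge-Tate theorem, applied to the log-smooth, relative dimension-one situation, yields
$$\rH^0 \cong \widehat{R_{Z\times_X\cU}}\hat\otimes \cO_{\C_p}(1),\quad \rH^1\cong \widehat{R_{Z\times_X\cU}}\otimes \Omega^1_{\cU/\cO_K}\hat\otimes \cO_{\C_p},\quad \rH^{\geq 2}=0.$$

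Combining with the factored $\omega^{\dagger,k}_w$ and invoking the Kodaira-Spencer isomorphism $\omega^{\dagger,k}_w\otimes_{\cO_{X(w)}}\Omega^1_{X(w)/K}\cong \omega^{\dagger,k+2}_w$ (which holds for these overconvergent sheaves by their construction in \cite{andreatta_iovita_stevens}), one gets
$R^0 v_{\fX(w),\ast}(j_\ast j^\ast \cG) \cong \iota_\ast\omega^{\dagger,k}_w\hat\otimes \C_p(1)$, $R^1 v_{\fX(w),\ast}(j_\ast j^\ast \cG) \cong \iota_\ast \omega^{\dagger,k+2}_w\hat\otimes \C_p$, and vanishing in higher degree, where $\iota_\ast$ denotes the pushforward from $Z$ to $X(w)$. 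Since $X(w)$ is affinoid, Tate acyclicity forces $\rH^p\bigl(\cX(w)^{\rm ket}, \iota_\ast\omega^{\dagger,k'}_w\bigr)=0$ for $p\geq 1$, so the Leray spectral sequence degenerates and delivers the three claimed isomorphisms, with $G_K$-equivariance since every step is $G_K$-equivariant.

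The main obstacle is justifying the relative Hodge-Tate computation in the third step in the presence of the log structure on $\cX(w)$ (both at cusps and at supersingular points, where $\cX(w)$ carries the mild singularities described in Corollary~\ref{cor:normal}), and ensuring that the Kodaira-Spencer isomorphism identifying $\omega^{\dagger,k}\otimes \Omega^1_{X(w)/K}$ with $\omega^{\dagger,k+2}$ is compatible with the specific integral structure via which $\omega^{\dagger,k}_{\fX(w)}$ is defined.
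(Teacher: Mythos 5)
Your proposal is correct and follows essentially the same route as the paper: transport to $\fX(w)$ via the exactness of $j_\ast$, apply the Leray spectral sequence for $v_{\fX(w)}$ together with the local description of $R^bv_{\fX(w),\ast}$, compute $\rH^i(\cG_\cU,\hR_{\cU,K})$ by Faltings, convert $\Omega^1$ into a weight shift by Kodaira--Spencer, and kill the higher Kummer-\'etale cohomology over the affinoid $X(w)$. The only cosmetic difference is that you phrase the local step via the induced-representation decomposition and Shapiro's lemma, whereas the paper identifies $j_\ast j^\ast\bigl(\omega^{\dagger,k}_{\fX(w)}(1)\bigr)$ directly with $\omega^{\dagger,k}_w\hat{\otimes}_{\hO_{\cX(w)}}\iota_\ast(\hO_Z)(1)$ and carries $\iota_\ast(\cO_Z)$ along as coefficients; these are equivalent bookkeeping choices.
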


\begin{proof}
As $R^i j_\ast=0$ for all $i\ge 1$ by \ref{cor:jacyclic} we have $\rH^i\bigl(\fZ, j^\ast\big(\omega^{\dagger,k}_{\fX(w)}(1)\big)\bigr)\cong \rH^1\bigl(\fX(w),
j_\ast\big(j^\ast(\omega^{\dagger,k}_{\fX(w)}(1))\big)\bigr)$. Set $\cF:=j_\ast\bigl(j^\ast\big(\omega^{\dagger,k}_{\fX(w)}(1)\big)\big)$. Recall that
$\omega^{\dagger,k}_{\fX(w)}$ is isomorphic to $\omega^{\dagger,k}_w\hat{\otimes}_{\hO_{\cX(w)}} \hO_{\fX(w)}$ by \ref{lemma:omegaklocallyfree}. Thus,
$j^\ast\big(\omega^{\dagger,k}_{\fX(w)}(1)\big)\cong \omega^{\dagger,k}_w\hat{\otimes}_{\hO_{\cX(w)}} \hO_{\fZ}(1)$ as $j^\ast\bigl(\hO_{\fX(w)} \bigr) \cong
\hO_{\fZ}$ and $\cF \cong \omega^{\dagger,k}_w\hat{\otimes}_{\hO_{\cX(w)}} j_\ast\bigl(\hO_{\fZ}\bigr)(1)$. Due to \ref{prop:left=right} the natural map
$\hO_{\fX(w)}\hat{\otimes}_{\hO_{\cX(w)}} \iota_\ast(\hO_{Z}) \lra j_\ast\bigl(\hO_{\fZ}\bigr)\bigl[p^{-1}\bigr]$ is an isomorphism. Hence, $\cF\cong
\omega^{\dagger,k}_w\hat{\otimes}_{\hO_{\cX(w)}} \iota_\ast(\hO_{\cZ})(1)$ is a locally free $(\hO_{\fX(w)}\hat{\otimes}B)[1/p]$-module.

To prove the theorem we will first calculate the sheaves $R^bv_{\fX(w),\ast}\bigl(\cF\bigr)$ using lemma \ref{computRcont} and then we'll use the Leray spectral
sequence (see \cite{andreatta_iovita3}):
$$ \rH^a\Bigl(\cX(w)^{\rm ket}, R^b v_{\fX(w),\ast}\bigl(\cF \bigr)\Bigr) \Longrightarrow \rH^{i}\bigl(\fX(w), \cF\bigr).
$$We compute the Galois cohomology of the localization of the ind-continuous sheaf $\cF$
$$
\cF(\Rbar_\cU,\Nbar_\cU)=\omega^{\dagger,k}_w(\cU)\hat{\otimes}_{R_{\cU,K}} \iota_\ast(\cO_Z)(\cU_K)\hat{\otimes}_{R_{\cU}} \widehat{\Rbar}_\cU(1).
$$for $\cU=(\Spf(R_\cU,N_\cU)$ a small affine open of $\cX(w)^{\rm ket}$. Using that
$\omega^{\dagger,k}_{\fX(w)}$ is a locally free $(\hO_{\fX(w)}\hat{\otimes}A)[1/p]$-module of rank $1$, it follows from the main result of \cite{faltingsmodular}
that $\rH^0\bigl(\cG_\cU, \hR_{\cU,K}\bigr)=R_{\cU,K} \hat{\otimes}_K\C_p$ so that
$$
\rH^0\bigl(\cG_\cU, \cF(\Rbar_\cU,\Nbar_\cU)\bigr)=\omega^{\dagger,k}_w(\cU)\hat{\otimes}_{R_{\cU,K}} \iota_\ast(\cO_Z)(\cU_K) \hat{\otimes}_K\C_p(1).
$$
Moreover $\rH^1\bigl(\cG_\cU, \hR_{\cU,K}\bigr)\cong \Omega^1_{\cU_K/K}\hat{\otimes}_K\C_p(-1)$ so that
$$
\rH^1\bigl(\cG_\cU, \cF(\Rbar_\cU,\Nbar_\cU)\bigr)\cong \bigl(\Omega^1_{\cU_K/K}\hat{\otimes}_K\C_p(-1)\bigr)\hat{\otimes}_{R_{\cU,K}} \iota_\ast(\cO_Z)(\cU_K)
\hat{\otimes}_K\omega^{\dagger,k}_w(\cU)(1).
$$
The Kodaira-Spencer isomorphism gives $ \Omega^1_{\cU_K/K}\hat{\otimes}_K B_K\cong \omega^{\otimes 2}_{\cE_{\cU_K}/\cU_K}\hat{\otimes}_K
B_K\cong\omega^{\dagger,2}_w(\cU)$. Therefore we obtain
$$
\rH^1\bigl(\cG_\cU, \cF(\Rbar_\cU,\Nbar_\cU)\bigr)\cong \omega^{\dagger, k+2}_w(\cU)\hat{\otimes}_{R_{\cU,K}} \iota_\ast(\cO_Z)(\cU_K) \hat{\otimes}_K\C_p.
$$
Finally $\rH^i\bigl(\cG_\cU, \cF(\Rbar_\cU,\Nbar_\cU)\bigr)=0$ for $i\ge 2$ because $\cG_\cU$ has cohomological dimension $1$. It follows that we have
$$
R^0v_{\fX(w),\ast}\cF \cong \omega^{\dagger,k}_w\hat{\otimes}_{R_{\cU,K}} \iota_\ast(\cO_Z)(\cU_K) \hat{\otimes}_K\C_p(1),
$$
where the isomorphism is as sheaves on $\cX(w)^{\rm ket}$. Similarly we have
$$
R^1v_{\fX(w),\ast}\cF=\omega^{\dagger,k+2}_w\hat{\otimes}_{R_{\cU,K}} \iota_\ast(\cO_Z)(\cU_K) \hat{\otimes}_K\C_p,
$$
and $R^bv_{\fX(w),\ast}\cF=0$ for $b\ge 2$. Now let us observe that $\omega^{\dagger,k}\hat{\otimes}_{R_{\cU,K}} \iota_\ast(\cO_Z)(\cU_K) \hat{\otimes}_K\C_p(1)$ is
a sheaf of $K$-Banach modules on $X(w)$, as it is locally isomorphic to $\iota_\ast(\cO_Z) \hat{\otimes}B_K\hat{\otimes}\C_p$. As $X(w)$ is an affinoid we obtain
that
$$
\rH^1\bigl(\cX(w)^{\rm ket}, \omega^{\dagger,k}_w\hat{\otimes}_{\cO_{\cX(w)}} \iota_\ast(\cO_Z) \hat{\otimes}_K\C_p(1)\bigr)=\rH^1(X(w),
\omega^{\dagger,k}_w\hat{\otimes}_{\cO_{\cX(w)}} \iota_\ast(\cO_Z)\hat{\otimes} \C_p(1)\bigr)=0,
$$
by the main result of the Appendix of \cite{andreatta_iovita_pilloni}.
Therefore the Leray spectral sequence gives now the result of the theorem.
\end{proof}

\section{Hecke Operators}
\label{sec:heckeoperators}

Let $\ell$ denote a prime integer and $w\in \Q$ be such that $0\le w<p/(p+1)$. We assume that $w$ is adapted to some integer $r\ge 1$ (see the beginning of section
\S 3) We denote (see section \S 3.1.1 of \cite{andreatta_iovita_stevens}) by $X^{(r)}_\ell(w)$ the rigid analytic space over $K$ which represents the functor
associating to a $K$-rigid space $S$ a quadruple $(\cE/S, \psi_S,H,Y)$, where $\cE\lra S$ is a semiabelian scheme of relative dimension $1$ and $Y$ is a global
section of $\omega_{\cE/S}^{1-p}$ such that $Yh(\cE/S)=p^w$, where we have denoted by $h$ a lift to characteristic $0$ of the Hasse-invariant. Let us notice that
the existence of $Y$ as above implies that there is a canonical subgroup $C_r\subset \cE[p^r]$, of order $p^r$ defined over $S$. We continue to describe the
quadruple $(\cE/S,\psi_S, H,Y)$: $\psi_S$ is a $\Gamma_1(Np^r)$-level structure of $\cE/S$, more precisely $\psi_S=\psi_N\cdot\psi_{p^r}$, where $\psi_N$ is a
$\Gamma_1(N)$-level structure of $\cE/S$ and $\psi_{p^r}$ is a generator of $C_r$. Furthermore $H\subset \cE$ is  locally free subgroup scheme, finite of order
$\ell$ defining a $\Gamma_0(\ell)$-level structure such that $H\cap C_r=\{0\}$ (this condition is automatic if $\ell\ne p$). We consider on $X^{(r)}_\ell(w)$ the
log structure given by the divisor of cusps and denote the resulting log rigid space by the same notation: $X^{(r)}_\ell(w)$.

We have natural morphisms $p_1\colon X^{(r)}_\ell(w)\lra X^{(r)}(w)$ and $p_2\colon X^{(r)}_\ell(w)\lra X^{(r)}(w')$, where $w'=w$ if $\ell\ne p$ and $w'=p^rw$ if
$\ell=p$, in which case we will assume that $0\le w\le 2/p^(2r)$. These morphisms are defined on points as follows: $p_1(\cE, \psi,H,Y):=(\cE, \psi,Y)\in
X^{(r)}(w)$ and $p_2(\cE, \psi, H,Y)=\bigl(\cE/H, \psi', Y'\bigr)\in X^{(r)}(w')$ where $\psi',Y'$ are the induced level structure and global section associated to
$\cE/H$. The morphism $p_1$ is finite and Kummer log \'etale and if $\ell=p$ then $p_2$ is an isomorphism of $K$-rigid spaces.

Let us recall that we have denoted $\fX(w)$ Faltings' site associated to the log formal scheme $\cX(w)$ and with $\fX^{(r)}(w)$ the site $\fX(w)$ localized at its
object $\bigl(\cX(w),X^{(r)}(w)\bigr)$. Let us observe that $\bigl(\cX(w), X^{(r)}_\ell(w)\bigr)$ is also an objects of $\fX(w)$ therefore we will denote by
$\fX^{(r)}_\ell(w):=\fX(w)_{/(\cX(w),X^{(r)}_\ell(w))}$, i.e., the localized site (see sections 2.3 and 2.4).

The morphisms $p_1$, $p_2$ defined above induce continuous morphisms of sites:
$$
\begin{array}{cccccccccc}
&&\fX^{(r)}_\ell(w)\\
&\nearrow p_1&&p_2\nwarrow\\
\fX^{(r)}(w)&&&&\fX^{(r)}(w')
\end{array}
$$
We denote by $\cE^{(r)}_w$ the universal generalised elliptic curve  over
 $X^{(r)}(w)$ and by  $\pi_\ell\colon \cE\lra \cE/H$  the natural universal isogeny over
$X^{(r)}_\ell(w)$. Let
$\cT(\cE)$, $\cT(\cE/H)$, $\cT(\cE^{(r)}_w)$ denote the $p$-adic Tate modules of $\cE, \cE/H,\cE^{(r)}_w$ seen as
continuous sheaves on $\fX^{(r)}_\ell(w)$ and $\fX^{(r)}(w)$ respectively.
Then we have maps
$$
p_2^\ast\bigl(\cT(\cE^{(r)}_{w'})\bigr)=\cT\bigl((\cE/H)\bigr)\stackrel{\pi_\ell}{\longleftarrow}
p_1^\ast\bigl(\cT(\cE^{(r)}_w)\bigr)=\cT\bigl(\cE\bigr).
$$
which induce the following commutative diagram
$$
\begin{array}{cccccccccc}
p_2^\ast \Bigl(\cT((\cE^{(r)}_{w'})^\vee)\otimes\hO_{\fX^{(r)}(w')}\Bigr)&\cong &\cT\bigl((\cE/H)^\vee\bigr)
\otimes\hO_{\fX^{(r)}_\ell(w)}&
\stackrel{\rm dlog}{\lra}&\omega_{\cE/H}\otimes\hO_{\fX^{(r)}_\ell(w)}\\
&&\downarrow \pi^\vee_\ell\otimes {\rm Id}&&\downarrow d(\pi_\ell)\otimes {\rm Id}\\
p_1^\ast\Bigl(\cT\bigl((\cE^{(r)}_w)^\vee\bigr)\otimes\hO_{\fX^{(r)}_\ell(w)}\Bigr)&\cong&\cT\bigl(\cE^\vee\bigr)
\otimes\hO_{\fX^{(r)}_\ell(w)}&
\stackrel{\rm dlog}{\lra}&\omega_{\cE}\otimes\hO_{\fX^{(r)}_\ell(w)}
\end{array}
$$

\bigskip
\noindent
Until the rest of this section we suppose $r=1$.

\begin{lemma}
\label{lemma:piell}

Let now $k\in \cW^\ast(B_K)$ be a weight with associated integer $r=1$ such that $w$ as above is associated to $k$. Then the above diagram induces morphisms:\smallskip
$$\pi_\ell\colon p_2^\ast\bigl(\cM^{(1)}_{-k}(w')\bigr)\lra p_1^\ast\bigl(\cM^{(1)}_{-k}(w)\bigr),
\quad \pi_\ell\colon p_2^\ast\bigl(j_1^\ast(\cD_k^o(w'))\bigr)\lra p_1^\ast\bigl(j_1^\ast(\cD_k^o(w'))\bigr),$$

\noindent where $\cM^{(1)}_{-k}(w)$ is the sheaf $\mathfrak{Hom}_{S_{\fX^{1r)}(w)}}\bigl(\cF^{(1)'}, (\hO_{\fX^{(1)}(w)}\hat{\otimes}B)^{(-k)}\bigr)$ on
$\fX^{(1)}(w)$ defined in section \S\ref{sec:omegakappa}, such that the diagram

$$
\begin{array}{ccccc}
p_2^\ast\bigl(j_1^\ast(\cD_k^o(w'))\bigr) & \stackrel{p_2^\ast(\delta)}{\lra} & p_2^\ast\bigl(\cM^{(1)}_{-k}(w')\bigr) \\
\downarrow\pi_\ell&&\downarrow\pi_\ell \\
p_1^\ast\bigl(j_1^\ast(\cD_k^o(w'))\bigr) & \stackrel{p_1^\ast(\delta)}{\lra}& p_1^\ast
\bigl(\cM^{(1)}_{-k}(w)\bigr),
\end{array}
$$where $\delta$ is the map defined in lemma \ref{lemma:continuousfactorization}, is commutative.
\end{lemma}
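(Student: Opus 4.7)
The plan is to construct both Hecke maps by transporting the functoriality of the universal dual isogeny $\pi_\ell^\vee$ across the Hom--sheaves defining $\cM^{(1)}_{-k}$ and $j_1^\ast(\cD_k^o)$, and then to deduce the commutativity of the square by localising at small affines and appealing to the explicit descriptions produced in the proof of Proposition \ref{prop:factorization} and in Lemma \ref{lemma:continuousfactorization}. The key geometric input is that the hypothesis $H\cap C=\{0\}$ (automatic for $\ell\ne p$, and built into the moduli problem when $\ell=p$) forces $\pi_\ell$ to identify the level--one canonical subgroup of $\cE$ with that of $\cE/H$: for $\ell\ne p$ because $\pi_\ell$ is \'etale on $p$-primary torsion, and for $\ell=p$ because the image of $C_1$ in $\cE/H$ is forced to be the canonical subgroup of $\cE/H$, consistent with the change of radius from $w$ to $w'=pw$. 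Dualising and combining with Lemma \ref{lemma:correctionHT}, $\pi_\ell^\vee$ induces an $S_{\fX^{(1)}_\ell(w)}$--equivariant map $p_2^\ast\bigl(\cF^{(1)'}(w')\bigr)\to p_1^\ast\bigl(\cF^{(1)'}(w)\bigr)$, together with the analogous map $p_2^\ast(\cT_0)\to p_1^\ast(\cT_0)$; note that $S_{\fX^{(1)}(w)}$ is intrinsic to the base and pulls back compatibly through both $p_1$ and $p_2$.

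Given these, I would define $\pi_\ell\colon p_2^\ast\cM^{(1)}_{-k}(w')\to p_1^\ast\cM^{(1)}_{-k}(w)$ by precomposition with the map on $\cF^{(1)'}$; this is legitimate because the twist $(-k)$ on the coefficients $\hO\hat{\otimes}B$ is defined purely in terms of the base action of $S$, which pulls back compatibly. For the map on $j_1^\ast(\cD_k^o)$, I would first produce $\pi_\ell^\vee\colon p_2^\ast\cA_k^o(w')\to p_1^\ast\cA_k^o(w)$ by restriction of functions on $T_0$ along $\pi_\ell^\vee$. Under the trivialisation of $\cT$ chosen in \S\ref{sec:geometric} the endomorphism $\pi_\ell^\vee$ is given by an element of $\Xi(\Z_p)$, so Lemma \ref{lemma:Akaction} shows that the restriction preserves the conditions of Definition \ref{def:A_k}; Proposition \ref{prop:finitefil}(ii) then shows that the dual map on $D_k^o$ preserves $\mathrm{Fil}^\bullet$ level by level, hence yields a map of the continuous sheaves $\cD_k^o$.

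Finally, the commutativity of the square reduces, after localising at a small affine $\cU=(\Spf R_\cU,N_\cU)$ of $\cX(w)^{\rm ket}$, to a statement about $\cG_\cU$--representations essentially already established in \S\ref{sec:dkappa}. Using the explicit descriptions computed in Proposition \ref{prop:factorization} and Lemma \ref{lemma:continuousfactorization}, the arrow $\delta$ is obtained by dualising the inclusion $\cA_k^o\hookrightarrow \mathfrak{Hom}_{\Z_p^\ast}(\cT_0,\hO^{(-k)})$ induced by $\mathrm{dlog}$; both vertical arrows $\pi_\ell$ are induced by precomposition with the same underlying map $\pi_\ell^\vee$ on Tate modules. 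The commutativity of the displayed Hodge--Tate diagram (relating $\pi_\ell^\vee$ with $d\pi_\ell$ on $\omega$) therefore transports, by applying $\mathfrak{Hom}(-,\hO^{(-k)})$ and restricting to $\Z_p^\ast$--equivariant maps, to commutativity of the square in the lemma. The main obstacle is careful bookkeeping when $\ell=p$: one must verify that the identification of canonical subgroups under $\pi_\ell$ is compatible with the change from $w$ to $w'=pw$ and with the definition of $\cF^{(1)'}$ as the preimage of $\cC_1^\vee\setminus\{0\}$, so that $\pi_\ell^\vee$ indeed sends the pullback of $\cF^{(1)'}(w')$ into that of $\cF^{(1)'}(w)$ rather than merely into $\cF^{0,(1)}(w)$.
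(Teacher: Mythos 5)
Your proposal follows essentially the same route as the paper's proof: the identification $C_1\cong C'_1$ of level-one canonical subgroups forced by $H\cap C=\{0\}$, the resulting isomorphism of $S_{\fX^{(1)}(w)}$-torsors between the pullbacks of $\cF^{(1)'}$, the induced map on $\cT_0$, and the transport of the commutative $({\rm dlog},\pi_\ell^\vee)$ square through the ${\mathfrak Hom}$-sheaves and dualization, checked on localizations at small affines exactly as in Proposition \ref{prop:factorization} and Lemma \ref{lemma:continuousfactorization}. The one slip is directional: precomposition with the torsor map $p_2^\ast\bigl(\cF^{(1)'}(w')\bigr)\to p_1^\ast\bigl(\cF^{(1)'}(w)\bigr)$ (and likewise restriction of functions along $p_2^\ast(\cT_0)\to p_1^\ast(\cT_0)$) yields arrows from the $p_1^\ast$-side to the $p_2^\ast$-side, opposite to what the lemma asserts; the paper repairs this by first producing $p_1^\ast\bigl(\cM^{(1)}_{k}(w)\bigr)\to p_2^\ast\bigl(\cM^{(1)}_{k}(w')\bigr)$ and then applying ${\mathfrak Hom}_{\hO_{\fX^{(1)}(w)}\hat{\otimes}B}(-,\hO_{\fX^{(1)}(w)}\hat{\otimes}B)$ to reverse the direction (equivalently, you may invert the torsor isomorphism), while on the distribution side the dual of the map on $\cA^o_{k,m}$ already points the right way.
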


\begin{proof}
Let $\cF^0(\cE):={\rm Im}\Bigl({\rm dlog}\colon \cT(\cE^\vee)\otimes\hO_{\fX^{(1)}_\ell(w)}\lra\omega_{\cE/\fX^{(1)}_\ell(w)}\Bigr)$ as in lemma
\ref{lemma:correctionHT} and we denote $\cF^0(\cE/H)$ the analogue object constructed with $\cE/H$ instead of $\cE$. Then, if $C_1$ is the canonical subgroup of
$\cE$ and $C'_1$ is the canonical subgroup of $\cE/H$ we have a natural commutative diagram

$$
\begin{array}{ccccc}
\cE[p]&\stackrel{\pi_\ell}{\lra}&(\cE/H)[p]\\
\cup&&\cup\\
C_1&\cong&C'_1
\end{array}
$$
where the map on canonical subgroups is an isomorphism. It follows that the dual isogeny $\pi_\ell^\vee\colon (\cE/H)^\vee \to \cE^\vee$ induces an isomorphism of
torsors $\cF'\bigl((\cE/H)^\vee\bigr) \lra \cF'(\cE^\vee)$ and, hence, an isomorphism $$\pi_\ell^\vee\colon p_1^\ast\bigl(\cM^{(1)}_{k}(w)\bigr)\lra
p_2^\ast\bigl(\cM^{(1)}_{k}(w')\bigr).$$Dualizing with respect to $\hO_{\fX^{(1)}(w)}\hat{\otimes}B$ and identifying the dual of $\cM^{(1)}_k(w)$ with
$\cM^{(1)}_{-k}(w)$   we get the first map $\pi_\ell$, which is an simorphism.

The map  $\pi_\ell^\vee\colon \cT\bigl((\cE/H)^\vee\bigr)\lra  \cT(\cE^\vee)$  induces a map  $\cT_0\bigl((\cE/H)^\vee\bigr)\lra  \cT_0(\cE^\vee)$ and, hence, a
morphism $\pi_\ell^\vee\colon p_1^\ast\Bigl(j_1^\ast\bigl(A_{k,m}(w)\bigr)\Bigr)\lra p_2^\ast\Bigl(j_1^\ast\bigl(A_{k,m}(w')\bigr)\Bigr)$ for every $m\in\N$ which
dualized induces  the second morphism $$\pi_\ell\colon p_2^\ast\Bigl(j_1^\ast\bigl(\cD_{k,m}^o(w') \bigr)\Bigr)\lra p_1^\ast\Bigl(j_1^\ast\bigl(\cD_{k,m}^o(w)
\bigr)\Bigr).$$ As the diagram $$
\begin{array}{ccccc}
\cT_0\bigl((\cE/H)^\vee\bigr) & {\lra} & \cF'\bigl((\cE/H)^\vee\bigr) \\
\downarrow\pi_\ell^\vee&&\downarrow\pi_\ell^\vee  \\
\cT_0(\cE^\vee) & {\lra}& \cF'(\cE^\vee)
\end{array}
$$is commutative, the above maps $\pi_\ell^\vee$ are compatible with the morphisms $$p_2^\ast\bigl(\cM^{(1)}_{k}(w')\bigr)\lra
p_2^\ast\bigl(j_1^\ast\bigl(\cA_{k}^o(w')\bigr)\bigr)\widehat{\otimes}_{\cO_K} \hO_{\fX^{(1)}(w')}$$
and $$p_1^\ast\bigl(\cM^{(1)}_{k}(w)\bigr)\lra
p_1^\ast\bigl(j_1^\ast\bigl(\cA_{k}^o(w')\bigr)\bigr)\widehat{\otimes}_{\cO_K} \hO_{\fX^{(1)}(w)}$$defined using ${\rm dlog}^{\vee,k}$ (see lemma
\ref{prop:factorization} and the following discussion).  Dualizing the compatibility of the two maps $\pi_\ell$ in the statement via $\delta$ follows.
\end{proof}

Now we define the Hecke operators $T_\ell$ for $\ell$ not dividing $Np$ and $U_p$ on modular forms and cohomology. More precisely $T_\ell$ for $\ell$ not dividing
$Np$, respectively $U_p$ on overconvergent modular forms are defined  as the maps $$T_\ell, \quad U_p\colon \rH^0\bigl(\fX(w'), \omega^{\dagger,k}_{\fX(w')}\bigr)
\lra \rH^0(\fX(w), \omega^{\dagger,k}_{\fX(w)}\bigr)$$defined as follows. Recall that by the definition $\omega^{\dagger,k}_{\fX(w)}:=
\left(j_{1,\ast}\big(\cM^{(1)}_{-k}(w)\bigr)\right)^{G_1}[1/p]$, see definition \ref{def:omegak}. Using  the fact that $R^i j_{1,\ast}=0$ for $i\geq 1$ by corollary
\ref{cor:jacyclic} we may identify $$\rH^i\bigl(\fX(w'), \omega^{\dagger,k}_{\fX(w')}\bigr)\cong \rH^i\bigl(\fX(w'),
j_{1,\ast}\big(\cM^{(1)}_{-k}(w')\bigr)[1/p]\bigr)^{G_1} \cong \rH^i\bigl(\fX^{(1)}(w'), \cM^{(1)}_{-k}(w')[1/p]\bigr)^{G_1}$$and similarly $\rH^i(\fX(w),
\omega^{\dagger,k}_{\fX(w)}\bigr) \cong \rH^i\bigl(\fX^{(1)}(w), \cM^{(1)}_{-k}(w')[1/p]\bigr)^{G_1}$ for every $i\in\N$. The maps $T_\ell$ and $U_p$ are defined
using these identifications and taking $G_1$-invariants and inverting $p$ in

$$
\rH^i\bigl(\fX^{(1)}(w'), \cM^{(1)}_{-k}(w')\bigr) \lra \rH^i\bigl(\fX^{(1)}_\ell(w), p_2^\ast(\cM^{(1)}_{-k}(w'))\bigr)\stackrel{\pi_\ell}{\lra}
$$

$$
\stackrel{\pi_\ell}{\lra}\rH^i\bigl(\fX^{(1)}_\ell(w), p_1^\ast(\cM^{(1)}_{-k}(w)[1/p])\bigr) =\rH^0\bigl(\fX^{(1)}(w), p_{1,\ast}p_1^\ast(\cM^{(1)}_{-k}(w))\bigr)
\lra \rH^i\bigl(\fX^{(1)}(w), \cM^{(1)}_{-k}(w))\bigr).
$$
The equality $ \rH^i\bigl(\fX^{(1)}_\ell(w), p_1^\ast(j_1^\ast(\cM^{(1)}_{-k}(w)\bigr) =\rH^i\bigl(\fX^{(1)}(w), p_{1,\ast}p_1^\ast(\cM^{(1)}_{-k}(w)\bigr)$ follows
from a Leray spectral sequence argument using the vanishing of $R^h p_{1,\ast}$, for $h\geq 1$, proven in corollary \ref{cor:jacyclic}. The last map
$$\rH^i\bigl(\fX^{(1)}(w), p_{1,\ast}p_1^\ast(\cM^{(1)}_{-k}(w)[1/p])\bigr) \lra \rH^i\bigl(\fX^{(1)}(w), \cM^{(1)}_{-k}(w)[1/p]\bigr)
$$is the map on cohomology associated to the trace map $p_{1,\ast}p_1^\ast(\cF)\to \cF$ defined in (\ref{def:trace}) and
can be seen as the trace map for Faltings' cohomology ($\rH^i$).

\

Now we assume that $k\in \cW^\ast(B_K)$. We have a $G_K$-equivariant isomorphism
$$
\rH^0\big(\fX(w),\omega^{\dagger,k}_{\fX(w)}\bigr)\cong \rH^0\bigl(X(w), \omega^{\dagger,k}_w\hat{\otimes}_K\C_p\bigr)
$$
and that the latter is provided with Hecke operators given in \cite{andreatta_iovita_stevens}. Recall that in  (\ref{deltakw}) of section \S\ref{sec:dkappa} we
defined a map of sheaves on $\fX(w)$
$$
\delta^\vee_k(w)\colon  \nu^\ast(\cD_k) \lra \omega^{\dagger,k}_{\fX(w)},
$$for $w$  adapted to $k$ and $p^w$ is a uniformizer of $K$.
Moreover in section \ref{sec:cohomegadaggerk} we have calculated the cohomology group
$$
\rH^1\bigl(\fX(w), \omega^{\dagger,k}_{\fX(w)}(1)\bigr)\cong \rH^0\bigl(X(w), \omega^{\dagger,k+2}_w\bigr)\hat{\otimes}_K\C_p.
$$
Combining the remarks above we have an $B_K\hat{\otimes}_K\C_p$-linear map, $G_K$-equivariant
$$
\Psi_{k,w}\colon \rH^1\bigl(\fX(w), \nu^\ast(\cD_k)(1)\bigr)\lra \rH^0\bigl(X(w),\omega^{\dagger,k+2}_w\bigr)\hat{\otimes}_K\C_p.
$$
We have
\begin{theorem}
\label{thm:hecke} The isomorphism $\rH^0\big(\fX(w),\omega^{\dagger,k}_{\fX(w)}\bigr)\cong \rH^0\bigl(X(w), \omega^{\dagger,k}_w\hat{\otimes}_K\C_p\bigr)$ and the
map $\Psi_{k,w}$ commute with the Hecke operators $T_\ell$ and $U_p$ defined above.
\end{theorem}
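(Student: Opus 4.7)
The plan is to reduce both compatibility statements to the sheaf-level diagram produced in Lemma \ref{lemma:piell} together with the formal properties of the six operations (pullback, pushforward, trace) on Faltings' sites developed in sections \S\ref{sec:fy} and \S\ref{sec:localization}. The underlying principle is that $T_\ell$ and $U_p$ on all four cohomology groups in question are defined by the same correspondence $p_1,p_2\colon \fX^{(1)}_\ell(w)\rightrightarrows \fX^{(1)}(w),\fX^{(1)}(w')$, using pullback along $p_2$, the coefficient transformation $\pi_\ell$, and the trace map associated to the finite Kummer \'etale morphism $p_1$ (cf.\ equation (\ref{def:trace})). Consequently each claim reduces to checking that the comparison maps used in the construction commute sheaf-theoretically with the three ingredients of the correspondence.

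For the first claim, the isomorphism $\rH^0\bigl(\fX(w),\omega^{\dagger,k}_{\fX(w)}\bigr)\cong \rH^0\bigl(X(w),\omega^{\dagger,k}_w\bigr)\hat{\otimes}_K\C_p$ comes from Lemma \ref{lemma:omegaklocallyfree}(ii), i.e.\ from $v_{\fX(w),\ast}\bigl(\omega^{\dagger,k}_{\fX(w)}\bigr)\cong \omega^{\dagger,k}_w\otimes_K\C_p$. Since the functors $v_{\fX(\cdot),\ast}$ commute with the continuous site maps $p_1,p_2$ (these being defined on both sides by fibre products over the Hecke correspondence) and with the formation of trace maps of finite Kummer \'etale morphisms, and since the coefficient map $\pi_\ell$ on $\cM^{(1)}_{-k}$ descends by construction to the map used in \cite{andreatta_iovita_stevens} to define Hecke operators on $\omega^{\dagger,k}_w$, the compatibility follows by chasing the defining diagram and taking $G_1$-invariants.

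For the map $\Psi_{k,w}$, I would factor it as
$$\rH^1\bigl(\fX(w),\nu^\ast(\cD_k)(1)\bigr)\xrightarrow{\delta^\vee_k(w)_\ast}\rH^1\bigl(\fX(w),\omega^{\dagger,k}_{\fX(w)}(1)\bigr)\xrightarrow{\sim}\rH^0\bigl(X(w),\omega^{\dagger,k+2}_w\bigr)\hat{\otimes}_K\C_p$$
and verify Hecke equivariance of each arrow separately. The first arrow is Hecke equivariant by the commutative square of Lemma \ref{lemma:piell}, since the $T_\ell$ and $U_p$ on $\rH^1(\fX(w),\nu^\ast(\cD_k)(1))$ are, via Proposition \ref{prop:XfXcompcoh} and the definition in section \S\ref{sec:geometric}, given by the same three-step recipe (pullback along $p_2$, apply $\pi_\ell$ on the coefficient sheaf $j_1^\ast(\cD_k^o)$, trace along $p_1$) that defines Hecke operators on $\omega^{\dagger,k}_{\fX(w)}$. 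The equivariance of the second arrow reduces, via the Leray spectral sequence used in the proof of Theorem \ref{thm:cohomologyomega}, to the compatibility of the Kodaira--Spencer isomorphism $\Omega^1_{X(w)/K}\cong \omega^{\otimes 2}_w$ with the Hecke correspondence.

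The main obstacle is precisely this last point: one must check that on $X^{(1)}_\ell(w)$ the pullback of Kodaira--Spencer via $p_1$ and via $p_2$ differ by exactly the twist produced by $d\pi_\ell\otimes (\pi_\ell^\vee)^{\otimes 2}$ under the identifications $\omega_{\cE/H}\cong p_2^\ast\omega$ and $\omega_\cE\cong p_1^\ast\omega$, so that the $+2$ shift in weight produced by the connecting map in the Leray spectral sequence (which involves $\mathrm{dlog}$ on $\cT\otimes\hO$) intertwines with the Hecke correspondence in the expected way. This is essentially a classical calculation on the universal isogeny and, combined with the sheaf-level Lemma \ref{lemma:piell}, yields the theorem.
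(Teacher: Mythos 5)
Your proposal is correct and follows essentially the same route as the paper: Hecke equivariance of the first arrow of $\Psi_{k,w}$ is deduced from the sheaf-level compatibility of Lemma \ref{lemma:piell}, and the remaining work is the compatibility of the comparison isomorphisms of Theorem \ref{thm:cohomologyomega} with the correspondence $p_1,p_2$. The only difference in emphasis is that the paper reduces this last step to the agreement of the Faltings-site trace $p_{1,\ast}p_1^\ast\bigl(\hO_{\fX^{(1)}(w)}\bigr)\to \hO_{\fX^{(1)}(w)}$ with the classical trace on $\cO_{\cX^{(1)}(w)}$, using the explicit description after formula (\ref{def:trace}), whereas you flag the Kodaira--Spencer compatibility with the universal isogeny as the residual check; both verifications belong to a complete argument, and your version makes explicit a point the paper leaves implicit while asserting (rather than proving) the trace compatibility the paper singles out.
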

\begin{proof} Due to the compatibility proven in \ref{lemma:piell} the map on cohomology induced by $\delta^\vee_k(w)$ is compatible with Hecke operators.
It suffices to prove that the isomorphisms $\rH^0\big(\fX(w),\omega^{\dagger,k}_{\fX(w)}\bigr)\cong \rH^0\bigl(X(w), \omega^{\dagger,k}_w\hat{\otimes}_K\C_p\bigr)$
and $\rH^1\bigl(\fX(w), \omega^{\dagger,k}_{\fX(w)}(1)\bigr)\cong \rH^0\bigl(X(w), \omega^{\dagger,k+2}_w\bigr)\hat{\otimes}_K\C_p$ are compatible with the Hecke
operators on $\omega^{\dagger,k}$ defined in \cite{andreatta_iovita_stevens}. By theorem \ref{thm:cohomologyomega} this amounts to prove that the trace map
$p_{1,\ast}\circ p_1^\ast\big(\hO_{\fX^{(1)}(w)}\big)=p_{1,\ast}\big(\hO_{\fX^{(1)}_\ell(w)}\big) \to  \hO_{\fX^{(1)}(w)}$ is compatible with the trace map
$p_{1,\ast} \big(\cO_{\cX^{(r)}_\ell(w)}\bigr)\to  \cO_{\cX^{(1)}(w)}$. This follows, for example, from the explicit description of the trace given after formula
(\ref{def:trace}) in section \S\ref{sec:fy}.
\end{proof}

\section{The Eichler-Shimura isomorphism}
\label{sec:eichlershimura}

Let $U\subset \cW^\ast$ be a wide open disk with universal weight $k_U$ and ring of bounded analytic functions
$\Lambda_U$. We have described in lemma \ref{lemma:compnuast}
the following sequence of maps
$$
\rH^1\bigl(\Gamma, D_U\bigr)\hat{\otimes}_K\C_p(1)\cong \rH^1\bigl(\fX(N,p), \cD_U\bigr)\hat{\otimes}_K\C_p(1) \lra \rH^1\Bigl(\fX(w),
\nu^\ast\bigl({\cD}_U(1)\bigr)\Bigr).
$$
These maps are equivariant for the action of the Galois group $G_K$ and the Hecke operators and $(1)$ denotes the usual Tate twist. Now let us recall that in
(\ref{deltakw}) of section \S\ref{sec:dkappa} we defined a map of sheaves $$\delta^\vee_{k_U}(w)\colon  \nu^\ast(\cD_U) \lra \omega_{\fX(w)}^{\dagger,k_U}$$ on
$\fX(w)$. Therefore, by theorem \ref{thm:cohomologyomega} we have maps:
$$
\rH^1\bigl(\fX(w), \nu^\ast\bigl(\cD_{U}\bigr)\otimes K)(1)\bigr)\lra \rH^1\bigl(\fX(w), \omega_{\fX(w)}^{\dagger,k_U}(1)\bigr)\cong \rH^0\bigl(X(w),
\omega_w^{\dagger,k_U+2}\bigr)\hat{\otimes}_K\C_p.
$$
These maps are also equivariant for the action of $G_K$ and the Hecke operators due to theorem \ref{thm:hecke}. Putting everything together we have a map:
$$
\Psi_U\colon \rH^1\bigl(\Gamma, D_U\bigr)\hat{\otimes}_K\C_p(1)\lra \rH^0\bigl(X(w), \omega^{\dagger,k_U+2}_w\bigr)\hat{\otimes}_K\C_p.
$$
This map is equivariant for the action of $G_K$ and the Hecke operators and commutes with specializations. In other words if $k\in U(K)$
is a weight, in similar way we obtain a map
$$
\Psi_k\colon \rH^1\bigl(\Gamma, D_k\bigr)\otimes_K\C_p(1)\lra \rH^0\bigl(X(w), \omega_w^{\dagger,k+2}\bigr)\otimes_K\C_p,
$$
such that the following diagram is commutative:
$$
\begin{array}{ccccccccccc}
\rH^1\bigl(\Gamma, D_U\bigr)\hat{\otimes}_K\C_p(1)&\stackrel{\Psi_U}{\lra}& \rH^0\bigl(X(w), \omega^{\dagger,k_U+2}_w\bigr)\hat{\otimes}_K\C_p\\
\downarrow\rho_{k}&&\downarrow\rho_k\\
\rH^1\bigl(\Gamma, D_k\bigr)\otimes_K\C_p(1)&\stackrel{\Psi_k}{\lra}& \rH^0\bigl(X(w), \omega_w^{\dagger,k+2}\bigr)\otimes_K\C_p
\end{array}
$$
The goal of this section is to study the maps $\Psi_k$ using the map $\Psi_U$.

\subsection{The Main result}
\label{sec:mainresult}

Let us fix a slope $h\in \Q$, $h\ge 0$ and an integer $k_0$ such that $h<k_0+1$ and think about
$k_0$ as a point in $\cW^\ast(K)$. We let $U\subset\cW^\ast$ be a wide open disk defined
over $K$ such that

a) $k_0\in U(K)$

b) Both $\rH^1\bigl(\Gamma, D_U\bigr)$ and $\rH^0\bigl(X(w),\omega^{\dagger,k_U+2}_w\bigr)$ have slope $h$ decompositions.

We denote by $k_U$ the universal weight of $U$, by $\Lambda_U$ the ring of bounded rigid functions on
$U$ and by $B_U:=\Lambda_U\otimes_{\cO_K}K$. Then $B_U$ is a $K$-Banach algebra and moreover it is a
principal ideal domain.

We let $\rH^1\bigl(\Gamma, D_U\bigr)^{(h)}$ and $\rH^0\bigl(X(w),\omega^{\dagger,k_U}_w\bigr)^{( h)}$
be the parts corresponding to slopes smaller or equal to
$h$ of the respective cohomology groups. Both these modules are free $B_U$-modules of finite rank and $\Psi_U$ induces an $(B_U\hat{\otimes}_K\C_p)$-linear map
$$
\Psi_U^{(h)}\colon \rH^1\bigl(\Gamma, D_U\bigr)^{(h)}\hat{\otimes}_K\C_p(1)\lra \rH^0\bigl(X(w),\omega^{\dagger,k_U+2}_w\bigr)^{(h)}\hat{\otimes}\C_p
$$
compatible with specializations and equivariant for the action of $G_K$ and the Hecke operators $T_\ell$, $(\ell, Np)=1$. We denote by $M_U$ the kernel of $\Psi_U$
and by $M_U^{(h)}$ the kernel of $\Psi_U^{(h)}$. Then $M_U$ is an $(B_U\hat{\otimes}_K\C_p)$-submodule of $\rH^1\bigl(\Gamma, D_U\bigr)\hat{\otimes}_K\C_p(1)$,
preserved by $G_K$ and the Hecke operators $T_\ell$, for $(\ell, Np)=1$ and $M_U^{(h)}$ is a $(B_U\hat{\otimes}\C_p)$-submodule on which $U_p$-acts by slopes
smaller or equal to $h$. We have

\begin{theorem}
\label{thm:muh}
a) There is a non-zero element $b\in B:=(B_U\hat{\otimes}\C_p)$ such that
$b$ annihilates ${\rm Coker}(\Psi_U^{(h)})$.

b)Let $Z\subset U(\C_p)$ be the (finite) set of zeroes of $b\in B$ at a) above and let $V\subset U$ be a
wide open disk defined over $K$ satisfying:
$V(K)$ contains an integer $k$ such that $k>h-1$ and $V(\C_p)\cap Z=\phi$. Then restriction to $V$ induces an exact sequence
$$
0\lra M_V^{(h)}\lra \rH^1\bigl(\Gamma, D_V\bigr)\hat{\otimes}_K\C_p(1)\stackrel{\Psi_V^{(h)}}{\lra} \rH^0\bigl(X(w), \omega^{\dagger,k_V+2}_w\bigr)\ho_K\C_p\lra 0.
$$

c) For a wide open disk $V$ as at b) above let us denote by $\chi_V^{\rm univ}$ the following composition:
$$G_K\stackrel{\chi}{\lra}\Z_p^\times\stackrel{k_V}{\lra}B_V^\times\lra (B_V\hat{\otimes}\C_p)^\times,
$$
where $\chi$ is the cyclotomic character of $K$. We call $\chi_V^{\rm univ}$ the universal cyclotomic character
attached to $V$.
Then the semilinear action of $G_K$ on the module $S_V:=M_V^{(h)}\bigl(\chi^{-1}(\chi_V^{\rm univ})^{-1}\bigr)$ is trivial.
Moreover $S_V$ is a
finite, projective $(B_V\ho\C_p)$-module with trivial semilinear $G_K$-action. Obviously $M_V^{(h)}=S_V(\chi\cdot\chi_V^{\rm univ})$ as semilinear $G_K$-modules.

d) For each $V$ as at b) and c) above there is a non-zero element $0\ne \beta\in B_V$ such that
the localized exact sequence
$$
0\lra \bigl(S_V(\chi\cdot \chi_U^{\rm univ})\bigr)_{\beta}\lra \Bigl(\rH^1\bigl(\Gamma, D_V\bigr)^{(h)}\hat{\otimes} \C_p(1)\Bigr)_{\beta}\lra
\Bigl(\rH^0\bigl(X(w),\omega^{\dagger,k_V}_w\bigr)^{(h)}\hat{\otimes}\C_p\Bigr)_\beta\lra 0
$$
is naturally and uniquely split as as a sequence of $G_K$-modules.
\end{theorem}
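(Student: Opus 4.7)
\medskip
\noindent\textbf{Proof plan.} The overall strategy is to reduce the family statements (a)--(d) to the classical arithmetic Eichler--Shimura isomorphism (theorem \ref{thm:faltings}) at integer weights $k>h+1$, and to propagate across the disk using that such weights are Zariski dense in any wide open $U\subset\cW^\ast$ and that all structures involved ($\Psi_U^{(h)}$, the slope decompositions, the filtrations on $D_U^o$) are compatible with specialization $\eta_k$.

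For (a), set $C_U:=\Coker\bigl(\Psi_U^{(h)}\bigr)$, a finitely generated module over $B:=B_U\ho_K\C_p$. Right-exactness of the cokernel, together with the specialization compatibility of $\Psi_U^{(h)}$ recorded at the end of section \ref{sec:eichlershimura} and the specialization of slope decompositions in theorem \ref{familiesslopedec}, gives $C_U\otimes_{B,\eta_k}\C_p\cong \Coker\bigl(\Psi_k^{(h)}\bigr)$ for each classical $k\in U(K)$. When $k\in\Z$ with $k>h+1$, Stevens' classicity (theorem \ref{thm:classicity}) and Coleman's classicity identify $\Psi_k^{(h)}$ with the Hodge--Tate projection of theorem \ref{thm:faltings} onto the weight $k+2$ classical modular forms summand, which is surjective. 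Hence $C_U$ vanishes at a Zariski-dense subset of $U(\C_p)$; as $B$ is a B\'ezout domain (bounded rigid analytic functions on an open disk, tensored with $\C_p$), $C_U$ must be a torsion $B$-module, and we take $b\in B\setminus\{0\}$ to be a generator of its annihilator. The zero set $Z\subset U(\C_p)$ of $b$ is then finite.

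Part (b) follows immediately: for $V\subset U$ with $V(\C_p)\cap Z=\phi$ the element $b$ becomes a unit in $B_V\ho_K\C_p$, so $\Psi_V^{(h)}$ is surjective and $M_V^{(h)}$ is its kernel by definition. For (c), the heart is to show that $S_V:=M_V^{(h)}\bigl((\chi\cdot\chi_V^{\rm univ})^{-1}\bigr)$ carries the trivial semilinear $G_K$-action. At a classical $k\in V(K)$ with $k>h+1$, Faltings' classical Eichler-Shimura identifies the specialization $\eta_k(M_V^{(h)})$ with $\rH^1\bigl(X,\omega^{-k}\bigr)^{(h)}\otimes_K\C_p(k+1)$, a Hodge-Tate representation with unique Sen weight $k+1$, coinciding with the specialization at $k$ of the element $1+k_V\in B_V$. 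Applying Sen theory to the profinite $\cO_K$-module $\rH^1(\Gamma,D_V^o)^{(h)}$ of remark \ref{rmk:toph1}, together with the density of classical weights, forces the Sen operator on $M_V^{(h)}$ to be scalar multiplication by $1+k_V$, which is precisely the Sen weight of $\chi\cdot\chi_V^{\rm univ}$. Untwisting, $S_V$ has vanishing Sen operator and is therefore a trivial semilinear $G_K$-representation by Ax--Sen--Tate. Projectivity of $S_V$ over $B_V\ho\C_p$ follows from being the kernel of a surjection of finite free modules (both slope-$\le h$ parts are finite free over the B\'ezout ring $B_V$).

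For (d), the obstruction to splitting lies in $\rH^1\bigl(G_K,\,\Hom(\rH^0,S_V)\otimes\C_p(\chi\cdot\chi_V^{\rm univ})\bigr)$, and uniqueness in the vanishing of the corresponding $\rH^0$; here $\rH^0$ is shorthand for $\rH^0\bigl(X(w),\omega^{\dagger,k_V+2}_w\bigr)^{(h)}\ho\C_p$, on which $G_K$ acts only through $\C_p$. The decisive input is a family version of Tate's theorem $\rH^i(G_K,\C_p(n))=0$ for $n\ne 0$: for a continuous character $\eta\colon G_K\to (B_V\ho\C_p)^\times$ with Sen weight $s(\eta)\in B_V$, the group $\rH^i\bigl(G_K,B_V\ho\C_p(\eta)\bigr)$ is annihilated by $s(\eta)$ for $i=0,1$. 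Applied with $\eta=\chi\cdot\chi_V^{\rm univ}$ and $\beta:=1+k_V$, localizing at $\beta$ kills both the obstruction and the uniqueness groups and yields the claimed canonical splitting. The main technical obstacle is precisely this Tate--Sen vanishing in families with coefficients in the non-affinoid Banach algebra $B_V\ho_K\C_p$; the wide-open disk $U$ with its ring $\Lambda_U$ of bounded rigid functions was introduced in the introduction exactly to put the profinite/$\underline{m}_V$-adic structure on the cohomology (remark \ref{rmk:toph1}) in scope, so this vanishing should follow by a Sen-theoretic descent along the cyclotomic tower using the integral filtrations of proposition \ref{prop:finitefil}.
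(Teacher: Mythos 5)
Your proposal is correct and follows essentially the same route as the paper: reduction to classical weights $k$ with $k+1>h$ via the classicity theorems and Faltings' Eichler--Shimura to get surjectivity of $\Psi_k^{(h)}$ and hence a nonzero annihilator $b$ of the cokernel (the paper produces $b$ as the determinant of an $m\times m$ minor of the matrix of $\Psi_U^{(h)}$ rather than via the support/torsion argument, but this is the same idea), then Sen theory in families applied to the twisted kernel $S_V$, with the Sen operator shown to vanish by interpolation from the dense set of classical specializations where the kernel is $\rH^1(X(N,p),\omega^{-k})^{(h)}\otimes\C_p(k+1)$. The ``family version of Tate's theorem'' you flag as the main technical obstacle in (d) is exactly the result of Sen (\cite{sen1}) that the paper invokes, namely that $\det(\phi)\in B_V$ annihilates $\rH^1(G_K,\cH)$ for $\phi$ the Sen operator of $\cH$, and the paper takes $\beta=\det(\phi)$, which agrees up to a power with your $1+k_V$.
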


Before proving the theorem let us point out some of its consequences.

\begin{corollary}
\label{cor:consequences}
Assume we have $U$, $k_0$, $h$ as at the beginning of section \ref{sec:mainresult}.

a) There exists a finite set of weights $Z'\subset U(\C_p)$ such that for every
$k\in U(K)-Z'$ we have a natural
isomorphism as $\C_p$-vector spaces equivariant for the semilinear $G_K$-action
$$
\Psi_k^{\rm ES}\colon \rH^1\bigl(\Gamma, D_k\bigr)^{(h)}\otimes_K\C_p(1)\cong S_k(k+1)\oplus \rH^0\bigl(X(w), \omega^{\dagger,k+2}_w\bigr)^{(h)}\ho_K\C_p.
$$
Here $S_k$ is a finite $\C_p$-vector space with trivial, semilinear action of $G_K$.

b) The set $Z'$ at a) above contains the integers $\kappa\in U\cap \Z$ such that $0\le \kappa \le h-1$.
\end{corollary}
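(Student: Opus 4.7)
The plan is to deduce both parts directly from Theorem \ref{thm:muh} by specializing the family statement to individual weights. First I would cover $U$ by finitely many wide open subdisks $V_j\subset U$, each containing an integer $>h-1$ (possible since the hypotheses of Theorem \ref{thm:muh} furnish such a $k_0\in U(K)$) and disjoint from the finite set $Z$ produced in Theorem \ref{thm:muh}(b). For each $j$, Theorem \ref{thm:muh}(d) provides a nonzero $\beta_j\in B_{V_j}$ whose zero locus $Z_j\subset V_j(\C_p)$ is finite by Weierstrass preparation applied to a bounded representative $\pi^n\beta_j\in\Lambda_{V_j}$. Setting $Z':=Z\cup\bigcup_j Z_j$ yields a finite subset of $U(\C_p)$.

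For part (a), given $k\in U(K)-Z'$, I would pick $V=V_j$ containing $k$ with $\beta_j(k)\ne 0$. Both $\rH^1\bigl(\Gamma,D_V\bigr)^{(h)}$ and $\rH^0\bigl(X(w),\omega_w^{\dagger,k_V+2}\bigr)^{(h)}$ are finitely generated projective $B_V$-modules by the slope-decomposition theory, hence so is the kernel $M_V^{(h)}$, and the short exact sequence of $B_V\hat{\otimes}\C_p$-modules in Theorem \ref{thm:muh}(b) stays exact after tensoring with $B_V/t_k B_V\cong K$. Define $S_k:=S_V\otimes_{B_V\hat{\otimes}\C_p}\C_p$; it is a finite-dimensional $\C_p$-vector space inheriting trivial semilinear $G_K$-action from $S_V$. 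Compatibility of the slope decomposition with specialization (Theorem \ref{familiesslopedec}(ii)) together with the compatibility of $\Psi_U$ with specialization identifies the specialized sequence with the one for $\Psi_k^{(h)}$. Since $\beta_j(k)\ne 0$, the canonical localized splitting of Theorem \ref{thm:muh}(d) descends to the fiber at $k$, giving the claimed $G_K$- and Hecke-equivariant decomposition.

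For part (b), I would argue by contradiction: assume $\kappa\in\Z\cap U(K)$ with $0\le\kappa\le h-1$ but $\kappa\notin Z'$, so the decomposition (a) holds at $\kappa$. Note that $h\ge\kappa+1$ violates the hypothesis of Theorem \ref{thm:classicity}, so classicity fails. Still, the surjection $\rho_\kappa\colon D_\kappa\to V_\kappa$ induces a Hecke-equivariant surjection on slope-$\le h$ parts, and classical Faltings (Theorem \ref{thm:faltings}) expresses $\rH^1\bigl(\Gamma,V_\kappa\bigr)^{(h)}\otimes_K\C_p$ as the sum of a classical modular forms piece with trivial Galois action and an $\rH^1(X,\omega^{-\kappa})^{(h)}$-piece twisted by $\C_p(\kappa+1)$. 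Pick a classical cuspidal eigenform $f$ of weight $\kappa+2$ and slope $\le h$, which exists by classical theory in this range of $\kappa$. Its $f$-isotypic $G_K$-representation in the classical decomposition is two-dimensional with Hodge-Tate weights $0$ and $\kappa+1$, and lifts compatibly via $\rho_\kappa$ to $\rH^1(\Gamma,D_\kappa)^{(h)}\otimes_K\C_p$. Matching this against the $f$-isotypic component of the hypothetical decomposition in (a), where $S_{\kappa,f}$ is nonzero by the identification of the Hecke characteristic polynomial on $S_{k+2}^{(h)}$ with that on overconvergent cusp forms (cf.\ Theorem \ref{thm:muhintro}(a)), yields a dimension mismatch, contradicting the assumption.

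The main obstacle will be (b): at these critical-slope integer weights one must delicately reconcile the Galois- and Hecke-theoretic data coming from classical Eichler-Shimura with the hypothetical overconvergent decomposition, accounting for the overcounting of cuspidal eigenforms that would simultaneously appear in the $\rH^0$-piece (as overconvergent modular forms) and in the $S_\kappa$-piece (via the Hecke characteristic polynomial identification). Part (a) is more routine, but its technical heart is verifying the projectivity hypotheses so that short exactness is preserved under specialization, and that the canonical splitting in Theorem \ref{thm:muh}(d) is genuinely natural with respect to base change.
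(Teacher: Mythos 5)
Your strategy for part (a) — split the sequence at the level of the family using Theorem \ref{thm:muh}(d) and then specialize — is not the paper's route, and as written it has a genuine gap at the very first step: the finite covering you postulate does not exist. A wide open disk $U$ cannot be covered (even just on $K$-points away from $Z$) by finitely many wide open subdisks $V_j$ with $V_j(\C_p)\cap Z=\emptyset$: by the ultrametric inequality, a subdisk of radius $r_j$ not containing a point $z_0\in Z$ keeps all of its points at distance $\ge r_j$ from $z_0$, so a finite union of such disks misses an entire punctured neighbourhood of $z_0$; moreover small disks near points of $Z$ contain no integer $>h-1$, which Theorem \ref{thm:muh} requires of $V$. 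Consequently your $Z':=Z\cup\bigcup_j Z_j$ is not actually defined, and if instead you choose $V$ separately for each $k$ you accumulate the zero loci of infinitely many elements $\beta$, with no a priori control. The paper avoids this entirely: it specializes the exact sequence of Theorem \ref{thm:muh}(b)--(c) at $k$ first, obtaining an extension of a $\C_p$-vector space with ``trivial'' semilinear action by $S_k(k+1)\cong\C_p(k+1)^{q}$, and then splits this \emph{fiberwise} by Tate's theorem ($\rH^1\bigl(G_K,\C_p(m)\bigr)=0$ for $m\ne 0$). The only weights excluded beyond $Z$ are those with $k+1=0$ (i.e.\ $s=-1$), a finite set independent of any auxiliary choice. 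If you want to keep your route you must either compute the Sen operator $\phi$ of $\cH$ explicitly (its determinant is essentially a power of $k_V+1$, so its zero locus is again $\{s=-1\}$, uniform in $V$) or switch to the fiberwise Tate argument.

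For part (b) you attempt to \emph{prove} that the integers $0\le\kappa\le h-1$ must lie in $Z'$ by a dimension count on $f$-isotypic components, and you concede yourself that this is the weak point. The paper's content here is Remark 2(ii) following Claim 1: for an integer $\kappa$ with $\kappa+1<h$ the commutativity of the diagram comparing $\Psi_\kappa^{(h)}$ with $p_2\circ\Phi_\kappa$ forces the image of $\Psi_\kappa^{(h)}$ into the (restriction of the) classical forms, so $\Psi_\kappa^{(h)}$ fails to be surjective as soon as there are non-classical overconvergent forms of that weight and slope $\le h$, whence $b(\kappa)=0$ and $\kappa\in Z\subset Z'$. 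Your argument by contrast hinges on the existence of a classical cuspidal eigenform of weight $\kappa+2$ and slope $\le h$ (false in general, e.g.\ for small weight and level) and on an unverified ``dimension mismatch''; the clean mechanism is the non-surjectivity of $\Psi_\kappa^{(h)}$ at non-cohomological slopes, not an overcount of eigenforms.
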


\bigskip
\noindent
{\bf Proof of theorem \ref{thm:muh}.} Let $k\in U(K)\cap \Z$ such that $k\ge 0$.

We will first recall Faltings' version of the classical Eichler-Shimura isomorphism, see \cite{faltingsmodular}.
Let us recall the ind-continuous sheaf $\cV_k\cong \Symm^k(\cT)\otimes_{\Z_p} K$ on $X(N,p)^{\rm ket}$,
which can also be seen as an ind-continuous  sheaf on $\fX(N,p)$. Let $k\ge 0$ be an integer.
The main result of \cite{faltingsmodular} is that there is a $\C_p$-linear, $G_K$-equivariant isomorphism (the Eichler-Shimura isomorphism)
$$
\Phi_k\colon \rH^1\bigl(X(N,p)_{\Kbar}^{\rm ket}, \cV_k\bigr)\otimes_K\C_p(1)\cong \rH^0\bigl(X(N,p), \omega^{k+2}\bigr)\otimes_K\C_p\oplus \rH^1\bigl(X(N,p),
\omega^{-k}\bigr)\otimes_K\C_p(k+1).
$$
We have a natural isomorphism $\rH^1\bigl(X(N,p)_{\Kbar}^{\rm ket}, \cV_k(1)\bigr)\cong \rH^1\bigl(\Gamma, V_k(1)\bigr)$ compatible with all structure and therefore
we have a natural diagram
$$
\begin{array}{cccccccccc}
\rH^1\bigl(\Gamma, D_U\bigr)\hat{\otimes}_K\C_p(1)&\stackrel{\Psi_U}{\lra}&\rH^0\bigl(X(w), \omega^{\dagger,k_U+2}_w\bigr)\hat{\otimes}\C_p\\
\downarrow&&\downarrow\\
\rH^1\bigl(\Gamma, D_k\bigr)\otimes_K\C_p(1)&\stackrel{\Psi_k}{\lra}&\rH^0\bigl(X(w),\omega^{\dagger,k+2}_w\bigr)\otimes_K\C_p\\
\downarrow&&\uparrow\\
\rH^1\bigl(\Gamma, V_k(1)\bigr)\otimes_K\C_p&\stackrel{p_2\circ\Phi_k}{\lra}&\rH^0\bigl(X(N,p), \omega^{k+2}\bigr)\otimes_K\C_p
\end{array}
$$
where the left vertical maps are induced by the specializations $D_U\lra D_k\lra V_k$ (see (\ref{rhok}) in \S\ref{sec:locallyanalytic}), same as the top right map. The lower right map is
restriction (let us recall that if $m>0$ is an integer $\omega^m|_{X(w)}\cong \omega_w^{\dagger, m}$).

{\em Claim 1} The above diagram is commutative. In fact we know that the upper rectangle is commutative so it would be enough to show that the lower rectangle is
also commutative.

\begin{proof}
For this we have to briefly recall (in a slightly different formulation) the proof of Faltings' result, namely the definition of the map $p_2\circ \Phi_k$. We first
notice that arguing as in the proof of proposition \ref{prop:XfXcompcoh} we have a natural isomorphism (as $\cX(N,p)$ is proper and semistable)
$$
\rH^1\bigl(X(N,p)_{\Kbar}^{\rm ket}, \cV_k(1)\bigr)\otimes_K\C_p\lra \rH^1\bigl(\fX(N,p), \Symm^k(\cT)\hat{\otimes}\hO_{\fX(N,p)}\otimes_{\cO_K}K(1)\bigr).
$$
As in definition \ref{def:omegak}  we denote by
$$
\omega_{\fX(N,p)}:=v_{\fX(N,p)}^\ast\bigl(\omega\bigr)\hat{\otimes}_{\hO_{\fX(N,p)}^{\rm un}}
\hO_{\fX(N,p)}.
$$
We then have the Hodge-Tate sequence of sheaves of $\hO_{\fX(N,p)}$-modules on $\fX(N,p)$
$$
0\lra \omega^{-1}_{\fX(N,p)}(1)\lra \cT\hat{\otimes}\hO_{\fX(N,p)}\lra \omega_{\fX(N,p)}\lra 0.
$$
This sequence is not exact but it becomes exact if we invert $p$, or if we tensor with $K$. We obtain
$$
0\lra (\omega^{-1}_{\fX(N,p)}\otimes K)(1)\lra \cT\hat{\otimes}\hO_{\fX(N,p)}\otimes_{\cO_K} K\lra (\omega_{\fX(N,p)}\otimes K)\lra 0.
$$
One shows by induction that, for every $m\ge 1$ we have a surjective map
of $(\hO_{\fX(N,p)}\otimes K)$-modules:
$$\Symm^k(\cT)\hat{\otimes}\hO_{\fX(N,p)}\otimes_{\cO_K} K(1)\lra \omega^k_{\fX(N,p)}\otimes K(1),
$$
which induces the morphism
$$
\rH^1\bigl(\fX(N,p), \Symm^k(\cT)\hat{\otimes}\hO_{\fX(N,p)}\otimes_{\cO_K} K(1)\bigr)\lra \rH^1\bigl(\fX(N,p), \omega^k_{\fX(N,p)}\otimes K(1)\bigr).
$$
Similarly to the calculations in the proof of theorem \ref{thm:cohomologyomega}, we calculate $\rH^1\bigl(\fX(N,p), \omega^k_{\fX(N,p)}\otimes K(1)\bigr)$ using the
spectral sequence
$$
\rH^a\Bigl(\cX(N,p)^{\rm ket}, R^bv_{\fX(N,p), \ast}\bigl(\omega^k_{\fX(N,p)}\otimes K(1)\bigr)\Bigr)\Longrightarrow \rH^{a+b}\bigl(\fX(N,p),
\omega^k_{\fX(N,p)}\otimes K(1)\bigr).
$$
For $a+b=1$ we have an edge morphism
$$
\rH^1\bigl(\fX(N,p), \omega_{\fX(N,p)}\otimes K(1)\bigr)\lra \rH^0\Bigl(\cX(N,p)^{\rm ket}, R^1v_{\fX(N,p),\ast}\bigl(\omega^k_{\fX(N,p)}\otimes K(1)\bigr)\Bigr).
$$
As in the proof of theorem \ref{thm:cohomologyomega} we have:
$$
R^1v_{\fX(N,p),\ast}\bigl(\omega^k_{\fX(N,p)}\otimes K(1)\bigr)\cong \omega^k\hat{\otimes}R^1v_{\fX(N,p),\ast}\bigl(\hO_{\fX(N,p)}\otimes K(1)\bigr)
\cong \omega^{k+2}\hat{\otimes}\C_p.
$$
Therefore we obtain a natural composition
$$
\rH^1\bigl(\fX(N,p), \cV_k\hat{\otimes}\hO_{\fX(N,p)}(1)\bigr)\lra \rH^1\bigl(\fX(N,p), \omega^k_{\fX(N,p)}\otimes K(1)\bigr)\lra \rH^0\bigl(X(N,p),
\omega^{k+2}\bigr)\otimes_K\C_p,
$$
which is the map $p_2\circ\Phi_k$ appearing in \cite{faltingsmodular}.

Let us now see what happens on $\fX(w)$. We denote by $\cT_w:=\nu^\ast(\cT)$. We have a natural map  $\nu^\ast\bigl(\cD_k\bigr)\lra \Symm^k(\cT_w)=
\nu^\ast\bigl(\cV_k\bigr)$ (associated to (\ref{rhok}) in \S\ref{sec:locallyanalytic}) and the composite with $\Symm^m(\cT)\hat{\otimes}\hO_{\fX(N,p)} \otimes_{\cO_K} K\lra \omega_{\fX(N,p)}^k\otimes K$ is the morphism $
\delta^\vee_k(w)\colon  \nu^\ast(\cD_k) \lra \omega_{\fX(N,p)}^k\otimes K$ of (\ref{deltakw}) of section \S\ref{sec:dkappa}. Moreover we have the following natural
commutative diagram of sites and continuous functors, inducing morphisms of topoi:
$$
\begin{array}{cccccccc}
\cX(N,p)^{\rm ket}&\stackrel{v_{\fX(N,p)}}{\lra}&\fX(N,p)\\
\downarrow\mu&&\downarrow\nu\\
\cX(w)^{\rm ket}&\stackrel{v_{\fX(w)}}{\lra}&\fX(w)
\end{array}
$$
Here $\mu,\nu$ are induced by the natural morphism of log formal schemes $\cX(w)\lra \cX(N,p)$.

Let us recall from theorem \ref{thm:cohomologyomega} that we have isomorphisms:
$$
\rH^1\bigl(\fX(w), \omega^{\dagger,k}_{\fX(w)}(1)\bigr)\cong \rH^0\bigl(\cX(w)^{\rm ket}, R^1v_{\fX(w),\ast}(\omega^{\dagger,k}_{\fX(w)}(1))\bigr)\cong
\rH^0\bigl(X(w), \omega^{\dagger,k+2}_w\bigr)\otimes_K\C_p.
$$
We also have the following sequence of isomorphisms of sheaves on $\fX(w)$:
$$
R^1v_{\fX(w),\ast}\Bigl(\nu^\ast\bigl(\omega^{k+2}_{\fX(N,p)}\otimes K(1)\bigr)\Bigr)\cong
R^1v_{\fX(w),\ast}\bigl(\omega^{\dagger,k+2}_{\fX(w)}(1)\bigr)\cong
$$
$$
\cong\omega_w^{\dagger,k+2}\otimes\C_p\cong \mu^\ast\bigl(\omega^{k+2}\otimes \C_p\bigr)\cong
\mu^\ast\Bigl(R^1v_{\fX(N,p),\ast}\bigl(\omega^{k+2}_{\fX(N,p)}\otimes K(1)\bigr)\Bigr).
$$
Finally putting together what we have done so far we have the following commutative diagram:
$$
\begin{array}{cccccccccc}
\rH^1\bigl(\fX(N,p), \cV_k\hat{\otimes}\hO_{\fX(N,p)}(1)\bigr)&\lra&\rH^1\bigl(\fX(w), \Symm^k(\cT_w)\hat{\otimes}\hO_{\fX(w)}\otimes_{\cO_K} K(1)\bigr)\\
\downarrow&&\downarrow\\
\rH^1\bigl(\fX(N,p), \omega^k_{\fX(N,p)}\otimes K(1)\bigr)&\lra&\rH^1\bigl(\fX(w),\omega_{\fX(w)}^{\dagger,k}(1)\bigr)\\
\downarrow&&\downarrow\cong\\
\rH^0\bigl(X(N,p), \omega^{k+2}\otimes K\bigr)\otimes_K\C_p&\stackrel{\varphi}{\lra}&\rH^0\bigl(X(w),\omega^{\dagger,k+2}_w\bigr)\otimes_K\C_p
\end{array}
$$
where $\varphi$ is the restriction map. This proves the claim 1.\end{proof}

\noindent {\bf Remark 1} Let us suppose now that $h\ge 0$, $h\in \Q$ is a slope such that both $\rH^1\bigl(\Gamma, D_U\bigr)$ and $\rH^0\bigl(X(w),
\omega^{\dagger,k_U+2}_w\bigr)$ have slope decompositions. Let $k\in U(K)\cap \Z$, $k\ge 0$. Then the diagram
$$
\begin{array}{cccccccccc}
\rH^1\bigl(\Gamma, D_U\bigr)^{(h)}\hat{\otimes}_K\C_p(1)&\stackrel{\Psi_U^{(h)}}{\lra}&\rH^0\bigl(X(w), \omega^{\dagger,k_U+2}_w\bigr)^{(h)}\hat{\otimes}\C_p\\
\downarrow&&\downarrow\\
\rH^1\bigl(\Gamma, D_k\bigr)^{(h)}\otimes_K\C_p(1)&\stackrel{\Psi_k^{(h)}}{\lra}&\rH^0\bigl(X(w),\omega^{\dagger,k+2}_w\bigr)^{(h)}\otimes_K\C_p\\
\downarrow&&\uparrow\varphi\\
\rH^1\bigl(\Gamma, V_k(1)\bigr)^{(h)}\otimes_K\C_p&\stackrel{p_2\circ\Phi_k}{\lra}&\rH^0\bigl(X(N,p), \omega^{k+2}\bigr)^{(h)}\otimes_K\C_p
\end{array}
$$
is commutative.

\bigskip
\noindent
{\em Claim 2} Let $U\subset \cW$ be a wide open disk, $k_U:\Z_p^\times\lra B_U^\times$ the universal
weight, $w>0, w\in\Q$ adapted to $k_U$  and $k\in U(K)$. Let $t_k\in B_U$ be a
rigid analytic function on $U$ which vanishes with order $1$ at $k$ and nowhere else on $U$.
The specialization maps $D_U\lra D_k$ and $\omega^{\dagger, k_U}_w\lra \omega^{\dagger,k}_w$
induce the following exact sequences
$$
\rH^1\bigl(\Gamma, D_U\bigr)\stackrel{t_k}{\lra}\rH^1\bigl(\Gamma, D_U\bigr)\lra \rH^1\bigl(\Gamma, D_k\bigr)\lra 0
$$
and
$$
0\lra \rH^0\bigl(X(w),\omega^{\dagger,k_U}_w\bigr)\stackrel{t_k}{\lra}\rH^0\bigl(X(w),\omega^{\dagger,k_U}_w\bigr)\lra \rH^0\bigl(X(w),
\omega^{\dagger,k}_w\bigr)\lra 0.
$$

\begin{proof} In fact the specialization maps are part of the following exact sequences:
$$
0\lra D_U\stackrel{t_k}{\lra}D_U\lra D_k\lra 0\mbox{ and } 0\lra \omega^{\dagger,k_U}_w\stackrel{t_k}{\lra}
\omega^{\dagger,k_U}_w\lra\omega^{\dagger,k}_w\lra 0.
$$
It follows that we have an exact sequence of $B_U$-modules
$$
\rH^1\bigl(\Gamma, D_U\bigr)\stackrel{t_k}{\lra}\rH^1\bigl(\Gamma, D_U\bigr)\lra \rH^1\bigl(\Gamma, D_k\bigr)\lra \rH^2\bigl(\Gamma, D_U\bigr).
$$
Now let us recall that $\Gamma=\Gamma_1(N)\cap \Gamma_0(p)$ is a torsion free group, therefore it is the fundamental group of the complement $Y(N,p)_{/\C}$ of a
non-void, finite set of points in a compact Riemann surface $X(N,p)_{/\C}$ and so it has cohomological dimension $1$. It follows that $\rH^2\bigl(\Gamma,
D_U\bigr)=0$ and the first claim follows.

Let us also remark that we have an exact sequence of $B_U$-modules
$$
0\lra \rH^0\bigl(X(w), \omega^{\dagger,k_U}\bigr)\stackrel{t_k}{\lra}\rH^0\bigl(X(w), \omega^{\dagger,k_U}\bigr)\lra \rH^0\bigl(X(w), \omega^{\dagger,k}\bigr)\lra
\rH^1\bigl(X(w), \omega^{\dagger,k_U}\bigr).
$$
As $X(w)$ is an affinoid subdomain and $\omega^{\dagger,k_U}_w$ is a sheaf of $B_U$-Banach modules by the Appendix of \cite{andreatta_iovita_pilloni} it follows
that $\rH^1\bigl(X(w), \omega^{\dagger,k_U}\bigr)=0$.
\end{proof}

\noindent {\bf Remark 2} Let $U\subset\cW$ be a wide open disk defined over $K$, $k_U:\Z_p^\times \lra B_U^\times$ the universal character, $w>0,w\in
\Q$ adapted to $k_U$ and $h\ge 0,h\in\Q$ a slope. We suppose that both $\rH^1\bigl(\Gamma, D_U\bigr)$ and $\rH^0\bigl(X(w), \omega^{\dagger,k_U}_w\bigr)$ have slope
decompositions. Let $k\in U(K)\cap \Z$, $k\ge 0$ and let us recall the commutative diagram of the previous remark.
$$
\begin{array}{cccccccccc}
\rH^1\bigl(\Gamma, D_U\bigr)^{(h)}\hat{\otimes}_K\C_p(1)&\stackrel{\Psi_U^{(h)}}{\lra}&\rH^0\bigl(X(w), \omega^{\dagger,k_U+2}_w\bigr)^{(h)}\hat{\otimes}\C_p\\
\downarrow&&\downarrow\\
\rH^1\bigl(\Gamma, D_k\bigr)^{(h)}\otimes_K\C_p(1)&\stackrel{\Psi_k^{(h)}}{\lra}&\rH^0\bigl(X(w),\omega^{\dagger,k+2}_w\bigr)^{(h)}
\otimes_K\C_p\\
\downarrow\psi&&\uparrow\varphi\\
\rH^1\bigl(\Gamma, V_k(1)\bigr)^{(h)}\otimes_K\C_p&\stackrel{p_2\circ\Phi_k}{\lra}&\rH^0\bigl(X(N,p), \omega^{k+2}\bigr)^{(h)}\otimes_K\C_p
\end{array}
$$

First let us remark that the surjective map $p_2\circ \Phi_k$ induces a surjective $\C_p$-linear map denoted by the same symbols $\rH^1\bigl(\Gamma,
V_k(1)\bigr)^{(h)}\otimes_K\C_p\stackrel{p_2\circ\Phi_k}{\lra}\rH^0\bigl(X(N,p), \omega^{k+2}\bigr)^{(h)}\otimes_K\C_p$.

 Then, there are two cases:

i) $k+1\ge h$. Then the classicity theorems both for overconvergent modular symbols and for
overconvergent modular forms imply that $\psi$ and $\varphi$ are isomorphisms.
It follows that in this case $\Psi_k^{(h)}$ is surjective.

ii) $k+1<h$. In this case the commutativity of the lower rectangle implies that the image of $\Psi_k$ is contained in the image of the classical forms inside
$\rH^0\bigl(X(w), \omega^{\dagger,k+2}_w\bigr)^{(h)}\hat{\otimes}\C_p$. The relationship between $h$ and $k$ implies that in this case $\Psi_k^{(h)}$ is not
surjective in general.

\bigskip
\noindent Now we prove a) of theorem \ref{thm:muh}. We assume that $U,w,h$ are as at the beginning of section \ref{sec:mainresult}. Let $k\in U(K)\cap \Z$ be such
that $h<k+1$ (there are infinitely many such $k$'s).

Let us recall that both $\rH^1\bigl(\Gamma, D_U\bigr)^{(h)}\hat{\otimes}_K\C_p(1)$ and $\rH^0\bigl(X(w),\omega_w^{\dagger,k_U}\bigr)^{(h)} \hat{\otimes}_K\C_p$ are
finite free $B:=(B_U\hat{\otimes}_K\C_p)$-modules of ranks $n$ and $m$ respectively. By choosing a basis, we can write $\Phi_U^{(h)}$ as a matrix
$\Phi_U^{(h)}=\bigl(a_{ij}\bigr)_{1\le i\le n,1\le j\le m}$ with $a_{ij}\in B$ for all $i,j$. By Claim 2 we have:
$$
\rH^1\bigl(\Gamma, D_k\bigr)^{(h)}\otimes_K\C_p\cong \Bigl(\rH^1\bigl(\Gamma, D_U\bigr)^{(h)}\hat{\otimes}_K\C_p\Bigr)/t_k\Bigl(\rH^1\bigl(\Gamma,
D_U\bigr)^{(h)}\hat{\otimes}_K\C_p\Bigr),
$$
and
$$
\rH^0\bigl(X(w),\omega_w^{\dagger,k+2}\bigr)^{(h)}\hat{\otimes}_K\C_p\cong
\Bigl(\rH^0\bigl(X(w),\omega_w^{\dagger,k_U+2}\bigr)^{(h)}\hat{\otimes}_K\C_p\Bigr)/t_k\Bigl(\rH^0\bigl(X(w),
\omega_w^{\dagger,k_U+2}\bigr)^{(h)}\hat{\otimes}_K\C_p\Bigr),
$$
where let us recall that we have denoted by $t_k$ an element of $B$ which vanishes with order $1$ at $k$ and nowhere else in $U$.

Moreover $\Psi_k^{(h)}=\bigl(a_{ij}(k)\bigr)_{i,j}$. The second remark above implies that
$n\ge m$ and the matrix $\Psi_k^{(h)}$ has rank exactly $m$, i.e. there is an $m\times m$-minor of
$\Psi_U^{(h)}$, $Q$, whose determinant has the property
$\det(Q)(k)\ne 0$. Therefore $b:=\det(Q)\ne 0$, $b\in B$
has the property that $b{\rm Coker}\bigl(\Psi_U^{(h)}\bigr)=0$.

\bigskip
\noindent
Let us now prove b) of theorem \ref{thm:muh}. Let $Z$ denote the set of zeroes of $b$ and
let $V\subset U$ be a connected affinoid subdomain defined over $K$ such that
$V(\C_p)\cap Z=\phi$ and such that $V(K)$ contains an integer $k>h-1$.
Then $b|_{V\times_K\C_p}\in (B_V\ho_K\C_p)^\times$,
therefore the following sequence
$$
0\lra M_V^{(h)}\lra \rH^1\bigl(\Gamma, D_V\bigr)^{(h)}\hat{\otimes}_K\C_p(1)\stackrel{\Psi_V^{(h)}}{\lra}
\rH^0\bigl(X(w),\omega^{\dagger,k_V+2}_w\bigr)^{(h)}\hat{\otimes}_K\C_p\lra 0
$$
is exact, where we have denoted by $M_V^{(h)}$ the kernel of $\Psi_V^{(h)}$. As $\rH^0\bigl(X(w),\omega^{\dagger,k_V+2}_w\bigr)^{(h)}\hat{\otimes}_K\C_p$ is a free
$(B_V\ho_K\C_p)$-module of finite rank the above exact sequence is split (as sequence of $(B_V\ho_K\C_p)$-modules ignoring for the moment the $G_K$-action).
Therefore $M_V^{(h)}$ is a finite projective $(B_V\ho_K\C_p)$-module
and because $(B_V\ho_K\C_p)$is a PID, $M_V^{(h)}$ is a finite and free $(B_V\ho_K\C_p)$-module of finite rank.

{\bf Remark 3} In fact we also have a localized exact sequence of $B_b$-modules
($0\ne b\in B$ is the element chosen at a) above)
$$
0\lra \bigl(M_U^{(h)}\bigr)_b\lra \Bigl(\rH^1\bigl(\Gamma, D_U\bigr)^{(h)}\hat{\otimes}_K\C_p(1)\Bigr)_b\lra
\Bigl(\rH^0\bigl(X(w),\omega^{\dagger,k_U+2}_w\bigr)^{(h)}\hat{\otimes}_K\C_p\Bigr)_b\lra 0.
$$
As in general $b$ is not invariant under $G_K$, the above exact sequence is not $G_K$-equivariant.

\bigskip
Now we prove c) of theorem \ref{thm:muh}.
Let $V$ be as in the theorem and we denote by  $S_V:=M_V^{(h)}\bigl(\chi^{-1}(\cdot \chi_U^{\rm univ})^{-1}\bigr)$.
It is a finite free
$(B_V\ho_K\C_p)$-module of rank say $q=n-m$ endowed with a continuous, semilinear action of $G_K$. Let us briefly recall
the so called {\em Sen's theory in families}, see \cite{sen1}, \cite{sen2} and also the section \S 2 of \cite{kisin}.

We assume (to simplify the exposition) that $K$ contains a non trivial $p$-th root of $1$.
Let $R$ be an affinoid $K$-algebra, $M$ a finite free $R\ho_K\C_p$-module of rank $q$ with a continuous, semilinear
action of $G_K$. The action of $G_K$ on $R\ho_K\C_p$ is via its natural action on $\C_p$.

Let $K'\subset \C_p$ be a finite extension, we denote by $H_{K'}:={\rm Ker}\bigl(\chi:G_K\lra (1+p\Z_p)\bigr)$ and
$\Gamma_{K'}:=G_{K'}/H_{K'}$. Also $K'_{\infty}:=\Kbar^{H_{K'}}$ and $\widehat{K}'_{\infty}:=\C_p^{H_{K'}}$.

We denote by
$$
\widehat{W}_{K'_{\infty}}\bigl(M\bigr):=M^{H_{K'}}.
$$
Then, if $K'$ is large enough (but still a finite extension of $K$) then
$\widehat{W}_{K'_{\infty}}\bigl(M\bigr)$ is a free $\widehat{K}'_{\infty}\ho_K R$-module of
rank $q$ and the natural map $\C_p\ho_{\widehat{K}'_{\infty}}\widehat{W}_{K'_{\infty}}\bigl(M\bigr)\lra
M$ is an isomorphism.

There is a finite extension $K'$ of $K$ (possibly larger then at the previous step)
such that $\widehat{W}_{K'_{\infty}}\bigl(M\bigr)$ has a basis $\{e_1,e_2,...,e_q\}$
over $\widehat{K}'_{\infty}\ho_K R$ such that the $K'\otimes_K R$-submodule
$W_\ast$ generated by this basis in $\widehat{W}_{K'_{\infty}}\bigl(M\bigr)$ is stable by
$\Gamma_{K'}$. If we denote by $\gamma$ a topological generator of $\Gamma_{K'}$, we define
the linear endomorphism $\phi\in {\rm End}_{K'\otimes_KR}\bigl(W_\ast\bigr)$ by
$$
\phi:=\frac{\log(\gamma^{p^r})}{\log(\chi(\gamma^{p^r})}\mbox{ for some }r>>0.
$$
We extend $\phi$ by linearity to $\widehat{W}_{K'_{\infty}}\bigl(M\bigr)$, where it is independent
of all the choices and whose characteristic polynomial has coefficients in $R$. It is called
{\bf the Sen operator} associated to $M$. Its importance consists in that its formation commutes with base change and
for $r$ large enough
the action of $\gamma^{p^r}$ on $W_\ast$ is determined by
$$
\gamma^{p^r}|_{W_\ast}=\exp\bigl(\log(\chi(\gamma^{p^r}))\phi\bigr).
$$

\bigskip
\noindent
We will apply the theory above as follows. Let $Z\subset U(\C_p)$ be the finite set of
zeroes of the function $b\in B$ above and let $V\subset U$ be a wide open disk
 defined over $K$ which contains an integer $k>h-1$ and such that $V(\C_p)\cap Z=\phi$.
Let $B_V:=\Lambda_V\otimes_{\cO_K}K$, where as usual $\Lambda_V$  denotes the algebra of bounded
rigid functions on $V$. Let
$$
S_V:=(M_V^{(h)}\bigl(\chi^{-1}\cdot(\chi_U^{\rm univ})^{-1}\bigr).
$$
Let us recall that $S_V$ is a free $(B_V\ho_K\C_p)$-module of rank $q$
with continuous, semilinear action of $G_K$.

Let $\phi_V$ denote the Sen operator attached to $S_V$ and let $K'$ be a finite, Galois
extension of $K$ in $\C_p$ such that:

i) $\widehat{W}_{K'_{\infty}}\bigl(S_V\bigr)$ is a free $(A_V\ho_{K}\widehat{K}'_{\infty})$-module
of rank $q$,

ii) There is a basis $\{e_1,e_2,   ,e_q\}$ of $\widehat{W}_{K'_{\infty}}\bigl(S_V\bigr)$ over
$(A_V\ho_{K}\widehat{K}'_{\infty})$ such that $W_\ast:=(K'\otimes_KB_V)e_1+...+(K'\otimes_KB_V)e_q$
is stable under $\Gamma_{K'}$

and

iii) The action of $\gamma$ on this basis is given by:
$$
\gamma(e_i)=\exp\bigl(\log(\chi(\gamma))\phi\bigr)(e_i)\mbox{ for every }.1\le i\le q,
$$
 where $\gamma$ is a topological generator
of $\Gamma_{K'}$.

Let us write the matrix of $\phi_V$ in the basis $\{e_1,e_2,\ldots,e_q\}$ as $\bigl(\alpha_{ij}\bigr)_{1\le i,j\le q}
\in M_{q\times q}\bigl(\widehat{K}'_{\infty}\ho_K B_V\bigr).$

Let now $k\in V(K)$ be an integer such that $k>h-1$ (there are infinitely many such weights).
We have an exact sequence of $(B_V\ho_K\C_p)$-modules, with $G_K$ and Hecke actions
$$
0\lra S_V\lra \rH^1\bigl(\Gamma, D_V\bigr)^{(h)}\ho_K\C_p(1)\bigl(\chi^{-1}(\chi_V^{\rm univ})^{-1}\bigr)\lra
$$
$$
\lra \rH^0\bigl(X(w), \omega_w^{\dagger,k_V+2}\bigr)^{(h)}\ho_K\C_p\bigl(\chi^{-1}(\chi_V^{\rm univ})^{-1}\bigr)\lra 0.
$$
We now specialize the sequence at the weight $k$, i.e. tensor over $B_V$ with $K$, for the map $B_V\lra K$ sending
$\alpha\rightarrow \alpha(k)$. As usual we denote by $t_k$ a generator of the kernel of the above map which does not vanish anywhere
else in $V$. Because in the above exact sequence all modules are free $(B_V\ho\C_p)$-modules, specialization gives an exact
sequence. Comparing with Faltings' result above we obtain the following commutative diagram with exact rows
$$
\begin{array}{cccccccccc}
\rH^1\bigl(\Gamma, D_k(1)\bigr)^{(h)}\otimes_K\C_p(-k-1)&\stackrel{\Psi_k^{(h)}}{\lra} &\rH^0\bigl(X(w), \omega^{k+2}_w\bigr)^{(h)}\ho\C_p(-k-1)&\lra&0\\
\downarrow\cong&&\uparrow\cong\\
\rH^1\bigl(\Gamma, V_k(1)\bigr)^{(h)}\otimes_K\C_p(-k-1)& \stackrel{(p_2\circ\Phi_k^{(h)})}\lra&\rH^0\bigl(X(N,p),
\omega^{k+2}\bigr)^{(h)}\otimes_K\C_p(-k-1)&\lra&0
\end{array}
$$
Therefore we have a natural isomorphism, $G_K$ and Hecke equivariant
$$
S_k=S_V/t_kS_V={\rm Ker}\bigl(\Psi_k^{(h)}\bigr)\cong {\rm Ker}\bigl((p_2\circ\Phi_k)\bigr)=\rH^1\big(X(N,p), \omega^{-k}\bigr)^{(h)}\otimes \C_p.
$$
The exact sequence $\displaystyle 0\lra S_V\stackrel{t_k}{\lra}S_V\lra S_k\lra 0$ induces the exact sequence
$$
0\lra \widehat{W}_{K'_{\infty}}\bigl(S_V\bigr)\stackrel{t_k}{\lra}\widehat{W}_{K'_{\infty}}\bigl(S_V\bigr)\lra \widehat{W}_{K'_{\infty}}\bigl(S_k\bigr)\lra
\rH^1\bigl(H_{K'},S_V\bigr).
$$
The theory of almost \'etale extensions implies that $\rH^1\bigl(H_{K'},S_V\bigr)=0$ and therefore if we denote by $\phi_k$ the Sen operator attached to $S_k$, then
the image of $\{e_1,e_2,\ldots,e_q\}$ is a basis of $\widehat{W}_{K'_{\infty}}\bigl(S_k\bigr)$ in which $\phi_k$ has matrix $\bigl(\alpha_{ij}(k)\bigr)_{1\le i,j\le
q}$. But $\widehat{W}_{K'_{\infty}}\bigl(S_k\bigr)\cong \widehat{K}'_{\infty}\ho_K \rH^1\bigl(X(N,p),\omega^{-k}\bigr)$, therefore $\phi_k=0$. It follows that
$\alpha_{ij}(k)=0$ for infinitely many $k\in V(K)$, therefore $\alpha_{ij}=0$ for all $1\le i,j\le q$. It follows that $\phi_V(e_i)=0$ which implies that
$\gamma(e_i)=e_i$ for all $1\le i\le q$. Therefore the free $K'\otimes_KB_V$-module of rank $q$, $W_\ast$, is equal to $(S_V)^{G_{K'}}$, i.e. $S_V$ is a trivial
$G_{K'}$-module. We supposed that $K'/K$ was a finite Galois extension, therefore by \'etale descent $S_V$ is trivial as $G_K$-module. It follows that
$M_V^{(h)}\cong S_V\bigl(\chi\cdot\chi_V^{\rm univ}\bigr)$ as $G_K$-modules, where $S_V$ is a free $(B_V\ho_K\C_p)$-module of rank $q$ with trivial $G_K$-action.

\bigskip
Finally let us prove d) of theorem \ref{thm:muh}. We denote by $$\cH:={\rm Hom}_{(B_V\ho\C_p)}\Bigl(\rH^0\bigl(X(w), \omega_w^{\dagger,k_V+2}\bigr)\ho_K\C_p,
S_V\bigl(\chi\cdot\chi_V^{\rm univ}\bigr)\Bigr).$$ Then $\cH$ is a free $(B_V\ho \C_p)$-module of finite rank with continuous, semilinear action of $G_K$. Moreover,
the extension class of the exact sequence at b) corresponds to a cohomology class in $\rH^1\bigl(G_K, \cH\bigr)$. If we denote by $\phi$ the Sen operator of $\cH$,
a result of \cite{sen1} implies that $\det(\phi)\in B_V$ annihilates this cohomology group. Moreover $\det(\phi)\ne 0$, so if we localize the sequence at this
element it will split naturally as a short exact sequence of $G_K$-modules. This finally ends the proof of theorem \ref{thm:muh}.

\bigskip
\noindent {\bf Proof of corollary \ref{cor:consequences}.} Let us assume the hypothesis of the corollary, i.e. we have $U,h$ satisfying the assumption there. Let
$Z\subset U(\C_p)$ be the finite set defined in theorem \ref{thm:muh} b) and let first $k\in  U(K)-Z$. Then there exists a wide open disk $V\subset U$, defined
over $K$ such that $V(\C_p)\cap Z=\phi$ and $k\in V(K)$. By theorem \ref{thm:muh} c) we have an exact sequence of $(B_V\ho_K\C_p)$-modules with continuous
semilinear $G_K$-action
$$
0\lra S_V\bigl(\chi\cdot\chi_V^{\rm univ}\bigr)\lra \rH^1\bigl(\Gamma, D_U^{(h)}\bigr)\ho_K\C_p(1)\lra \rH^0\bigl(X(w), \omega_w^{\dagger, k_V+2}
\bigr)^{(h)}\ho_K\C_p\lra 0.
$$
As $k\in V(K)$ we may specialize this sequence and we obtain an exact sequence of $\C_p$-vector spaces with continuous, semilinear action of $G_K$
$$
0\lra S_k(k+1)\lra \rH^1\bigl(\Gamma, D_k\bigr)^{(h)}\ho_K\C_p(1)\lra \rH^0\bigl(X(w), \omega^{\dagger,k+2}_w\bigr) \ho_K\C_p\lra 0.
$$
Now $k\in V(K)\subset U(K)\subset \cW^\ast(K)$ so $k$ is an accessible weight associated to a pair $(s, i)$. Then it follows that if $s\ne -1$, the character
$\chi^{s+1}$ is a character of infinite order, therefore by the main result of \cite{tate}, the above sequence is naturally and uniquely split as a sequence of
$G_K$-modules. Therefore we choose $Z':=Z\cup \{k\in U(K)-Z\quad |\quad k=(s,i), s\ne -1\}.$ Then $Z'$ is finite and the corollary follows.

\bigskip
\noindent
Finally, in this article we do not give a precise geometric interpretation of the $B_V$-module $S_V$ of
theorem \ref{thm:muh} but we prove the following lemma. With $U, h, Z'$ and $V$ as in theorem \ref{thm:muh},
we denote by
$$
\mathbb{S}_V:=H^0\bigl(X(w), \omega_w^{\dagger, k_V}\otimes \Omega^1_{X(w)/K}\bigr)\subset H^0\bigl(X(w), \omega_w^{\dagger, k_V+2}\bigr),
$$
the $A_V$-module of families of overconvergent cuspforms over $V$. It is a Hecke-submodule of
the $B_V$-module of families of overconvergent modular forms over $V$,
$\mathbb{M}_V:=H^0\bigl(X(w), \omega_w^{\dagger, k_V+2}\bigr)$ and we have:

\begin{lemma}
\label{lemma:heckesv}
Let  $\ell$ be a positive prime integer such that $(\ell, Np)=1$. Then the characteristic polynomials of $T_{\ell}$ acting on $S_V^{(h)}$ and on $\mathbb{S}_V^{(h)}$ are equal.
\end{lemma}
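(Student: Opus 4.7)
The plan is to compare characteristic polynomials via specialization at infinitely many classical weights $k \in V(K)\cap \Z$ with $k > h-1$, using Serre duality and the Faltings Eichler-Shimura isomorphism.

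First, I would specialize the exact sequence of Theorem \ref{thm:muh}(c) at such a classical weight $k$. By the proof of Theorem \ref{thm:muh}(c) (specifically the diagram comparing $\Psi_k^{(h)}$ with $p_2\circ \Phi_k^{(h)}$), one obtains a Hecke-equivariant isomorphism
$$
S_k \;\cong\; \rH^1\bigl(X(N,p),\omega^{-k}\bigr)^{(h)}\otimes_K\C_p.
$$
Moreover, by Theorem \ref{thm:muh}(c) together with the fact that $S_V^{(h)}$ is a finite free $(B_V\hat\otimes\C_p)$-module, we have $S_k \cong S_V^{(h)}/t_k S_V^{(h)}$ as Hecke modules.

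Second, on the proper smooth curve $X(N,p)$ Serre duality provides a perfect Hecke-compatible pairing (up to the standard twist by the determinant character on $T_\ell$, which is trivial since $(\ell,Np)=1$ and the correspondence is symmetric in the usual way)
$$
\rH^1\bigl(X(N,p),\omega^{-k}\bigr)\;\times\;\rH^0\bigl(X(N,p),\omega^{k}\otimes\Omega^1_{X(N,p)/K}\bigr)\;\longrightarrow\; K.
$$
Since Hecke correspondences are self-transpose on $X(N,p)$ (the two projections from the Hecke correspondence $X(N,p)_\ell$ differ only by Atkin--Lehner-type involutions that do not affect characteristic polynomials of $T_\ell$ for $\ell\nmid Np$), the operator $T_\ell$ on $\rH^1(X(N,p),\omega^{-k})$ has the same characteristic polynomial as $T_\ell$ acting on $\rH^0(X(N,p),\omega^{k}\otimes\Omega^1)$, and the slope $\le h$ decompositions correspond. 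Combined with classicity of overconvergent cusp forms in slope $\le h < k+1$ (Coleman's theorem, which gives $\rH^0(X(N,p),\omega^{k}\otimes\Omega^1)^{(h)}\cong \mathbb{S}_k^{(h)}$), we obtain equality of characteristic polynomials of $T_\ell$ on $S_k$ and on $\mathbb{S}_k^{(h)}\cong \mathbb{S}_V^{(h)}/t_k\mathbb{S}_V^{(h)}$.

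Third, both $S_V^{(h)}$ and $\mathbb{S}_V^{(h)}$ are finite projective (hence, since $B_V$ is a PID, free) $B_V$-modules of finite rank, stable under $T_\ell$. Their characteristic polynomials $P_S(t),\ P_{\mathbb S}(t)\in B_V[t]$ specialize at every $k\in V(K)$ to the characteristic polynomials of $T_\ell$ on the specializations. By the previous step, $P_S(t)(k)=P_{\mathbb S}(t)(k)$ for every integer $k\in V(K)$ with $k>h-1$. Since $V$ is a wide open disk, the set of such $k$ is infinite with an accumulation point in $V(\C_p)$; hence any bounded rigid function on $V$ vanishing there must vanish identically, and we conclude $P_S(t)=P_{\mathbb S}(t)$ in $B_V[t]$.

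The main obstacle is the second step: one must carefully verify that the Serre duality pairing on $X(N,p)$ is Hecke-compatible in such a way that the characteristic polynomial of $T_\ell$ (for $\ell\nmid Np$) is preserved. This is standard but requires tracking the action of $T_\ell$ via the two projections of the Hecke correspondence and checking that transposition under Serre duality does not introduce extraneous twists affecting characteristic polynomials.
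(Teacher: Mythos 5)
Your proposal is correct and follows essentially the same route as the paper: specialize at the infinitely many integer weights $k\in V(K)$ with $k>h-1$, identify $S_V^{(h)}/t_kS_V^{(h)}$ with $\rH^1\bigl(X(N,p),\omega^{-k}\bigr)^{(h)}$ and $\mathbb{S}_V^{(h)}/t_k\mathbb{S}_V^{(h)}$ with $\rH^0\bigl(X(N,p),\omega^{k}\otimes\Omega^1_{X(N,p)/K}\bigr)^{(h)}$, match characteristic polynomials of $T_\ell$ at each such $k$ via Serre duality, and conclude by density. The only cosmetic difference is that the paper justifies the Hecke-compatibility of the duality by citing that it is induced from the Poincar\'e pairing between $\rH^1\bigl(\Gamma,V_k(1)\bigr)$ and $\rH^1_c\bigl(\Gamma,V_k(1)\bigr)$ (Faltings), whereas you argue directly with the self-transposeness of the Hecke correspondence.
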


\begin{proof}
Let us first consider an integer weight $k\in V$ such that $k>h-1$ and let us recall (\cite{faltingsmodular}) that  the natural
Poincar\'e pairing between $H^1\bigl(\Gamma, V_k(1)\bigr)$ and $H_c^1\bigl(\Gamma, V_k(1)\bigr)$ induces
Serre-duality between $H^1\bigl(X(N,p), \omega^{-k}\bigr)$ and $H^0\bigl(X(N,p), \omega^k\otimes
\Omega^1_{X(N,p)/K}\bigr)$. Therefore we can identify $H^1\bigl(X(N,p), \omega^{-k}\bigr)$ with
the $K$-dual of $H^0\bigl(X(N,p), \omega^k\otimes\Omega^1_{X(N,p)/K}\bigr)$, and so the characteristic polynomials
of $T_{\ell}$ acting on  $H^1\bigl(X(N,p), \omega^{-k}\bigr)$ and $H^0\bigl(X(N,p), \omega^k\otimes
\Omega^1_{X(N,p)/K}\bigr)$ are equal.

Now let $P_{\ell,i}(T)\in B_U[T]$, for $i=1,2$ be the characteristic polynomials of $T_{\ell}$ acting on
$S_V^{(h)}$ and respectively $\mathbb{S}_V^{(h)}$. If $k\in V$ is an integer weight such that $k>h-1$,
then the characteristic polynimials of $T_{\ell}$ acting on:
$$
S_V^{(h)}/t_kS_V^{(h)}\cong  H^1\bigl(X(N,p), \omega^{-k}\bigr)^{(h)}\mbox{ and on }\mathbb{S}_V^{(h)}/t_k\mathbb{S}_V^{(h)}\cong
H^0\bigl(X(N,p), \omega^k\otimes\Omega^1_{X(N,p)/K}\bigr)^{(h)}
$$
are $P_{\ell,1}(k)$ and $P_{\ell,2}(k)$. By the above argument $P_{\ell,1}(T)(k)=P_{\ell,2}(T)(k)$ for infinitely many $k\in V$
therefore $P_{\ell,1}(T)=P_{\ell,2}(T)$. Here by $P_{\ell,1}(T)(k)$ we mean the polynomial obtained by evaluating the coefficients
of $P_{\ell,1}(T)$ at $k$.
\end{proof}

\subsection{On the global Galois representations attached to overconvergent eigenforms}
\label{sec:globalrep}

In this section we give a geometric interpretation of the $G_{\Q}={\rm Gal}(\qbar/\Q)$-representation attached
to a {\it generic} overconvergent cuspidal eigenform.

Let us start by  fixing a slope $h\ge 0$, $h\in \Q$ and a wide open disk $U\subset \cW^\ast$ as
in the statement of theorem \ref{thm:muh}.

Let us recall (see section \S 5) that we may think of $\Gamma$ as the fundamental group of $Y(N,p)_{\C}$ for a
choice of a geometric generic point, therefore we have canonical isomorphisms of topological $B_U$-modules
$$
H^1\bigl(\Gamma, D_U\bigr)\cong H^1\bigl(X(N,p)^{\rm ket}_{\C}, \cD_U\bigr)\cong H^1\bigl(X(N,p)^{\rm ket}_{\qbar},
 \cD\bigr).
$$
Let us remark that the last $B_U$-module has a natural, continuous, $B_U$-linear action of $G_{\Q}$ with the property that
its restriction to $G_K$ (seen as an open subgroup of a decomposition group of $G_{\Q}$ at $p$) is
what we denoted in section \S 5 by $H^1\big(X(N,p)_{\Kbar}^{\rm ket}, \cD\bigr)$. Moreover,
as the $G_{\Q}$-action commutes with the action of the Hecke operators, in particular with the action of $U_p$, it induces a
$G_{\Q}$-module structure on the finite free $B_U$-module $H^1(\Gamma, D_U)^{(h)}$.
We also denote by $H^1_p\bigl(\Gamma, D_U\bigr)$ the image of the natural map
$H^1_c\bigl(\Gamma, D_U\bigr)\lra H^1\bigl(\Gamma, D_U\bigr)$. Then all these cohomology groups have natural
interpretations as \'etale cohomology groups with compact support, respectively \'etale parabolic cohomology, they are naturally
$G_\Q$ and Hecke modules. Moreover, $U$ will now be chosen  such that both $H^1\bigl(\Gamma, D_U\bigr)$ and
$H^1_p\bigl(\Gamma, D_U\bigr)$ have slope $\le h$-decompositions.

We have

\begin{theorem}
\label{thm:global1}
a) For every positive prime  integer $\ell$ with $(\ell, Np)=1$ the $G_{\Q}$-representations
$H^1\bigl(\Gamma, D_U(1)\bigr)$ and $H_p^1(\Gamma, D_U(1))$
are unramified at $\ell$.

b) Let us fix  $\ell$ as at a) above and denote by $\varphi_\ell$ a geometric Frobenius at
$\ell$ and by $T_\ell$ the Hecke operator both acting on $H_p^1\bigl(\Gamma, D_U(1)\bigr)^{(h)}$. Then the characteristic polynomials of
$\varphi_\ell$ and $T_\ell$ are equal.
\end{theorem}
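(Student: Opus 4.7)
The plan is to deduce both statements by reducing to the corresponding classical facts at integer weights (good reduction of $X(N,p)$ away from $Np$, and Deligne's Eichler--Shimura congruence relation) and then to propagate these to the whole family via the compatibility of the slope $\le h$ decomposition with specialization (Theorem \ref{familiesslopedec}) and the density of classical weights in $U$.

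For (a), I would interpret $H^1(\Gamma, D_U(1))$ as the continuous \'etale cohomology $H^1(Y(N,p)_{\qbar}^{\rm ket}, \cD_U)(1)$ on the open modular curve $Y(N,p)/\Q$ (with trivial log structure), exactly as in the proof of the Hecke-equivariant identification established in Section 5; this interpretation is already $G_\Q$-equivariant since $Y(N,p)$ is defined over $\Q$ and the topological fundamental group of $Y(N,p)(\mathbb{C})$ is $\Gamma$. The sheaves $\cD^o_{U,n}$ are built as pullbacks along the monodromy $\cG\to \Iw$ attached to the Tate module of the universal generalized elliptic curve $\cE\to Y(N,p)$, and $\cE$ has good reduction over $\Spec\Z[1/Np]$. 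Consequently each $\cD^o_{U,n}$ extends functorially to an \'etale sheaf on $Y(N,p)_{\Z[1/Np]}$, and the inertia at any prime $\ell\nmid Np$ acts trivially on $T_p(\cE)$, hence on all $\cD^o_{U,n}$. Smooth base change then yields that $H^1(\Gamma, D_U(1))$ is unramified at every $\ell\nmid Np$. The parabolic cohomology $H^1_p(\Gamma, D_U(1))$, being the image of $H^1_c\to H^1$, is a $G_\Q$-stable subquotient, and therefore also unramified at such $\ell$.

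For (b), by Theorem \ref{familiesslopedec} (and its compact-support variant), $H^1_p(\Gamma, D_U(1))^{(h)}$ is a finite projective $B_U$-module carrying commuting $B_U$-linear actions of $\varphi_\ell$ and $T_\ell$, whose characteristic polynomials $P_{\varphi_\ell}, P_{T_\ell}$ lie in $B_U[T]$. By compatibility of the slope decomposition with specialization, for every integer $k\in U(K)\cap\Z$ with $k>h-1$ the specializations $P_{\varphi_\ell}(k), P_{T_\ell}(k)\in K[T]$ are the characteristic polynomials of $\varphi_\ell$ and $T_\ell$ acting on $H^1_p(\Gamma, D_k(1))^{(h)}$. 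By Theorem \ref{thm:classicity} (together with its parabolic analogue), this module is $G_\Q$- and Hecke-equivariantly isomorphic to $H^1_p(\Gamma, V_k(1))^{(h)}$, on which Deligne's classical theorem gives that $\varphi_\ell$ and $T_\ell$ have the same characteristic polynomial. Integer weights $k>h-1$ are dense in $U(K)$, so $P_{\varphi_\ell}$ and $P_{T_\ell}$ agree at infinitely many such weights; since $B_U$ is an integral domain, this forces $P_{\varphi_\ell}=P_{T_\ell}$ in $B_U[T]$.

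The main obstacle I expect is to verify rigorously that $U_p$ (and hence the slope decomposition) commutes with the $G_\Q$-action, so that $\varphi_\ell$ really acts on the slope $\le h$ piece and its characteristic polynomial is a well-defined element of $B_U[T]$. This is where the construction of Hecke operators via Faltings' topoi and the trace map of Section \ref{sec:fy} pays off: the Hecke correspondence $X(N,p)_p\rightrightarrows X(N,p)$ and the universal isogeny $\pi_p$ are defined over $\Q$, and the trace map, being built purely from sheaf theory on the induced site, is automatically $G_\Q$-equivariant, so $U_p$ commutes with $G_\Q$ and the characterization of the slope decomposition as the kernel of a polynomial in $U_p$ shows it is $G_\Q$-stable. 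A secondary technical point is to establish the compactly supported analogues of the statements in Sections 3.2 and 5, but these follow from the same arguments applied to $H^1_c$, using that $Y(N,p)$ is the complement of the divisor of cusps in the proper curve $X(N,p)$ and that inertia at any $\ell\nmid Np$ acts trivially on the cusps as well.
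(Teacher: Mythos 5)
Your proposal is correct and follows essentially the same route as the paper: part (a) via good reduction of $X(N,p)$ away from $Np$, and part (b) by specializing the characteristic polynomials of $\varphi_\ell$ and $T_\ell$ at infinitely many classical weights in $U$, invoking classicity and Deligne's Eichler--Shimura congruence relation there, and concluding from the fact that $B_U$ is a domain. The only (harmless) discrepancy is your bound $k>h-1$ where the classicity theorem as stated requires $k>h+1$; since only infinitely many admissible weights are needed, this does not affect the argument.
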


\begin{proof}
a) is clear as $X(N,p)_{\Q_\ell}$ has a smooth proper model over $\Spec(\Z_\ell)$.
For b), let us denote by $P_i(T)\in B_U[T], i=1,2$ the characteristic polynomials of $\varphi_\ell$ and $T_\ell$
respectively. For every $k\in U\cap \Z$ with $k>h+1$ we have natural isomorphisms, equivariant for the $G_{\Q}$ and Hecke
actions
$$
H_p^1\bigl(\Gamma, D_U(1)\bigr)^{(h)}/t_kH_p^1\bigl(\Gamma, D_U(1)\bigr)^{(h)}\cong H_p^1\bigl(\Gamma, D_k(1)\bigr)^{(h)}\cong H_p^1(\Gamma, V_k(1)\bigr)^{(h)}.
$$
Moreover the characteristic polynomials of $\varphi_{\ell}$ and $T_{\ell}$ on the last group are $P_1(T)(k)$ and $P_2(T)(k)$
and by theorem 4.9 of \cite{deligne} they are equal: $P_1(T)(k)=P_2(T)(k)$. As there are infintely many
weights $k$ as above in $U$, it follows that $P_1(T)=P_2(T)$.
\end{proof}

Let now $Z'\subset U(\C_p)$ be the finite set of weights of corollary \ref{cor:consequences}.

 \begin{corollary}
\label{cor:repoverconvergent}

 Let $k\in U(\C_p)-Z'$ and let $f$ be an overconvergent cuspidal eigenform of weight $k+2$ and slope smaller or equal to
$h$. Let $K_f$ denote the finite extension of $K$ generated by all the eigenvalues of $f$. Then
$H^1\big(\Gamma, D_k(1)\bigr)^{(h)}_f$ is a $K_f$-vector space of dimension $2$ and it is the $G_{\Q}$-representation
attached to $f$ by the theory of pseudo-representations.
\end{corollary}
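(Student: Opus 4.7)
The plan is to combine the Eichler–Shimura style decomposition of Corollary \ref{cor:consequences} with the Hecke/Frobenius compatibility of Theorem \ref{thm:global1}. First I would apply Corollary \ref{cor:consequences} at the weight $k\in U(\C_p)-Z'$: this gives a $K_f$-equivariant, Hecke-equivariant, and $G_K$-semilinear decomposition
$$
\Psi_k^{\rm ES}\colon \rH^1\bigl(\Gamma, D_k\bigr)^{(h)}\otimes_K\C_p(1) \cong S_k(k+1) \oplus \rH^0\bigl(X(w), \omega_w^{\dagger, k+2}\bigr)^{(h)}\otimes_K\C_p.
$$
Localizing at the maximal ideal $\mathfrak{m}_f$ of the Hecke algebra associated to the eigensystem of $f$, the second summand becomes one-dimensional over $K_f\otimes_K\C_p$ because $f$ is a cuspidal eigenform of finite slope (multiplicity one for such eigenforms; this is part of what ``generic'' means outside $Z'$). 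For the first summand I would invoke Lemma \ref{lemma:heckesv} over a suitable wide open disk $V\ni k$ to conclude that the characteristic polynomials of $T_\ell$ on $S_V^{(h)}$ and $\mathbb{S}_V^{(h)}$ coincide; by specialization at $k$ and by faithful flatness of $\C_p/K$, this forces $(S_k)_f$ to have $K_f$-dimension $1$ as well. Hence $\rH^1(\Gamma, D_k(1))^{(h)}_f$ has $K_f$-dimension exactly $2$.

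Next I would identify the $G_\Q$-representation. Since $f$ is cuspidal, the localization at $\mathfrak{m}_f$ of the full cohomology $\rH^1(\Gamma, D_k(1))^{(h)}$ coincides with the localization of the parabolic cohomology $\rH^1_p(\Gamma, D_k(1))^{(h)}$ (the Eisenstein contribution is killed by $\mathfrak{m}_f$). By Theorem \ref{thm:global1}(b), applied in family over $U$ and specialized at $k$, the characteristic polynomial of the geometric Frobenius $\varphi_\ell$ at any prime $\ell\nmid Np$ acting on $\rH^1_p(\Gamma, D_k(1))^{(h)}$ equals the characteristic polynomial of $T_\ell$. Passing to the $f$-localization, a two-dimensional $K_f$-representation of $G_\Q$ on which $T_\ell$ acts by the scalar $a_\ell(f)$ must have $\mathrm{tr}(\varphi_\ell)=a_\ell(f)$ for every $\ell\nmid Np$; similarly the determinant is determined by $U_p$ and the nebentypus of $f$.

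Finally, by the theory of two-dimensional pseudo-representations (Wiles–Taylor), the collection of traces $\{a_\ell(f)\}_{\ell\nmid Np}$ together with the determinant pins down the semisimple $G_\Q$-representation up to isomorphism; Chebotarev density and the unramifiedness outside $Np$ reduce the comparison to traces at geometric Frobenii, which match by construction. Hence the semisimplification of $\rH^1(\Gamma, D_k(1))^{(h)}_f$ agrees with the Galois representation $\rho_f$ attached to $f$ by the theory of pseudo-representations, and since $\rho_f$ is (for $f$ generic, in particular outside $Z'$) absolutely irreducible, this is an actual isomorphism of $G_\Q$-representations over $K_f$.

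The step I expect to be the main obstacle is the first part of Step 2, namely pinning down that the $f$-localization of $S_k$ is one-dimensional: Lemma \ref{lemma:heckesv} gives equality of characteristic polynomials of $T_\ell$ but not a priori an identification of generalized eigenspaces, so one needs to argue that the two Hecke modules $S_k^{(h)}$ and $\mathbb{S}_k^{(h)}$ have the same dimensions at each maximal ideal of the spherical Hecke algebra; equality of characteristic polynomials of all $T_\ell$ (together with $U_p$ by the slope decomposition) is enough provided the Hecke algebra acts semisimply on $f$'s generalized eigenspace, which is encoded in the genericity hypothesis $k\notin Z'$ and the multiplicity-one property of finite-slope overconvergent cuspidal eigenforms.
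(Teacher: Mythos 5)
Your proposal follows essentially the same route as the paper: the published proof reduces to parabolic cohomology because $f$ is cuspidal, invokes Theorem \ref{thm:global1} (specialized at $k$) for the Frobenius--Hecke compatibility, and cites Corollary \ref{cor:consequences} together with Lemma \ref{lemma:heckesv} to conclude that $W_f\otimes_K\C_p$ is free of rank two over $K_f\otimes_K\C_p$, with the identification via pseudo-representations then following from the matching of traces of Frobenii --- exactly your argument. The subtlety you flag at the end (that equality of the characteristic polynomials of the individual $T_\ell$ on $S_V^{(h)}$ and $\mathbb{S}_V^{(h)}$ does not by itself pin down the localized dimensions at $\mathfrak{m}_f$) is genuine, but the paper's own proof passes over it in silence, so your treatment is if anything the more careful one.
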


Before staring the proof of this corollary, let us explain its notations: we denote by $\mathbb{T}$ the $K$ subalgebra
of the $K$-endomorphism algebra  of $H^1\bigl(\Gamma, D_k(1)\bigr)^{(h)}$ generated by
the images of $T_{\ell}$ for all positive prime integers $\ell$ such that $(\ell, Np)=1$. The
overconvergent cuspidal eigenform $f$ determines a surjective $K$-algebra homomorphism
$\mathbb{T}\lra K_f$ sending $T_\ell$ to its $f$ eigenvalue. Then we denote by
$H^1\bigl(\Gamma, D_k(1)\bigr)^{(h)}_f:=H^1\bigl(\Gamma, D_k(1)\bigr)^{(h)}\otimes_{\mathbb{T}}K_f$.

\begin{proof}
As $f$ is an overconvergent cuspform we have $W_f:=H^1\bigl(\Gamma, D_k(1)\bigr)^{(h)}_f\cong H^1_p\bigl(\Gamma, D_k(1)\bigr)^{(h)}_f$. If we denote by $\varphi_\ell$ a geometric Frobenius acting on $W_f$, theorem \ref{thm:global1}
implies (by specialization at weight $k$) that the characteristic polynomial of $\varphi_\ell$ is equal to the characteristic
polynomial of $T_\ell$ on the same module.
Now we consider  $W_f\otimes_K \C_p$ and forget the global Galois action. Corollary 6.2 and Lemma 6.3 imply that
$W_f\otimes\C_p$ is a free two dimensional $K_f\otimes_K \C_p$-module which means that $W_f$ is two dimensional over
$K_f$. This ends the argument.
\end{proof}

\end{document}